\numberwithin{equation}{section}
\newtheorem{theorem}{Theorem}[section]
\newtheorem{definition}[theorem]{Definition}
\newtheorem{lemma}[theorem]{Lemma}
\newtheorem{proposition}[theorem]{Proposition}
\newtheorem{corollary}[theorem]{Corollary}
\numberwithin{equation}{section}
\theoremstyle{remark}
\newtheorem{remark}[theorem]{Remark}
\newcommand{\R}{{\mathbb R}}
\def\wh{\widehat}
\def\9{{\infty}}
\def\ve{{\varepsilon}}
\def\na{{\nabla}}
\def\bbe{{\mathbb{E}}}
\def\bbr{{\mathbb{R}}}
\def\bbp{{\mathbb{P}}}
\def\({\left(}
\def\){\right)}
\def\<{\left<}
\def\>{\right>}
\newcommand{\one}{\mathbbm{1}}
\newcommand{\imu}{\mathrm{i}}
\newcommand{\dd}{\,\mathrm{d}}
\renewcommand{\epsilon}{\varepsilon}
\renewcommand{\Im}{\operatorname{Im}}
\renewcommand{\Re}{\operatorname{Re}}
\newcommand{\N}{{\mathbb{N}}}
\newcommand{\Q}{{\mathbb{Q}}}
\newcommand{\C}{{\mathbb{C}}}
\newcommand{\Z}{{\mathbb{Z}}}
\newcommand{\PP}{{\mathbb{P}}}
\newcommand{\G}{{\mathbb{G}}}
\newcommand{\X}{{\mathbb{X}}}
\newcommand{\Y}{{\mathbb{Y}}}
\newcommand{\cF}{{\mathcal{F}}}
\newcommand{\cB}{{\mathcal{B}}}
\newcommand{\cI}{{\mathcal{I}}}
\newcommand{\cJ}{{\mathcal{J}}}
\newcommand{\cP}{{\mathcal{P}}}
\newcommand{\cS}{{\mathcal{S}}}
\newcommand{\cT}{{\mathcal{T}}}
\newcommand{\cU}{{\mathcal{U}}}
\newcommand{\Schw}{\mathcal{S}}
\newcommand{\Sp}{{\mathbb S}}
\newcommand{\vece}{{\textnormal{\textbf{e}}}}
\newcommand{\ModN}{C_{\leq (\frac{\lambda}{2^8})^2}}
\newcommand{\TempN}{P^{(t)}_{\leq (\frac{\lambda}{2^8})^2}}
\newcommand{\SOne}{S^{1,\frac{1}{4}}}
\newcommand{\SHalf}{S^{\frac{1}{2},0}}
\newcommand{\Ssa}{S^{s,a}}
\newcommand{\NOne}{N^{1,\frac{1}{4}}}
\newcommand{\NHalf}{N^{\frac{1}{2},0}}
\newcommand{\Nsa}{N^{s,a}}
\newcommand{\XOne}{\mathbb{X}^1}
\newcommand{\Xs}{\mathbb{X}^{s}}
\newcommand{\GOne}{\mathbb{G}^1}
\newcommand{\GHalf}{\mathbb{G}^{\frac{1}{2}}}
\newcommand{\Gs}{\mathbb{G}^{s}}
\newcommand{\WEner}{W^{0,\frac{1}{4},\frac{1}{2}}}
\newcommand{\WEndp}{W^{0,0,0}}
\begin{document}
	
\title[]{The energy-critical stochastic Zakharov system}

\author{Sebastian Herr}
\address{Fakult\"at f\"ur Mathematik,
Universit\"at Bielefeld, D-33501 Bielefeld, Germany}
\email[Sebastian Herr]{herr@math.uni-bielefeld.de}
\thanks{}

\author{Michael R\"ockner}
\address{Fakult\"at f\"ur Mathematik,
Universit\"at Bielefeld, D-33501 Bielefeld, Germany, 
Academy of Mathematics and Systems Science, 
Chinese Academy of Sciences, 
Beijing, China.}
\email{roeckner@math.uni-bielefeld.de}
\thanks{}

\author{Martin Spitz}
\address{Fakult\"at f\"ur Mathematik,
Universit\"at Bielefeld, D-33501 Bielefeld, Germany}
\email[Martin Spitz]{mspitz@math.uni-bielefeld.de}
\thanks{}

\author{Deng Zhang}
\address{School of Mathematical Sciences, MOE-LSC,
CMA-Shanghai, Shanghai Jiao Tong University, China.}
\email[Deng Zhang]{dzhang@sjtu.edu.cn}
\thanks{}

\keywords{}
%

\begin{abstract}
   This work is devoted to the stochastic Zakharov system in dimension four, which is the energy-critical dimension.
   First, we prove local well-posedness in the energy space $H^1\times L^2$
   up to the maximal existence time and a blow-up alternative.
 Second,
  we prove that for large data solutions exist globally
  as long as energy and wave mass are below the ground state threshold.
  Third, we prove a regularization by noise phenomenon:  the probability of global existence and scattering goes to one if the strength of the (non-conservative) noise goes to infinity.
  The proof is based on the refined rescaling approach and a new functional framework, where both Fourier restriction and local smoothing norms are used as well as a (uniform)  double endpoint Strichartz and local smoothing inequality for the Schrödinger equation with certain rough and time dependent lower order perturbations.
\end{abstract}

\maketitle

 \tableofcontents


\section{Introduction and main results}   \label{Sec:Intro}

The present work is devoted to 
the energy-critical stochastic Zakharov system in dimension four  
\begin{equation}   \label{eq:StoZak}
	\left\{\aligned
	 \imu \dd X + \Delta X \dd t &= \Re(Y) X \dd t 
   - \imu \mu X \dd t + \imu X \dd W_1(t),  \\
	 \frac{1}{\alpha} \imu \dd Y + |\na|Y \dd t &= -|\na| |X|^2 \dd t + \dd W_2(t).
	\endaligned
	\right.
\end{equation}
Here the initial datum is in the energy space $(X(0), Y(0)) = (X_0, Y_0) \in H^1(\R^4) \times L^2(\R^4)$, 
$\alpha>0$ represents the ion sound speed, 
$X$ is the complex envelope of the electric field, 
and $Y$ is the (reduced) ion density fluctuation.  
The noises $W_1$ and $W_2$ represent, 
respectively, 
fluctuations in the plasma density and 
temperature, 
which are independent Wiener processes 
\begin{align} \label{Noise}
   W_j(t,x) = \sum\limits_{k=1}^\infty \imu \phi^{(j)}_k(x) \beta^{(j)}_k(t),
   \ \ x\in \bbr^4,\ \ t\geq 0
\end{align}
for $j=1,2$,  
where 
$\{\phi^{(1)}_k\}_k \subseteq H^4(\mathbb{R}^4)$  
and $\{\phi^{(2)}_k\}_k \subseteq H^2(\mathbb{R}^4$) are real-valued functions,
$\{\beta^{(j)}_k\}$ are real-valued independent Brownian motions
on a stochastic basis $(\Omega, \mathscr{F}, \{\mathscr{F}_t\}, \mathbb{P})$, 
and 
\begin{align*}
    \mu = \frac 12 \sum\limits_{k=1}^\infty 
    |\phi_k^{(1)}|^2 <\infty. 
\end{align*}
In 1972 Zakharov introduced the deterministic system (i.e.\ $W_1=W_2=0$)
to model the dynamics of Langmuir waves. 
A heuristic derivation of the stochastic noise can be found in \cite[Section 2]{HRSZ23}. 
It is worth noting that 
the Schr\"odinger component is driven by multiplicative noise, 
while the wave component is driven by additive noise. 
In particular, 
in the case where $\{\phi^{(1)}_k\}$ 
are real valued, 
$-\imu \mu X  \dd t + \imu X  \dd W_1(t) = \imu X \circ \dd W_1(t)$ is the standard Stratonovich differential, 
so that the mass of the Schr\"odinger component is conserved.

One remarkable relationship is that, 
via the subsonic limit $\alpha \to \infty$, the
Zakharov system \eqref{eq:StoZak} converges to the
focusing cubic nonlinear Schr\"odinger equations 
(NLS for short) 
\begin{align}   \label{equa-NLS}
   \imu \partial_t u + \Delta u  &= - |u|^2 u. 
\end{align}
We refer to ~\cite{SW86, OT92, KPV95, MN08}
for rigorous results in this direction.

Recently, 
substantial progress has been achieved towards
the understanding of 
solvability and long-term dynamics of 
the Zakharov system in the critical energy space. 
One key feature is that 
the sharp threshold for global well-posedness, blowup and scattering of the energy-critical Zakharov system is played by the ground state, 
that is, the famous Aubin-Talenti function 
\begin{align} \label{AT-funct}
    W(x)= \Big(1+\frac{|x|^2}{8}\Big)^{-1}. 
\end{align}   
This phenomenon was previously proved by Kenig and Merle  in the seminal paper \cite{KM06} 
for the energy-critical NLS. 
For the 4D Zakharov system,  
a sub-threshold Kenig-Merle dichotomy was derived by Guo-Nakanishi~\cite{GN21} for the radial Zakharov system: under the energy constraint 
\begin{align*}
    e_Z(u,v) < e_Z(W, - W^2), 
\end{align*} 
the energy space $H^1(\R^4)\times L^2(\R^4)$ 
topologically splits into two regimes 
\begin{align*}
    \{\|v\|_{L^2_x} < \|W^2\|_{L^2_x}\}\ \ \   
    \text{and}\ \ \  
    \{\|v\|_{L^2_x} > \|W^2\|_{L^2_x}\},
\end{align*} 
and all radial solutions in the former domain 
exist globally and scatter, 
but can not be global and bounded in the energy space in the latter regime. 
For  non-radial data of finite energy below the threshold, 
global well-posedness was proved by Candy-Herr-Nakanishi~\cite{CHN21} by developing a new type of adapted spaces and a uniform Strichartz estimate for the Schrödinger equation with a  potential. Also, 
the local regularity theory has been clarified in \cite{CHN23}. 
Very recently,  
in the case slightly above the threshold energy, 
finite time type II blow-up solutions have been constructed by 
Krieger-Schmid \cite{KS24a,KS24b},   
where the method is inspired by matched asymptotic regions and approximation procedures introduced by Krieger, Schlag and Tataru \cite{KST08,KST09a,KST09b}  
for critical nonlinear wave equations 
and subsequently developed methods 
by Perelman \cite{Pe14} and Ortoleva-Perelman \cite{OP14} for critical Schr\"odinger equations.  
Up to now, 
the scattering below the ground state for general data still remains a challenging problem.

In contrast to the above, 
very few results are known for {\it critical stochastic dispersive equations}. 
For the typical stochastic NLS, 
well-posedness and scattering were proved in the recent papers 
\cite{FX21, Zh23, OO20}. 
However,  
the theory for the {\it energy-critical}  stochastic Zakharov system is largely open.  
In the subcritical case where $d\leq 3$, well-posedness results were very recently proved 
in \cite{Ts22,Ba22,HRSZ23,BBD24}. 

In the 4D case, 
the energy regularity $H^1\times L^2$ lies at the boundary of the well-posedness regime for the Zakharov system, 
and cannot be treated directly by the normal form 
method \cite{BGHN15,HRSZ23}. 
The noise makes the situation worse by destroying the energy conservation law, 
so that the indirect method from \cite{BGHN15} does not apply to the stochastic case. 
Moreover, 
the low temporal $C^{1/2-}$-regularity of  the Wiener process does not allow for standard $X^{s,b}$-techniques.
For proving global well-posedness  for the stochastic Zakharov system, another main challange 
is to derive a uniform control 
for Schr\"odinger type equations with free-wave potentials and problematic derivative terms caused by the noise. 
This is in contrast to the stochastic NLS and the stability arguments for critical stochastic NLS 
developed in \cite{Zh23} 
are not applicable to the Zakharov system.

From the perspective of probability theory, 
it is widely expected that 
noise has regularizing effects on deterministic systems, 
such as preventing blowup and improving the regularity theory.
Regularization by noise phenomena have been observed for 
various SPDE models, including 
transport equations \cite{FGP10}, 
stochastic Hamilton-Jacobi equations \cite{GG19}, 
3D vorticity stochastic Navier-Stokes equations \cite{FL21}, 
and stochastic NLS \cite{BRZ17-1,HRZ19}.  
Recently, it has been shown in \cite{HRSZ23} that
norm explosion is prevented 
with high probability for the 3D Zakharov system driven by {\it non-conservative noise}  on bounded 
time intervals.
But  whether noise can prevent blowup for all times,  or even enforce scattering behavior, remains open.

The purpose of this work is to solve the 4D stochastic Zakharov system at the critical energy regularity
and to investigate the noise regularization effects on large time dynamics.
We mainly address the following three problems:
\begin{enumerate}
  \item[(i)] local well-posedness in the energy space including a blow-up alternative
  \item[(ii)] global well-posedness for large data below the ground state threshold
  \item[(iii)] noise regularization effects on global well-posedness and scattering
\end{enumerate}

We next present the precise formulation of the main results. 

Throughout this paper the spatial functions of the noise \eqref{Noise}
satisfy the following hypothesis.

\paragraph{\bf Hypothesis (H)}
The spatial functions $\{\phi^{(j)}_k\}$, $j=1,2$, satisfy 
the following summability conditions:
   \begin{align} \label{phik-condition}
   \sum\limits_{k=1}^\infty 
   \|\phi^{(1)}_k\|_{H^4}^2 +
   \sum\limits_{j=1}^4  \sum\limits_{k=1}^\infty \int  \sup_{y\in \bbr^3} |\nabla \phi^{(1)}_k(r \vece_j+y)|\dd r  <\infty, \ \
   \sum\limits_{k=1}^\infty 
   \|\phi^{(2)}_k\|_{H^2}^2 <\infty,
\end{align}
where $\vece_1, \vece_2, \vece_3, \vece_4$ denote the standard orthonormal basis of $\bbr^4$.

Theorem \ref{Thm-LWP}  shows that, 
for general initial data, 
the energy-critical stochastic Zakharov system 
 is locally well-posed in the energy space  
and satisfies a blow-up alternative.

\begin{theorem}    [LWP and blow-up alternative] \label{Thm-LWP}
	\label{thm:LocalWP}
Assume Hypothesis (H).
Then, given any deterministic initial data  $(X_0,Y_0) \in H^1(\R^4) \times L^2(\R^4)$, 
there exists a stopping time $\tau^* > 0$ such that the system~\eqref{eq:StoZak} has a unique $\{\cF_t\}$-adapted solution $(X,Y)$ in $C([0,\tau^*), H^1(\R^4) \times L^2(\R^4))$.
	
	Moreover, if $\tau^* <\infty$, then $\PP$-a.s.
	$$(i)\quad  \limsup_{t \rightarrow \tau^*} (\|X(t)\|_{H^1(\R^4)} + \|Y(t)\|_{L^2(\R^4)}) = \infty \qquad \text{or}\qquad (ii)\quad \|X\|_{L^2_t W^{\frac{1}{2},4}_x([0,\tau^*) \times \R^4)} = \infty.$$
\end{theorem}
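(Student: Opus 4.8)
The plan is to remove the noise by a pathwise (refined) rescaling transformation, to reduce \eqref{eq:StoZak} to a random Zakharov-type system carrying rough, time-dependent lower order perturbations, to solve the latter by a contraction argument in the adapted functional framework (Fourier restriction together with local smoothing), and then to run the standard continuation argument for the blow-up alternative. Concretely, I would write $X = \Gamma v$ with $\Gamma(t) = \exp(W_1(t))$, the unimodular multiplier attached to the real gauge $\psi(t,x) := \sum_k \phi^{(1)}_k(x)\beta^{(1)}_k(t)$ (so $\dd\Gamma = \Gamma\circ\dd W_1$ and $|\Gamma|\equiv 1$), and $Y = z + Z$ with $Z$ the stochastic convolution solving $\frac{1}{\alpha}\imu\dd Z + |\na|Z\dd t = \dd W_2$, $Z(0)=0$. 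Since $|\Gamma|\equiv 1$ one has $|X|^2=|v|^2$, and using the Stratonovich form of the $X$-equation recorded above, a direct computation shows that $(v,z)$ solves, $\PP$-a.s., the \emph{random} system
\begin{equation*}
\left\{\aligned
\imu\p_t v + \Delta v &= \Re(z)\,v + \Re(Z)\,v - 2\imu\,\na\psi\cdot\na v + \big(|\na\psi|^2 - \imu\Delta\psi\big)v,\\
\frac{1}{\alpha}\imu\p_t z + |\na|z &= -|\na||v|^2,
\endaligned\right.
\end{equation*}
with $(v(0),z(0)) = (X_0,Y_0)$. Under Hypothesis (H), $\PP$-a.s.\ $\Gamma$ is a bounded multiplier on $H^1$ uniformly on compact time intervals, $Z$ lies in the adapted wave space used below, and $\na\psi,\Delta\psi,|\na\psi|^2$ are finite in the norms required for the estimates; the directional local smoothing summability in \eqref{phik-condition} is precisely what is needed to control the first order term $\na\psi\cdot\na v$.

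For the functional framework I would place $v$ in $\XOne$ (with $\SOne$, $\NOne$ and $\WEner$ as constituents, so that in particular $\|v\|_{L^2_t W^{\frac{1}{2},4}_x}$ is controlled) and $z$ in $\GOne$, following the adapted spaces of \cite{CHN21}. The analytic inputs are: (a) a \emph{uniform double endpoint Strichartz and local smoothing estimate} for $\imu\p_t v + \Delta v - 2\imu\,\na\psi\cdot\na v + (|\na\psi|^2-\imu\Delta\psi)v = F$, with implicit constant depending only on the ($\PP$-a.s.\ finite) sizes of $\psi,\na\psi$ from \eqref{phik-condition} and decaying with the length of the time interval; (b) a Strichartz-with-potential estimate in the spirit of \cite{CHN21} for the free-wave-type potential term $\Re(Z)v$, uniform in the size of $Z$; (c) the bilinear estimates placing $\Re(z)v$ in $\NOne$ and $|\na||v|^2$ in the wave norms. \textbf{I expect (a) to be the main obstacle}: the coefficient $\psi$ is only Hölder-$\frac{1}{2}^-$ in time, so standard $X^{s,b}$ machinery is unavailable and the genuine derivative loss in $\na\psi\cdot\na v$ has to be regained through local smoothing — which is exactly why the solution space must carry the $L^2_t W^{\frac{1}{2},4}_x$ norm and why Hypothesis (H) asks for both $H^4$-bounds and the directional local smoothing summability on $\{\phi^{(1)}_k\}$; combining (a) with the non-perturbative potential estimate (b) inside one closed framework is the technical heart of the argument.

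For local well-posedness, for $R>0$ let $\tau_R$ be the first time the relevant norms of $\psi,\na\psi,Z$ reach $R$; on $[0,\tau_R\wedge T]$ the estimates (a)–(c) hold with deterministic constants, and for $T=T(R,\|X_0\|_{H^1}+\|Y_0\|_{L^2})>0$ small the Duhamel map is a contraction on a ball of the product space, yielding a unique adapted $(v,z)$ there. Letting $R\to\infty$ and concatenating maximal pieces produces a unique $\{\cF_t\}$-adapted solution on a maximal random interval $[0,\tau^*)$, and $(X,Y)=(\Gamma v,\,z+Z)\in C([0,\tau^*);H^1\times L^2)$ solves \eqref{eq:StoZak}; uniqueness for \eqref{eq:StoZak} follows from uniqueness for the random system via the same truncation, and adaptedness is inherited from the adapted Picard iterates.

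For the blow-up alternative, suppose $\tau^*<\infty$ while on an event of positive probability both $\sup_{t<\tau^*}(\|X(t)\|_{H^1}+\|Y(t)\|_{L^2})<\infty$ and $\|X\|_{L^2_t W^{\frac{1}{2},4}_x([0,\tau^*)\times\R^4)}<\infty$. Working pathwise there, these give $\|v\|_{L^\infty_t H^1}+\|z\|_{L^\infty_t L^2}<\infty$ and $\|v\|_{L^2_t W^{\frac{1}{2},4}_x}<\infty$ on $[0,\tau^*)$ (the transfer through the bounded multiplier $\Gamma$ and through $Z\in C([0,\tau^*);L^2)$ costing only finite constants). Splitting $[0,\tau^*)$ into finitely many subintervals on which the critical $L^2_t W^{\frac{1}{2},4}_x$-norm is small (by absolute continuity of the $L^2_t$-integral) lets the estimates (a)–(c) close on each piece, so $v\in\XOne([0,\tau^*))$ and $z\in\GOne([0,\tau^*))$ with finite norms; hence $(v(t),z(t))$, and therefore $(X(t),Y(t))$, converges in $H^1\times L^2$ as $t\uparrow\tau^*$. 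Re-solving from time $\tau^*$ with the limiting datum (by the local theory above) extends the solution beyond $\tau^*$, contradicting maximality. Therefore $\PP$-a.s.\ either (i) or (ii) holds.
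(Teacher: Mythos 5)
Your high-level skeleton (gauge/rescaling to a pathwise random system, adapted Fourier restriction plus lateral local smoothing norms, a fixed point, then a continuation argument) matches the paper's strategy, but two of your key allocations are backwards or missing, and they are exactly where the difficulty sits. First, you assign the non-perturbative ``Strichartz with potential'' input to $\Re(Z)v$, where $Z$ is the stochastic convolution, and you propose to treat $\Re(z)v$ by the bilinear estimate alone. This is the wrong way around: under Hypothesis (H) the stochastic convolution is an $H^2$-valued continuous process, so $\Re(\cT_t(W_2))u$ is a harmless lower order term that gains a factor $|I|^{\frac12}$ (cf.\ \eqref{eq:BilincTu}); the genuinely critical potential is the free wave $e^{\imu t|\na|}Y_0$ contained in your $z$. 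Since $\|e^{\imu t|\na|}Y_0\|_{L^\infty_t L^2_x}=\|Y_0\|_{L^2}$ on every interval, the bilinear bound $\|\Re(z)v\|_{\NOne}\lesssim\|z\|_{\Y+L^2_tW^{1,4}_x}\|v\|_{\SOne}$ carries no smallness for large $Y_0$, and no choice of $T$ small ``depending on $\|X_0\|_{H^1}+\|Y_0\|_{L^2}$'' makes your contraction close; in a critical problem the local time cannot be a function of the norms anyway. The paper's device is to split $v=v_L+\rho$, solve the linear Schr\"odinger equation with the free-wave potential $\Re(v_L)$ (Lemma~\ref{lem:LinSchrPotSmallTime}) on intervals where $\|v_L\|_{\Y+L^2_tW^{1,4}_x}$ is small (Lemma~\ref{lem:SmallnessWavePotential}, a divisibility statement), and treat only the wave Duhamel part perturbatively, with smallness coming from $\|e^{\imu t\Delta}u_0\|_{L^2_tL^4_x([0,t])}$ — a profile-dependent quantity encoded in the stopping times, which is also why alternative (ii) of the blow-up statement is phrased in the critical norm.

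Second, ``letting $R\to\infty$ and concatenating maximal pieces'' hides the real obstruction to reaching the maximal time: the derivative coefficient $b=2\na W_1$ is controlled in the lateral norm by $\sum_k\int\sup_y|\na\phi^{(1)}_k|\,\dd r\,\sup_{s\le t}|\beta^{(1)}_k(s)|$, which is small only near $t=0$ and admits no gain in $|I|$ (see \eqref{eq:Bilinbnablau}); so restarting the same fixed point from a later time $t_1>0$ does not close, no matter how short the new interval is. One must re-center the gauge at random times (the refined rescaling transformations of Proposition~\ref{prop:RefinedRescaling}), which in turn requires the product estimate for $e^{\pm W_1(\sigma)}$ on $\XOne$ (Lemma~\ref{lem:ProductNoiseInX}, Corollary~\ref{cor:RefinedRescalingInX}), the gluing of solutions (Proposition~\ref{prop:GluingSolutions}) and the decomposability of $\XOne$ (Lemma~\ref{lem:DecompX}) to keep control of the adapted norms across the junctions; none of this appears in your outline, and your blow-up/continuation step inherits the same problems: closing the estimates on subintervals of $[0,\tau^*)$ away from $0$ again needs the re-centered gauge and the free-wave-potential treatment (plus an endpoint-regularity argument, cf.\ Lemma~\ref{lem:ImprovementRegularity}, to extend $v$ continuously to $\tau^*$), while re-solving past $\tau^*$ and gluing adaptedly is again the refined rescaling machinery. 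So while your route could in principle be repaired, as written the contraction does not close for large wave data and the passage to the maximal existence time and the blow-up alternative are not justified.
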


The uniqueness statement in the above theorem means that the solution is unique in a suitable subspace of $C([0,\tau^*), H^1(\R^4) \times L^2(\R^4))$. We refer to Remark~\ref{rem:Uniqueness} for the precise formulation.

We remark that $L^2_t W^{\frac 12, 4}_x$ is the endpoint Strichartz space at the minimal regularity for the Schr{\"o}dinger component for which the deterministic Zakharov system is well-posed, see \cite{CHN23}.

Our proof introduces a {\it new functional framework}, a combination of the spaces in \cite{CHN21,CHN23} and local smoothing norms, which can deal with more general Zakharov systems with {\it derivative perturbations}.

\begin{theorem} [Zakharov system with derivative perturbations] \label{Thm-Zakharov-Low}
Consider the Zakharov system with lower order perturbations
\begin{equation}   \label{eq:StoZak-low}
	\left\{\aligned
	 \imu \partial_t u + \Delta u  &= \Re(v) u  - a_1\cdot \na u - a_0 u,  \\
	 \frac{1}{\alpha} \imu \partial_t v + |\na|v  &= -|\na| |u|^2,
	\endaligned
	\right.
\end{equation}
where the coefficients $a_1$ and $a_0$ are of the form
\begin{align}
 & a_1(t,x)= 2 \imu \sum\limits_{k=1}^\infty \na \phi_k(x)h_k(t),   \label{a1-loworder} \\
 & a_0(t,x)= - \sum\limits_{j=1}^4 \Big(\sum\limits_{k=1}^\infty \partial_j \phi_k(x)h_k(t)\Big)^2
          + \imu \sum\limits_{k=1}^\infty \Delta \phi_k(x) h_k(t), \label{a0-loworder}
\end{align}
and $\{\phi_k\}$ satisfy Hypothesis $(H)$, 
$\{h_k\} \subset C(\bbr^+; \bbr)$ 
and $h_k(0)=0$. 
Then, for any initial data $(u_0, v_0) \in H^1(\R^4) \times L^2(\R^4)$,
there exists a unique solution $(u,v)$ in $C([0,\tau^*), H^1(\R^4) \times L^2(\R^4))$ of \eqref{eq:StoZak-low}
up to a maximal existence time $\tau^*$.
Moreover,
one has the blow-up alternative as in Theorem \ref{Thm-LWP}
for system \eqref{eq:StoZak-low}.
\end{theorem}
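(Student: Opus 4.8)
The plan is to solve \eqref{eq:StoZak-low} by a contraction mapping argument in the adapted function spaces of \cite{CHN21,CHN23}, enlarged by local smoothing norms, with the perturbative terms $a_1\cdot\na u$ and $a_0 u$ handled by a \emph{uniform perturbed linear estimate} rather than by a change of gauge. (The gauge $u=e^{\imu\Phi}w$ with $\Phi=\sum_k\phi_k h_k$ does remove the first-order term, but it generates a term $(\p_t\Phi)w$, which is unavailable here since the $h_k$ are merely continuous --- this is precisely why the perturbation is kept explicit in \eqref{eq:StoZak-low}, and why the stochastic rescaling behind Theorem~\ref{Thm-LWP} can only reduce matters to \eqref{eq:StoZak-low} and not further.) Concretely, I would write \eqref{eq:StoZak-low} in Duhamel form, $u(t)=e^{\imu t\Delta}u_0+\imu\int_0^t e^{\imu(t-s)\Delta}\big(a_1\cdot\na u+a_0 u-\Re(v)u\big)(s)\,ds$ and $v(t)=e^{-\imu\alpha t|\na|}v_0+\imu\alpha\int_0^t e^{-\imu\alpha(t-s)|\na|}\,|\na||u|^2(s)\,ds$, and set up the resolution spaces: for $u$ the energy-critical Schrödinger space $\XOne$ (the $U^2_\Delta/V^2_\Delta$-type space carrying the double endpoint Strichartz norm $L^2_t W^{\frac12,4}_x$ and the critical bilinear norm of \cite{CHN21,CHN23}) intersected with the local smoothing space $\SOne$, and for $v$ the corresponding wave space $\GOne$; write $\cS^1:=\XOne\cap\SOne$ for the solution space of the Schrödinger component and $\NOne$ for the dual forcing space.

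The decisive new ingredient is the following uniform estimate for the inhomogeneous perturbed equation $\imu\p_t u+\Delta u=a_1\cdot\na u+a_0 u+F$ on a time interval $I\ni t_0$: whenever $(a_1,a_0)$ are of the form \eqref{a1-loworder}--\eqref{a0-loworder} with the $\phi_k$ subject to Hypothesis~(H), one has $\|u\|_{\cS^1(I)}\lesssim\|u(t_0)\|_{H^1}+\|F\|_{\NOne(I)}$ with an implied constant depending only on the quantities in \eqref{phik-condition} and on $\sup_I|h_k|$ --- in particular independent of $|I|$. The mechanism: $a_0 u$ is a rough, time-dependent potential term whose coefficient norms are controlled through the $\|\phi_k\|_{H^4}$-summability (the square in \eqref{a0-loworder} via $H^4\hookrightarrow L^q$ for all finite $q$, the Laplacian term likewise), and is absorbed by Strichartz and energy duality; the genuinely dangerous term is the first-order one $a_1\cdot\na u$, which is \emph{not} lower order at the scaling-critical level and can only be recovered by the half-derivative gain of the local smoothing estimate. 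Pairing $\na u$ against the dual local smoothing norm reduces the required control of $a_1$ to an $\ell^1$-summed $L^1_{x_j}L^\infty_{x',t}$-type quantity, finite by the second sum in \eqref{phik-condition}; since $h_k(0)=0$ and the $h_k$ are continuous, all perturbation coefficient norms are small on a short initial interval $[0,\delta]$, so the perturbed propagator is obtained there by a Neumann series / continuity argument, and the extension to a fixed interval $I$ is done by subdividing into finitely many pieces whose number is controlled by the summable data of \eqref{phik-condition} --- this is where uniformity of the constant is essential, and also where it pays off later for the $\omega$-dependent problem in Theorem~\ref{Thm-LWP}.

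Granting the perturbed linear estimate, the remainder follows the deterministic energy-critical scheme: the multilinear estimates for $\Re(v)u$ and $|\na||u|^2$ of \cite{CHN21,CHN23} are imported, and one checks that their outputs also control the newly added local smoothing norms --- this compatibility check, together with the perturbed linear estimate, is the content of the ``new functional framework''. Then on a short interval $[0,T]$, with $T$ depending on the frequency profile of $(u_0,v_0)$ rather than merely on their norm (criticality), the Duhamel map is a contraction on a ball of $\cS^1([0,T])\times\GOne([0,T])$, giving a unique local solution, with uniqueness in the auxiliary space read off directly. For the maximal existence time $\tau^*$ and the blow-up alternative I would argue by continuation: if $\tau^*<\infty$ and neither (i) nor (ii) holds --- i.e.\ $\sup_{t<\tau^*}(\|u(t)\|_{H^1}+\|v(t)\|_{L^2})<\infty$ and $\|u\|_{L^2_t W^{\frac12,4}_x([0,\tau^*)\times\R^4)}<\infty$ --- then a subdivision argument using the multilinear and perturbed linear estimates shows $\|u\|_{\cS^1([0,\tau^*))}+\|v\|_{\GOne([0,\tau^*))}<\infty$; hence $(u,v)$ has a limit in $H^1\times L^2$ at $\tau^*$ and the local theory restarts from a time near $\tau^*$, extending the solution and contradicting maximality.

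The main obstacle is the uniform first-order perturbed linear estimate in the critical adapted spaces. Because $a_1\cdot\na u$ is a true first-order operator, absorbing it requires the local smoothing gain, and one must carry this through the $U^p/V^p$-type framework --- where pointwise-in-time evaluations and pairings are delicate --- while keeping the constant independent of the coefficients within the Hypothesis~(H) class, which is needed both for the subdivision over a fixed interval and, ultimately, for the random coefficients $h_k=\beta_k$. A secondary difficulty, inherited from the deterministic theory, is that the local existence time genuinely depends on the concentration profile of the data; this forces reliance on the perturbative (non-concentration-compactness) structure of \cite{CHN21,CHN23} and a verification that neither the added local smoothing norms nor the rough perturbations spoil it.
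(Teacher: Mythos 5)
Your local-in-time set-up is essentially workable and close to the paper's: on a short initial interval the free-wave potential norm $\|v_L\|_{\Y+L^2_tW^{1,4}_x}$ is small, the difference $\rho=v-v_L$ is controlled through smallness of $\|u\|_{L^2_tL^4_x}$, and the perturbations are controlled exactly because $h_k(0)=0$ and the $h_k$ are continuous, so the coefficient norms (in particular the lateral norm $\sum_j\|a_1\|_{L^{1,\infty}_{\vece_j}}$, which multiplies the critical low--high part $(a_1\cdot\nabla u)_{LH}$ with no factor of $|I|$) are small on $[0,\delta]$; this is the content of the paper's Lemma~\ref{lem:BilinearLowerOrder} and of the stopping-time construction in Subsection~\ref{Subsec-LWP}. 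The genuine gap is in your extension to the maximal existence time. You claim a perturbed linear estimate $\|u\|_{\cS^1(I)}\lesssim\|u(t_0)\|_{H^1}+\|F\|_{\NOne(I)}$ with constant depending only on Hypothesis (H) and $\sup_I|h_k|$, independent of $|I|$, obtained by ``subdividing $I$ into finitely many pieces''. This does not work as stated: the first-order term is critical (as you note), so absorbing it by the local smoothing gain requires the coefficient norm to be \emph{small}, not merely finite, and the relevant norm $\|a_1\|_{L^{1,\infty}_{\vece_j}}=\|a_1\|_{L^1_rL^\infty_{t,y}}$ does not shrink under time subdivision (it is $L^\infty$ in $t$, and $\sup_{[t_1,t_2]}|h_k|$ stays of order one on later subintervals). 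A Neumann series on a subinterval away from $t=0$ therefore does not close, and no argument is offered for a genuinely uniform (large-coefficient) first-order estimate in this class.

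The mechanism that resolves this in the paper is precisely the one your parenthetical dismisses. The restart at time $\sigma$ is not the gauge $u=e^{\imu\Phi(t,x)}w$ (which would indeed require $\partial_t\Phi$); it is multiplication by the \emph{time-independent} spatial factor, $u_\sigma(t):=e^{\Phi(\sigma)}u(\sigma+t)$ with $\Phi=\imu\sum_k\phi_k h_k$, which produces no time derivative of $h_k$ and converts the coefficients into increment form built from $h_k(\sigma+t)-h_k(\sigma)$, hence small near each restart time by continuity (this is Proposition~\ref{prop:RefinedRescaling} with $\beta_k$ replaced by $h_k$). Making this compatible with the solution space is exactly what the product estimate of Lemma~\ref{lem:ProductNoiseInX} (boundedness of $e^{\pm\Phi(\sigma)}$ on $\XOne$) and the gluing/decomposability results (Proposition~\ref{prop:GluingSolutions}, Lemma~\ref{lem:DecompX}) provide; the two-stopping-times device of Remark~\ref{Rem-twostopping} is needed so that glued solutions remain in $\XOne$. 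Without either this restart gauge or a proof of your uniform large-coefficient estimate, your continuation to $\tau^*$ and hence your blow-up alternative (whose contradiction argument, as in Subsection~\ref{Subsec-Blowup}, must show that the quantities defining the local existence times stay bounded away from the stopping thresholds near $\tau^*$) do not go through. The rest of your outline (treating $\Re(v_L)u$ perturbatively on intervals where it is small, rather than via the propagator $\cI_{v_L}$, and the compatibility check of the bilinear estimates with the local-smoothing-augmented spaces) is a harmless repackaging of the paper's local argument.
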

As in Theorem~\ref{Thm-LWP}, uniqueness holds in a suitable subspace of $C([0,\tau^*), H^1(\R^4) \times L^2(\R^4))$.

Moreover, 
based on a {\it new uniform estimate}, 
we derive 
the global well-posedness below the ground state
for the energy-critical stochastic Zakharov system.
More precisely,
let $e_Z$ denote the Zakharov energy 
\begin{align} \label{energy-Zakh}
   e_Z(u,v) := 
   \int\limits_{\bbr^4} 
   \bigg(\frac 12 |\na u|^2
               + \frac 14 |v|^2 + \frac 12 \Re(v) |u|^2 
    \bigg) dx.
\end{align} 
The ground state for the Zakharov system 
is  $(W, -W^2)$ 
where $W$ is the Aubin-Talenti function  
given by~\eqref{AT-funct}.

\begin{theorem} [GWP below the ground state]  \label{Thm-GWP-Ground}
Assume (H). Let $(X_0, Y_0) \in H^1(\R^4) \times L^2(\R^4)$ 
be deterministic initial data 
satisfying 
\begin{align*}
	e_Z(X_0, Y_0) < e_Z(W, -W^2), \qquad \|Y_0\|_{L^2_x} \leq \|W^2\|_{L^2_x}.
\end{align*}
Let $(X,Y)$ be the corresponding unique solution of \eqref{eq:StoZak} on $[0,\tau^*)$ from Theorem~\ref{Thm-LWP},
where $\tau^*$ is the maximal existence time.
Define the $\{\mathscr{F}_t\}$-stopping times
\begin{align} \label{sigma*-def}
   \sigma_n^* := \inf\left\{t \in [0,\tau^*): e_Z\Big(e^{-W_1(t)} X(t),Y(t) + \imu \int_0^t e^{\imu (t-s) |\nabla|} \dd W_2(s)\Big) \geq e_Z(W, - W^2) - \frac{1}{n} \right\}
\end{align}
for all $n \in \N$ and the stopping time $\sigma^*$ as the pointwise limit of the monotonically increasing sequence $(\sigma_n^*)$.

We then have  $\tau^*\geq \sigma^*$, $\bbp$-a.s., i.e.
$(X,Y)$ exists at least up to the stopping time $\sigma^*$.
\end{theorem}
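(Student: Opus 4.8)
\emph{Reduction via the rescaling transformation.} The plan is to first remove the noise from both components. Given the solution $(X,Y)$ of~\eqref{eq:StoZak} on $[0,\tau^*)$ from Theorem~\ref{Thm-LWP}, I would set
\[
u(t) := e^{-W_1(t)}X(t), \qquad v(t) := Y(t)+\imu\int_0^t e^{\imu(t-s)|\na|}\dd W_2(s).
\]
Since the $\phi^{(1)}_k$ are real-valued, $W_1(t)$ is purely imaginary, so $|e^{-W_1(t)}|\equiv1$ and $\|u(t)\|_{L^2}=\|X(t)\|_{L^2}$; under Hypothesis~(H), $\PP$-a.s.\ the paths $t\mapsto W_1(t)$ are continuous with $\na W_1(t)\in L^\infty$ locally bounded in $t$, so $e^{\pm W_1(t)}$ is an isomorphism of $H^1$ and of $W^{\frac12,4}$ with operator norm locally bounded in $t$, and $t\mapsto\imu\int_0^t e^{\imu(t-s)|\na|}\dd W_2(s)$ is continuous in $L^2$. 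By It\^o's formula (the Stratonovich structure makes the $\dd W_1$ and $\mu\,\dd t$ contributions cancel), $(u,v)$ then solves pathwise the perturbed Zakharov system~\eqref{eq:StoZak-low} with $\{\phi_k\}=\{\phi^{(1)}_k\}$, $h_k=\beta^{(1)}_k$, up to one extra zeroth-order term $-\Re\!\big(\imu\int_0^\cdot e^{\imu(\cdot-s)|\na|}\dd W_2(s)\big)u$ of the same low-regularity-in-time, $H^2$-in-space type as $a_0$ in~\eqref{a0-loworder}. In particular $(u,v)\in C([0,\tau^*),H^1\times L^2)$ and $\sigma_n^*$ is precisely the first time in $[0,\tau^*)$ at which $e_Z(u(t),v(t))$ reaches $e_Z(W,-W^2)-\frac1n$ (set to $+\infty$ if never reached). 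Since $e^{\pm W_1}$ and the wave stochastic convolution are $\PP$-a.s.\ bounded on compact time intervals, it suffices to prove $\tau^*\ge\sigma_n^*$ $\PP$-a.s.\ for each $n$, i.e.\ that the event
\[
A_n := \{\tau^*<\infty\}\cap\big\{\,e_Z(u(t),v(t))<e_Z(W,-W^2)-\tfrac1n\ \text{ for all }t\in[0,\tau^*)\,\big\}
\]
is $\PP$-null; I would work pathwise on the full-measure set where the Brownian paths are continuous and derive a contradiction with the blow-up alternative of Theorem~\ref{Thm-LWP}.

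\emph{Step 1: the variational a priori bound, excluding alternative~(i).} On $A_n$ one has $e_Z(u(t),v(t))<e_Z(W,-W^2)-\frac1n=\frac14\|W^2\|_{L^2}^2-\frac1n$ for all $t<\tau^*$, where $e_Z(W,-W^2)=\frac14\|W^2\|_{L^2}^2$ by $-\Delta W=W^3$. The sharp Sobolev inequality $\|f\|_{L^4}^2\le\|W^2\|_{L^2}^{-1}\|\na f\|_{L^2}^2$, with equality at $f=W$, gives the coercivity bound
\[
e_Z(u,v)\ \ge\ \tfrac12\|\na u\|_{L^2}^2\big(1-\|W^2\|_{L^2}^{-1}\|v\|_{L^2}\big)+\tfrac14\|v\|_{L^2}^2 .
\]
From the hypotheses $e_Z(X_0,Y_0)<e_Z(W,-W^2)$ and $\|Y_0\|_{L^2}\le\|W^2\|_{L^2}$, this bound at $t=0$ forces the \emph{strict} inequality $\|v(0)\|_{L^2}=\|Y_0\|_{L^2}<\|W^2\|_{L^2}$. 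A continuity/first-crossing argument then keeps $\|v(t)\|_{L^2}<\|W^2\|_{L^2}$ on $[0,\tau^*)$: at a first crossing time $t_0$ the coercivity bound would give $e_Z(u(t_0),v(t_0))\ge\frac14\|W^2\|_{L^2}^2=e_Z(W,-W^2)$, contradicting the strict sub-threshold bound. Reinserting $\|v(t)\|_{L^2}<\|W^2\|_{L^2}$ yields the uniform estimates $\|\na u(t)\|_{L^2}^2<\|W^2\|_{L^2}^2$ and $\|v(t)\|_{L^2}^2<\|W^2\|_{L^2}^2-\frac4n$ on $[0,\tau^*)$; combined with mass conservation $\|u(t)\|_{L^2}=\|X_0\|_{L^2}$, this bounds $\sup_{t<\tau^*}\|u(t)\|_{H^1}$. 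Transferring back through $X=e^{W_1}u$ and $Y=v-\imu\int_0^\cdot e^{\imu(\cdot-s)|\na|}\dd W_2(s)$, and using that $\|e^{W_1(t)}\|_{H^1\to H^1}$ and the wave stochastic convolution are $\PP$-a.s.\ bounded on the compact interval $[0,\tau^*]$ (recall $\tau^*<\infty$ on $A_n$), I get $\sup_{t<\tau^*}(\|X(t)\|_{H^1}+\|Y(t)\|_{L^2})<\infty$, which rules out alternative~(i).

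\emph{Step 2: excluding the Strichartz blow-up, alternative~(ii).} It remains to rule out $\|X\|_{L^2_tW^{\frac12,4}_x([0,\tau^*)\times\R^4)}=\infty$, which by the mapping properties of $e^{W_1}$ reduces to $\|u\|_{L^2_tW^{\frac12,4}_x([0,\tau^*)\times\R^4)}=\infty$. On $A_n$, $(u,v)$ is, along the fixed sample path, a solution on the finite interval $[0,\tau^*)$ of the deterministic perturbed Zakharov system~\eqref{eq:StoZak-low} (plus the harmless $\Re(\cdot)u$ term) whose coefficients $a_1(t),a_0(t)$ are bounded on $[0,\tau^*]$ in the local-smoothing and $H^2$-type norms provided by Hypothesis~(H) and the continuity of $t\mapsto\beta^{(1)}_k(t)$, and whose energy $e_Z(u(t),v(t))$ is uniformly below the ground-state threshold by Step~1. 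Here I would run the critical global theory for the Zakharov system inside the new functional framework of this paper: the Fourier-restriction spaces of~\cite{CHN21,CHN23} together with local-smoothing norms, and the uniform double endpoint Strichartz and local smoothing estimate for the perturbed Schr\"odinger operator $\imu\p_t+\Delta+a_1\cdot\na+a_0$. Concretely, if $\|u\|_{L^2_tW^{\frac12,4}_x([0,\tau^*))}=\infty$ with $\tau^*<\infty$ and $\|u(t)\|_{H^1}$ bounded, a profile decomposition adapted to the perturbed flow — in which the lower-order perturbations become asymptotically negligible under the critical parabolic rescaling, uniformly thanks to the new estimates — would extract a minimal non-scattering solution of the \emph{unperturbed} energy-critical Zakharov system with energy $\le e_Z(W,-W^2)$ and wave mass $\le\|W^2\|_{L^2}$, contradicting the sub-threshold rigidity of~\cite{GN21,CHN21}. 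Hence $\|u\|_{L^2_tW^{\frac12,4}_x([0,\tau^*))}<\infty$, alternative~(ii) is excluded, and the blow-up alternative of Theorem~\ref{Thm-LWP} forces $\tau^*=\infty$, contradicting $\tau^*<\infty$. Thus $\PP(A_n)=0$ for all $n$, so $\tau^*\ge\sigma_n^*$ $\PP$-a.s., and letting $n\to\infty$ gives $\tau^*\ge\sigma^*$ $\PP$-a.s.

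\emph{Main obstacle.} Step~1 is essentially classical once one observes that $\sigma_n^*$ is defined precisely so that the rescaled energy stays (strictly) sub-threshold on $[0,\tau^*)$, so that no energy-conservation law for the non-conservative stochastic system is needed. The hard part is Step~2: controlling the derivative perturbation $a_1\cdot\na u$ at the critical regularity. Unlike for critical stochastic NLS, a perturbative/stability argument around the free Schr\"odinger flow is not available; one genuinely needs the \emph{uniform} double endpoint Strichartz and local smoothing estimates for the perturbed Schr\"odinger operator, and the rescaling has to be set up so that these perturbations remain uniformly controlled on the rescaled system — which is exactly where the $\ell^1$-type local-smoothing condition on $\na\phi^{(1)}_k$ in Hypothesis~(H) is used.
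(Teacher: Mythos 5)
Your reduction via the rescaling transform and your Step 1 do coincide with the paper's argument: on the event $\{\sigma^* > \tau^*\}$ one works pathwise with $(u,v) = (e^{-W_1}X,\, Y - \cT_t(W_2))$, uses the variational constraints below the ground state (Lemma~\ref{lem:VariationalConstraints}, i.e.\ \cite[Lemma~7.3]{CHN21}) together with a continuity argument and mass conservation to get a uniform $H^1\times L^2$ bound on $[0,\tau^*)$, which excludes alternative (i) of the blow-up alternative and reduces everything to showing $\|u\|_{L^2_tW^{\frac12,4}_x([0,\tau^*)\times\R^4)}<\infty$.

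The gap is in your Step 2. First, the contradiction you invoke is not available: for general (non-radial) data, scattering below the ground state is open — the paper says so explicitly, and \cite{Ca24} only asserts that \emph{if} scattering fails there exists a minimal non-scattering solution, so "extracting a minimal non-scattering solution with energy $\le e_Z(W,-W^2)$" contradicts nothing (only the radial dichotomy of \cite{GN21} would serve, and the theorem is for general data). What has to be excluded is finite-time blow-up of the endpoint norm on a bounded interval with bounded energy, and this needs a quantitative bound rather than a rigidity theorem. Second, and more fundamentally, the step where "the lower-order perturbations become asymptotically negligible" in a profile decomposition adapted to the perturbed flow is exactly the step the authors state they do \emph{not} know how to carry out: extending the concentration-compactness proof of the uniform estimate of \cite{CHN21} to lateral Strichartz spaces so as to control $b\cdot\nabla u$ is flagged as unclear, and the paper instead proves the uniform bound of Proposition~\ref{prop:UniformEstimateBelowGroundState} by the algebraic identity $\cI_{v_L} = [I + \cI_{v_L}\Re(v_L)]\cI_0$, which reduces the derivative term to inhomogeneous local-smoothing estimates carrying an $|I|^{\frac12}$ factor, with no compactness argument at all. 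The actual exclusion of alternative (ii) is then a bootstrap on intervals of a length $\sigma$ that is independent of the restart time $\tau_n$, run at the $H^{\frac12}$-endpoint level, whose engine is the regularity-gap estimate $\|e^{\imu t\Delta}u_0\|_{L^2_tW^{\frac12,4}_x(I\times\R^4)}\lesssim |I|^{\frac14}\|u_0\|_{H^1}$ combined with the endpoint bilinear estimates \eqref{eq:BilinvuEndpoint}--\eqref{eq:BilinnablauwEndpoint}, the uniform estimate, and the continuity of $t\mapsto\|u\|_{\SHalf([\tau_n,t))}$; your sketch contains neither this mechanism nor any substitute for it, so the decisive step of the proof is asserted rather than proved.
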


In the deterministic case, 
one indeed has 
$\sigma^*=\infty$ due to the energy conservation law. 
But the presence of noise destroys the energy conservation. 
Moreover, 
the noise has large fluctuations at infinity, 
which may even push the energy to exceed the ground state energy. 
Thus, 
it may happen that $\sigma^*<\infty$ 
with positive probability. 

However, 
the next result shows that, 
driven by a suitable  {non-conservative noise}, 
stochastic solutions to the Zakharov system 
exist globally and scatter at infinity 
with high probability, 
even for general data above the ground state energy.

\begin{theorem}[Noise regularization effects on 
blowup and scattering]
	\label{thm:RegNoise}
	Consider the stochastic Zakharov system \eqref{eq:StoZak}
with  a one-dimensional Brownian motion $W_1$
with non-zero imaginary part as the driving noise,
i.e., $\phi_1^{(1)}$ is a constant with $\Im \phi_1^{(1)} \not = 0$,
$\phi_k^{(1)} = 0$ for $2\leq k<\infty$,
and $W_2 \equiv 0$.
Then, for any deterministic initial data
$(X_0, Y_0) \in H^1(\R^4) \times L^2(\R^4)$,
we have
\begin{align}  
\bbp ((X(t), Y(t)) \text{ scatters as } t \rightarrow \infty) \longrightarrow 1,\ \ \text{as} \ \Im \phi_1^{(1)}\to \infty,
\end{align}
where $(X,Y)$ denotes the solution of~\eqref{eq:StoZak}, 
and ``scatters'' means that there exists $(z_+,v_+)\in H^1(\R^4) \times L^2(\R^4)$ such that 
\begin{align}  \label{scatter-XY} 
   \lim_{t \rightarrow \infty} \|e^{-\imu t \Delta} e^{\wh \mu t - W_1(t)} X(t) - z_+\|_{H^1} = 0\ \  \text{and} \ \ \lim_{t \rightarrow \infty} \|e^{-\imu t |\nabla|} Y(t) - v_+\|_{L^2} = 0 
\end{align}
with
\begin{align} \label{def-whmu}
	\wh \mu := \frac{1}{2} (|\phi_1^{(1)}|^2 - (\phi_1^{(1)})^2).
\end{align}
\end{theorem}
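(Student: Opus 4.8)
The plan is to remove the noise by a pathwise rescaling, thereby reducing \eqref{eq:StoZak} in this special case to a \emph{deterministic} Zakharov system in which the only trace of the randomness is a positive scalar damping factor $\rho(t)$ multiplying the wave nonlinearity. I will then establish a purely deterministic criterion: this damped system is globally well posed and scattering whenever $\rho$ is bounded and small in $L^1_t$. Finally, Doob's maximal inequality together with Brownian scaling shows that this smallness holds on an event of probability tending to one as $\Im\phi^{(1)}_1\to\infty$, which gives the assertion.

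\textbf{Step 1 (rescaling and reduction).} Write $\beta:=\beta^{(1)}_1$ and $b:=\Im\phi^{(1)}_1\neq0$, so that $W_1(t)=\imu\phi^{(1)}_1\beta(t)$, $\mu=\tfrac12|\phi^{(1)}_1|^2$ and $\wh\mu=\tfrac12(|\phi^{(1)}_1|^2-(\phi^{(1)}_1)^2)$, and set $Z(t):=e^{\wh\mu t-W_1(t)}X(t)$. Since $\tfrac12\,\dd\langle W_1\rangle_t=-\tfrac12(\phi^{(1)}_1)^2\,\dd t$ and $\wh\mu+\tfrac12(\phi^{(1)}_1)^2=\mu$, an application of It\^o's formula (exactly as in the construction of the solution in the proof of Theorem~\ref{Thm-LWP}, which here simplifies: $\phi^{(1)}_1$ being constant, no derivative perturbations arise and the wave equation merely acquires a scalar factor) shows that the stochastic integral and the It\^o correction cancel, and that $(Z,Y)$ solves, $\bbp$-a.s., the deterministic system
\begin{equation}\label{eq:Zdamped}
	\left\{\aligned
	\imu\partial_t Z+\Delta Z&=\Re(Y)Z,\\
	\frac1\alpha\imu\partial_t Y+|\nabla|Y&=-\rho(t)\,|\nabla|\,|Z|^2,
	\endaligned\right.
	\qquad (Z(0),Y(0))=(X_0,Y_0),
\end{equation}
where, using $|e^{W_1(t)-\wh\mu t}|^2=e^{2\Re(W_1(t)-\wh\mu t)}$,
\[
	\rho(t):=|e^{W_1(t)-\wh\mu t}|^2=e^{-2b\beta(t)-2b^2 t}.
\]
Thus $\rho>0$ is exactly the exponential martingale generated by $-2b\beta$; in particular $\rho(0)=1$ and $\bbe\rho(t)=1$ for all $t\geq0$. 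Since $e^{W_1-\wh\mu t}$ and its inverse are continuous in $t$ and, on each compact time interval, bounded multipliers on $H^1$ and on $L^2_tW^{\frac12,4}_x$, the pair $(Z,Y)$ exists on the same maximal interval $[0,\tau^*)$ as $(X,Y)$, with the corresponding blow-up alternative. Moreover, because $Z=e^{\wh\mu t-W_1(t)}X$, the assertion \eqref{scatter-XY} is \emph{equivalent} to: $\tau^*=\infty$, $\|e^{-\imu t\Delta}Z(t)-z_+\|_{H^1}\to0$ for some $z_+$, and $\|e^{-\imu t|\nabla|}Y(t)-v_+\|_{L^2}\to0$ for some $v_+$; that is, to global existence together with scattering of $(Z,Y)$ to the free Schr\"odinger, resp.\ free wave, flow.

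\textbf{Step 2 (deterministic scattering criterion).} I claim there is $\ep_0=\ep_0(M,\|X_0\|_{H^1},\|Y_0\|_{L^2})>0$ such that: if $\rho\in C([0,\infty);(0,\infty))$ satisfies $\|\rho\|_{L^\infty_t}\leq M$ and $\|\rho\|_{L^1_t([0,\infty))}\leq\ep_0$, then the solution of \eqref{eq:Zdamped} is global and both components scatter. This is proved perturbatively off the linear Schr\"odinger equation with a free-wave potential. By Duhamel,
\[
	Y(t)=e^{\imu\alpha t|\nabla|}Y_0+\imu\alpha\int_0^t e^{\imu\alpha(t-s)|\nabla|}\,\rho(s)\,|\nabla|\,|Z(s)|^2\,\dd s,
\]
so $\Re(Y)$ equals the free wave $\Re(e^{\imu\alpha t|\nabla|}Y_0)$ plus a term which, in the functional framework of this paper, is controlled by $\|\rho\|_{L^1_t}$ times a power of the global scattering norm of $Z$ (and, on a fixed short initial interval where $\rho$ need not be small, by quantities depending on $M$ and the data, needed only to start the bootstrap). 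Inserting this into the $Z$-equation and invoking the uniform double endpoint Strichartz and local smoothing inequality for the Schr\"odinger equation with rough, time-dependent lower order perturbations — applied to the (fixed, but possibly large) free-wave potential $\Re(e^{\imu\alpha t|\nabla|}Y_0)$, for which the uniform estimates and adapted spaces of \cite{CHN21,CHN23} are available with no ground-state restriction — a continuity/bootstrap argument closes once $\ep_0$ is chosen small depending on $M$ and the data, yielding finite global scattering norms for $Z$ and $L^2$-convergence of the Duhamel term for $Y$. Hence $Z$ scatters in $H^1$ and $Y$ scatters in $L^2$; note that no ground-state condition enters here because only the genuinely small $\rho$-weighted self-interaction is treated perturbatively, the potential being handled by the linear theory.

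\textbf{Step 3 (good event and conclusion).} For $M>1$ and $\ep>0$ set $\Omega_{M,\ep}:=\{\sup_{t\geq0}\rho(t)\leq M\}\cap\{\|\rho\|_{L^1_t([0,\infty))}\leq\ep\}$. Since $\rho$ is a nonnegative martingale with $\bbe\rho(t)=1$, Doob's maximal inequality gives $\bbp(\sup_{t\geq0}\rho(t)>M)\leq1/M$, \emph{uniformly} in $b$. For the $L^1$-part, the substitution $u=2b^2t$ and Brownian scaling give
\[
	\|\rho\|_{L^1_t([0,\infty))}=\int_0^\infty e^{-2b\beta(t)-2b^2 t}\,\dd t\;\stackrel{d}{=}\;\frac1{2b^2}\int_0^\infty e^{-\sqrt2\,\wt\beta(u)-u}\,\dd u,
\]
with $\wt\beta$ a standard Brownian motion; the integral on the right is $\bbp$-a.s.\ finite with a law independent of $b$, hence $\bbp(\|\rho\|_{L^1_t}>\ep)\to0$ as $|b|\to\infty$ for each fixed $\ep>0$. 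Let $\ep_0(\cdot)=\ep_0(\cdot,\|X_0\|_{H^1},\|Y_0\|_{L^2})$ be as in Step 2, choose $M_b\to\infty$ slowly enough that $b^2\ep_0(M_b)\to\infty$ (possible for any positive $\ep_0(\cdot)$), and put $\ep_b:=\ep_0(M_b)$. Then
\[
	\bbp(\Omega_{M_b,\ep_b})\geq1-\frac1{M_b}-\bbp(\|\rho\|_{L^1_t}>\ep_b)\longrightarrow1\qquad\text{as }\Im\phi^{(1)}_1\to\infty.
\]
On $\Omega_{M_b,\ep_b}$, Step 2 applies pathwise to \eqref{eq:Zdamped}, so $(Z,Y)$ is global and scatters, and by the equivalence in Step 1 the solution $(X,Y)$ of \eqref{eq:StoZak} scatters in the sense of \eqref{scatter-XY}; therefore $\bbp((X(t),Y(t))\text{ scatters as }t\to\infty)\geq\bbp(\Omega_{M_b,\ep_b})\to1$. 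The steps involving the rescaling and the probabilistic estimates are routine once the framework is in place; the main obstacle is Step 2, i.e.\ the deterministic global-existence-and-scattering criterion for the damped Zakharov system, whose proof genuinely requires the new functional framework — the combined Fourier restriction and local smoothing norms, and the uniform double endpoint Strichartz and local smoothing inequality for Schr\"odinger equations with the rough, time-dependent potential $\Re(Y)$ — together with the fact that the free-wave potential $\Re(e^{\imu\alpha t|\nabla|}Y_0)$ can be accommodated for arbitrarily large $\|Y_0\|_{L^2}$.
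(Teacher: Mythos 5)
Your Step 1 coincides with the paper's reduction (the transform $Z=e^{\wh\mu t-W_1(t)}X$ produces exactly the system~\eqref{eq:RanZakNoncons} with the geometric Brownian motion $\rho=h_{\Im\phi_1^{(1)}}$ in the wave nonlinearity), and your Step 3 is essentially sound: Doob's inequality for the nonnegative martingale $\rho$ and the Brownian scaling identity for $\|\rho\|_{L^1_t}$ are correct and parallel to the scaling argument the paper uses. The genuine gap is Step 2. Your deterministic criterion assumes only $\|\rho\|_{L^\infty_t}\leq M$ and $\|\rho\|_{L^1_t([0,\infty))}\leq\ep_0$ and asserts that "a continuity/bootstrap argument closes"; but to close the Schr\"odinger estimate at energy regularity one must bound the wave component $\cJ_0[\rho|\nabla||Z|^2]$ in the full $\Y=W^{0,\frac14,\frac12}$ norm (this is what the bilinear estimate~\eqref{eq:Bilinvu} consumes), and the component $\lambda^{-\frac14}\|(\lambda+|\partial_t|)^{\frac14}\TempN v\|_{L^\infty_tL^2_x}$ cannot be controlled by integrability of $\rho$ alone. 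In the nonresonant regime the product $C_{\ll\mu^2}\overline{\varphi}_\mu\,\psi_{\ll\mu}$ carries temporal frequency $\sim\mu^2$, so its low-temporal-frequency output is produced only when $\rho$ itself has temporal frequency $\sim\mu^2$ (the nonresonance identity~\eqref{eq:NonresonanceId}); absorbing the resulting $\lambda^{\frac14}$ loss requires a quantitative bound on $\|P^{(t)}_{\sim\mu^2}\rho\|_{L^6_t}$, i.e.\ fractional \emph{temporal regularity} of $\rho$, which $L^1\cap L^\infty$ smallness does not provide. This is exactly why the paper proves the trilinear estimate of Theorem~\ref{thm:TrilinearEstWave} with the norm $\|h\|_{L^6_t}+\|h\|_{B^{1/8}_{6,\infty}}$ and then supplies the missing probabilistic input through the global-in-time $V^p$ control of the geometric Brownian motion (Proposition~\ref{prop:GeometricBrownianMotionVp}) together with the embedding $V^p_0\subseteq\dot B^{1/p}_{p,\infty}$.

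To repair your argument you would have to strengthen the hypothesis of your deterministic criterion to include a global temporal-regularity norm of $\rho$ (for instance $\|\rho\|_{B^{1/8}_{6,\infty}([0,\infty))}$ or a $V^p$-type bound) and then show in Step 3 that this norm is small with probability tending to one as $\Im\phi_1^{(1)}\to\infty$. The latter is not routine: Brownian motion has infinite $p$-variation on $[0,\infty)$ for $p\leq 2$, and even for $p>2$ the global bound for the geometric Brownian motion requires balancing the exponential decay against the growth of the H\"older norms of $\beta$ on unit intervals, which is precisely the content of Proposition~\ref{prop:GeometricBrownianMotionVp}. As written, your proposal omits both of these ingredients, so the proof does not go through.
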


\begin{remark}  
$(i)$ For the deterministic 4D Zakharov system, 
finite time type II blow-up solutions exist
if the energy is slightly above the threshold energy \cite{KS24a,KS24b}. 

The noise regularization effect in 
Theorem \ref{thm:RegNoise} shows that, with high probability,  
the non-conservative noise destroys the  dichotomy and finite time blow-up dynamics \cite{GN21,CHN21} in that
the corresponding stochastic solutions 
exist globally and scatter at infinity 
for general data, even above the 
threshold energy.  

$(ii)$ 
It is worth noting that, 
via It\^o's calculus, 
the mass of the Schr\"odinger component of solutions satisfies 
the evolution formula 
\begin{align*}
    \|X(t)\|_{L^2_x}^2 
    = \|X_0\|_{L^2_x}^2  
      - 2 \sum\limits_{k=1}^\infty \int_0^t \int |X(s)|^2 \text{Im} \phi_k^{(1)}  \dd \beta_k^{(1)}(s).   
\end{align*} 
Hence, 
in the case where $\{\phi_k^{(1)}\}$ are real valued as in Theorems \ref{thm:LocalWP} 
and \ref{Thm-GWP-Ground}, the mass of the Schr{\"o}dinger component of
solutions is conserved pathwisely. 
But in the non-conservative case  where $\text{Im}\phi_k^{(1)} \not =0$ 
as in Theorem \ref{thm:RegNoise}, 
the mass is a continuous positive martingale and hence conserved under expectation
rather than in the pathwise sense. 

The key observation in the proof of Theorem~\ref{thm:RegNoise} is that after a rescaling transform  
the non-conservative noise 
gives rise to a geometric Brownian motion 
in the wave nonlinearity, 
see \eqref{eq:RanZakNoncons}. 
Intuitively, 
the geometric Brownian motion decays exponentially fast at large time, 
which weakens the wave nonlinearity 
and the resulting solutions scatter at infinity.  A rigorous analysis involves an intricate trilinear estimate 
and a novel global-in-time $V^p$ type control of 
the geometric Brownian motion, see Subsection \ref{Subsec-Novelties} for more details.  
\end{remark}

\subsection{Background  and motivation}

\subsubsection{Deterministic Zakharov system} 

The coupling between Schr{\"o}dinger and wave equation in the Zakharov system leads to a rich local and global regularity theory, which has attracted a lot of interest over the years. Several key results in the theory have been established only recently. We refer to~\cite{BC96,GTV97,Sa22} and the references therein for dimensions less than $4$ and to~\cite{CHN23} and the references therein for higher dimensions and concentrate on discussing the $4D$ energy-critical case in the following.

In dimension four, 
the Zakharov system  is energy critical in the sense that the focusing NLS \eqref{equa-NLS}, which is the subsonic limit, is scale invariant in $\dot{H}^1(\R^4)$, which is the  energy-regularity of~\eqref{energy-Zakh}.  
The kinetic and potential energy have the same scaling,
and the sign-indefinite term of the energy
is controlled in the energy space by the critical Sobolev embedding
$\dot{H}^1(\bbr^4) \subset L^4(\bbr^4)$.

In the seminal work \cite{KM06}, 
Kenig-Merle proved the dynamical dichotomy into
scattering and blowup  for the 
radial case
by developing the concentration-compactness and rigidity method. Dodson extended this to the full energy space in \cite{Do19}.

A similar dichotomy for the 4D Zakharov system in the radial energy space was proved by Guo-Nakanishi \cite{GN21}. 
The key role to characterize the threshold 
of the dichotomy 
is played by the ground state, 
that is, the Aubin-Talenti function $W$
defined in \eqref{AT-funct}. 
It is an extremiser of the energy-critical Sobolev inequality 
\begin{align*}
    \|\varphi\|_{L^4(\mathbb{R}^4)} 
    \leq \frac{\|W\|_{L^4(\mathbb{R}^4)} }{\|\na W\|_{L^2(\mathbb{R}^4)} } 
         \|\na \varphi\|_{L^2(\mathbb{R}^4)}, 
         \ \ \varphi \in \dot{H}^1(\mathbb{R}^4). 
\end{align*}
Moreover, 
the family $W_\lambda(x):= \lambda W(\lambda x)$, $\lambda>0$,   
solves the static cubic NLS  
\begin{align} \label{AT-equa}
   -\Delta W = W^3. 
\end{align}
Correspondingly, 
$(W_\lambda, - W^2_\lambda)$, $\lambda>0$, 
are static non-dispersing solutions to the  Zakharov system \eqref{eq:StoZak}.  
One also has for the Zakharov energy 
\begin{align*} 
    e_Z(W, -W^2) = e_S(W) = 
    \frac 14 \|W^2\|^2_{L^2(\mathbb{R}^4)}, 
\end{align*} 
where $e_S$ denotes the energy of the cubic NLS 
\begin{align*}
    e_S(u) = \int_{\mathbb{R}^4} 
    \bigg( \frac 12 |\na u|^2  - \frac 14 |u|^4\bigg)  dx. 
\end{align*}

For the 4D
energy-critical Zakharov system, 
the difficulty for the global analysis of solutions 
is mainly due to the wave component 
with the low $L^2_x$ regularity. 
Smallness helps to control the Schr\"odinger-wave interaction in view of
\begin{align*}
    \|\Re(v) u \|_{L^{\frac{4}{3}}(\R^4)} 
    \leq \|v\|_{L^2(\R^4)} \|u\|_{L^4(\R^4)}. 
\end{align*} 
For small initial data, the global well-posedness and scattering in the energy space was derived in \cite{BGHN15} 
by an indirect weak compactness argument. 
Global well-posedness and scattering for radial data below the ground state was recently shown in~\cite{GN21}.
For general data below the ground state threshold, 
global well-posedness was proved 
in \cite{CHN21} by using adapted spaces, bilinear Fourier restriction estimates and a profile decomposition argument.
If scattering fails, then the existence of a minimal energy non-scattering solution
was proved in~\cite{Ca24}. However, scattering for general data remains an open problem. 

Above the ground state threshold it is known that there is grow-up from \cite{GN21}. Very recently, 
finite time blow-up solutions to the 4D Zakharov system 
have been constructed by Krieger-Schmid in \cite{KS24a,KS24b}.

\subsubsection{Stochastic NLS} 
As a closely related stochastic dispersive model, 
there have been many results  on stochastic NLS in the subcritical regime. 
It was first proved by de Bouard and Debussche \cite{DD99, DD03} 
that the stochastic NLS is globally well-posed 
in some subcritical regime. 
Afterwards, Millet-Brze\'{z}niak \cite{BM14} obtained global well-posedness of subcritical stochastic NLS on manifolds.  
The key tools there are  stochastic Strichartz estimates. 
Moreover, the existence of martingale solutions 
was developed for stochastic NLS 
in general geometrical manifolds in \cite{BHW19}. 
In \cite{BRZ14, BRZ16},   
based on the rescaling approach, 
global pathwise well-posedness 
was proved for stochastic NLS in the whole subcritical regime in general dimensions. In \cite{HRZ19}, global well-posedness and scattering have been established in energy-subcritical cases.

In the critical defocusing regime,   
global well-posedness was proved for the 1D mass-critical case in \cite{FX21}, where the arguments also can be generalized to the 3D case.  
For mass- and energy-critical cases in general dimensions, 
global well-posedness, scattering and a Stroock-Varadhan type support theorem were obtained in \cite{Zh23}, 
based on a different refined rescaling approach. 
We also refer to \cite{OO20} for the global well-posedness of critical stochastic NLS with additive noise, 
and to \cite{BRZ18, Zh20} for 
the existence of optimal controllers.  

In the critical focusing regime, 
several results have been obtained recently for 
stochastic blow-up and soliton dynamics.  
Based on numerical methods,  
stable stochastic blow-up solutions 
were investigated in \cite{MRRY21,MRY21}.  
Stochastic blow-up solutions with the ground state mass 
or with loglog blow-up rate 
were constructed in \cite{SZ23,FSZ22}.
Concerning multi-bubble blow-up solutions, we refer to the recent papers
\cite{SZ20,RSZ24}. Also, in spite of the breakdown of the pseudo-conformal symmetry in the stochastic case, 
stochastic multi-solitons have been constructed directly in \cite{RSZ23}.

\subsubsection{Stochastic Zakharov system}
In contrast to the above, very few results have been obtained for the stochastic Zakharov system. 
In \cite{Ts22}, 
Tsutsumi first proved the global well-posedness for the stochastic 1D Zakharov system with additive noise. 
Different to the deterministic case, 
the solutions are constructed in $X^{s,b}$ spaces 
with $b<1/2$, due to the temporal $C^{1/2-}$-irregularity of the Wiener process. 
The subsonic limit problem was analyzed in \cite{Ba22,BBD24}
in the 1D setting with additive noise in the wave component.

In the 3D case,  well-posedness in the energy space 
was proved by the authors in~\cite{HRSZ23}.   
Unlike in \cite{Ts22, Ba22,BBD24}, 
the proof there is based on the normal form method 
and the refined rescaling approach. 
The normal form is crucially used to recover the necessary regularity in the Schr\"odinger-wave interaction.  

We point out that in the 4D case one cannot solve the problem in the energy space by the normal form method directly, see~\cite{BGHN15}. 
In~\cite{BGHN15}, this problem was circumvented by a compactness argument based on the energy conservation of the deterministic Zakharov system.
However, the energy is no longer conserved in the stochastic case. 
In this paper we use a direct method based on adapted Fourier restriction spaces and lateral Strichartz spaces to treat the 4D energy space, which avoids the normal form and builds on the approach devised in \cite{CHN21,CHN23} instead.

\subsubsection{Noise regularization effects} 
Noise regularization phenomena have been observed for various stochastic models. 
In the finite dimensional case,  
it is well-known that 
noise can improve well-posedness properties for differential equations with irregular drifts, 
see, e.g., \cite{KR05, Ve80}.   
This kind of regularization effect was also proved for 
infinite dimensional SPDEs 
with non-regular drifts \cite{DFR16}.  
Moreover, 
in \cite{FGP10}, Flandoli-Gubinelli-Priola 
showed that transport type noise improves the uniqueness of transport equations, 
even in the case where deterministic solutions lose 
uniqueness. 
Recently, Flandoli-Luo \cite{FL21} proved that transport noise can prevent blowup with high probability for 3D vorticity stochastic Navier-Stokes equations. 

Regarding stochastic dispersive equations, 
it was investigated numerically 
that multiplicative noise has a regularization effect in the sense that it 
delays blowup \cite{DDi02a, DDi02b}. 
In \cite{BRZ17-1}, 
it was found that norm explosion can be prevented 
for mass-(super)critical stochastic NLS  
by non-conservative noise, 
for which solutions conserve the mass on average 
rather than in the pathwise sense. 
The effect of non-conservative noise 
on scattering for stochastic NLS 
was analyzed in \cite{HRZ19}. 
Very recently, 
the effect of superlinear noise on non-explosion was proved in \cite{BFMZ24} for stochastic NLS with arbitrary power nonlinearity. 
Moreover, a regularization-by-noise effect on preventing blowup on any bounded time interval was proved by the authors for the 3D Zakharov system \cite{HRSZ23}.

\subsection{Novelties of the present work} 
\label{Subsec-Novelties}
The present paper mainly investigates the 4D energy-critical stochastic Zakharov system. 

The novelties of the present paper  
can be summarized as follows 
\begin{enumerate}
  \item[(i)] construction of a new functional framework
  which includes adapted Fourier restriction and lateral Strichartz spaces and is compatible with the refined rescaling transformations; 
  \item[(ii)]  
  uniform Strichartz type estimates for Schr\"odinger equations with free-wave potentials below the ground state 
  and with first order perturbations; 
  \item[(iii)] 
   a noise regularization effect on global scattering dynamics, via a global-in-time $V^p$ control of geometric Brownian motions. 
\end{enumerate}

More detailed explanations are presented in the following  subsections. 

\subsubsection{A new functional framework}  
One of the main difficulties in the analysis of the Zakharov system arises from the regularity of the Schr{\"o}dinger-wave interaction. This is already the case in the deterministic setting and only becomes more challenging in the stochastic one. We thus apply a rescaling transform, which was introduced in~\cite{HRSZ23} for the stochastic Zakharov system by the authors, in order to transform the stochastic Zakharov system to an equivalent system of random PDEs, see Section~\ref{Sec-Rescaling}. The advantage of this approach is that we can treat the resulting system pathwisely. Besides the benefit that our results such as local well-posedness will be in a pathwise sense, we can now use sophisticated analytical tools to address the regularity issue in the Schr{\"o}dinger-wave interaction. The price one has to pay is that the rescaling transform gives rise to random first order perturbation terms, see~\eqref{eq:RanZakbc} below. These first order terms are at the critical regularity level for a perturbative approach and lead to many difficulties not present in the deterministic system, both in the local and the global analysis.

In particular, we have to develop a new functional framework which 
can control, simultaneously, the Schr\"odinger-wave interaction 
and the critical derivative terms caused by the noise. 
The new function space~$\X^s$, which we introduce for the Schrödinger part (see Subsection~\ref{Subsec:FunctFrame}),
consists of two main ingredients: 
the adapted Fourier restriction spaces very recently developed in \cite{CHN21,CHN23}, and lateral Strichartz spaces established in the context of Schr\"odinger maps~\cite{BIKT11}.

Compared to the theory of adapted spaces in \cite{CHN21,CHN23}, 
two new contributions 
are the following: 
\begin{enumerate}
    \item[$\bullet$] Compatibility between adapted spaces and lateral Strichartz spaces, in particular the linear inhomogenoeus estimate in Lemma \ref{lem:LinEstimates}.
  \item[$\bullet$] Compatibility between adapted spaces and the refined rescaling transformations, in particular a product type estimate in Lemma \ref{lem:ProductNoiseInX}.
\end{enumerate}
Let us also mention that the new functional spaces 
admit the {\it decomposability property}, 
that permits to glue together solutions 
in the refined rescaling procedure, 
see Appendix \ref{Sec-Decom}. 
In addition, a new argument involving 
two sequences of stopping times is used 
in the fixed point argument 
in Subsection \ref{Subsec-LWP}.

\subsubsection{Uniform estimates below the ground state} 
In view of the blow-up alternative,  
in order to prove the global existence of solutions, 
it is crucial to derive a global bound for solutions in the critical endpoint space. 

For critical stochastic NLS,  
the global bounds were derived by stability estimates 
with respect to the deterministic NLS, 
together with the refined rescaling approach \cite{Zh23}.
But for the Zakharov system, 
because of the Schr\"odinger-wave interaction 
one has to derive a uniform global bound for solutions of Schr\"odinger equations with free-wave potentials below the ground state as in \cite{GN21,CHN21}. 

For the stochastic Zakharov system,  
one needs to 
derive a uniform estimate   
in the endpoint space $L^2_tW^{\frac 12, 4}_x$ 
for Schr\"odinger equations with both a free-wave potential $v_L$ and extra lower order perturbations 
\begin{align} \label{equa-low}
   \imu \partial_t u + \Delta u  - \Re(v_L) u + b\cdot \na u + c u - \Re(\cT_t(W_2)) u = f, 
\end{align}  
where the coefficients $b$ and $c$ are random and
arise from the noise via the rescaling transformation, 
and $\cT_t(W_2)$ is the stochastic convolution of the wave noise.  
In fact, we do not only apply one rescaling transform, but the extension to the maximal existence time requires the refined rescaling approach, which is a sequence of rescaling transformations, see Proposition~\ref{prop:RefinedRescaling}.
When implementing the refined rescaling approach, restarting at subsequent stopping times
gives rise to different free-wave potentials, 
which forces us to derive uniform estimates 
for the equation~\eqref{equa-low}. In~\cite{CHN21}, uniform estimates for~\eqref{equa-low} without the terms arising from the noise have been established using concentration compactness arguments.
It is not clear to us how to extend this approach to lateral Strichartz spaces in order to control the additional lower order terms, in particular, the derivative 
term $b\cdot \na u$.

Proceeding differently, we present a simplified argument to derive   
\begin{align*}
		\|u\|_{\SHalf(I)}
     \leq C (\|u_0\|_{H^{\frac{1}{2}}_x(\R^4)} +   \|f\|_{\NHalf(I)} +  |I|^{\frac{1}{2}})
	\end{align*} 
for solutions of~\eqref{equa-low}, where the constant $C$ depends only on the mass of the free wave, the energy norm of $u$, and the noise, see Proposition~\ref{prop:UniformEstimateBelowGroundState}. (We refer to Subsection~\ref{Subsec:FunctFrame} for the definition of the adapted spaces $\SHalf(I)$ and $\NHalf(I)$.)
Besides the uniform estimate in \cite{CHN21}, 
one key ingredient of the proof is 
a double expansion of Duhamel operators, 
i.e., 
	\begin{equation*}
		\cI_{v_L} = [I + \cI_{v_L} \Re(v_L)] \cI_0,
	\end{equation*} 
where $\cI_{v_L}$ denotes the Duhamel operator 
of the Schr{\"o}dinger equation with free-wave potential $v_L$.   
See more details in the proof of Proposition \ref{prop:UniformEstimateBelowGroundState}.

\subsubsection{Noise regularization effects on blowup and scattering} 

We note that the non-conservative noise 
is structurally different from the conservative case studied in the previous subsections. 

More precisely, we use a different rescaling transform than before. We set
\begin{align}
\label{eq:RescalingTransformNoncons}
    z := e^{\wh \mu t - W_1(t)} X,  \qquad	 v := Y
\end{align}
with $\wh \mu$ given by \eqref{def-whmu}.  
Note that
in the non-conservative case considered in Theorem~\ref{thm:RegNoise}, 
\begin{align}  \label{wtmu-Imphi2}
	 \Re \wh \mu
	 =  (\Im \phi_1^{(1)})^2 >0,
\end{align}
while $\Re \wh \mu = 0$ in the conservative case as in Theorem~\ref{thm:LocalWP} and Theorem~\ref{Thm-GWP-Ground}. 
The rescaling transform~\eqref{eq:RescalingTransformNoncons} converts the stochastic Zakharov system~\eqref{eq:StoZak} into the equivalent random system
\begin{equation}   \label{eq:RanZakNoncons}
	\left\{\aligned
	  \imu \partial_t z + \Delta z &= \Re(v)z,  \\
	 \imu \partial_t v + |\na |v  &= - h_{\Im\phi_1^{(1)}}|\na||z|^2, \\
    (u(0), v(0)) &= (X_0, Y_0),
	\endaligned
	\right.
\end{equation}
with $(X_0, Y_0)\in H^1(\R^4) \times L^2(\R^4)$ being deterministic initial data and $h_{\Im\phi_1^{(1)}}$ the geometric Brownian motion
\begin{align}  \label{h-W1-def}
	h_{\Im \phi_1^{(1)}}(t):= e^{2 \Re (W_1(t)- \wh \mu t)}
    = e^{-2 \Im \phi_1^{(1)} \beta^{(1)}_1(t) - 2 (\Im \phi_1^{(1)})^2 t}.
\end{align}
The crucial observation here is that because of the law of the iterated logarithm
\begin{align} \label{log-BM}
   \limsup\limits_{t\to \infty} \frac{\beta^{(1)}_1(t)}{\sqrt{2t \log\log t}} =1, \ \
    \liminf\limits_{t\to \infty} \frac{\beta^{(1)}_1(t)}{\sqrt{2t \log\log t}} = -1, \ \ \bbp\text{-a.s.},
\end{align}
the geometric Brownian motion $h_{\Im \phi_1^{(1)}}$ decays exponentially fast at infinity. Heuristically, one thus expects that the geometric Brownian motion weakens the nonlinearity in the wave equation and hence stabilizes the system.
The key step in order to exploit this exponential decay is the derivation of a suitable trilinear estimate for the wave nonlinearity in~\eqref{eq:RanZakNoncons}, which is the content of  Theorem~\ref{thm:TrilinearEstWave} below.
It should be mentioned that, 
although the geometric Brownian motion is independent of the spatial variable, there are new effects in the trilinear interactions which are not present in the deterministic setting \cite{CHN23}. 

We overcome this difficulty by uncovering the subtle {\it non-resonance identity} \eqref{eq:NonresonanceId}, which allows to transfer spatial regularity to temporal regularity of the geometric Brownian motion,   
and therefore requires a~\emph{global bound on some fractional derivative} of the geometric Brownian motion.   
This is achieved
by proving a new {\it global-in-time $V^p$ control} of the geometric Brownian motion  
\begin{align*}
     h \in V^p_0,\ \ \bbp\text{-a.s.},\ \ \text{for}\ \text{every}\ p>2,
\end{align*}
where $V^p_0$ is the space of functions of bounded $p$-variation \cite{Wi24}, 
but over $[0,\infty)$. 

We stress that for the global dynamics
it is crucial to work on unbounded time intervals, where Brownian motion has infinite $V^p$ variation. 
The idea to control the geometric Brownian motion 
globally in $V^p$ is to 
exploit its exponential decay, which has to be balanced carefully  
with the pathwise growth of the 
H\"older-norm 
$\|\beta(\cdot, \omega)\|_{C^{1/p}(n,n+1)}$ 
 of Brownian motions 
(see Proposition \ref{prop:GeometricBrownianMotionVp}).

\subsection{Notation} \label{sec:Notation}
Take an even function $\eta_0 \in C_c^\infty(\R)$ such that $0 \leq \eta_0 \leq 1$, $\eta_0(r) = 1$ for $|r| \leq \frac{5}{4}$, and $\eta_0(r) = 0$ for $|r| \geq \frac{8}{5}$. For every dyadic number $\lambda \in 2^\Z$ we set
\begin{align*}
	\chi_\lambda(r) = \eta_0(r/\lambda) - \eta_0(2r/\lambda), \qquad \chi_{\leq \lambda}(r) = \eta_0(r/\lambda)
\end{align*}
for all $r \in \R$. We define the standard Littlewood-Paley projectors as the spatial Fourier multipliers
\begin{align*}
    P_\lambda = \chi_\lambda(|\nabla|) \quad \text{if } \lambda > 1, \qquad P_1 = \chi_{\leq 1}(|\nabla|).
\end{align*}
Hence, $P_\lambda$ localizes the spatial Fourier support to the set $\{\lambda/2 < |\xi| < 2 \lambda\}$ if $\lambda > 1$ and to the set $\{|\xi| < 2\}$ if $\lambda = 1$.

Similarly, we define temporal frequency and modulation projectors by
\begin{align*}
    P^{(t)}_\lambda = \chi_\lambda(|\partial_t|), \qquad C_\lambda = \chi_\lambda(| \imu \partial_t + \Delta|)
\end{align*}
for all $\lambda \in 2^\Z$, i.e., $P^{(t)}_\lambda$ localizes temporal frequencies around $\lambda$ and $C_\lambda$ localizes the space-time Fourier support to distances of size $\lambda$ from the paraboloid. We also set
\begin{align*}
    P_{\leq \lambda} = \chi_{\leq \lambda}(|\nabla|), \qquad P^{(t)}_{\leq \lambda} = \chi_{\leq \lambda}(|\partial_t|), \qquad C_{\leq \lambda} = \chi_{\leq \lambda}(| \imu \partial_t + \Delta|),
\end{align*}
as well as $P_{> \lambda} = I - P_{\leq \lambda}$, $P^{(t)}_{> \lambda} = I - P^{(t)}_{\leq \lambda}$, and $C_{> \lambda} = I - C_{\leq \lambda}$. We also write
\begin{align*}
    \tilde{P}_\lambda = P_{\frac{\lambda}{2}} + P_\lambda + P_{2 \lambda}
\end{align*}
for the fattened Littlewood-Paley projectors and correspondingly for the temporal frequency and the modulation projectors. Sometimes, we also write $f_\lambda = P_\lambda f$ for the sake of brevity.

Besides the more sophisticated function spaces we introduce in Subsection~\ref{Subsec:FunctFrame} below, we employ the standard Besov and Sobolev spaces. These are defined as the sets of tempered distributions for which the following norms are finite. The inhomogeneous and homogeneous Sobolev spaces $W^{s,p}$ and $\dot{W}^{s,p}$ are defined via the norms
\begin{align*}
    \| f \|_{W^{s,p}} = \| \langle \nabla \rangle^s f \|_{L^p} \qquad \text{and} \qquad \| f \|_{\dot{W}^{s,p}} = \| | \nabla |^s f \|_{L^p},
\end{align*}
respectively. The defining norms for the inhomogeneous and homogeneous Besov spaces $B^{s}_{p, q}$ and $\dot{B}^{s}_{p,q}$ are
\begin{align*}
    \| f \|_{B^s_{p,q}} = \Big(\sum_{\lambda \in 2^{\N_0}} \lambda^{s q} \| P_\lambda f\|_{L^p}^q \Big)^{\frac{1}{q}} \qquad \text{and} \qquad 
    \| f \|_{\dot{B}^s_{p,q}} = \Big(\sum_{\lambda \in 2^{\Z}} \lambda^{s q} \| \dot{P}_\lambda f\|_{L^p}^q \Big)^{\frac{1}{q}},
\end{align*}
respectively, where we have written $\dot{P}_\lambda = \chi_\lambda(|\nabla|)$ $(\lambda \in 2^\Z)$ for the homogeneous Littlewood-Paley projectors.

The endpoint Strichartz space at endpoint regularity $L^2_t W^{\frac{1}{2},4}(I \times \R^4)$ will play a distinguished role in our analysis and will be denoted by $D(I)$ for any interval $I \subseteq \R$. In particular, we set
\begin{align*}
    \| u \|_{D(I)} = \| u \|_{L^2_t W^{\frac{1}{2},4}(I \times \R^4)}.
\end{align*}
We also note that $C_\lambda P_\lambda$, $C_{\leq \lambda} P_\lambda$, etc. are convolution operators with kernels bounded in $L^1(\R \times \R^4)$ independent of $\lambda$ so that these operators are bounded on all $L^q_t L^p_x$, $L^q_t W^{s,p}_x$, and $L^q_t B^s_{p,r}$ spaces uniformly in $\lambda$.

To distinguish different frequency interactions, we also introduce the standard paraproduct decomposition
\begin{align*}
    f g = (f g)_{LH} + (f g)_{HH} + (f g)_{HL},
\end{align*}
where the low-high, high-high, and high-low interactions are defined as
\begin{align*}
    (f g)_{LH} = \sum_{\lambda} P_{\leq \frac{\lambda}{2^8}} f P_\lambda g, \qquad 
    (f g)_{HH} = \sum_{\lambda_1 \sim \lambda_2} P_{\lambda_1} f P_{\lambda_2} g, \qquad
    (f g)_{HL} = (g f)_{LH}.
\end{align*}
Here the first sum runs over $\lambda \in 2^{\N}$ with $\lambda \geq 2^8$ and the second sum runs over all $\lambda_1, \lambda_2 \in 2^{\N_0}$ such that $| \log(\lambda_1/\lambda_2) | \leq 7$.

We finally introduce some notations concerning the solution operators of linear Schrödinger and wave equations.
        We write $\cI_0[g]$ for the inhomogeneous Schr\"odinger solution of
\begin{align*}
	(\imu \partial_t + \Delta) u = g, \qquad u(t_0) = 0,
\end{align*}
and $\cJ_0[h]$ for the inhomogeneous wave solution of
\begin{align*}
	(\imu \partial_t + |\nabla|) v = h, \qquad v(t_0) = 0,
\end{align*}
i.e., in the Duhamel form
\begin{equation}
	\label{eq:DefPropOp}
	\cI_0[g](t) = -\imu \int_{t_0}^t e^{\imu (t-s) \Delta} g(s) \dd s, \qquad
	\cJ_0[h](t) = -\imu \int_{t_0}^t e^{\imu (t-s)|\nabla|} h(s) \dd s.
\end{equation}
We will also consider the Schrödinger equation with potential $V$, i.e.,
\begin{align}
\label{eq:SchrPotentialIntro}
    (\imu \partial_t + \Delta - V) u = g, \qquad u(t_0) = f.
\end{align}
If they exist, we will denote the homogeneous propagation operator, i.e. the solution of~\eqref{eq:SchrPotentialIntro} with $g = 0$, by $\cU_V[f]$, and the inhomogeneous propagation operator, i.e. the solution of~\eqref{eq:SchrPotentialIntro} with $f = 0$, by $\cI_V[g]$. In order to ease the notation, we did not include the dependence on $t_0$ in the labeling of these operators and we shall take care that the considered $t_0$ will always be clear from the context.

\medskip
\paragraph{\bf Organization}
The remaining part of the present paper is orgarnized as follows. 
Section \ref{Sec-Rescaling} is concerned with the refined rescaling transformations, 
which permit to reduce the originial stochastic Zakharov system 
to random systems on different random intervals. 
Then, in Sections \ref{Sec:MultilinEst}-\ref{Sec-Product}, 
we construct the new functional framework 
and derive key estimates to show its compatibility
with lateral Strichartz spaces and refined rescaling transformations. 
Section \ref{Sec-LWP} is devoted to the proof of local well-posedness in Theorem \ref{thm:LocalWP}. 
The global well-posedness result below the ground state 
is proved in Section \ref{sec:WellPoBelowGrSt}. 
At last, Section \ref{Sec-Noise-Regular} is concerned with the noise regularization effects on blow-up and scattering. 
It also contains the key global $V^p$ control 
of geometric Brownian motions. In order to not disturb the flow of the main part of the paper, we prove some rather technical but important properties of our function spaces in the appendices.

\section{Refined rescaling transforms} \label{Sec-Rescaling}

Here we give the definition of a solution of~\eqref{eq:StoZak}. 
Without loss of generality, 
we take $\alpha=1$. 

\begin{definition}   \label{def:Solution}
Fix $T\in (0,\infty)$.
We say that $(X,Y)$ is a probabilistic strong solution to \eqref{eq:StoZak}
on $[0,\tau]$,
where $\tau\in(0,T]$ is an $\{\mathscr{F}_t\}$-stopping time,
if $(X,Y)$ is an $H^1\times L^2$-valued
$\{\mathscr{F}_t\}$-adapted process which belongs to $C([0,\tau],H^1 \times L^2)$
and satisfies $\mathbb{P}$-a.s.
for any $t\in [0,\tau]$,
\begin{equation}   \label{equa-stoZak-def}
	\left\{\aligned
	 X(t) &= \int_0^t \imu \Delta X \dd s - \int_0^t  \imu \Re(Y) X \dd s - \int_0^t \mu X \dd s + \int_0^t X \dd W_1(s),    \\
	Y(t) &= \int_0^t \imu |\na|Y \dd s + \int_0^t \imu |\na| |X|^2 \dd s - \imu W_2(t),
	\endaligned
	\right.
\end{equation}
as equations in $H^{-1} \times H^{-1}$.

Given an $\{\mathscr{F}_t\}$-stopping time $\tau^*$, we also call $(X,Y)$ a probabilistic strong solution to \eqref{eq:StoZak}
on $[0,\tau^*)$ if $(X,Y)$ is an $\{\mathscr{F}_t\}$-adapted process belonging to $C([0,\tau^*),H^1 \times L^2)$ such that for any $T \in (0,\infty)$ and any $\{\mathscr{F}_t\}$-stopping time $\tau < \tau^*$, $(X,Y)$ is a probabilistic strong solution to \eqref{eq:StoZak} on $[0,\tau \wedge T]$.
\end{definition}

\begin{remark}
	\label{rem:Uniqueness}
In the statement of Theorem~\ref{Thm-LWP} uniqueness means that for any $T \in (0,\infty)$ and any $\{\cF_t\}$-adapted stopping time $\tau < \tau^*$ the process $(X,Y)$ is the unique solution of~\eqref{eq:StoZak} in the sense of Definition~\ref{def:Solution} satisfying
\begin{align*}
    (X,Y) \in C([0, \tau \wedge T], H^1 \times L^2), \qquad e^{-W_1} X \in \XOne([0, \tau \wedge T]),
\end{align*}
where the space $\XOne$ is introduced in~\eqref{eq:DefRestrictionNorm}.
\end{remark}

Via the rescaling or Doss-Sussman type transforms
\begin{align}
     & u(t):= e^{-W_1(t)}X(t),   \label{rescal.1}  \\
     & v(t):= Y(t) - \cT_{t}(W_2) \ \
	\text{with} \ \cT_t(W_2):= - \imu \int_0^t e^{\imu (t-s)|\na|} \dd W_2(s).   \label{rescal.2}
\end{align}
we reduce~\eqref{eq:StoZak} to the random system
\begin{equation}   \label{eq:RanZakW1W2}
	\left\{\aligned
	  \imu \partial_t u + e^{-W_1}\Delta (e^{W_1} u) &= \Re(v) u + \Re(\cT_t(W_2)) u,  \\
	  \imu \partial_t v + |\na |v  &= - |\na||u|^2, \\
    (u(0), v(0)) &= (X_0, Y_0),
	\endaligned
	\right.
\end{equation}
or equivalently,
\begin{equation}   \label{eq:RanZakbc}
	\left\{\aligned
	  \imu \partial_t u + \Delta u
	   &= \Re(v) u - b\cdot \na u - cu + \Re(\cT_t(W_2)) u,  \\
	 \imu \partial_t v + |\na |v  &= - |\na||u|^2, \\
    (u(0), v(0)) &= (X_0, Y_0),
	\endaligned
	\right.
\end{equation}
where the coefficients $b$ and $c$ of the lower order perturbations are of the form
\begin{align}
   b &= 2 \na W_1 = 2 \imu \sum\limits_{k=1}^\infty \na \phi^{(1)}_k \beta^{(1)}_k,  \label{eq:Defb}  \\
   c &= |\na W_1|^2 + \Delta W_1
     = - \sum\limits_{j=1}^4 \Big(\sum\limits_{k=1}^\infty
	     \partial_j \phi_k^{(1)} \beta^{(1)}_k \Big)^2
        + \imu \sum\limits_{k=1}^\infty \Delta \phi^{(1)}_k \beta^{(1)}_k.   \label{eq:Defc}
\end{align}

This rescaling transform was introduced for the Zakharov system in dimension $3$ in~\cite{HRSZ23}. The equivalence of~\eqref{eq:StoZak} and~\eqref{eq:RanZakbc} is independent of the spatial dimension and we obtain~\cite[Theorem~3.1]{HRSZ23} also in dimension $d = 4$.

\begin{theorem} [Equivalence via rescaling transformations] \label{thm:Rescaling}
	\begin{enumerate}
\item[]
\item \label{it:EquivRescStochToRandom} Let $(X,Y)$ be a solution to \eqref{eq:StoZak} on $[0,\tau]$
in the sense of Definition~\ref{def:Solution},
where $\tau$ is an $\{\mathscr{F}_t\}$-stopping time
and $(X,Y) \in C([0,\tau]; H^1 \times L^2)$ $\bbp$-a.s.
Set $u: =e^{-W_1}X$ and $v:= Y - \cT_t(W_2)$.
Then,
$(u,v)$ is an analytically weak solution to~\eqref{eq:RanZakW1W2} on $[0,\tau]$
as equations in $H^{-1} \times H^{-1}$.

\item \label{it:EquivRescRandomToStoch} Let $(u,v)$ be an analytically weak solution to \eqref{eq:RanZakW1W2} on $[0,\tau]$
as equations in $H^{-1} \times H^{-1}$,
where $\tau$ is an $\{\mathscr{F}_t\}$-stopping time,
and $(u,v)$ is $\{\mathscr{F}_t\}$-adapted and continuous in $H^1\times L^2$.
Set $(X, Y): =(e^{W_1}u, v+\cT_t(W_2))$.
Then,
$(X, Y)$ is a solution of~\eqref{eq:StoZak} on $[0,\tau]$
in the sense of Definition~\ref{def:Solution}.
\end{enumerate}
\end{theorem}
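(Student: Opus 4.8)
The plan is to verify the Doss--Sussmann type change of variables \eqref{rescal.1}--\eqref{rescal.2} by It\^o calculus, following the scheme used for the three-dimensional system in \cite{HRSZ23}. The point is that this computation is purely stochastic--analytic: the spatial dimension enters only through two elementary facts that persist for $d=4$ under Hypothesis (H), namely that $e^{\pm W_1(t)}$ is a bounded multiplier on the relevant Sobolev spaces and that the stochastic convolution $\cT_t(W_2)$ is a well-defined $L^2$-valued process solving a linear equation. Hence \cite[Theorem~3.1]{HRSZ23} carries over verbatim; below I indicate the steps.

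First I would record the preliminaries. Under (H) the series $W_1(t)=\sum_k \imu\phi_k^{(1)}\beta_k^{(1)}(t)$ converges in $H^4(\R^4)$, $\bbp$-a.s.\ locally uniformly in $t$; since $H^4(\R^4)\hookrightarrow C^2_b$ and the $\phi_k^{(1)}$ are real-valued, $e^{\pm W_1(t)}$ is unimodular with bounded first and second spatial derivatives, locally uniformly in time and $\bbp$-a.s., so that multiplication by $e^{\pm W_1(t)}$ maps both $H^1$ and $H^{-1}$ boundedly into themselves, and, as a multiplier-valued It\^o process,
\[
   \dd\big(e^{\mp W_1}\big)=\mp\,e^{\mp W_1}\,\dd W_1-\mu\,e^{\mp W_1}\,\dd t ,
\]
where $\dd\langle W_1,W_1\rangle=\sum_k(\imu\phi_k^{(1)})^2\,\dd t=-2\mu\,\dd t$. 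Likewise, since $\sum_k\|\phi_k^{(2)}\|_{L^2}^2<\infty$, the process $\cT_t(W_2)$ takes values in $L^2(\R^4)$, has continuous paths, satisfies $\cT_0(W_2)=0$, and solves $\dd\cT_t(W_2)=\imu|\na|\cT_t(W_2)\,\dd t-\imu\,\dd W_2$ in $H^{-1}$.

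For part \ref{it:EquivRescStochToRandom}, given a solution $(X,Y)$ of \eqref{equa-stoZak-def} I would apply the It\^o product formula to $u=e^{-W_1}X$, combining the It\^o equation for $X$ with the one for $e^{-W_1}$. The cross-variation equals $\dd\langle e^{-W_1},X\rangle=(-e^{-W_1})X\,\dd\langle W_1,W_1\rangle=2\mu\,e^{-W_1}X\,\dd t$, which combines with the It\^o correction $-\mu e^{-W_1}X\,\dd t$ from $\dd(e^{-W_1})$ and the term $-\mu X\,\dd t$ already present in \eqref{equa-stoZak-def} so that all $\mu$-terms cancel, while the two stochastic integrals $-e^{-W_1}X\,\dd W_1$ and $+e^{-W_1}X\,\dd W_1$ cancel each other; what remains is $\imu\partial_t u=-e^{-W_1}\Delta(e^{W_1}u)+(\Re(v)+\Re(\cT_t(W_2)))u$, i.e.\ the first equation of \eqref{eq:RanZakW1W2} in integrated weak form (and, by the Leibniz expansion of $e^{-W_1}\Delta(e^{W_1}\,\cdot\,)$, of \eqref{eq:RanZakbc}). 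For the wave component I would subtract the mild equation for $\cT_t(W_2)$ from the $Y$-equation in \eqref{equa-stoZak-def}: the additive noises cancel and one gets $\imu\partial_t v+|\na|v=-|\na||X|^2=-|\na||u|^2$, the last identity because $|e^{W_1}|\equiv 1$. Part \ref{it:EquivRescRandomToStoch} is the same computation in reverse: here $(u,v)$ solves the random PDE pathwise, hence is absolutely continuous into $H^{-1}\times H^{-1}$ with vanishing martingale part, so the product formula for $e^{W_1}u$ carries no cross-variation; substituting $\partial_t u$ from \eqref{eq:RanZakW1W2} yields $e^{W_1}\partial_t u=\imu\Delta X-\imu\Re(Y)X$ with $X=e^{W_1}u$ and $Y=v+\cT_t(W_2)$, whence $\dd X=\imu\Delta X\,\dd t-\imu\Re(Y)X\,\dd t-\mu X\,\dd t+X\,\dd W_1$, and adding $\cT_t(W_2)$ back to $v$ restores the additive noise in the wave equation; the initial data match since $W_1(0)=0$ and $\cT_0(W_2)=0$.

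The only genuinely delicate point, and the step on which I would spend the most care, is the rigorous justification of the infinite-dimensional It\^o product formula for the product of the multiplier-valued process $e^{\pm W_1(t)}$ with an $H^{-1}$-valued It\^o process (respectively with a pathwise absolutely continuous $H^{-1}$-valued process), together with the correct identification of the cross-variation term. I would handle this by testing against a dense family $\psi\in C_c^\infty(\R^4)$ and reducing to the scalar It\^o formula for $t\mapsto\langle X(t),e^{\mp W_1(t)}\psi\rangle$, using the $H^4$-convergence of the series defining $W_1$ to pass to the limit, exactly as in \cite[Section~3]{HRSZ23}; since these arguments are dimension-blind, the conclusion holds for $d=4$.
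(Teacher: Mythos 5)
Your proposal is correct and follows exactly the route the paper takes: the paper gives no new proof of Theorem~\ref{thm:Rescaling} but simply invokes \cite[Theorem~3.1]{HRSZ23}, observing that the Doss--Sussmann/It\^o-product computation is independent of the spatial dimension once Hypothesis (H) guarantees $W_1\in H^4\hookrightarrow C_b^2$ and the $L^2$-valued stochastic convolution $\cT_t(W_2)$, and your sketch (cancellation of the $\mu$-terms and stochastic integrals via $\dd\langle W_1,W_1\rangle=-2\mu\,\dd t$, subtraction of the mild equation for $\cT_t(W_2)$, and the rigorous justification by testing against $C_c^\infty$ functions as in \cite[Section~3]{HRSZ23}) is precisely that argument. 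No gaps.
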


The above results permit to construct local solutions up to a possibly very small stopping time, 
but are not sufficient to extend solutions to the maximal existence time. 
The key ingredient in the extension is the  refined rescaling approach 
for the Zakharov system introduced in~\cite{HRSZ23}. 
Since the statement and the proof are independent of the spatial dimension, we obtain~\cite[Proposition~3.2]{HRSZ23} also for $d = 4$.

\begin{proposition} [Refined rescaling transformations] \label{prop:RefinedRescaling}
Let $\sigma, \tau \colon \Omega \rightarrow [0,T]$ such that $\sigma+\tau\leq T$.
\begin{enumerate}
\item \label{it:RefResc0ToSig}
Let $(u_\sigma, v_\sigma) \in C([0,\tau], H^1 \times L^2)$
be an analytically weak solution of the system
\begin{equation}   \label{eq:RanZakSigma}
	\left\{\aligned
	  \partial_t u_\sigma(t) &=  \imu e^{-W_{1,\sigma}(t)} \Delta (e^{W_{1,\sigma}(t)} u_\sigma(t))
	-   \imu  \Re v_\sigma(t)\, u_\sigma(t)
	- \imu u_\sigma(t) \Re \cT_{\sigma+t, \sigma}(W_2),   \\
	  \partial_t v_\sigma(t) &=   \imu  |\na| v_\sigma(t) + \imu |\na| |u_\sigma(t)|^2,
	\endaligned
	\right.
\end{equation}
as equations in $H^{-1} \times H^{-1}$,
where the incerements of noises $W_{1,\sigma}$ and $ \cT_{\sigma+t, \sigma} (W_2)$ are defined by
\begin{align}
   & W_{1,\sigma}(t):= W_1(\sigma+t) - W_1(\sigma),   \label{eq:DefWsigma}  \\
   & \cT_{\sigma+t, \sigma} (W_2)
     := -\imu \int_\sigma^{\sigma+t} e^{\imu (\sigma+t-s)|\na|} \dd W_2(s)  \label{eq:DefTsigmaW2}
\end{align}
for all $t\in [0,\tau]$.
For any $t\in [\sigma, \sigma + \tau]$, we set
\begin{align}
	 u(t)
	&:= e^{-W_1(\sigma)} u _\sigma(t-\sigma),    \label{eq:DefusigmaRescal} \\
     v(t) &:= v_\sigma(t-\sigma)- e^{\imu (t-\sigma) |\na|}\cT_\sigma(W_2).    \label{eq:DefvsigmaRescal}
\end{align}
Then, $(u, v)$ is an analytically weak solution of system \eqref{eq:RanZakW1W2}
on $[\sigma, \sigma+\tau]$
with
\begin{align}
   & u(\sigma) =  e^{-W_1(\sigma)} u_\sigma(0),   \label{usigma-vsigma-initial.1}\\
   & v(\sigma) =   v_\sigma(0)-\cT_\sigma(W_2).    \label{usigma-vsigma-initial.2}
\end{align}

\item \label{it:RefRescSigTo0} If $(u,v) \in C([\sigma, \sigma+\tau], H^1\times L^2)$
is an analytically weak solution of system \eqref{eq:RanZakW1W2} on $[\sigma, \sigma+\tau]$
as equations in $H^{-1}\times H^{-1}$,
then
\begin{align}
	 & u_\sigma(t)
    :=  e^{W_1(\sigma)} u(\sigma+t),    \label{eq:u-sigma-v-sigma-rescal.1} \\
     & v_\sigma(t) := v(\sigma+t) + e^{\imu t|\na|}\cT_\sigma(W_2)),   \label{eq:u-sigma-v-sigma-rescal.2}
    \ \ t\in [0,\tau],
\end{align}
is an analytically weak solution of the system \eqref{eq:RanZakSigma} on $[0,\tau]$.
\end{enumerate}
\end{proposition}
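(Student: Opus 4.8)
The cleanest route is to note that Proposition~\ref{prop:RefinedRescaling} is precisely \cite[Proposition~3.2]{HRSZ23}, whose proof is a purely algebraic verification that never uses the spatial dimension; I would nonetheless carry out the computation here for completeness. Everything is done $\bbp$-a.s.\ for a fixed realization of the noise, on the event of full probability on which $W_1(\sigma)\in H^4(\R^4)$ — so that $e^{\pm W_1(\sigma)}\in W^{1,\infty}(\R^4)$ and multiplication by $e^{\pm W_1(\sigma)}$ is bounded on $H^{\pm 1}$ — and on which the continuous processes $\cT_\cdot(W_2)$, $\cT_{\sigma+\cdot,\,\sigma}(W_2)$ are well defined. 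After this freezing of the noise, the two transformations \eqref{eq:DefusigmaRescal}--\eqref{eq:DefvsigmaRescal} and \eqref{eq:u-sigma-v-sigma-rescal.1}--\eqref{eq:u-sigma-v-sigma-rescal.2} are each a time shift composed with multiplication by the fixed unimodular spatial function $e^{\mp W_1(\sigma)}$, resp.\ with adding/subtracting the free wave $e^{\imu t|\nabla|}\cT_\sigma(W_2)$; in particular they are mutually inverse (compose them and use $e^{W_1(\sigma)}e^{-W_1(\sigma)}=1$), so it suffices to establish one direction, say part~\eqref{it:RefResc0ToSig}.

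I would first record the two structural identities that drive the computation. From $W_{1,\sigma}(t)=W_1(\sigma+t)-W_1(\sigma)$ one gets, whenever $u(\sigma+t)=e^{-W_1(\sigma)}u_\sigma(t)$, the pointwise-in-$x$ relation $e^{W_1(\sigma+t)}u(\sigma+t)=e^{W_{1,\sigma}(t)}u_\sigma(t)$, hence
\[
   \bigl(e^{-W_1}\Delta(e^{W_1}u)\bigr)(\sigma+t)=e^{-W_1(\sigma)}\Bigl[e^{-W_{1,\sigma}(t)}\Delta\bigl(e^{W_{1,\sigma}(t)}u_\sigma(t)\bigr)\Bigr].
\]
For the wave noise, splitting the stochastic integral at $\sigma$ and using $e^{\imu(\sigma+t-s)|\nabla|}=e^{\imu t|\nabla|}e^{\imu(\sigma-s)|\nabla|}$ yields the cocycle identity
\[
   \cT_{\sigma+t}(W_2)=e^{\imu t|\nabla|}\cT_\sigma(W_2)+\cT_{\sigma+t,\,\sigma}(W_2).
\]
This cocycle identity is the only genuinely probabilistic ingredient and, I expect, the single point requiring any care: it uses that $\sigma$ is an $\{\mathscr{F}_t\}$-stopping time, so that the Itô integral over $[0,\sigma+t]$ decomposes into the integral over $[0,\sigma]$ plus the one over $[\sigma,\sigma+t]$. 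The rest is bookkeeping.

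Next I would substitute the Ansatz \eqref{eq:DefusigmaRescal}--\eqref{eq:DefvsigmaRescal} into the integrated form of \eqref{eq:RanZakW1W2} on $[\sigma,\sigma+\tau]$, perform the obvious time shift under the integral, and compare with \eqref{eq:RanZakSigma}. In the Schrödinger equation the time derivative and the conjugated Laplacian reproduce, after multiplying through by $e^{W_1(\sigma)}$, the first two terms of \eqref{eq:RanZakSigma}; in the interaction term one writes $v(\sigma+t)=v_\sigma(t)-e^{\imu t|\nabla|}\cT_\sigma(W_2)$ and uses the cocycle identity, so the two copies of $\Re\bigl(e^{\imu t|\nabla|}\cT_\sigma(W_2)\bigr)u$ cancel and one is left with $\Re(v_\sigma)u_\sigma+\Re\bigl(\cT_{\sigma+t,\,\sigma}(W_2)\bigr)u_\sigma$, using that multiplication by the real-valued functions $\Re(v_\sigma)$ and $\Re(\cT_{\sigma+t,\,\sigma}(W_2))$ commutes with multiplication by $e^{W_1(\sigma)}$. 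In the wave equation, $e^{\imu t|\nabla|}\cT_\sigma(W_2)$ is annihilated by $\imu\partial_t+|\nabla|$ and $|u(\sigma+t)|^2=|u_\sigma(t)|^2$ because $e^{-W_1(\sigma)}$ is unimodular (the $\phi^{(1)}_k$ being real-valued), which leaves precisely the wave equation of \eqref{eq:RanZakSigma}. Evaluating at $t=0$ gives the initial conditions \eqref{usigma-vsigma-initial.1}--\eqref{usigma-vsigma-initial.2}.

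Since each of these steps is an equivalence, part~\eqref{it:RefRescSigTo0} follows from part~\eqref{it:RefResc0ToSig} by the inversion noted above. To finish I would only have to check that every term appearing in the manipulation is a well-defined element of $H^{-1}$: this holds because multiplication by $e^{\pm W_1(\sigma)}\in W^{1,\infty}$ maps $H^{\pm1}$ to itself, $e^{\imu t|\nabla|}$ preserves all Sobolev regularity, and $\cT_\sigma(W_2)\in H^2(\R^4)$ under Hypothesis~(H); hence the stated identities genuinely hold in $C([0,\tau],H^1\times L^2)$ with the equations read in $H^{-1}\times H^{-1}$.
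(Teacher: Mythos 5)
Your proposal is correct and takes essentially the same route as the paper, which does not reprove the statement but invokes \cite[Proposition~3.2]{HRSZ23} with the remark that the argument is independent of the spatial dimension; your verification — the cocycle identity $\cT_{\sigma+t}(W_2)=e^{\imu t|\nabla|}\cT_\sigma(W_2)+\cT_{\sigma+t,\sigma}(W_2)$, the conjugation identity $e^{W_1(\sigma+t)}u(\sigma+t)=e^{W_{1,\sigma}(t)}u_\sigma(t)$, the annihilation of the free wave by $\imu\partial_t+|\nabla|$, and $|e^{\pm W_1(\sigma)}|=1$ for the wave nonlinearity — is precisely that dimension-independent computation. The only point to keep an eye on is that in the proposition $\sigma$ is merely a random time, so the splitting of the stochastic convolution should be justified pathwise (e.g.\ realizing $\cT_\cdot(W_2)$ as a continuous process and using the explicit form \eqref{Noise} of the noise) rather than solely via the stopping-time property, but this does not affect the correctness of your argument.
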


\begin{remark}
	\label{rem:RanZakRefinedRescaling}
	Defining
	\begin{align}
	\label{eq:Defbsigmacsigma}
		b_\sigma := 2 \nabla W_{1,\sigma}, \qquad
        c_\sigma := |\nabla W_{1,\sigma}|^2 + \Delta W_{1,\sigma},
	\end{align}
	in the setting of Proposition \ref{prop:RefinedRescaling},
    we note that~\eqref{eq:RanZakSigma} is equivalent to
	\begin{equation}   \label{eq:RanZakbsigmacsigma}
	\left\{\aligned
	  \imu \partial_t u_\sigma + \Delta u_\sigma
	   &= \Re(v_\sigma) u_\sigma - b_\sigma \cdot \nabla u_\sigma - c_\sigma u_\sigma + \Re(\cT_{\sigma + \cdot, \sigma}(W_2)) u_\sigma,  \\
	 \imu \partial_t v_\sigma + |\nabla |v_\sigma  &= - |\nabla||u_\sigma|^2.
	\endaligned
	\right.
\end{equation}
\end{remark}

In order to extend solutions by means of Proposition~\ref{prop:RefinedRescaling},
we also need to be able to glue together solutions.
The following gluing procedure was already introduced in~\cite{HRSZ23},
and it is independent of the spatial dimension. We thus also have~\cite[Proposition~3.3]{HRSZ23} in dimension $d = 4$.

\begin{proposition} [Gluing solutions] \label{prop:GluingSolutions}
Let $(u_1,v_1)\in C([0,\sigma], H^1 \times L^2)$
be an analytically weak solution of~\eqref{eq:RanZakW1W2} on $[0,\sigma]$,
and let $(u_\sigma,v_\sigma)\in C([0,\tau], H^1 \times L^2)$
be an analytically weak solution of the refined Zakharov system
\eqref{eq:RanZakSigma} on $[0,\tau]$
with the initial condition
\begin{align*}
   (u_\sigma(0), v_\sigma(0))
   := (e^{W_1(\sigma)} u_1(\sigma), v_1(\sigma) + \cT_\sigma(W_2)).
\end{align*}
For every $t\in [0,\sigma+\tau]$,
we set
\begin{align*}
\label{eq:GluingSolutions}
   u(t):= \begin{cases}
   				u_1(t), \quad &\text{if } t \in [0,\sigma), \\
   				 e^{-W_1(\sigma)}u_\sigma(t-\sigma), &\text{if } t \in [\sigma, \sigma + \tau], 						\end{cases} \qquad
   v(t):= \begin{cases}
   				 v_1(t), \quad &\text{if } t \in [0,\sigma), \\
   				v_\sigma(t-\sigma) - e^{\imu (t-\sigma)|\na|}\cT_\sigma(W_2)), &\text{if } t \in [\sigma, \sigma + \tau].
   				\end{cases}
\end{align*}
Then, $(u,v) \in C([0,\sigma+\tau], H^1\times L^2)$
is an analytically weak solution of  \eqref{eq:RanZakW1W2} on the larger
interval $[0,\sigma+\tau]$.
\end{proposition}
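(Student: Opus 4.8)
The statement is purely pathwise---no measurability or adaptedness is claimed---so I would fix $\omega\in\Omega$ once and for all and argue for this single realization; then $W_1(\sigma)$, $\cT_\sigma(W_2)\in L^2(\R^4)$, the multiplier $e^{\pm W_1(\sigma)}$, etc.\ are all fixed objects. The plan is to reduce the proposition to Proposition~\ref{prop:RefinedRescaling}(i) together with two elementary facts: continuity of the glued pair at the junction $t=\sigma$, and additivity of the time integral in the weak formulation. First I would verify the matching at $t=\sigma$. From the prescribed datum $(u_\sigma(0),v_\sigma(0))=(e^{W_1(\sigma)}u_1(\sigma),\,v_1(\sigma)+\cT_\sigma(W_2))$ one computes
\[
   e^{-W_1(\sigma)}u_\sigma(0)=u_1(\sigma),\qquad v_\sigma(0)-\cT_\sigma(W_2)=v_1(\sigma),
\]
so the value of $(u,v)$ at $\sigma$ prescribed by the second branch agrees with $\lim_{t\to\sigma^-}(u_1(t),v_1(t))=(u_1(\sigma),v_1(\sigma))$. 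Continuity on each closed subinterval is then immediate: on $[0,\sigma]$ it is the hypothesis on $(u_1,v_1)$, and on $[\sigma,\sigma+\tau]$ the map $t\mapsto e^{-W_1(\sigma)}u_\sigma(t-\sigma)$ is $H^1$-continuous because $e^{-W_1(\sigma)}$ is a fixed bounded multiplier on $H^1$ (pathwise, $W_1(\sigma)\in H^4\subset W^{1,\infty}(\R^4)$) and $(u_\sigma,v_\sigma)\in C([0,\tau],H^1\times L^2)$, while $t\mapsto v_\sigma(t-\sigma)-e^{\imu(t-\sigma)|\na|}\cT_\sigma(W_2)$ is $L^2$-continuous since $\cT_\sigma(W_2)\in L^2$ and $\{e^{\imu t|\na|}\}$ is a strongly continuous unitary group on $L^2$. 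Hence $(u,v)\in C([0,\sigma+\tau],H^1\times L^2)$.

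Next I would observe that the restriction of $(u,v)$ to $[\sigma,\sigma+\tau]$ is, by construction, precisely the pair produced by the formulae~\eqref{eq:DefusigmaRescal}--\eqref{eq:DefvsigmaRescal} applied to $(u_\sigma,v_\sigma)$. Since $(u_\sigma,v_\sigma)$ solves~\eqref{eq:RanZakSigma} on $[0,\tau]$ by assumption, Proposition~\ref{prop:RefinedRescaling}(i) then tells us that this restriction is an analytically weak solution of~\eqref{eq:RanZakW1W2} on $[\sigma,\sigma+\tau]$, with $u(\sigma)=e^{-W_1(\sigma)}u_\sigma(0)=u_1(\sigma)$ and $v(\sigma)=v_\sigma(0)-\cT_\sigma(W_2)=v_1(\sigma)$, consistent with the first step; on $[0,\sigma]$ the pair equals $(u_1,v_1)$, which solves~\eqref{eq:RanZakW1W2} by hypothesis (in particular $u_1(0)=X_0$, $v_1(0)=Y_0$). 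It then remains to splice the two weak formulations. Testing against an arbitrary $\psi\in H^1(\R^4)$ and writing $R_u,R_v$ for the right-hand sides of~\eqref{eq:RanZakW1W2} (in $L^1_{\mathrm{loc}}$ in time with values in $H^{-1}$), for $t\in(\sigma,\sigma+\tau]$ I would split $\int_0^t=\int_0^\sigma+\int_\sigma^t$: the first integral equals $\langle u(\sigma),\psi\rangle-\langle X_0,\psi\rangle$ by the $(u_1,v_1)$-identity at time $\sigma$, and the second equals $\langle u(t),\psi\rangle-\langle u(\sigma),\psi\rangle$ by the weak formulation on $[\sigma,\sigma+\tau]$ just obtained. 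Adding the two contributions yields $\langle u(t),\psi\rangle-\langle X_0,\psi\rangle=\int_0^t\langle R_u,\psi\rangle\,\dd s$ on all of $[0,\sigma+\tau]$, and similarly for $v$; since $R_u,R_v$ are integrable on each of $[0,\sigma]$ and $[\sigma,\sigma+\tau]$ they are integrable on $[0,\sigma+\tau]$, so all the integrals make sense. This is exactly the claim.

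I do not expect a genuine analytic obstacle here: the only thing that could conceivably go wrong is the interaction of the noise with the splitting of the time interval, and that is already absorbed into Proposition~\ref{prop:RefinedRescaling}, where the cocycle-type identities $W_{1,\sigma}(t)=W_1(\sigma+t)-W_1(\sigma)$ and $\cT_{\sigma+t}(W_2)=e^{\imu t|\na|}\cT_\sigma(W_2)+\cT_{\sigma+t,\sigma}(W_2)$ are exactly what makes the shifted system~\eqref{eq:RanZakSigma} transform back into~\eqref{eq:RanZakW1W2} under~\eqref{eq:DefusigmaRescal}--\eqref{eq:DefvsigmaRescal}. So, modulo quoting that proposition, the only point that really needs care is the bookkeeping at the junction---checking that the prescribed initial datum of $(u_\sigma,v_\sigma)$ is the terminal value of $(u_1,v_1)$ (done in the first step) and that the concatenated right-hand side is the one appearing in~\eqref{eq:RanZakW1W2} over the whole interval---after which the argument is the routine continuity-plus-additivity reasoning sketched above.
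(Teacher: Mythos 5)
Your proposal is correct and follows essentially the intended route: the paper does not reprove this proposition but cites \cite[Proposition~3.3]{HRSZ23}, whose argument is exactly the one you give, namely reduce to Proposition~\ref{prop:RefinedRescaling}~\ref{it:RefResc0ToSig} on $[\sigma,\sigma+\tau]$, check the matching of the data at the junction $t=\sigma$ (which also gives continuity of the glued pair), and concatenate the two weak formulations by splitting the time integral at $\sigma$. Your bookkeeping at the junction and the pathwise boundedness of the multiplier $e^{\pm W_1(\sigma)}$ on $H^1$ are precisely the points that need checking, and they are handled correctly.
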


\section{Functional framework} \label{Sec:MultilinEst}

In this section we develop the main functional framework
for the solvability of the energy-critical stochastic Zakharov system.
We first introduce the functional spaces
essentially consisting of lateral Strichartz spaces
and adapted spaces.
Then, the key estimates 
in these functional spaces are derived 
for the Schr\"odinger flows, Schr\"odinger-wave interacting nonlinearity and
lower order terms arising from noise.

\subsection{Function spaces} \label{Subsec:FunctFrame}
Our functional framework combines lateral Strichartz spaces
and adapted spaces.

\subsubsection{Lateral Strichartz spaces}
Let us first introduce the lateral Strichartz spaces
which are used to capture the local smoothing effect of the Schr{\"o}dinger flow. 
 	
Let  $\textbf{e} \in \Sp^{3}$ and $\mathcal{P}_{\vece}=\{\xi \in \R^{4} \, |\, \xi \cdot \textbf{e}=0 \}$
with the induced Euclidean measure.
Set
\begin{equation} \label{Lepq-def}
\| f \|_{L_{\textbf{e}}^{p,q}(I \times \R^4)}
: = \Bigl( \int_{\R} \Bigl( \int_{I \times \mathcal{P}_{\textbf{e}}} |f(t, r \textbf{e} + y)|^q \dd t \dd y \Bigr)^{\frac{p}{q}} \dd r \Bigr)^{\frac{1}{p}},
\end{equation}
where $p, q \in [1,\infty]$,
with the usual adaptions if $p = \infty$ or $q = \infty$.

Let $\phi \in C_0^\infty(\R)$
be a nonnegative and symmetric function
such that $\phi(r)=0$ if $|r|\leq \frac{1}{8}$ or $|r|>4$ and $\phi(r)=1$ if $\frac{1}{4} \leq |r| \leq 2$, and set $\phi_N(r)=\phi(r/N)$.
Then,
\begin{equation} \label{eq:DecompositionIdentity}
\prod_{j=1}^{4}(1-\phi_N(\xi_j))=0
\end{equation}
for all $\xi \in \R^4$ with $N/2 < |\xi| < 2 N$.
Set  $P_{N,\vece} :=\cF_{x}^{-1} \phi_N(\xi \cdot \vece) \cF_{x}$.
By \eqref{eq:DecompositionIdentity},
one has the decomposition
 \begin{equation}   \label{eq:DecompositionAngular}
		P_N f = \sum_{j=1}^{4} P_{N,\vece_j} \Big[\prod_{l=1}^{j-1} (1-P_{N,\vece_l})\Big]P_N f,
	\end{equation}
	where $\vece_1, \ldots, \vece_4$ is the standard basis of $\R^4$.

\subsubsection{Adapted spaces}
	The other component of our functional framework consists of the adapted function spaces from~\cite{CHN23}, 
which have been developed recently to prove the 
optimal local well-posedness theory for the deterministic Zakharov system in dimensions $d \geq 4$. 

\paragraph{\bf (i) Schr\"odinger component:} 
Let us first recall the definition of these spaces.
	
	For $s, a, b \in \R$ and $\lambda \in 2^\N$ we define
	\begin{align}
	\label{eq:DefSsablambda}
		\|u\|_{S^{s,a,b}_\lambda} := \lambda^s \|u\|_{L^\infty_t L^2_x} + \lambda^{s - 2a} \|(\lambda + |\partial_t|)^a u\|_{L^2_t L^4_x} + \lambda^{s-1 + b} \Big\| \Big( \frac{\lambda + |\partial_t|}{\lambda^2 + |\partial_t|}\Big)^a (\imu \partial_t + \Delta) u \Big\|_{L^2_{t,x}}
	\end{align}
	and
	\begin{align}
	\label{eq:DefNsablambda}
		\|F\|_{N^{s,a,b}_\lambda} := \lambda^{s-2} \|P_{\leq(\frac{\lambda}{2^8})^2}^{(t)} F \|_{L^\infty_t L^2_x} + \lambda^s \|C_{\leq (\frac{\lambda}{2^8})^2} F \|_{L^2_t L^{\frac{4}{3}}_x} + \lambda^{s-1+b} \Big\| \Big( \frac{\lambda + |\partial_t|}{\lambda^2 + |\partial_t|}\Big)^a F \Big\|_{L^2_{t,x}}.
	\end{align}
	The corresponding $S^{s,a,b}$- and $N^{s,a,b}$-norm are defined by the $l^2$-sum of the dyadic pieces 
$\|P_\lambda u\|_{S^{s,a,b}_\lambda}$ and $\|P_\lambda F\|_{N^{s,a,b}_\lambda}$, respectively. 

In the case $0 \leq a \leq 1$ an application of Bernstein's inequality yields
\begin{align}
\label{eq:CharSsablambda}
	\|u_\lambda\|_{S^{s,a,b}_\lambda} \sim \lambda^s(\|u_\lambda\|_{L^\infty_t L^2_x} + \|C_{\leq (\frac{\lambda}{2^8})^2} u_\lambda\|_{L^2_t L^4_x}) + \lambda^{s-1+b}\Big\| \Big( \frac{\lambda + |\partial_t|}{\lambda^2 + |\partial_t|}\Big)^a (\imu \partial_t + \Delta) u_\lambda \Big\|_{L^2_{t,x}},
\end{align}	
see Remark~2.1 in~\cite{CHN23}.

We remark that the $S^{s,a,b}$-norm is used to control the Schr{\"o}dinger component of the Zakharov system and the $N^{s,a,b}$-norm to control the Schr{\"o}dinger nonlinearity. 
The parameters $a$ and $b$ are introduced in order to obtain the local well-posedness 
of deterministic Zakharov systems in the optimal regularity region, see Theorem~1.1 in~\cite{CHN23}.

In this work we do not need the full flexibility of these function spaces. 
We mainly work with the energy regularity $(s,l) = (1,0)$ and the endpoint regularity $(s,l) = (\frac{1}{2},0)$, 
to which the corresponding parameters are 
$(a,b) = (\frac{1}{4},0)$ and $(a,b) = (0,0)$, respectively, see~(2.4) in~\cite{CHN23}. In particular, the parameter $b$ is always~$0$ in the regime we are working in so that we drop it from our notation.

Using characterization~\eqref{eq:CharSsablambda}, we define for the case $(s,l) = (1,0)$
\begin{align}
	\label{eq:DefS1lambda}
	\|u\|_{\SOne_\lambda} := \lambda \|u\|_{L^\infty_t L^2_x} + \lambda \|C_{\leq (\frac{\lambda}{2^8})^2} u\|_{L^2_t L^4_x} + \Big\| \Big( \frac{\lambda + |\partial_t|}{\lambda^2 + |\partial_t|}\Big)^{\frac{1}{4}} (\imu \partial_t + \Delta) u \Big\|_{L^2_{t,x}},
\end{align}
while for the case $(s,l) = (\frac{1}{2},0)$ 
\begin{align}
	\label{eq:DefS12lambda}
	\|u\|_{\SHalf_\lambda} := \lambda^{\frac{1}{2}} \|u\|_{L^\infty_t L^2_x} + \lambda^{\frac{1}{2}} \|u\|_{L^2_t L^4_x} + \lambda^{-\frac{1}{2}} \Big\| (\imu \partial_t + \Delta) u \Big\|_{L^2_{t,x}}
\end{align}
based on~\eqref{eq:DefSsablambda}. 

Moreover, since both at the energy regularity and the endpoint regularity 
we have $0 \leq a < \frac{1}{2}$, we get
the characterization
\begin{align*}
	\|F_\lambda\|_{N^{s,a,b}_\lambda} \sim \lambda^s \| C_{\leq (\frac{\lambda}{2^8})^2} F_\lambda \|_{L^2_t L^{\frac{4}{3}}_x} + \lambda^{s-1+b} \Big\| \Big( \frac{\lambda + |\partial_t|}{\lambda^2 + |\partial_t|}\Big)^a F_\lambda \Big\|_{L^2_{t,x}},
\end{align*}
which follows from an application of Bernstein's inequality and Sobolev's embedding, 
see Remark~2.2 in~\cite{CHN23}. 
We define 
\begin{align*}
	\|F\|_{\NOne_\lambda} &:= \lambda \| C_{\leq (\frac{\lambda}{2^8})^2} F \|_{L^2_t L^{\frac{4}{3}}_x} +  \Big\| \Big( \frac{\lambda + |\partial_t|}{\lambda^2 + |\partial_t|}\Big)^{\frac{1}{4}} F \Big\|_{L^2_{t,x}}, \\
	\|F\|_{\NHalf_\lambda} &:= \lambda^{\frac{1}{2}} \| C_{\leq (\frac{\lambda}{2^8})^2} F \|_{L^2_t L^{\frac{4}{3}}_x} + \lambda^{-\frac{1}{2}} 
\| F \|_{L^2_{t,x}}.
\end{align*}
	The corresponding $\Ssa$- and $\Nsa$-norms are defined by
	\begin{align*}
		\|u\|_{\Ssa} := \Big(\sum_{\lambda \in 2^{\N_0}} \|u_\lambda\|_{\Ssa_\lambda}^2 \Big)^{\frac{1}{2}}, \qquad \qquad
		\|F\|_{\Nsa} := \Big(\sum_{\lambda \in 2^{\N_0}} \|F_\lambda\|_{\Nsa_\lambda}^2 \Big)^{\frac{1}{2}}
	\end{align*}
	for $(s,a) \in \{(\frac{1}{2},0),(1,\frac{1}{4})\}$. Finally, we set
    \begin{align*}
        \Ssa(\R) := \{u \in C(\R, H^s(\R^4)) \colon \|u\|_{\Ssa} < \infty\},
    \end{align*}
    while $\Nsa(\R)$ is the space of tempered distributions with finite $\| \cdot \|_{\Nsa}$-norm.

\begin{remark}  \label{rem:NormComp}
  $(i)$ To summarize, we note that
		\begin{align*}
			\|u_\lambda\|_{\SOne_\lambda} \sim \|u_\lambda\|_{S_\lambda^{1,\frac{1}{4},0}}, \qquad \|F_\lambda\|_{\NOne_\lambda} \sim \|F_\lambda\|_{N_\lambda^{1,\frac{1}{4},0}}, \qquad
			\|u_\lambda\|_{\SHalf_\lambda} \sim \|u_\lambda\|_{S_\lambda^{\frac{1}{2},0,0}}, \qquad \|F_\lambda\|_{\NHalf_\lambda} \sim \|F_\lambda\|_{N_\lambda^{\frac{1}{2},0,0}}
		\end{align*}
by Remark~2.1 and Remark~2.2 in~\cite{CHN23}.
		
  $(ii)$ 
We also observe that, because of $u_\lambda = C_{\leq (\frac{\lambda}{2^8})^2} u_\lambda + C_{> (\frac{\lambda}{2^8})^2} u_\lambda$ 
and an application of Bernstein's inequality, 
				\begin{align*}
					\|u_\lambda\|_{L^2_t L^4_x} &\lesssim \| C_{\leq (\frac{\lambda}{2^8})^2} u_\lambda\|_{L^2_t L^4_x} + \lambda \| C_{> (\frac{\lambda}{2^8})^2} u_\lambda \|_{L^2_{t,x}}  \\ 
					&\lesssim  \| C_{\leq (\frac{\lambda}{2^8})^2} u_\lambda\|_{L^2_t L^4_x} + \lambda^{-1} \| C_{> (\frac{\lambda}{2^8})^2} (\imu \partial_t + \Delta) u_\lambda \|_{L^2_{t,x}} \\
					&\lesssim  \| C_{\leq (\frac{\lambda}{2^8})^2} u_\lambda\|_{L^2_t L^4_x} + \lambda^{-1 + a} \Big\| \Big(\frac{\lambda + |\partial_t|}{\lambda^2 + |\partial_t|} \Big)^{a} (\imu \partial_t + \Delta) u_\lambda \Big\|_{L^2_{t,x}},
				\end{align*}					
which shows that
				\begin{align*}
					\|u\|_{L^2_t W^{\frac{1}{2},4}_x} \lesssim \|u\|_{\SHalf} \lesssim \|u\|_{\SOne}.
				\end{align*} 
	\end{remark}

\paragraph{\bf $(ii)$ Wave component} 
Concerning the wave component, we use the same norm as in the deterministic setting, where
\begin{align*}
	& \|v\|_{W^{l,\alpha,\beta}_\lambda} = \lambda^l \|v\|_{L^\infty_t L^2_x}
   + \lambda^{l - \alpha} \|(\lambda + |\partial_t|)^{\alpha} \TempN v\|_{L^\infty_t L^2_x}
    + \lambda^{\beta - 1} \|(\imu \partial_t + |\nabla|) v\|_{L^2_{t,x}}
\end{align*}
was introduced with the choice $\alpha = a$ and $\beta = s - \frac{1}{2}$, see~\cite[Section~2.2]{CHN23}. Recalling that we only work at the regularity $l = 0$ and that $a = \frac{1}{4}$ in the case $(s,l) = (1,0)$ and $a = 0$ in the case $(s,l) = (\frac{1}{2},0)$, we will use
\begin{align*}
	\|v\|_{\WEner_\lambda} &:= \|v\|_{L^\infty_t L^2_x} + \lambda^{-\frac{1}{4}} \|(\lambda + |\partial_t|)^{\frac{1}{4}} \TempN v\|_{L^\infty_t L^2_x} + \lambda^{-\frac{1}{2}} \|(\imu \partial_t + |\nabla|) v\|_{L^2_{t,x}}, \\
	\|v\|_{\WEndp_\lambda} &:= \|v\|_{L^\infty_t L^2_x} + \|\TempN v\|_{L^\infty_t L^2_x} + \lambda^{-1} \|(\imu \partial_t + |\nabla|) v\|_{L^2_{t,x}}.
\end{align*}
We define
\begin{align*}
    W^{0,\alpha,\beta}(\R) := \{v \in C(\R, L^2(\R^4)) \colon \| v \|_{W^{0,\alpha, \beta}} < \infty\}
\end{align*}
for $(\alpha,\beta) \in \{(0,0),(\frac{1}{4}, \frac{1}{2})\}$.

\subsubsection{Setup for the stochastic Zakharov system}  
The Schr{\"o}dinger component of the Zakharov system will be controlled 
in both the adapted and lateral Strichartz spaces. 
We define
\begin{align*}
	& \|u\|_{\Xs_\lambda} := \|u\|_{\Ssa_\lambda} + \sum_{j = 1}^4 \lambda^{s + \frac{1}{2}} \|P_{\lambda, \vece_j} \ModN u \|_{L^{\infty,2}_{\vece_j}} \quad \text{if } \lambda > 1, \qquad \|u\|_{\Xs_\lambda} := \|u\|_{\Ssa_\lambda} \quad \text{if } \lambda = 1,
\end{align*}
and
\begin{align*}
     \|u\|_{\Xs} := \Big( \sum_{\lambda \in 2^{\N_0}} \|u_\lambda\|_{\Xs_\lambda}^2\Big)^{\frac{1}{2}}
\end{align*}
for $s \in \{\frac{1}{2},1\}$, where $a = \frac{1}{4}$ in the case $s = 1$ and $a = 0$ in the case $s = \frac{1}{2}$.
We distinguish between high and low frequencies in the definition of $\| \cdot \|_{\Xs_\lambda}$ since inhomogeneous function spaces are used here, and hence the local smoothing estimate is only available for high frequencies, 
which is sufficient to control the problematic derivative terms caused by thenoise.

The nonlinearity in the Schr{\"o}dinger equation will be controlled via
\begin{align*}
	\|F\|_{\Gs} := \Big(\| P_1 F\|_{\Nsa_1}^2 + \inf_{F = F_1 + F_2} \Big(\sum_{\lambda \in 2^\N} \|P_\lambda F_1\|_{\Nsa_\lambda}^2 + \sum_{j = 1}^4 \sum_{\lambda \in 2^{\N}} \lambda^{2s - 1} \|P_\lambda F_2\|_{L^{1,2}_{\vece_j}}^2 \Big)\Big)^{\frac{1}{2}},
\end{align*}
where we again choose $a = \frac{1}{4}$ in the case $s = 1$ and $a = 0$ in the case $s = \frac{1}{2}$. 

The wave component is controlled by 
\begin{align*}
	\|v\|_{\Y} = \|v\|_{\WEner} :=  \Big( \sum_{\lambda \in 2^\N} \|v_\lambda\|_{\WEner_\lambda}^2 \Big)^{\frac{1}{2}} \qquad \text{and} \qquad
     \|v\|_{\WEndp} := \Big( \sum_{\lambda \in 2^\N} \|v_\lambda\|_{\WEndp_\lambda}^2 \Big)^{\frac{1}{2}}
\end{align*} 
at the energy regularity $(s,l) = (1,0)$ and at the endpoint regularity $(s,l) = (\frac{1}{2},0)$, respectively.

Again, we define the function spaces
\begin{align*}
    \Xs(\R) := \{ u \in C(\R, H^s(\R^4)) \colon \| u \|_{\Xs} < \infty\}, \qquad
    \Y(\R) := \{ v \in C(\R, L^2(\R^4)) \colon \| v \|_{\Y} < \infty\},
\end{align*}
and $\Gs(\R)$ as the set of tempered distributions with finite $\| \cdot \|_{\Gs}$-norm.

Finally, we localize the norms and spaces above to intervals $I \subseteq \R$ via restriction. For example, we set
\begin{equation}
	\label{eq:DefRestrictionNorm}
	\|u\|_{\Xs(I)} = \inf_{u' \in \Xs(\R), u'_{|I} = u} \|u'\|_{\Xs(\R)}.
\end{equation}

\subsection{Control of linear Schr\"odinger and wave flows}
Lemma \ref{lem:StrichartzLocalSmooth} collects
Strichartz and local smoothing estimates for the linear Schr\"odinger flow.

\begin{lemma} [Strichartz and local smoothing estimates]  \label{lem:StrichartzLocalSmooth}
	Let $\lambda, \mu \in 2^{\N_0}$ with $|\log_2 (\mu/\lambda)| \leq 4$, $\vece \in \Sp^{d-1}$, and $(q,p)$, $(\tilde{q}, \tilde{p})$ be Schr\"odinger admissible. We then have the following estimates.
	\begin{enumerate}
		\item \label{it:HomStrichartz} Homogeneous Strichartz estimate:
				\begin{align*}
					\|e^{\imu t \Delta}  f_\lambda\|_{L^q_t L^p_x} \lesssim \|f_\lambda\|_{L^2_x}.
				\end{align*}
		\item \label{it:HomLocalSmooth} Homogeneous local smoothing estimate:
				\begin{align*}
					\|e^{\imu t \Delta} P_{\mu,\vece} f\|_{L^{\infty,2}_{\vece}} \lesssim \mu^{-\frac{1}{2}} \|f\|_{L^2_x}, \qquad \mu > 1.
				\end{align*}
		\item \label{it:InhomStrichartz} Inhomogeneous Strichartz estimate:
				\begin{align*}
					\Big\| \int_{s < t} e^{\imu (t-s) \Delta} g_\lambda(s) \dd s \Big\|_{L^q_t L^p_x} \lesssim \|g_\lambda\|_{L^{\tilde{q}'}_t L^{\tilde{p}'}_x},
				\end{align*}
      where $\tilde{p}'$ and $\tilde{q}'$ are the conjugate numbers of $\tilde{p}$ and $\tilde{q}$, respectively.
      That is, $1/\tilde{p}' + 1/\tilde{p}=1$ and $1/\tilde{q}' + 1/\tilde{q}=1$.

		\item \label{it:InhomLocalSmooth} Inhomogeneous local smoothing estimate:
				\begin{align*}
					\Big\| \int_{s < t} e^{\imu (t-s) \Delta} P_\lambda P_{\mu,\vece} g(s) \dd s \Big\|_{L^{\infty,2}_\vece} \lesssim \lambda^{-1} \|g\|_{L^{1,2}_\vece}, \qquad \mu,\lambda > 1.
				\end{align*}
		\item \label{it:InhomStrichartzLocalSmooth} Inhomogeneous Strichartz to local smoothing estimate:
				\begin{align*}
					\Big\| \int_{s < t} e^{\imu (t-s) \Delta} P_\lambda P_{\mu,\vece} g(s) \dd s \Big\|_{L^q_t L^p_x} \lesssim \lambda^{-\frac{1}{2}} \|g\|_{L^{1,2}_\vece}, \qquad \mu,\lambda > 1.
				\end{align*}
		\item \label{it:InhomLocalSmoothStrichartz} Inhomogeneous local smoothing to Strichartz estimate:
				\begin{align*}
					\Big\| \int_{s < t} e^{\imu (t-s) \Delta} P_\lambda P_{\mu,\vece} g(s) \dd s \Big\| _{L^{\infty,2}_\vece} \lesssim \lambda^{-\frac{1}{2}} \|g\|_{L^{\tilde{q}'}_t L^{\tilde{p}'}_x}, \qquad \mu,\lambda > 1.
				\end{align*}
	\end{enumerate}
\end{lemma}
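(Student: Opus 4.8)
The plan is to treat the six estimates in sequence, building the later ones from the earlier ones via duality and the Christ--Kiselev lemma. First, item~\eqref{it:HomStrichartz} is just the classical homogeneous Strichartz estimate of Keel--Tao applied after a Littlewood--Paley projection; nothing new is needed. For item~\eqref{it:HomLocalSmooth}, I would reduce via Plancherel in the $\vece$-direction to a one-dimensional oscillatory-integral estimate: writing $\xi = r\vece + \eta$ with $\eta \in \cP_\vece$, the Schr\"odinger phase $t|\xi|^2 = t r^2 + t|\eta|^2$ has $\p_r$-derivative $2tr$, and on the support of $P_{\mu,\vece}$ one has $|r| \sim \mu$, so a change of variables $\rho = r^2$ (legitimate since $r$ is bounded away from $0$) turns the $r$-integral into a one-dimensional free evolution whose $L^\infty_r L^2_{t,\cP_\vece}$ bound costs exactly $\mu^{-1/2}$. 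This is the standard sharp local smoothing / Kato smoothing estimate in the form used in~\cite{BIKT11}; I would cite it and record the $\mu^{-1/2}$ gain.

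Next, item~\eqref{it:InhomStrichartz} is the inhomogeneous (retarded) Strichartz estimate, which follows from \eqref{it:HomStrichartz} together with the $TT^*$ argument and the Christ--Kiselev lemma to pass from the non-retarded to the retarded integral (admissible since $(q,p)$, $(\tilde q, \tilde p)$ are non-endpoint or one invokes the Keel--Tao endpoint version directly). Items~\eqref{it:InhomLocalSmooth}, \eqref{it:InhomStrichartzLocalSmooth}, \eqref{it:InhomLocalSmoothStrichartz} are then obtained by combining the homogeneous bounds on the two sides of the Duhamel operator. Concretely: the non-retarded operator $\int_\R e^{\imu(t-s)\Delta}(\cdot)\dd s = \big(\int e^{-\imu s\Delta}(\cdot)\dd s\big)$ followed by $e^{\imu t\Delta}$ factors as $A A^*$-type compositions, where $A^*$ maps $L^{1,2}_\vece \to L^2_x$ with norm $\lesssim \lambda^{-1/2}$ by the dual of \eqref{it:HomLocalSmooth} (localized to frequency $\lambda$, using $|\log_2(\mu/\lambda)|\le 4$), and $A$ maps $L^2_x$ into $L^{\infty,2}_\vece$ with norm $\lesssim \lambda^{-1/2}$, into $L^q_tL^p_x$ with norm $\lesssim 1$, and similarly the dual maps $L^{\tilde q'}_tL^{\tilde p'}_x \to L^2_x$ with norm $\lesssim 1$. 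Composing gives $\lambda^{-1}$ for \eqref{it:InhomLocalSmooth}, and $\lambda^{-1/2}$ for \eqref{it:InhomStrichartzLocalSmooth} and \eqref{it:InhomLocalSmoothStrichartz}; one then removes the restriction $s<t$ in the integral by the Christ--Kiselev lemma exactly as for \eqref{it:InhomStrichartz}.

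The main obstacle is item~\eqref{it:HomLocalSmooth} (and the way its dual is used in \eqref{it:InhomLocalSmooth}--\eqref{it:InhomLocalSmoothStrichartz}): one must be careful that the lateral space $L^{\infty,2}_\vece$ is anisotropic, so the $TT^*$ composition and the Christ--Kiselev step have to be run with the correct ordering of the mixed norms (the ``outer'' $L^\infty_r$ or $L^1_r$ variable is the one transverse to the time integration), and the frequency localizations $P_\lambda$, $P_{\mu,\vece}$ must be tracked so that the $r$-variable stays $\sim\lambda$ throughout — this is what makes the dyadic pieces sum and what produces the sharp powers of $\lambda$. Once the one-dimensional smoothing estimate and its dual are pinned down with these localizations, the remaining compositions and the Christ--Kiselev applications are routine. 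I would also remark that all kernels involved are those of $C_\lambda P_\lambda$-type operators, which are bounded on every $L^q_tL^p_x$ uniformly in $\lambda$ (as noted in Subsection~\ref{sec:Notation}), so inserting or removing modulation cutoffs along the way is harmless.
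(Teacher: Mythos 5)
Your items \ref{it:HomStrichartz}--\ref{it:InhomStrichartz} are fine and coincide with the paper's treatment (Keel--Tao for the Strichartz bounds, and the homogeneous lateral estimate quoted from~\cite{BIKT11}). The genuine gap is in your derivation of the retarded estimates \ref{it:InhomLocalSmooth}, \ref{it:InhomStrichartzLocalSmooth}, \ref{it:InhomLocalSmoothStrichartz}. Composing the homogeneous operator with its dual only gives the \emph{non-retarded} operator $\int_{\R} e^{\imu(t-s)\Delta}(\cdot)\dd s$, and your plan to restore the truncation $\{s<t\}$ by the Christ--Kiselev lemma does not go through here. Christ--Kiselev requires the operator to act between spaces of the form $L^{\tilde q'}_t(B_1)\to L^q_t(B_2)$, with the time variable as the \emph{outer} integration and with the strict inequality $\tilde q' < q$. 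The lateral spaces $L^{1,2}_{\vece}$ and $L^{\infty,2}_{\vece}$ are of the opposite type: the outer variable is the lateral spatial variable $r$ and time sits inside an $L^2_{t,y}$ norm, so they cannot be rewritten as time-outer Banach-valued norms (Minkowski only gives one-sided comparisons, in the wrong direction for your purpose). Moreover, even at the formal level the exponent condition fails exactly where the lemma is used: in \ref{it:InhomLocalSmooth} both sides carry $L^2$ in time, and in \ref{it:InhomStrichartzLocalSmooth}/\ref{it:InhomLocalSmoothStrichartz} the paper needs the endpoint pair $(q,p)=(2,4)$ (it is applied with $L^2_tL^4_x$ in the proof of Lemma~\ref{lem:LinEstimates}), which again makes the two time exponents equal. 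You flag the ``correct ordering of the mixed norms'' as a point to be careful about, but this is not a bookkeeping issue: no ordering makes Christ--Kiselev applicable, and this is precisely why retarded lateral smoothing estimates require a direct proof.

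The paper sidesteps this entirely: \ref{it:InhomLocalSmooth} and \ref{it:InhomStrichartzLocalSmooth} are quoted as retarded estimates from Proposition~3.8 of~\cite{BIKT11} (with the observation that the proof there, essentially Lemma~7.4 of~\cite{BIKT11}, works for every Schr\"odinger admissible pair on the left-hand side), and \ref{it:InhomLocalSmoothStrichartz} is then obtained from \ref{it:InhomStrichartzLocalSmooth} by a duality/transposition argument, which handles the truncation correctly because transposing merely reverses it to $\{s>t\}$, for which \ref{it:InhomStrichartzLocalSmooth} still holds. To repair your argument you should either invoke the retarded versions from~\cite{BIKT11} directly (as the paper does) or give a direct proof of the retarded lateral estimates; the $TT^*$-plus-Christ--Kiselev route cannot produce them.
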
	

	\begin{proof}
		Estimates~\ref{it:HomStrichartz} and~\ref{it:InhomStrichartz} are the well-known Strichartz estimates, see~\cite{KT98}. The local smoothing estimates~\ref{it:HomLocalSmooth} and~\ref{it:InhomLocalSmooth} are contained in Proposition~3.8 in~\cite{BIKT11} as one sees by checking the definition of the involved norms there. Although estimate~\ref{it:InhomStrichartzLocalSmooth} is contained in Proposition~3.8 in~\cite{BIKT11} only for one particular Schr\"odinger admissible pair, an inspection of the proof of that proposition reveals that one can take any Schr\"odinger admissible pair on the left hand side. In fact, a verbatim copy of the proof of Lemma~7.4 in~\cite{BIKT11} yields~\ref{it:InhomStrichartzLocalSmooth}.
		
		The remaining estimate~\ref{it:InhomLocalSmoothStrichartz} follows from~\ref{it:InhomStrichartzLocalSmooth} by duality. We first note that~\ref{it:InhomStrichartzLocalSmooth} also holds if we integrate over $s > t$. Exploiting this estimate, we then obtain
		\begin{align*}
			\Big\| \int_{s < t} e^{\imu (t-s) \Delta} P_\lambda P_{\mu,\vece} g(s) \dd s \Big\| _{L^{\infty,2}_\vece}
			&= \sup_{\|h\|_{L^{1,2}_\vece} \leq 1} \Big| \int_\R \int_{\R^d} \int_{s < t} e^{\imu (t-s) \Delta} P_\lambda P_{\mu,\vece} g(s) \dd s\, \overline{h(t)} \dd x \dd t \Big| \\
			&= \sup_{\|h\|_{L^{1,2}_\vece} \leq 1} \Big| \int_\R  \int_{s < t} \int_{\R^d}  g(s)  \overline{e^{\imu (s-t) \Delta} P_\lambda P_{\mu,\vece} h(t)} \dd x \dd s \dd t \Big| \\
			&= \sup_{\|h\|_{L^{1,2}_\vece} \leq 1} \Big| \int_\R   \int_{\R^d}  g(s)  \overline{\int_{t > s}e^{\imu (s-t) \Delta} P_\lambda P_{\mu,\vece} h(t) \dd t} \dd x \dd s \Big| \\
			&\leq \|g\|_{L^{\tilde{q}'}_t L^{\tilde{p}'}_x} \sup_{\|h\|_{L^{1,2}_\vece} \leq 1} \Big\|\int_{t > s}e^{\imu (s-t) \Delta} P_\lambda P_{\mu,\vece} h(t) \dd t\Big\|_{L^{\tilde{q}}_s L^{\tilde{p}}_x} \\
			&\lesssim \|g\|_{L^{\tilde{q}'}_t L^{\tilde{p}'}_x} \sup_{\|h\|_{L^{1,2}_\vece} \leq 1} \lambda^{-\frac{1}{2}} \|h\|_{L^{1,2}_\vece} \lesssim \lambda^{-\frac{1}{2}}  \|g\|_{L^{\tilde{q}'}_t L^{\tilde{p}'}_x}. \qedhere
		\end{align*}
	\end{proof}

The control of the linear Schr\"odinger flow in adapted spaces has been worked out in~\cite{CHN23}. We collect them in the next lemma.
\begin{lemma} [\cite{CHN23}] 	\label{lem:LinFlowAdaptedSpaces}
	Let $(s,a) \in \{(\frac{1}{2},0),(1,\frac{1}{4})\}$. For any $\lambda \in 2^{\N}$ we have
	\begin{equation*}
		\|e^{\imu t \Delta} f_\lambda\|_{\Ssa_\lambda} \lesssim \lambda^s \|f_\lambda\|_{L^2_x}, \qquad
		\Big\|\int_{t_0}^t e^{\imu (t-t') \Delta} g_\lambda(t') \dd t' \Big\|_{\Ssa_\lambda} \lesssim \|g_\lambda\|_{\Nsa_\lambda}.
	\end{equation*}
\end{lemma}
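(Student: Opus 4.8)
This lemma is collected from~\cite{CHN23}, and the strategy I would follow is the standard modulation analysis adapted to the spaces $\Ssa_\lambda$, $\Nsa_\lambda$ recalled above (in the simplified form of Remark~\ref{rem:NormComp}). Since $\lambda$ is fixed and both estimates are linear, it suffices to treat $s = 0$ and then multiply through by $\lambda^s$; thus I only need to control the three constituent terms of $\|u\|_{\Ssa_\lambda}$ one at a time.

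\emph{Homogeneous estimate.} Let $u = e^{\imu t \Delta} f_\lambda$. Then $(\imu \partial_t + \Delta) u = 0$, so the weighted $L^2_{t,x}$-term in $\|u\|_{\Ssa_\lambda}$ vanishes identically. The $L^\infty_t L^2_x$-term is controlled by unitarity of $e^{\imu t \Delta}$ on $L^2_x$. Finally $u$ has space-time Fourier support on the paraboloid, so $C_{\leq (\frac{\lambda}{2^8})^2} u = u$, and the remaining $L^2_t L^4_x$-term is exactly the Keel--Tao endpoint Strichartz estimate in dimension four, $\|e^{\imu t \Delta} f_\lambda\|_{L^2_t L^4_x} \lesssim \|f_\lambda\|_{L^2_x}$ (recall $(2,4)$ is Schr\"odinger-admissible for $d = 4$); the modulation cutoffs appearing in $\SOne_\lambda$ are harmless, being $L^q_t L^p_x$-bounded uniformly. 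Matching powers of $\lambda$ gives the claim.

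\emph{Inhomogeneous estimate.} Set $u = \cI_0[g_\lambda]$, so $(\imu \partial_t + \Delta) u = g_\lambda$ and $u(t_0) = 0$. The weighted $L^2_{t,x}$-term of $\|u\|_{\Ssa_\lambda}$ then equals the corresponding term of $\|g_\lambda\|_{\Nsa_\lambda}$, with nothing to prove. For the two remaining pieces I would split $g_\lambda = g_\lambda^{\mathrm{lo}} + g_\lambda^{\mathrm{hi}}$ with $g_\lambda^{\mathrm{lo}} = C_{\leq (\frac{\lambda}{2^8})^2} g_\lambda$. On the low-modulation piece, the inhomogeneous Strichartz estimate (Lemma~\ref{lem:StrichartzLocalSmooth}\ref{it:InhomStrichartz}) with exponents $(2,4)$ yields $\|\cI_0[g_\lambda^{\mathrm{lo}}]\|_{L^\infty_t L^2_x} + \|\cI_0[g_\lambda^{\mathrm{lo}}]\|_{L^2_t L^4_x} \lesssim \|g_\lambda^{\mathrm{lo}}\|_{L^2_t L^{4/3}_x}$, which, after inserting $\lambda^s$, is controlled by the first two terms of $\|g_\lambda\|_{\Nsa_\lambda}$; one also checks that the output modulation is $\lesssim \lambda^2$ up to a free-evolution term absorbed by the homogeneous estimate. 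On the high-modulation piece the operator $\imu \partial_t + \Delta$ is invertible with a gain of $\lambda^{-2}$, so $\|\cI_0[g_\lambda^{\mathrm{hi}}]\|_{L^2_{t,x}} \lesssim \lambda^{-2} \|g_\lambda^{\mathrm{hi}}\|_{L^2_{t,x}}$, and Bernstein's inequality ($L^2_x \hookrightarrow L^4_x$ at cost $\lambda$) together with the modulation weight $\big(\frac{\lambda + |\partial_t|}{\lambda^2 + |\partial_t|}\big)^a$ redistributes this to the required $L^\infty_t L^2_x$- and $C_{\leq (\frac{\lambda}{2^8})^2} L^2_t L^4_x$-bounds.

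\emph{Main obstacle.} The one genuinely delicate point is the high-modulation analysis of $\cI_0$: because $\cI_0[g](t) = -\imu \int_{t_0}^t e^{\imu(t-s)\Delta} g(s)\, ds$ carries a \emph{sharp} cutoff in time, it is not a space-time Fourier multiplier, so ``inverting $\imu\partial_t + \Delta$ to gain $\lambda^{-2}$'' is not literally applicable. The remedy I would use is the standard one: write the truncated integral as the untruncated Duhamel term $-\imu \int_\R e^{\imu(t-s)\Delta}\big(\one_{[t_0,\infty)} g\big)(s)\, ds$ --- now a genuine multiplier, to which modulation calculus applies --- plus a free Schr\"odinger evolution generated by the cutoff, which is absorbed into the already-proven homogeneous estimate. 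A milder subtlety is the $L^\infty_t L^2_x$-control at low temporal frequency, which is not a direct Strichartz inequality and instead relies on the interplay of the three norm components with the modulation thresholds $(\frac{\lambda}{2^8})^2$; this is precisely the structure engineered in~\cite[Section~2]{CHN23}, and as none of it is dimension-specific beyond the Strichartz numerology for $d = 4$, the lemma follows from there.
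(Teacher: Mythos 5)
Your proposal is correct in outline, but it takes a different route from the paper for the simple reason that the paper does not reprove this lemma at all: its entire proof is the observation that the estimates are Lemma~2.4 of \cite{CHN23}, combined with the norm identifications of Remark~\ref{rem:NormComp}. What you do instead is reconstruct the modulation analysis behind that citation, and your reconstruction is essentially the standard one: the reduction by the common factor $\lambda^s$, the homogeneous case (vanishing modulation, unitarity, endpoint Strichartz), the exact identity $(\imu\partial_t+\Delta)\cI_0[g_\lambda]=g_\lambda$ for the weighted $L^2_{t,x}$ component, Strichartz for the low-modulation input, and operator inversion for the high-modulation input; this buys a self-contained account, whereas the citation buys brevity and sidesteps the one genuinely delicate point, the sharp time truncation in $\cI_0$, which you correctly single out. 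Two caveats on how you handle that point. First, the remedy as written is off: $-\imu\int_\R e^{\imu(t-s)\Delta}\big(\one_{[t_0,\infty)}g\big)(s)\,\dd s$ is a free evolution of a single fixed profile, not a space--time multiplier plus Duhamel term; the correct decomposition uses the retarded integral $\int_{-\infty}^{t}$, or equivalently the device the present paper uses in the proof of Lemma~\ref{lem:LinEstimates}, see \eqref{eq:DuhamelLowHighMod}, where after the outer projection $C_{\leq(\frac{\lambda}{2^8})^2}$ only the free evolution of the boundary term $H(t_0)$ survives. Second, ``Bernstein plus the modulation weight redistributes'' glosses over the fact that high modulation does not force high temporal frequency (take $\tau\approx 0$, $|\xi|\sim\lambda$), and in that region the weight $\big(\tfrac{\lambda+|\partial_t|}{\lambda^2+|\partial_t|}\big)^a$ is only $\lambda^{-a}$; the actual argument splits the high-modulation input into temporal frequencies below and above $(\frac{\lambda}{2^8})^2$ and controls the former by the first component $\lambda^{s-2}\|P^{(t)}_{\leq(\frac{\lambda}{2^8})^2}F\|_{L^\infty_t L^2_x}$ of \eqref{eq:DefNsablambda} --- a component your sketch never invokes --- exactly as in \eqref{eq:EstDuhamelLSNsHighModLowTemp}--\eqref{eq:EstDuhamelLSNsHighModHighTemp} for the analogous lateral Strichartz bound. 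You acknowledge this by deferring the low-temporal-frequency $L^\infty_t L^2_x$ control to the structure of \cite[Section~2]{CHN23}, which is legitimate, but it means your argument, like the paper's, ultimately rests on that citation rather than replacing it.
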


\begin{proof}
 These estimates follow from Lemma~2.4 in~\cite{CHN23} and Remark~\ref{rem:NormComp}.
\end{proof}

Next, we show the compatibility between
lateral Strichartz spaces and adapted spaces. 
That is, we show that the linear Schr{\"o}dinger flow is controlled in the new $\Xs$-space by the inital datum in $H^s$ and the inhomogeneity in $\Gs$.

\begin{lemma}   [Control of linear Schr\"odinger flows in $\Xs$-spaces]
	\label{lem:LinEstimates}
	Let $(s,a) \in \{(\frac{1}{2},0),(1,\frac{1}{4})\}$, $f \in H^s(\R^4)$, $g \in \Gs$, and $u$ solve the linear Schr{\"o}dinger equation
	\begin{align*}
		\imu \partial_t u + \Delta u = g, \qquad u(t_0) = f.
	\end{align*}
	Then
	\begin{align*}
		\|u\|_{\Xs} \lesssim \|f\|_{H^s} +  \|g \|_{\Gs}.
	\end{align*}
\end{lemma}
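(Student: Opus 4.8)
The plan is to split $u$ into the homogeneous part $u_{\mathrm{hom}} = e^{\imu (t-t_0)\Delta} f$ and the inhomogeneous part $u_{\mathrm{inh}} = \cI_0[g]$, and to bound each in the two constituent pieces of the $\Xs$-norm separately: the adapted-space piece $\Ssa$ and the lateral Strichartz piece $\sum_j \lambda^{s+1/2} \|P_{\lambda,\vece_j} \ModN \,\cdot\|_{L^{\infty,2}_{\vece_j}}$ (for $\lambda > 1$). For the adapted-space component, both bounds are exactly Lemma~\ref{lem:LinFlowAdaptedSpaces}: $\|u_{\mathrm{hom}}\|_{\Ssa} \lesssim \|f\|_{H^s}$ and $\|u_{\mathrm{inh}}\|_{\Ssa} \lesssim \|g\|_{\Nsa} \lesssim \|g\|_{\Gs}$, after summing the dyadic pieces in $\ell^2$. (One has to be a little careful with the low frequency $\lambda = 1$, where $\Xs_1 = \Ssa_1$ and there is nothing extra to prove.) So the content of the lemma is entirely in the lateral Strichartz piece for high frequencies.

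For the homogeneous part at frequency $\lambda > 1$, I would first discard the modulation cutoff $\ModN$ (it is a bounded Fourier multiplier on $L^{\infty,2}_{\vece_j}$, or one can simply note $e^{\imu t\Delta} f_\lambda$ is supported on the paraboloid so modulation is comparable to $\lambda^2$ and the cutoff acts trivially after a harmless modification), apply the decomposition~\eqref{eq:DecompositionAngular} to write $P_\lambda f$ as a sum over $j$ of pieces that are also frequency-localized in the $\vece_j$ direction at scale $\lambda$, and then invoke the homogeneous local smoothing estimate, Lemma~\ref{lem:StrichartzLocalSmooth}\,\ref{it:HomLocalSmooth}: $\|e^{\imu t\Delta} P_{\lambda,\vece_j} h\|_{L^{\infty,2}_{\vece_j}} \lesssim \lambda^{-1/2}\|h\|_{L^2_x}$. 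This gives $\lambda^{s+1/2} \cdot \lambda^{-1/2}\|f_\lambda\|_{L^2} = \lambda^s \|f_\lambda\|_{L^2}$, and summing in $\ell^2_\lambda$ yields $\|f\|_{H^s}$. For the inhomogeneous part I would use the decomposition of $g$ in the definition of $\Gs$ as $g = g_1 + g_2$: the $g_1$ piece is routed through $\Ssa \hookrightarrow$ (local smoothing) — more precisely one needs that the $\Ssa_\lambda$-norm already controls $\lambda^{s+1/2}\|P_{\lambda,\vece_j}\ModN \cI_0[g_1]\|_{L^{\infty,2}_{\vece_j}}$; this should follow by writing $\cI_0[g_1]$ via its Duhamel kernel and using the homogeneous local smoothing estimate together with Minkowski, or more directly from the fact that $\ModN$-localized functions satisfy a local smoothing bound controlled by the $S$-norm (this is essentially built into how the $\Ssa$-spaces interact with the paraboloid geometry). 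The $g_2$ piece is exactly tailored to the lateral-Strichartz estimates: by~\eqref{eq:DecompositionAngular} again and then Lemma~\ref{lem:StrichartzLocalSmooth}\,\ref{it:InhomLocalSmooth}, $\|P_{\lambda,\vece_j}\cI_0[P_\lambda g_2]\|_{L^{\infty,2}_{\vece_j}} \lesssim \lambda^{-1}\|P_\lambda g_2\|_{L^{1,2}_{\vece_j}}$, so $\lambda^{s+1/2}$ times this is $\lambda^{s-1/2}\|P_\lambda g_2\|_{L^{1,2}_{\vece_j}}$, which after squaring and summing is $(\sum_j\sum_\lambda \lambda^{2s-1}\|P_\lambda g_2\|^2_{L^{1,2}_{\vece_j}})^{1/2}$ — precisely the term appearing in $\|g\|_{\Gs}$. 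Taking the infimum over decompositions $g = g_1 + g_2$ closes this.

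One also needs the cross-consistency that the $\Ssa$-bound on $\cI_0[g_1]$ simultaneously controls its lateral-Strichartz norm; I would handle this by observing that for a function $w$ with $\|w\|_{\Ssa_\lambda} < \infty$ one has $w_\lambda = e^{\imu t\Delta} w_\lambda(t_0) + \cI_0[(\imu\p_t + \Delta)w_\lambda]$ and then estimate: the free evolution by the homogeneous local smoothing step above (controlled by $\lambda^s\|w_\lambda(t_0)\|_{L^2} \lesssim \|w_\lambda\|_{\Ssa_\lambda}$), and the Duhamel term with error $F = (\imu\p_t + \Delta)w_\lambda$ via Lemma~\ref{lem:StrichartzLocalSmooth}\,\ref{it:InhomStrichartzLocalSmooth} or a direct Minkowski-plus-local-smoothing argument, bounding it by $\lambda^{-1/2}\|F\|_{L^2_{t,x}}$ times the appropriate power, which is again dominated by $\|w_\lambda\|_{\Ssa_\lambda}$ (note the $\lambda^{s-1}\|(\imu\p_t+\Delta)w_\lambda\|_{L^2_{t,x}}$ term sitting inside the $\Ssa_\lambda$-norm). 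The main obstacle, and the only genuinely delicate point, is precisely this last compatibility step — reconciling the modulation cutoff $\ModN$ with the lateral Strichartz spaces and making sure the $\Ssa$-norm alone suffices to bound the $L^{\infty,2}_{\vece_j}$-piece of a Duhamel term whose inhomogeneity is only controlled in $\Nsa$ (which has a low-modulation $L^2_t L^{4/3}_x$ component, not an $L^{1,2}_{\vece_j}$ component); this requires carefully exploiting that the output is modulation-restricted to $\leq (\lambda/2^8)^2$ and using a Strichartz-type estimate ($L^{4/3}$ dual exponent) feeding into local smoothing rather than the $L^{1,2}_{\vece_j} \to L^{\infty,2}_{\vece_j}$ estimate. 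I expect the bulk of the writeup to be organized around this decomposition $g = g_1 + g_2$ with the two halves treated by genuinely different mechanisms.
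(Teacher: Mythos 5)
Your overall architecture (homogeneous part via Lemma~\ref{lem:LinFlowAdaptedSpaces} and homogeneous local smoothing; inhomogeneous part via the decomposition $g=g_1+g_2$ in the definition of $\Gs$, with $g_2$ fed into Lemma~\ref{lem:StrichartzLocalSmooth}~\ref{it:InhomLocalSmooth}) matches the paper, and your identification of the dual $L^{4/3}$-Strichartz-into-local-smoothing mechanism (Lemma~\ref{lem:StrichartzLocalSmooth}~\ref{it:InhomLocalSmoothStrichartz}) is exactly how the paper treats the low-modulation part of $g_1$. But there is a genuine gap at the point you yourself flag as delicate: the $\Nsa_\lambda$-norm only controls $C_{\leq(\frac{\lambda}{2^8})^2}g_1$ in $L^2_tL^{4/3}_x$; the high-modulation portion $C_{>(\frac{\lambda}{2^8})^2}g_1$ is controlled only through the weighted $L^2_{t,x}$ component and the temporally low-frequency $L^\infty_tL^2_x$ component of $\Nsa_\lambda$, and neither of these can be inserted into the local smoothing estimates of Lemma~\ref{lem:StrichartzLocalSmooth} (they are not of the form $L^{1,2}_{\vece_j}$ or $L^{\tilde q'}_tL^{\tilde p'}_x$ with $(\tilde q,\tilde p)$ admissible). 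Your proposal contains no mechanism for this piece. The paper's proof needs a further, nonobvious idea here: writing the antiderivative $H=\partial_t^{-1}P^{(t)}_{>(\frac{\lambda}{2^8})^2}(e^{-\imu(\cdot)\Delta}g_\lambda)$ one finds that $C_{\leq(\frac{\lambda}{2^8})^2}\cI_0[C_{>(\frac{\lambda}{2^8})^2}g_\lambda]$ collapses to a free Schr\"odinger evolution of the boundary term $H(t_0)$ (the low temporal-frequency projection of $H$ vanishes), so the \emph{homogeneous} local smoothing estimate applies, followed by a temporal dyadic decomposition and Bernstein in time to convert $L^\infty_t$ into the weighted $L^2_{t,x}$ component of $\Nsa_\lambda$ (estimates \eqref{eq:EstDuhamelLSNsCl1}--\eqref{eq:EstDuhamelLSNsCl2} and \eqref{eq:EstDuhamelLSNsHighModLowTemp}--\eqref{eq:EstDuhamelLSNsHighModHighTemp}).

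Your proposed fallback — that the $\Ssa_\lambda$-norm of $\cI_0[g_1]$ alone controls its lateral component — cannot work: if it did, the lateral Strichartz part of the $\Xs_\lambda$-norm would be redundant, whereas the paper defines and estimates it separately precisely because it is not (cf.\ Lemma~\ref{lem:ProductNoiseInX}). Concretely, in your free-plus-Duhamel reduction the error $F=(\imu\partial_t+\Delta)w_\lambda$ is only known in (weighted) $L^2_{t,x}$, and the bound ``$\lambda^{-1/2}\|F\|_{L^2_{t,x}}$'' you invoke is not provided by parts~\ref{it:InhomLocalSmooth} or~\ref{it:InhomLocalSmoothStrichartz} of Lemma~\ref{lem:StrichartzLocalSmooth}, since $L^2_{t,x}$ embeds into neither $L^{1,2}_{\vece_j}$ nor a dual Strichartz space globally in time. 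A smaller omission: for the $g_2$ piece you only estimate the lateral component, but the full $\Xs_\lambda$-norm of $\cI_0[g_2]$ also contains the $\Ssa_\lambda$-part, which must itself be bounded by $\lambda^{s-\frac12}\|g_2\|_{L^{1,2}_{\vece_j}}$ — this is routine via Lemma~\ref{lem:StrichartzLocalSmooth}~\ref{it:InhomStrichartzLocalSmooth} and Bernstein (as in \eqref{eq:DuhamelSlLSl1}--\eqref{eq:DuhamelSlLSl2s12}), but it is not covered by your write-up as organized.
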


\begin{proof}
	From Lemmas~\ref{lem:StrichartzLocalSmooth} and \ref{lem:LinFlowAdaptedSpaces} we have 
	\begin{align*}
		\|e^{\imu t \Delta} f_\lambda\|_{\Ssa_\lambda} \lesssim \lambda^s \|f_\lambda\|_{L^2_x}, \qquad
		\lambda^{s + \frac{1}{2}} \| P_{\lambda, \vece} \ModN e^{\imu t \Delta}f_\lambda \|_{L^{\infty,2}_{\vece}} \lesssim \lambda^s \|f_\lambda\|_{L^2_x}
	\end{align*}
	for any $\vece \in \Sp^{3}$, which immediately implies
	\begin{align*}
		\|e^{\imu t \Delta} f\|_{\Xs} \lesssim \|f\|_{H^s}.
	\end{align*}

	For the remaining estimate
    \begin{equation*}
        \Big\|\int_{t_0}^t e^{\imu (t-t') \Delta} g(t') \dd t' \Big\|_{\X^s} \lesssim \|g\|_{\G^s},
    \end{equation*}
it is sufficient to show
	\begin{align}
		\Big\|\int_{t_0}^t e^{\imu (t-t') \Delta} g_\lambda(t') \dd t' \Big\|_{\Xs_\lambda} &\lesssim \sum_{j = 1}^4 \lambda^{s-\frac{1}{2}} \| g_\lambda\|_{L^{1,2}_{\vece_j}}, \label{eq:DuhamelLS} \\ 
  \Big\|\int_{t_0}^t e^{\imu (t-t') \Delta} g_\lambda(t') \dd t' \Big\|_{\Xs_\lambda} &\lesssim \|g_\lambda\|_{\Nsa_\lambda}  \label{eq:DuhamelNlambda}  
	\end{align}
 for all $\lambda > 1$ and~\eqref{eq:DuhamelNlambda} for $\lambda = 1$. The latter directly follows from Lemma~\ref{lem:LinFlowAdaptedSpaces} so that we only consider the case $\lambda > 1$ in the following.

    Let us start with the proof of~\eqref{eq:DuhamelLS}. We first consider the $\Ssa_\lambda$-component of the $\Xs_\lambda$-norm. For the Strichartz components we use Lemma~\ref{lem:StrichartzLocalSmooth}~\ref{it:InhomStrichartzLocalSmooth} and the decomposition~\eqref{eq:DecompositionAngular} to infer
	\begin{align}
		&\lambda^s \Big\|\int_{t_0}^t e^{\imu (t-t') \Delta} g_\lambda(t') \dd t' \Big\|_{L^\infty_t L^2_x} + \lambda^s \Big\|\ModN \int_{t_0}^t e^{\imu (t-t') \Delta} g_\lambda(t') \dd t' \Big\|_{L^2_t L^4_x} \nonumber\\
		&\lesssim \sum_{j = 1}^4 \Big(\lambda^s \Big\|\int_{t_0}^t e^{\imu (t-t') \Delta} P_{\lambda, \vece_j} g_\lambda(t') \dd t' \Big\|_{L^\infty_t L^2_x} + \lambda^s \Big\| \int_{t_0}^t e^{\imu (t-t') \Delta} P_{\lambda, \vece_j} g_\lambda(t') \dd t' \Big\|_{L^2_t L^4_x}\Big)  \nonumber  \\
        & \lesssim \sum_{j = 1}^4 \lambda^{s-\frac{1}{2}} \| g_\lambda\|_{L^{1,2}_{\vece_j}}. \label{eq:DuhamelSlLSl1}
	\end{align}
	For the remaining component of the $\Ssa_\lambda$-norm, we get by Bernstein's inequality
	\begin{align}
		\Big\|\Big(\frac{\lambda + |\partial_t|}{\lambda^2 + |\partial_t|} \Big)^{\frac{1}{4}} (\imu \partial_t + \Delta)\int_{t_0}^t e^{\imu (t-t') \Delta} g_\lambda(t') \dd t' \Big\|_{L^2_{t,x}}
		\lesssim \| g_\lambda\|_{L^2_{t,x}} \lesssim  \lambda^{\frac{1}{2}} \| g_\lambda\|_{L^{1,2}_{\vece_1}}  \label{eq:DuhamelSlLSl2s1}
	\end{align}
	in the case $(s,a) = (1,\frac{1}{4})$ and
	\begin{align}
		\lambda^{-\frac{1}{2}} \Big\|(\imu \partial_t + \Delta)\int_{t_0}^t e^{\imu (t-t') \Delta} g_\lambda(t') \dd t' \Big\|_{L^2_{t,x}}
		\lesssim \lambda^{-\frac{1}{2}} \| g_\lambda\|_{L^2_{t,x}} \lesssim  \| g_\lambda\|_{L^{1,2}_{\vece_1}} \label{eq:DuhamelSlLSl2s12}
	\end{align}
	in the case $(s,a) = (\frac{1}{2},0)$.
    To estimate the lateral Strichartz component, we apply Lemma~\ref{lem:StrichartzLocalSmooth}~\ref{it:InhomLocalSmooth} to derive
	\begin{align}
		\sum_{j = 1}^4 \lambda^{s + \frac{1}{2}} \Big\|P_{\lambda, \vece_j} \ModN \int_{t_0}^t e^{\imu (t-t') \Delta} g_\lambda(t') \dd t' \Big\|_{L^{\infty,2}_{\vece_j}}
		&\lesssim \sum_{j = 1}^4 \lambda^{s + \frac{1}{2}} \Big\|
          \int_{t_0}^t e^{\imu (t-t') \Delta} P_{\lambda, \vece_j}g_\lambda(t') \dd t' \Big\|_{L^{\infty,2}_{\vece_j}} \notag \\
        & \lesssim \sum_{j = 1}^4 \lambda^{s - \frac{1}{2}} \|g_\lambda\|_{L^{1,2}_{\vece_j}}. \label{eq:DuhamelLSlLSl}
	\end{align}
	The combination of~\eqref{eq:DuhamelSlLSl1} to~\eqref{eq:DuhamelLSlLSl} yields~\eqref{eq:DuhamelLS}.
 
	To prove~\eqref{eq:DuhamelNlambda}, we first note that
	\begin{align}
		\Big\|\int_{t_0}^t e^{\imu (t-t') \Delta} P_\lambda g(t') \dd t' \Big\|_{\Ssa_\lambda} \lesssim \|g_\lambda\|_{\Nsa_\lambda} \label{eq:DuhamelSlNl}
	\end{align}
	by Lemma~\ref{lem:LinFlowAdaptedSpaces}.
	For the remaining lateral Strichartz component of the $\Xs_\lambda$-norm, we have to show
	\begin{align}
    \label{eq:DuhamelLSNs}
		\lambda^{s + \frac{1}{2}} \Big\| P_{\lambda, \vece_j} C_{\leq (\frac{\lambda}{2^8})^2} \int_{t_0}^t e^{\imu (t-t') \Delta}   g_\lambda(t') \dd t' \Big\|_{L^{\infty,2}_{\vece_j}}
		\lesssim   \|g_\lambda\|_{\Nsa_\lambda}.
	\end{align}
    We recall that $\cI_0$ denotes the Duhamel integral for the Schr{\"o}dinger group. Splitting $g_\lambda$ in its low and high modulation part, we first get
    \begin{align}
        \lambda^{s + \frac{1}{2}} \| P_{\lambda, \vece_j} C_{\leq (\frac{\lambda}{2^8})^2} \cI_0[g_\lambda]\|_{L^{\infty,2}_{\vece_j}}
        &\leq \lambda^{s + \frac{1}{2}} \| P_{\lambda, \vece_j} C_{\leq (\frac{\lambda}{2^8})^2} \cI_0[C_{\leq (\frac{\lambda}{2^8})^2} g_\lambda] \|_{L^{\infty,2}_{\vece_j}} \nonumber\\
        &\qquad + \lambda^{s + \frac{1}{2}} \| P_{\lambda, \vece_j} C_{\leq (\frac{\lambda}{2^8})^2} \cI_0[C_{> (\frac{\lambda}{2^8})^2}  g_\lambda] \|_{L^{\infty,2}_{\vece_j}}. \label{eq:DuhamelLSNsSplit}
    \end{align}
    For the first term on the above right-hand side, we apply Lemma~\ref{lem:StrichartzLocalSmooth}~\ref{it:InhomLocalSmoothStrichartz} to derive
    \begin{align}
        \label{eq:DuhamelLSNsLowMod}
        \lambda^{s + \frac{1}{2}} \| P_{\lambda, \vece_j} C_{\leq (\frac{\lambda}{2^8})^2} \cI_0[ C_{\leq (\frac{\lambda}{2^8})^2} g_\lambda ]\|_{L^{\infty,2}_{\vece_j}}
        &\lesssim \lambda^{s + \frac{1}{2}} \| \cI_0[P_{\lambda, \vece_j} C_{\leq (\frac{\lambda}{2^8})^2} g_\lambda]\|_{L^{\infty,2}_{\vece_j}} \nonumber \\ 
        &\lesssim \lambda^s \| C_{\leq (\frac{\lambda}{2^8})^2} g_\lambda \|_{L^2_t L^{\frac{4}{3}}_x} \lesssim \| g_\lambda \|_{\Nsa_\lambda}.
    \end{align}
    To estimate the second term in~\eqref{eq:DuhamelLSNsSplit}, we first claim that
    \begin{align}
        \lambda^{s + \frac{1}{2}}\|P_{\lambda, \vece_j} C_{\leq (\frac{\lambda}{2^8})^2} \cI_0[ C_{> (\frac{\lambda}{2^8})^2} g_\lambda ]\|_{L^{\infty,2}_{\vece_j}} 
        &\lesssim \lambda^{s-2} \| C_{> (\frac{\lambda}{2^8})^2} g_\lambda \|_{L^\infty_t L^2_x}, \label{eq:EstDuhamelLSNsCl1} \\
        \lambda^{s + \frac{1}{2}}\|P_{\lambda, \vece_j} C_{\leq (\frac{\lambda}{2^8})^2} \cI_0[ C_{\sim \nu} C_{> (\frac{\lambda}{2^8})^2} g_\lambda ]\|_{L^{\infty,2}_{\vece_j}} 
        &\lesssim \lambda^{s} \nu^{-1} \| C_{> (\frac{\lambda}{2^8})^2} g_\lambda \|_{L^\infty_t L^2_x} \label{eq:EstDuhamelLSNsCl2}
    \end{align}
    for any $\nu > (\frac{\lambda}{2^8})^2$. Assuming these two estimates for the moment, we further split
    \begin{align*}
        C_{> (\frac{\lambda}{2^8})^2} g_\lambda = P^{(t)}_{\leq (\frac{\lambda}{2^8})^2} C_{> (\frac{\lambda}{2^8})^2} g_\lambda + P^{(t)}_{> (\frac{\lambda}{2^8})^2} C_{> (\frac{\lambda}{2^8})^2} g_\lambda.
    \end{align*}
    Employing~\eqref{eq:EstDuhamelLSNsCl1}, we estimate for the first summand
    \begin{align}
        \label{eq:EstDuhamelLSNsHighModLowTemp}
        \lambda^{s + \frac{1}{2}} \| P_{\lambda, \vece_j} C_{\leq (\frac{\lambda}{2^8})^2} \cI_0[P^{(t)}_{\leq (\frac{\lambda}{2^8})^2} C_{> (\frac{\lambda}{2^8})^2} g_\lambda ]\|_{L^{\infty,2}_{\vece_j}}
        &\lesssim \lambda^{s-2}\| C_{> (\frac{\lambda}{2^8})^2} P^{(t)}_{\leq (\frac{\lambda}{2^8})^2} g_\lambda\|_{L^\infty_t L^2_x} \nonumber\\
        &\lesssim \lambda^{s-2}\| P^{(t)}_{\leq (\frac{\lambda}{2^8})^2}  g_\lambda\|_{L^\infty_t L^2_x} \lesssim \| g_\lambda \|_{\Nsa_\lambda},
    \end{align}
    where we combined~\eqref{eq:DefNsablambda} and Remark~\ref{rem:NormComp} in the last step. 
    For the high temporal frequencies, we exploit~\eqref{eq:EstDuhamelLSNsCl2} and Bernstein's inequality to infer
    \begin{align}
        &\lambda^{s + \frac{1}{2}} \| P_{\lambda, \vece_j} C_{\leq (\frac{\lambda}{2^8})^2} \cI_0[P^{(t)}_{> (\frac{\lambda}{2^8})^2} C_{> (\frac{\lambda}{2^8})^2} g_\lambda ]\|_{L^{\infty,2}_{\vece_j}}
        \lesssim \lambda^{s + \frac{1}{2}} \sum_{\nu > (\frac{\lambda}{2^8})^2} \| P_{\lambda, \vece_j} C_{\leq (\frac{\lambda}{2^8})^2} \cI_0[P^{(t)}_{\nu} C_{\sim \nu} C_{> (\frac{\lambda}{2^8})^2} g_\lambda ]\|_{L^{\infty,2}_{\vece_j}} \nonumber\\
        &\lesssim \lambda^s \sum_{\nu > (\frac{\lambda}{2^8})^2} \nu^{-1} \| C_{> (\frac{\lambda}{2^8})^2} P^{(t)}_{\nu} g_\lambda \|_{L^\infty_t L^2_x} \lesssim \lambda^s \sum_{\nu > (\frac{\lambda}{2^8})^2} \nu^{-\frac{1}{2}} \| C_{> (\frac{\lambda}{2^8})^2} P^{(t)}_{\nu} g_\lambda \|_{L^2_t L^2_x} \nonumber \\
        &\lesssim \lambda^s \sum_{\nu > (\frac{\lambda}{2^8})^2} \nu^{-\frac{1}{2}} \Big\| \Big( \frac{\lambda + |\partial_t |}{\lambda^2 + |\partial_t|}\Big)^a g_\lambda \Big\|_{L^2_t L^2_x} \lesssim  \lambda^{s-1}  \Big\| \Big( \frac{\lambda + |\partial_t |}{\lambda^2 + |\partial_t|}\Big)^a g_\lambda \Big\|_{L^2_t L^2_x}
        \lesssim \| g_\lambda \|_{\Nsa}. \label{eq:EstDuhamelLSNsHighModHighTemp}
    \end{align}
    The combination of~\eqref{eq:EstDuhamelLSNsHighModLowTemp} and~\eqref{eq:EstDuhamelLSNsHighModHighTemp} with~\eqref{eq:DuhamelLSNsLowMod} yields~\eqref{eq:DuhamelLSNs}.

    It only remains to prove~\eqref{eq:EstDuhamelLSNsCl1} and~\eqref{eq:EstDuhamelLSNsCl2}. We first show~\eqref{eq:EstDuhamelLSNsCl1}. To that purpose, we recall the commutation relation
    \begin{align*}
        e^{\imu t \Delta} C_{> (\frac{\lambda}{2^8})^2} = P^{(t)}_{> (\frac{\lambda}{2^8})^2} e^{\imu t \Delta}
    \end{align*}
    and correspondingly for $C_{\leq  (\frac{\lambda}{2^8})^2}$. Hence, we can write
    \begin{align}
        P_{\lambda, \vece_j} C_{\leq (\frac{\lambda}{2^8})^2} \int_{t_0}^t e^{\imu (t-t') \Delta} C_{> (\frac{\lambda}{2^8})^2} g_\lambda(t') \dd t' 
        &= P_{\lambda, \vece_j} e^{\imu t \Delta} P^{(t)}_{\leq (\frac{\lambda}{2^8})^2} \int_{t_0}^t \partial_t \partial_t^{-1} P^{(t')}_{> (\frac{\lambda}{2^8})^2}(e^{-\imu t' \Delta}  g_\lambda(t')) \dd t' \nonumber \\
        &= P_{\lambda, \vece_j} e^{\imu t \Delta} P^{(t)}_{\leq (\frac{\lambda}{2^8})^2} (H(t) - H(t_0)), \label{eq:DuhamelLowHighMod}
    \end{align}
    where we set
    \begin{align*}
        H(t) = \partial_t^{-1} P^{(t)}_{> (\frac{\lambda}{2^8})^2}(e^{-\imu (\cdot) \Delta} g_\lambda)(t).
    \end{align*}
    Since 
    \begin{align*}
        P^{(t)}_{\leq (\frac{\lambda}{2^8})^2} H(t) = \partial_t^{-1} P^{(t)}_{\leq (\frac{\lambda}{2^8})^2} P^{(t)}_{> (\frac{\lambda}{2^8})^2}(e^{-\imu (\cdot) \Delta} g_\lambda)(t) = 0,
    \end{align*}
    we infer that~\eqref{eq:DuhamelLowHighMod} reduces to the free Schr{\"o}dinger solution $-P_{\lambda, \vece_j} e^{\imu t \Delta} H(t_0)$. Applying the local smoothing estimate from Lemma~\ref{lem:StrichartzLocalSmooth}~\ref{it:HomLocalSmooth}, we thus obtain
    \begin{align}
    \label{eq:EstLSLowHighMod}
       \lambda^{s + \frac{1}{2}} \| P_{\lambda, \vece_j} C_{\leq (\frac{\lambda}{2^8})^2} \cI_0[ C_{> (\frac{\lambda}{2^8})^2} g_\lambda] \|_{L^{\infty,2}_{\vece_j}}
       \lesssim \lambda^s \| H(t_0) \|_{L^2_x} \lesssim \lambda^s\| H \|_{L^\infty_t L^2_x}.
    \end{align}
    A computation yields the bound
    \begin{align*}
        \| H \|_{L^\infty_t L^2_x} \lesssim \lambda^{-2} \| e^{-\imu t \Delta} C_{> (\frac{\lambda}{2^8})^2}g_\lambda\|_{L^\infty_t L^2_x} \lesssim \lambda^{-2} \| C_{> (\frac{\lambda}{2^8})^2} g_\lambda \|_{L^\infty_t L^2_x}.
    \end{align*}
    In combination with~\eqref{eq:EstLSLowHighMod}, this estimate finally yields~\eqref{eq:EstDuhamelLSNsCl1}. The estimate~\eqref{eq:EstDuhamelLSNsCl2} follows along the same lines.
\end{proof}

The definition of the $\Y$-norm and~\cite[Lemma~2.6]{CHN23} also yield the following bound for the linear half-wave flow in the $\Y$-space.

\begin{lemma}   [Control of linear wave flows in $\Y$-space]
	\label{lem:LinearEstimateHalfWave}
	Let $g \in L^2(\R^4)$. Then one has
	\begin{align*}
		\|e^{\imu t |\nabla|} g\|_{\Y} \lesssim \|g\|_{L^2}.
	\end{align*}
\end{lemma}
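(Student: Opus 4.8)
The plan is to reduce everything to a single Littlewood--Paley block and then exploit that the free half-wave propagator $e^{\imu t|\nabla|}$ intertwines temporal and spatial Fourier multipliers, which trivializes each of the three terms in the $\WEner_\lambda$-norm. Since $\|\cdot\|_\Y=\|\cdot\|_{\WEner}$ is the $\ell^2$-sum over dyadic $\lambda$ of the blocks $\|P_\lambda\,\cdot\,\|_{\WEner_\lambda}$, and $P_\lambda$ commutes with $e^{\imu t|\nabla|}$, it is enough to establish
\[
	\|e^{\imu t|\nabla|} P_\lambda g\|_{\WEner_\lambda} \lesssim \|P_\lambda g\|_{L^2_x}
\]
uniformly in $\lambda$; summing in $\ell^2_\lambda$ and using the almost-orthogonality bound $\sum_\lambda \|P_\lambda g\|_{L^2_x}^2 \lesssim \|g\|_{L^2}^2$ then yields the assertion (and $e^{\imu t|\nabla|}g \in C(\R, L^2)$ since the propagator is strongly continuous on $L^2$).

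Fixing $\lambda$ and writing $v_\lambda := e^{\imu t|\nabla|} P_\lambda g$, I would handle the three contributions to $\|v_\lambda\|_{\WEner_\lambda}$ separately. For the $L^\infty_t L^2_x$ term, unitarity of $e^{\imu t|\nabla|}$ on $L^2_x$ gives $\|v_\lambda(t)\|_{L^2_x} = \|P_\lambda g\|_{L^2_x}$ for all $t$. For the energy-flux term, $v_\lambda$ solves $\partial_t v_\lambda = \imu|\nabla|v_\lambda$ exactly, so $(\imu\partial_t + |\nabla|)v_\lambda = 0$ and that contribution vanishes outright. For the middle term $\lambda^{-\frac14}\|(\lambda + |\partial_t|)^{\frac14}\TempN v_\lambda\|_{L^\infty_t L^2_x}$, the key observation is that $\partial_t v_\lambda = \imu|\nabla|v_\lambda$ forces the temporal frequency of $v_\lambda$ to coincide with $|\nabla|$, so for any bounded Borel function $\psi$ one has $\psi(|\partial_t|)v_\lambda = e^{\imu t|\nabla|}\psi(|\nabla|)P_\lambda g$. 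Choosing $\psi(r) = (\lambda + r)^{\frac14}\chi_{\leq(\lambda/2^8)^2}(r)$ and using unitarity once more reduces this term to $\lambda^{-\frac14}\|(\lambda + |\nabla|)^{\frac14}\chi_{\leq(\lambda/2^8)^2}(|\nabla|)P_\lambda g\|_{L^2_x}$, and on the Fourier support of $P_\lambda g$ (where $|\xi| \lesssim \lambda$) the symbol is $\lesssim \lambda^{\frac14}$, so this contribution is $\lesssim \|P_\lambda g\|_{L^2_x}$. Adding the three bounds finishes the block estimate.

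I do not expect a genuine obstacle here: the proof is essentially bookkeeping, and it is in fact the content of~\cite[Lemma~2.6]{CHN23} once one matches the $\WEner$-norm with the $W^{0,\frac14,\frac12}$-norm used there. The one step that deserves a little care is the passage from the temporal multiplier $(\lambda + |\partial_t|)^{\frac14}\TempN$ to a spatial multiplier; carrying it out through the identity $\partial_t v_\lambda = \imu|\nabla|v_\lambda$ (rather than as a Fourier-multiplier bound on the awkward space $L^\infty_t L^2_x$) is what makes the estimate immediate. It is also worth noting that the temporal cutoff $\TempN$ never hurts in the upper bound---for small $\lambda$ it in fact annihilates $v_\lambda$ entirely, since the temporal frequency $|\xi|\sim\lambda$ then lies above $(\lambda/2^8)^2$.
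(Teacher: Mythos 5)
Your proof is correct. The paper offers no argument of its own here -- it simply invokes \cite[Lemma~2.6]{CHN23} after matching the $\Y$-norm with the $W^{0,\frac14,\frac12}$-norm -- and your block-wise reduction (unitarity for the $L^\infty_t L^2_x$ term, the exact relation $(\imu\partial_t+|\nabla|)e^{\imu t|\nabla|}P_\lambda g=0$ for the flux term, and the intertwining $\psi(|\partial_t|)e^{\imu t|\nabla|}P_\lambda g=e^{\imu t|\nabla|}\psi(|\nabla|)P_\lambda g$ for the $(\lambda+|\partial_t|)^{\frac14}\TempN$ term) is precisely the direct verification behind that citation, so the two arguments coincide in substance.
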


\subsection{Control of nonlinearity and noise terms}  \label{Subsec-Esti-Nonl}

We next provide  bilinear estimates for the nonlinearities.
\begin{lemma} [Bilinear estimates] 	\label{lem:BilinearEstimates}
	\begin{enumerate}
		\item[]
		\item \label{it:BilinearEstNonendpoint} (Energy regularity) There exist a parameter $\theta \in (0,1)$ and constant $C > 0$ such that for any interval $I \subseteq \R$ we have
	\begin{align}
		\|\Re(v) u\|_{\NOne(I)}
       &\leq C \|v\|_{\Y(I) + L^2_t W^{1,4}_x(I \times \R^4)} \|u\|_{\SOne(I)}, \label{eq:Bilinvu}\\
		\|\cJ_0[|\nabla| ({u} w)]\|_{\Y(I)}
        &\leq C (\|u\|_{\SOne(I)} \|w\|_{\SOne(I)})^{1-\theta} (\|u\|_{L^2_t L^{4}_x(I \times \R^4)} \|w\|_{L^2_t L^{4}_x(I \times \R^4)})^{\theta}. \label{eq:Bilinnablauu}
	\end{align}
	\item \label{it:BilinearEstEndpoint}(Endpoint regularity) There exists a constant $C > 0$ such that for any interval $I \subseteq \R$ we have
	\begin{align}
		\|\Re(v) u\|_{\NHalf(I)} &\leq C \| v \|_{W^{0,0,0}(I)} \|u\|_{D(I)}^{\frac{1}{2}} \|u\|_{\SHalf(I)}^{\frac{1}{2}}, \label{eq:BilinvuEndpoint} \\
		\|\cJ_0[|\nabla| (u w)]\|_{W^{0,0,0}} &\leq C \Big( \|u\|_{D(I)} \|w\|_{D(I)} \Big)^{\frac{1}{2}} \Big( \|u\|_{\SHalf(I)} \|w\|_{\SHalf(I)} \Big)^{\frac{1}{2}}. \label{eq:BilinnablauwEndpoint}
	\end{align}
	 \end{enumerate}
\end{lemma}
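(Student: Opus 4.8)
The four estimates in Lemma~\ref{lem:BilinearEstimates} are essentially the deterministic bilinear estimates underpinning the local theory of \cite{CHN21,CHN23}, transcribed into the function spaces used here; the only genuinely new ingredient is the $L^2_tW^{1,4}_x$-summand in the wave norm of~\eqref{eq:Bilinvu}, which is needed later in order to absorb the stochastic convolution term $\Re(\cT_t(W_2))u$. By the definition~\eqref{eq:DefRestrictionNorm} of the restriction norms it suffices to prove the estimates globally on $\R$ and then pass to infima over extensions; this reduction is harmless because, although $\X^s$ and $\G^s$ are built from the intricate adapted and lateral Strichartz spaces, the quantities appearing on both sides of~\eqref{eq:Bilinvu}--\eqref{eq:BilinnablauwEndpoint} involve only the adapted $\Ssa$- and $\Nsa$-norms, the wave spaces $W^{0,\alpha,\beta}$, and classical Strichartz spaces, none of which carry a lateral component. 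The common engine is the paraproduct splitting $fg=(fg)_{LH}+(fg)_{HH}+(fg)_{HL}$ applied to $\Re(v)u$ and to $uw$, together with the norm characterizations in Remark~\ref{rem:NormComp}.

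\textbf{Energy regularity.} For~\eqref{eq:Bilinnablauu} I would run the paraproduct decomposition of $uw$: in the $LH$ and $HL$ pieces the output frequency is comparable to the larger input frequency, whereas in the $HH$ piece it is summed over the (dyadically larger, mutually comparable) input frequencies, and the factor $|\na|$ acting at a low output frequency is compensated by an off-diagonal bilinear gain coming from the transversality of the two high-frequency interactions. This is exactly the bilinear wave estimate of \cite{CHN23} (cf.\ also \cite{CHN21}), and the exponent $\theta\in(0,1)$ is produced by interpolating this dyadic gain against the trivial bound, which simultaneously makes the frequency sum converge and leaves a positive power of $\|u\|_{L^2_tL^4_x}\|w\|_{L^2_tL^4_x}$ on the right. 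For~\eqref{eq:Bilinvu} I would write $v=v_1+v_2$ with $v_1\in\Y$ and $v_2\in L^2_tW^{1,4}_x$. The bound $\|\Re(v_1)u\|_{\NOne}\lesssim\|v_1\|_{\Y}\|u\|_{\SOne}$ is the deterministic bilinear estimate of \cite{CHN23} at energy regularity $(s,l)=(1,0)$. For the $v_2$-contribution I would estimate each component of the $\NOne$-norm directly: after a further paraproduct splitting, the modulation-truncated $L^2_tL^{4/3}_x$-piece is treated by H\"older (in the low-high interaction $\|P_{\le\lambda}v_2\,P_\lambda u\|_{L^2_tL^{4/3}_x}\le\|v_2\|_{L^2_tL^4_x}\|P_\lambda u\|_{L^\infty_tL^2_x}$, while in the high-low and high-high interactions the $W^{1,4}_x$-regularity of $v_2$ cancels the derivative weight), and the weighted $L^2_{t,x}$-piece is bounded simply by $\|v_2u\|_{L^2_{t,x}}\le\|v_2\|_{L^2_tL^4_x}\|u\|_{L^\infty_tL^4_x}$ using $\dot H^1(\R^4)\subset L^4(\R^4)$ and $\|u\|_{L^\infty_tH^1_x}+\|u\|_{L^2_tL^4_x}\lesssim\|u\|_{\SOne}$; Littlewood--Paley together with Cauchy--Schwarz then closes the $\ell^2$-sum over dyadic frequencies.

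\textbf{Endpoint regularity.} At the endpoint, $D(I)=L^2_tW^{\frac12,4}_x$ is the minimal-regularity Strichartz space and $\SHalf$, $\WEndp=W^{0,0,0}$ are the corresponding minimal adapted norms, so there is no longer any room in the frequency balance. Running the paraproduct decomposition of $\Re(v)u$ (resp.\ $uw$) and placing the outcome into $\NHalf$ (resp.\ $\WEndp$) now forces the use of the full $D$-norm on one factor (resp.\ on both), and the geometric-mean structure $\|u\|_{D(I)}^{1/2}\|u\|_{\SHalf(I)}^{1/2}$ emerges by interpolating the endpoint Strichartz bound against the $\SHalf$-bound, after first using the second part of Remark~\ref{rem:NormComp} to reduce the high-modulation portions of the factors to their $\SHalf$-norms and isolating the low-modulation portion on which the Strichartz estimates of Lemma~\ref{lem:StrichartzLocalSmooth} apply; apart from this interpolation, these are the bilinear estimates underlying the endpoint well-posedness theory of \cite{CHN23}.

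\textbf{Main obstacle.} I expect the principal difficulty, in both~\eqref{eq:Bilinnablauu} and~\eqref{eq:BilinnablauwEndpoint}, to be the high--high-to-low frequency interaction in the wave nonlinearity $\cJ_0[|\na|(uw)]$: the derivative $|\na|$ falls on the low output frequency while both inputs oscillate at a much higher, mutually comparable frequency, so Bernstein's inequality is too lossy and one must instead exploit the transversality of the high-frequency interactions through the bilinear estimates in the adapted spaces of \cite{CHN21,CHN23}, and then organize the resulting off-diagonal dyadic sums so that they converge with a quantifiable power gain $\theta$ at the energy level and with a clean $\ell^2$-summation at the endpoint. A secondary, bookkeeping-type point is to check that the new $v_2\in L^2_tW^{1,4}_x$-summand is compatible with \emph{every} component of the $\NOne$-norm --- in particular with the weighted $L^2_{t,x}$-modulation term --- without sacrificing the factor $\|u\|_{\SOne}$.
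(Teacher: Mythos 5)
Your overall architecture coincides with the paper's: estimates \eqref{eq:Bilinnablauu}, \eqref{eq:BilinvuEndpoint}, \eqref{eq:BilinnablauwEndpoint} and the $\Y$-part of \eqref{eq:Bilinvu} are quoted from \cite{CHN23} (Corollary~4.2, Propositions~6.1 and~6.2, Theorem~3.1 in $d=4$), and the only genuinely new point is the $L^2_tW^{1,4}_x$-summand in \eqref{eq:Bilinvu}. It is exactly in this one hands-on step that your argument has a gap. In your paraproduct treatment of $\Re(v_2)u$ with $v_2\in L^2_tW^{1,4}_x$, the high-low piece (where $v_2$ carries the high frequency and $u$ the low one) gives, after Hölder, $\lambda\|P_\lambda(\Re(v_2)u)_{HL}\|_{L^2_tL^{4/3}_x}\lesssim\sum_{\mu\sim\lambda}\mu\|P_\mu v_2\|_{L^2_tL^4_x}\|u\|_{L^\infty_tL^2_x}$, so closing the $\ell^2_\lambda$-sum requires $\big(\sum_\mu\mu^2\|P_\mu v_2\|^2_{L^2_tL^4_x}\big)^{1/2}$, i.e.\ an $L^2_tB^1_{4,2}$-bound on $v_2$. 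This is \emph{not} controlled by $\|v_2\|_{L^2_tW^{1,4}_x}$: since the Lebesgue exponent is $4>2$, Minkowski and Littlewood--Paley only give $B^1_{4,2}\hookrightarrow W^{1,4}\hookrightarrow B^1_{4,4}$, the opposite of what you need, and the low-frequency factor $u$ offers no dyadic decay to trade, so "Littlewood--Paley together with Cauchy--Schwarz" does not close this interaction (the high-high piece has a similar issue as you wrote it, though there one can at least borrow $\ell^2$-summability from the $u$-factor). The weighted $L^2_{t,x}$-modulation component that you treat via $\|v_2u\|_{L^2_{t,x}}\le\|v_2\|_{L^2_tL^4_x}\|u\|_{L^\infty_tL^4_x}$ is fine (almost orthogonality of the $P_\lambda$ handles that sum), so the obstruction sits only in the $\lambda\|C_{\leq(\frac{\lambda}{2^8})^2}P_\lambda(\cdot)\|_{L^2_tL^{4/3}_x}$ component of $\NOne_\lambda$.

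The paper sidesteps this by never decomposing $v_2$ dyadically: extending by zero and using Bernstein, one first shows $\|g\|_{\NOne(I)}\lesssim\|g\|_{L^2_tW^{1,\frac{4}{3}}_x(I\times\R^4)}$ --- legitimate because the output exponent $\frac{4}{3}<2$ makes the dyadic square-sum comparison favorable ($W^{1,\frac43}\hookrightarrow B^1_{\frac43,2}$) --- and then the elementary, non-dyadic product estimate $\|\Re(v_2)u\|_{L^2_tW^{1,\frac43}_x}\lesssim\|v_2\|_{L^2_tW^{1,4}_x}\|u\|_{L^\infty_tH^1_x}\lesssim\|v_2\|_{L^2_tW^{1,4}_x}\|u\|_{\SOne(I)}$ finishes \eqref{eq:Bilinvu}. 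If you insist on your frequency-by-frequency route, the high-low piece would need a vector-valued square-function/maximal-function paraproduct argument rather than naive dyadic Hölder; the paper's two-line reduction makes this unnecessary. Your sketches of how \eqref{eq:Bilinnablauu}--\eqref{eq:BilinnablauwEndpoint} would be derived (transversality in the high-high-to-low output, interpolation producing $\theta$, the geometric-mean structure at the endpoint) are plausible heuristics but are not re-proved here: as in your own reduction, the paper simply cites them as black boxes from \cite{CHN23}.
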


\begin{proof}
	To prove~\eqref{eq:Bilinvu}, we note that~\cite[Theorem~3.1]{CHN23} yields
 \begin{align*}
     \|\Re(v) u\|_{\NOne(I)}
       &\lesssim \|v\|_{\Y(I)} \|u\|_{\SOne(I)}
 \end{align*}
 so that we only have to show
 \begin{align*}
     \|\Re(v) u\|_{\NOne(I)}
       &\lesssim \|v\|_{ L^2_t W^{1,4}_x(I \times \R^4)} \|u\|_{\SOne(I)}.
 \end{align*}
 To that purpose, we observe that, extending $g$ by $0$ from $I$ to $\R$ and applying Bernstein's inequality, we get
 \begin{align*}
     \| g \|_{\NOne(I)} \lesssim \| g\|_{L^2_t W^{1,\frac{4}{3}}_x(I \times \R^4)}.
 \end{align*}
 An elementary product estimate thus gives
 \begin{align*}
      \| \Re(v) u \|_{\NOne(I)} &\lesssim \| \Re(v) u\|_{L^2_t W^{1,\frac{4}{3}}_x(I \times \R^4)} \lesssim \| v \|_{L^2_t W^{1,4}_x(I \times \R^4)} \| u \|_{L^\infty_t H^1(I \times \R^4)} \lesssim \| v \|_{L^2_t W^{1,4}_x(I \times \R^4)} \| u \|_{\SOne(I)},
 \end{align*}
 which finishes the proof of~\eqref{eq:Bilinvu}.
 
 Estimate~\eqref{eq:Bilinnablauu} is a direct consequence of~\cite[Corollary~4.2]{CHN23}.
	Estimates~\eqref{eq:BilinvuEndpoint} and~\eqref{eq:BilinnablauwEndpoint} are the estimates from Propositions~6.1 and 6.2 in~\cite{CHN23}, respectively, in the case $d = 4$.
\end{proof}

The presence of noise gives rise to several lower order terms,
particularly,
including derivative terms  
that are usually hard for Schr\"odinger flows. 
The following estimates are important to control these terms 
in the new functional spaces.  

\begin{lemma} [Control of noise terms]  \label{lem:BilinearLowerOrder} 
Let $I \subseteq \R$ be a finite interval.
	\begin{enumerate}
	\item \label{it:ControlNoiseG1}  
We have the estimates  
	\begin{align}
		\| b \cdot \nabla u \|_{\GOne(I)} &\lesssim \Big(|I|^{\frac{1}{2}} \|b\|_{L^\infty_t H^2_x} + \sum_{j = 1}^4 \|b\|_{L^{1,\infty}_{\vece_j}}^{\frac{1}{2}}(\|b\|_{L^{1,\infty}_{\vece_j}}^{\frac{1}{2}} + \|b\|_{L^{\infty}_{t,x}}^{\frac{1}{2}})\Big) \|u\|_{\XOne(I)}, \label{eq:Bilinbnablau} \\
		\| c u\|_{\GOne(I)} &\lesssim |I|^{\frac{1}{2}} \|c\|_{L^\infty_t H^2_x} \|u\|_{\XOne(I)}, \label{eq:Bilincu} \\
		\| \cT_{\cdot}(W_2) u \|_{\GOne(I)} &\lesssim |I|^{\frac{1}{2}} \|\cT_{\cdot}(W_2)\|_{L^\infty_t H^2_x} \|u\|_{\XOne(I)}. \label{eq:BilincTu}
	\end{align}
	\item \label{it:ControlNoiseN12}  
Moreover, we have 
	\begin{align} 
		&\|(b \cdot \nabla u)_{HL + HH} + c u - \Re(\cT_t(W_2)) u\|_{\NHalf(I)} \nonumber \\
		&\qquad \lesssim |I|^{\frac{1}{2}}(\|b\|_{L^\infty_t H^2_x} + \|c\|_{L^\infty_t H^2_x} + \|\Re(\cT_t(W_2))\|_{L^\infty_t H^2_x}) \|u\|_{L^\infty_t H^1_x}. \label{eq:BilinNoiseN12} 
	\end{align}
In the above estimates all the space-time norms are taken over $I \times \R^4$.
	\end{enumerate}
\end{lemma}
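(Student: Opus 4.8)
The plan is to reduce every term to dyadic Littlewood--Paley and paraproduct analysis, and in each case to decide whether the output is placed into the $\Nsa$-component of the $\Gs$-norm (the ``$F_1$-slot'') or into the lateral $L^{1,2}_{\vece_j}$-component (the ``$F_2$-slot''). A preliminary observation is that all norms of the coefficients $b$, $c$, $\cT_{\cdot}(W_2)$ on the right-hand sides involve only $L^\infty$ in time (either $L^\infty_t H^2_x$, or the lateral $L^{1,\infty}_{\vece_j}$ and $L^\infty_{t,x}$, which are $L^\infty$ in the time slot); hence the low temporal regularity of the driving noise is irrelevant here, all the estimates are ``frozen-in-time'' product inequalities, and the factors $|I|^{\frac12}$ come from H\"older's inequality $\|\cdot\|_{L^2_t(I)}\le|I|^{\frac12}\|\cdot\|_{L^\infty_t(I)}$. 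As in the proof of Lemma~\ref{lem:BilinearEstimates}, after extending by zero we will also use the Bernstein-type embeddings $\|g\|_{\NOne(I)}\lesssim\|g\|_{L^2_t W^{1,4/3}_x(I\times\R^4)}$ and $\|g\|_{\NHalf(I)}\lesssim\|g\|_{L^2_t W^{1/2,4/3}_x(I\times\R^4)}$, the second being proved exactly like the first.

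For the ``soft'' contributions in part~\ref{it:ControlNoiseG1}, namely $cu$, $\cT_{\cdot}(W_2)u$, and the high--low and high--high parts $(b\cdot\nabla u)_{HL}$, $(b\cdot\nabla u)_{HH}$ (in which $b$ carries a high frequency, so no problematic derivative acts on a high frequency of $u$), I would put everything into the $F_1$-slot and bound the $\NOne(I)$-norm by $\|\cdot\|_{L^2_t W^{1,4/3}_x(I)}$. For $cu$ and $\cT_{\cdot}(W_2)u$ this is the fractional Leibniz rule together with the H\"older pairing $L^4_x\cdot L^2_x\hookrightarrow L^{4/3}_x$ and the critical Sobolev embedding $H^2(\R^4)\hookrightarrow W^{1,4}(\R^4)$, which converts $\|c\|_{L^\infty_t H^2_x}$, $\|\cT_{\cdot}(W_2)\|_{L^\infty_t H^2_x}$ into control of $\|c\|_{L^\infty_t W^{1,4}_x}$, etc.; integrating in $t$ over $I$ produces $|I|^{\frac12}$ and $\|u\|_{L^\infty_t H^1_x}\le\|u\|_{\XOne(I)}$. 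For $(b\cdot\nabla u)_{HL}$ the derivative lands on $P_{\le\lambda/2^8}u$ and is harmless; for $(b\cdot\nabla u)_{HH}$ the derivative costs a factor $\sim\mu$ on $P_\mu u$ which is more than absorbed by the $\mu^{-2}$ gain hidden in $\|P_\mu b\|_{L^\infty_t L^2_x}$ from $\|b\|_{L^\infty_t H^2_x}$; in both cases one H\"older step and a routine Schur/Cauchy--Schwarz summation over dyadic scales close the bound. Part~\ref{it:ControlNoiseN12} follows by the very same scheme with the $\NHalf$/$W^{1/2,4/3}$ embedding replacing the $\NOne$/$W^{1,4/3}$ one, and is in fact strictly easier because its left-hand side contains only these soft terms --- the low--high interaction $(b\cdot\nabla u)_{LH}$ is deliberately omitted there, as it is exactly the term for which the full $\XOne$/local-smoothing machinery is needed.

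The crux is therefore the low--high piece $(b\cdot\nabla u)_{LH}=\sum_\lambda P_{\le\lambda/2^8}b\cdot\nabla P_\lambda u$ in $\GOne(I)$, which I would route into the $F_2$-slot. I split $P_\lambda u=\ModN P_\lambda u+\ModF P_\lambda u$ into low and high modulation. On the low-modulation part I use the angular decomposition~\eqref{eq:DecompositionAngular} to write $P_\lambda u$ as a sum of four pieces, the $l$-th localized in the direction $\vece_l$; for that piece I take $\vece_j=\vece_l$ in the $F_2$-norm and apply H\"older in the lateral variable, getting $\|P_{\le\lambda/2^8}b\,\partial_i P_{\lambda,\vece_l}(\cdots)\|_{L^{1,2}_{\vece_l}}\lesssim\|b\|_{L^{1,\infty}_{\vece_l}}\,\lambda\,\|P_{\lambda,\vece_l}\ModN u\|_{L^{\infty,2}_{\vece_l}}$, where the $\lambda$ is the cost of $\partial_i$ at frequency $\lambda$. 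Against the weight $\lambda^{2s-1}=\lambda$ in the $F_2$-part this is exactly $\|b\|_{L^{1,\infty}_{\vece_l}}$ times the local-smoothing component $\lambda^{s+\frac12}\|P_{\lambda,\vece_l}\ModN u\|_{L^{\infty,2}_{\vece_l}}$ of $\|u\|_{\XOne_\lambda}$, and the $\ell^2_\lambda$-sum gives the first term on the right of~\eqref{eq:Bilinbnablau}. On the high-modulation part one has the gain $\|\ModF P_\lambda u\|_{L^2_{t,x}}\lesssim\lambda^{-2}\|(\imu\partial_t+\Delta)P_\lambda u\|_{L^2_{t,x}}$, which, weighed against the multiplier $\big(\frac{\lambda+|\partial_t|}{\lambda^2+|\partial_t|}\big)^{1/4}$ appearing in $\|u\|_{\SOne_\lambda}$, leaves a net decay in $\lambda$ after accounting for the derivative and the $\lambda^{\frac12}$ weight; putting the output into $F_2$ and using H\"older $L^{2,\infty}_{\vece_j}\cdot L^2_{t,x}\hookrightarrow L^{1,2}_{\vece_j}$ together with the interpolation bound $\|b\|_{L^{2,\infty}_{\vece_j}}\le\|b\|_{L^\infty_{t,x}}^{\frac12}\|b\|_{L^{1,\infty}_{\vece_j}}^{\frac12}$ produces exactly the remaining geometric-mean term in~\eqref{eq:Bilinbnablau}, the surplus power of $\lambda$ being absorbed by the $\ell^2_\lambda$-sum.

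I expect the main obstacle to be precisely this interplay between the modulation decomposition and the lateral Strichartz spaces for $(b\cdot\nabla u)_{LH}$ --- it has no analogue in~\cite{CHN23}: one must split into low/high modulation, send each piece to the correct slot of $\GOne$ (local smoothing for low modulation, an $L^{2,\infty}_{\vece}$-H\"older bound for high modulation), and verify that the powers of $\lambda$ generated by the derivative, by Bernstein, and by the modulation gain balance against the weights $\lambda^{s+\frac12}$ in $\XOne_\lambda$ and $\lambda^{2s-1}$ in $\GOne$. Everything else is a routine combination of Sobolev and H\"older product inequalities with dyadic bookkeeping once the correct routing has been chosen.
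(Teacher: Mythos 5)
Your overall routing is the same as the paper's: the terms $cu$, $\cT_{\cdot}(W_2)u$ and $(b\cdot\nabla u)_{HL+HH}$ go into the $\Nsa$-slot via the frequency-localized Bernstein bound and H\"older/Sobolev, part~(ii) is the $\NHalf$-analogue, and the critical piece $(b\cdot\nabla u)_{LH}$ goes into the lateral $L^{1,2}_{\vece_j}$-slot after a modulation split, with the local smoothing component of $\XOne$ absorbing the low modulations and the equation (modulation gain $\lambda^{-2}$ against the weight $\bigl(\frac{\lambda+|\partial_t|}{\lambda^2+|\partial_t|}\bigr)^{1/4}$) absorbing the high ones. Your power counting in both modulation regimes is consistent with the paper's, and your high-modulation step (H\"older $L^{2,\infty}_{\vece_j}\cdot L^2_{t,x}\to L^{1,2}_{\vece_j}$ plus $\|b\|_{L^{2,\infty}_{\vece_j}}\leq\|b\|_{L^\infty_{t,x}}^{1/2}\|b\|_{L^{1,\infty}_{\vece_j}}^{1/2}$) is exactly estimate \eqref{eq:bnablauLS2}.

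There is, however, a genuine gap in your low-modulation estimate. You propose to "take $\vece_j=\vece_l$ in the $F_2$-norm", i.e. to choose the lateral direction of the $\GOne$-norm to match the direction $\vece_l$ of the angular decomposition \eqref{eq:DecompositionAngular}. But in the definition of $\|\cdot\|_{\Gs}$ the infimum is over a single decomposition $F=F_1+F_2$, and the $F_2$-component is then measured in $L^{1,2}_{\vece_j}$ for \emph{all} $j=1,\dots,4$; you cannot select one favorable $j$. For $j\neq l$ your single H\"older step gives $\|b\|_{L^{1,\infty}_{\vece_j}}\,\lambda\,\|P_{\lambda,\vece_l}[\cdots]\ModN u_\lambda\|_{L^{\infty,2}_{\vece_j}}$, and the mismatched norm $\|P_{\lambda,\vece_l}\ModN u_\lambda\|_{L^{\infty,2}_{\vece_j}}$ is not a component of $\|u\|_{\XOne_\lambda}$ (the local smoothing piece is tied to matching projection and lateral directions); converting it via Bernstein in the $\vece_j$-direction costs $\lambda^{1/2}$ and then $L^2_{t,x}$ of $u_\lambda$ costs another $\lambda^{1/2}|I|^{1/2}$ against $L^\infty_tL^2_x$, so the estimate loses a full power of $\lambda$ and does not close. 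The paper handles exactly this direction mismatch by writing $b=|b|^{1/2}\,|b|^{1/2}$ and H\"oldering the two halves in two \emph{different} lateral directions: one half in $L^{2,\infty}_{\vece_j}$ (external direction of the $F_2$-slot), passing through the direction-free quantity $\||b|^{1/2}|C_{\leq(\frac{\mu}{2^8})^2}P_\mu\nabla u|\|_{L^2_{t,x}}$, and then the other half in $L^{2,\infty}_{\vece_l}$ against the local smoothing norm $\|P_{\mu,\vece_l}C_{\leq(\frac{\mu}{2^8})^2}P_\mu\nabla u\|_{L^{\infty,2}_{\vece_l}}$; this is precisely the origin of the geometric-mean structure $\|b\|^{1/2}_{L^{1,\infty}_{\vece_j}}\|b\|^{1/2}_{L^{1,\infty}_{\vece_l}}$ in \eqref{eq:Bilinbnablau}. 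You already use this splitting device in the high-modulation regime; you need it (or an equivalent way to decouple the external lateral direction from the angular decomposition direction) in the low-modulation regime as well.
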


\begin{proof}
	\ref{it:ControlNoiseG1}
 Let us start with the proof of estimate~\eqref{eq:Bilinbnablau}.
 Writing $b \cdot \nabla u = (b \cdot \nabla u)_{HL} + (b \cdot \nabla u)_{HH} + (b \cdot \nabla u)_{LH}$ and extending $b$ and $u$ by $0$ from $I$ to $\R$, 
 the definition of the $\GOne(I)$-norm implies
 \begin{align}
 \label{eq:bnablauG}
 	\|b \cdot \nabla u\|_{\GOne(I)} \lesssim \| P_1(b \cdot \nabla u)\|_{\NOne_1} + \Big(\sum_{\lambda \in 2^\N} \|P_\lambda (b \cdot \nabla u)_{HL + HH}\|_{\NOne_\lambda}^2\Big)^{\frac{1}{2}} + \sum_{j = 1}^4 \Big(\sum_{\lambda \in 2^\N} \lambda \|P_\lambda (b \cdot \nabla u)_{LH}\|_{L^{1,2}_{\vece_j}}^2\Big)^{\frac{1}{2}}.
 \end{align}
 Since by Bernstein's inequality,
 \begin{align*}
    \|P_\lambda g\|_{\NOne_\lambda} \lesssim \lambda \|P_\lambda g\|_{L^2_t L^{\frac{4}{3}}_x},
 \end{align*}
 we obtain
 \begin{align*}
 	\|P_\lambda (b \cdot \nabla u)_{HL}\|_{\NOne_\lambda}
    &\lesssim \sum_{\mu \sim \lambda} \mu \|P_\mu b P_{\leq \frac{\mu}{2^8}} \nabla u\|_{L^2_t L^{\frac{4}{3}}_x} \\
 	&\lesssim \sum_{\mu \sim \lambda} \mu \| P_\mu b\|_{L^2_t L^4_x} \|P_{\leq \frac{\mu}{2^8}} \nabla u\|_{L^\infty_t L^2_x} \\
    &\lesssim \|u\|_{\XOne} \sum_{\mu \sim \lambda} \mu \|P_\mu b\|_{L^2_t L^4_x}.
 \end{align*}
 Summing up, we conclude
 \begin{align}
 	\Big(\sum_{\lambda \in 2^\N} \|P_\lambda (b \cdot \nabla u)_{HL}\|_{\NOne_\lambda}^2\Big)^{\frac{1}{2}}
 	\lesssim \|b\|_{L^2_t B^{1}_{4,2}} \|u\|_{\XOne} \lesssim |I|^{\frac{1}{2}} \|b\|_{L^\infty_t H^2_x} \|u\|_{\XOne}. \label{eq:bnablauNl}
 \end{align}

 The usual adaptions yield the same estimate for the $HH$-component of $b \cdot \nabla u$. Since $P_1 (b \cdot \nabla u) = P_1 (b \cdot \nabla u)_{HH}$, we also obtain this estimate for the first term on the right-hand side of~\eqref{eq:bnablauG} in this way.

 For the third summand in~\eqref{eq:bnablauG},
 we use the H\"older inequality and decompose the modulation to derive
 \begin{align}
 	\lambda^{\frac{1}{2}} \|P_\lambda (b \cdot \nabla u)_{LH}\|_{L^{1,2}_{\vece_j}}
 	& \lesssim \sum_{\mu \sim \lambda} \mu^{\frac{1}{2}} \|P_{\leq \frac{\mu}{2^8}} b P_\mu \nabla u\|_{L^{1,2}_{\vece_j}} \notag \\
 	& \lesssim \mu^{\frac{1}{2}} \sum_{\mu \sim \lambda}\||P_{\leq \frac{\mu}{2^8}} b|^{\frac{1}{2}}\|_{L^{2,\infty}_{\vece_j}}  \mu^{\frac{1}{2}} \||P_{\leq \frac{\mu}{2^8}} b|^{\frac{1}{2}} |P_\mu \nabla u| \|_{L^2_{t,x}}     	\label{eq:bnablauLS} \\
 	&\lesssim \|b\|_{L^{1,\infty}_{\vece_j}}^{\frac{1}{2}} \sum_{\mu \sim \lambda} \mu^{\frac{1}{2}}
    \left(\||P_{\leq \frac{\mu}{2^8}} b|^{\frac{1}{2}} |C_{\leq (\frac{\mu}{2^8})^2}P_\mu \nabla u| \|_{L^2_{t,x}} + \||P_{\leq \frac{\mu}{2^8}} b|^{\frac{1}{2}} |C_{> (\frac{\mu}{2^8})^2}P_\mu \nabla u| \|_{L^2_{t,x}} \right). \notag
 \end{align}
 For the low-modulation contribution on the right-hand side,
 we use \eqref{eq:DecompositionAngular} to infer
 \begin{align}
 	&\|b\|_{L^{1,\infty}_{\vece_j}}^{\frac{1}{2}} \sum_{\mu \sim \lambda} \mu^{\frac{1}{2}} \||P_{\leq \frac{\mu}{2^8}} b|^{\frac{1}{2}} |C_{\leq (\frac{\mu}{2^8})^2}P_\mu \nabla u| \|_{L^2_{t,x}} \nonumber\\
 	&\lesssim \|b\|_{L^{1,\infty}_{\vece_j}}^{\frac{1}{2}} \sum_{\mu \sim \lambda} \sum_{l = 1}^4 \mu^{\frac{1}{2}} \Big\||P_{\leq \frac{\mu}{2^8}} b|^{\frac{1}{2}} \Big|P_{\mu, \vece_l} \Big[\prod_{k = 1}^{l-1} (I - P_{\mu, \vece_k})\Big] C_{\leq (\frac{\mu}{2^8})^2}P_\mu \nabla u \Big| \Big\|_{L^2_{t,x}} \nonumber\\
 	&\lesssim \|b\|_{L^{1,\infty}_{\vece_j}}^{\frac{1}{2}} \sum_{\mu \sim \lambda} \sum_{l = 1}^4 \mu^{\frac{1}{2}} \||P_{\leq \frac{\mu}{2^8}} b|^{\frac{1}{2}}\|_{L^{2,\infty}_{\vece_l}} \|P_{\mu, \vece_l} C_{\leq (\frac{\mu}{2^8})^2}P_\mu \nabla u \|_{L^{\infty,2}_{\vece_l}} \nonumber\\
 	&\lesssim \max_{j = 1, \ldots, 4} \|b\|_{L^{1,\infty}_{\vece_j}}  \sum_{\mu \sim \lambda} \sum_{l = 1}^4 \mu^{\frac{3}{2}} \|P_{\mu, \vece_l} C_{\leq (\frac{\mu}{2^8})^2}P_\mu u \|_{L^{\infty,2}_{\vece_l}}
 	\lesssim \sum_{j = 1}^4 \|b\|_{L^{1,\infty}_{\vece_j}} \sum_{\mu \sim \lambda} \|u_\mu\|_{\XOne_\mu}. \label{eq:bnablauLS1}
 \end{align}
 For the high-modulation contribution in~\eqref{eq:bnablauLS} we derive
 \begin{align}
 	&\|b\|_{L^{1,\infty}_{\vece_j}}^{\frac{1}{2}} \sum_{\mu \sim \lambda} \mu^{\frac{1}{2}}  \||P_{\leq \frac{\mu}{2^8}} b|^{\frac{1}{2}} |C_{> (\frac{\mu}{2^8})^2}P_\mu \nabla u| \|_{L^2_{t,x}}   \notag \\
 	&\lesssim \|b\|_{L^{1,\infty}_{\vece_j}}^{\frac{1}{2}} \|b\|_{L^{\infty}_{t,x}}^{\frac{1}{2}} \sum_{\mu \sim \lambda} \mu^{\frac{3}{2}}  \|C_{> (\frac{\mu}{2^8})^2}P_\mu  u \|_{L^2_{t,x}} \nonumber\\
 	&\lesssim \|b\|_{L^{1,\infty}_{\vece_j}}^{\frac{1}{2}} \|b\|_{L^{\infty}_{t,x}}^{\frac{1}{2}} \sum_{\mu \sim \lambda} \mu^{-\frac{1}{2}}  \|C_{> (\frac{\mu}{2^8})^2}(\imu \partial_t + \Delta) P_\mu  u \|_{L^2_{t,x}} \nonumber \\
 	&\lesssim \|b\|_{L^{1,\infty}_{\vece_j}}^{\frac{1}{2}} \|b\|_{L^{\infty}_{t,x}}^{\frac{1}{2}} \sum_{\mu \sim \lambda}   \Big\|\Big(\frac{\mu + |\partial_t|}{\mu^2 + |\partial_t|}\Big)^{\frac{1}{4}}(\imu \partial_t + \Delta) P_\mu  u \Big\|_{L^2_{t,x}}   \notag \\
 	&\lesssim \|b\|_{L^{1,\infty}_{\vece_j}}^{\frac{1}{2}} \|b\|_{L^{\infty}_{t,x}}^{\frac{1}{2}} \sum_{\mu \sim \lambda}   \|u_\mu\|_{\XOne_\mu}. \label{eq:bnablauLS2}
 \end{align}
 Combining~\eqref{eq:bnablauLS} to~\eqref{eq:bnablauLS2}, we obtain
 \begin{align}
 	\sum_{j = 1}^4 \Big(\sum_{\lambda \in 2^\N} \lambda \| P_\lambda (b \cdot \nabla u)_{LH} \|_{L^{1,2}_{\vece_j}}^2 \Big)^{\frac{1}{2}}
 	\lesssim \sum_{j = 1}^4 \|b\|_{L^{1,\infty}_{\vece_j}}^{\frac{1}{2}} (\|b\|_{L^{1,\infty}_{\vece_j}}^{\frac{1}{2}} + \|b\|_{L^{\infty}_{t,x}}^{\frac{1}{2}}) \|u\|_{\XOne}.  \label{eq:bnablauLS3}
 \end{align}
 Inserting~\eqref{eq:bnablauNl} and~\eqref{eq:bnablauLS3} into~\eqref{eq:bnablauG}, we arrive at~\eqref{eq:Bilinbnablau}.

 Estimates~\eqref{eq:Bilincu} and~\eqref{eq:BilincTu} follow in the same way as~\eqref{eq:bnablauNl}.

 \ref{it:ControlNoiseN12} We first note that an application of Bernstein's inequality shows that for every $\mu \sim \lambda$ we have
 \begin{align*}
     \| P_\mu g \|_{\NHalf_\lambda} \lesssim \lambda^{\frac{1}{2}} \| P_\mu g \|_{L^2_t L^{\frac{4}{3}}_x}.
 \end{align*}
 Consequently, we derive
 \begin{align*}
     \| P_\lambda (b \cdot \nabla u)_{HL} \|_{\NHalf_\lambda} \lesssim \lambda^{\frac{1}{2}}\sum_{\mu \sim \lambda} \| P_\mu b \cdot P_{\leq \frac{\mu}{2^8}} \nabla u \|_{L^2_t L^{\frac{4}{3}}_x}
     \lesssim \lambda^{\frac{3}{2}} \sum_{\mu \sim \lambda} \| P_\mu b\|_{L^2_t L^{2}_x} \| \nabla u\|_{L^\infty_t L^2_x}.
 \end{align*}
 Taking the $l^2$-sum in $\lambda$ and extending $(b \cdot \nabla u)_{HL}$ by $0$ from $I$ to $\R$, we arrive at
 \begin{align*}
     \| (b \cdot \nabla u)_{HL} \|_{\NHalf(I)} \lesssim \| b \|_{L^2_t H^{\frac{3}{2}}_x(I \times \R^4)} \| u \|_{L^\infty_t H^1_x(I \times \R^4)} \lesssim |I|^{\frac{1}{2}} \| b \|_{L^\infty_t H^2_x(I \times \R^4)} \| u \|_{L^\infty_t H^1_x(I \times \R^4)}.
 \end{align*}
 The standard adaptions yield the same estimate for $(b \cdot \nabla u)_{HH}$. For the remaining terms we simply estimate
 \begin{align*}
     \| c u \|_{\NHalf} \lesssim \| c u \|_{L^2_t B^{\frac{1}{2}}_{\frac{4}{3},2}} \lesssim \| c \|_{L^2_t B^{\frac{1}{2}}_{4,2}} \| u \|_{L^\infty_t H^{\frac{1}{2}}_x} \lesssim \| c \|_{L^2_t H^2_x} \| u \|_{L^\infty_t H^1_x}.
 \end{align*}
 Extending $c u$ by $0$ from $I$ to $\R$ again, we get
 \begin{align*}
     \| c u \|_{\NHalf(I)} \lesssim \| c \|_{L^2_t H^2_x(I \times \R^4)} \| u \|_{L^\infty_t H^1_x(I \times \R^4)} \lesssim |I|^{\frac{1}{2}}  \| c \|_{L^\infty_t H^2_x(I \times \R^4)} \| u \|_{L^\infty_t H^1_x(I \times \R^4)}.
 \end{align*}
 The term $\Re(\cT_t(W_2))$ is treated analogously. The combination of these estimates implies~\eqref{eq:BilinNoiseN12}.
\end{proof}

\section{Product estimate for rescaling transforms} 
\label{Sec-Product}

In this section we show that the rescaling transforms are bounded maps on the $\XOne$-space.
The first step in this direction is the following lemma.

\begin{lemma}   [Product estimate for rescaling transforms]
	\label{lem:ProductNoiseInX}
	Let $\sigma \in [0,\infty)$, $I \subseteq \R$ be a bounded interval, and $u \in \XOne(I)$.
    Then, $e^{\pm W_1(\sigma)} u$ belongs to $\XOne(I)$ and we have
			\begin{align*}
				\|e^{\pm W_1(\sigma)} u\|_{\XOne(I)}
               \lesssim (1 +  \|e^{\pm W_1(\sigma)} - 1\|_{H^4}(1 + |I|^{\frac{1}{2}})) \|u\|_{\XOne(I)}.
			\end{align*}
\end{lemma}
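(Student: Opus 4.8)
The plan is to reduce the statement to a multiplication estimate by the \emph{fixed} spatial function $w:=e^{\pm W_1(\sigma)}-1$, and then to estimate each building block of the $\XOne$-norm after a paraproduct decomposition. Since $\{\phi^{(1)}_k\}\subset H^4(\R^4)$ by Hypothesis~(H), the random variable $W_1(\sigma)=\sum_k\imu\phi^{(1)}_k\beta^{(1)}_k(\sigma)$ lies in $H^4(\R^4)$ $\bbp$-a.s., and as $H^4(\R^4)$ is a Banach algebra we get $w\in H^4(\R^4)$. Writing $e^{\pm W_1(\sigma)}u=u+wu$ and observing that $w$ is a multiplier on $H^1$, it suffices to prove $\|wu\|_{\XOne(I)}\lesssim\|w\|_{H^4}(1+|I|^{\frac12})\|u\|_{\XOne(I)}$; membership $e^{\pm W_1(\sigma)}u\in\XOne(I)$ then follows from finiteness of the right-hand side together with continuity in $H^1$. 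To handle the finite-interval bookkeeping cleanly, I would fix a smooth cutoff $\chi$ with $\chi\equiv1$ on $I$, $\supp\chi\subseteq J$ for some interval $J\supseteq I$ with $|J|\leq|I|+2$ and $\|\chi\|_{C^k}\lesssim1$, pick an extension $u'\in\XOne(\R)$ of $u$ with $\|u'\|_{\XOne(\R)}\leq2\|u\|_{\XOne(I)}$, and use $w(\chi u')$ (which restricts to $wu$ on $I$) as a competitor in~\eqref{eq:DefRestrictionNorm}. Using that multiplication by such a $\chi$ is bounded on $\XOne(\R)$ with an \emph{absolute} constant (because $\chi'$ is supported on a set of measure $O(1)$; this is of the same flavour as the decomposability property of Appendix~\ref{Sec-Decom}), everything reduces to proving $\|wF\|_{\XOne(\R)}\lesssim\|w\|_{H^4}(1+|J|^{\frac12})\|F\|_{\XOne(\R)}$ for $F\in\XOne(\R)$ with time support in the bounded interval $J$.

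For the $\SOne$-part of the $\XOne$-norm I would use $H^4(\R^4)\hookrightarrow W^{1,\infty}(\R^4)$ and $\Delta w\in H^2(\R^4)\hookrightarrow L^4(\R^4)$. The $L^\infty_tL^2_x$- and the $\ModN$-projected $L^2_tL^4_x$-pieces are controlled by $\|wF\|_{L^\infty_tH^1_x}\lesssim\|w\|_{W^{1,\infty}}\|F\|_{L^\infty_tH^1_x}$ and $\|wF\|_{L^2_tW^{1,4}_x}\lesssim\|w\|_{W^{1,\infty}}\|F\|_{L^2_tW^{1,4}_x}\lesssim\|w\|_{W^{1,\infty}}\|F\|_{\SOne}$, the last bound following from~\eqref{eq:DefS1lambda} as in Remark~\ref{rem:NormComp}. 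For the modulation piece I would use the Leibniz identity
\[
(\imu\partial_t+\Delta)(wF)=w(\imu\partial_t+\Delta)F+2\nabla w\cdot\nabla F+(\Delta w)F,
\]
noting \emph{crucially} that $w$ is time-independent, so the temporal multiplier $\bigl(\tfrac{\lambda+|\partial_t|}{\lambda^2+|\partial_t|}\bigr)^{1/4}$ commutes with multiplication by $w$. The first term is handled by a paraproduct decomposition in $w$: for the low-$w$/high-$F$ interaction the weight with $\lambda$ matches the natural weight of $(\imu\partial_t+\Delta)F_{\sim\lambda}$, giving $\lesssim\|w\|_{L^\infty}\|F\|_{\SOne}$; the high-$w$ interactions are absorbed by Bernstein's inequality together with the extra smoothness $w\in H^4$ (a couple of derivatives to spare upon $\ell^2$-summation). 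The term $(\Delta w)F$ is bounded in $L^2_{t,x}$ by $\|\Delta w\|_{L^4}\|F\|_{L^2_tL^4_x}\lesssim\|w\|_{H^4}\|F\|_{\SOne}$ (the multiplier being $\lesssim1$). The term $2\nabla w\cdot\nabla F$ is bounded in $L^2_{t,x}$ by $\|\nabla w\|_{L^\infty}\|\nabla F\|_{L^2_{t,x}(J)}$, and here $\|\nabla F\|_{L^2_{t,x}(J)}\lesssim(1+|J|^{\frac12})\|F\|_{\SOne}$ since $\|F_\lambda\|_{L^2_{t,x}(J)}\lesssim(1+|J|^{\frac12})\lambda^{-1}\|F_\lambda\|_{\SOne_\lambda}$ (splitting into $\ModN$/$\ModF$ and using Hölder in time on $J$) — this is one source of the $|I|^{\frac12}$.

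For the lateral Strichartz part $\lambda^{\frac32}\|P_{\lambda,\vece_j}\ModN P_\lambda(wF)\|_{L^{\infty,2}_{\vece_j}}$ I would again paraproduct-decompose in $w$. When $w$ sits at low frequency ($\leq\lambda/2^8$), the output projector $P_{\lambda,\vece_j}$ forces the relevant part of $F$ to have $\vece_j$-frequency $\sim\lambda$, so the term reduces to $\lesssim\|w\|_{L^\infty}\lambda^{\frac32}\|(\text{fattened }P_{\lambda,\vece_j})\ModN F_\lambda\|_{L^{\infty,2}_{\vece_j}}\lesssim\|w\|_{L^\infty}\|F_\lambda\|_{\XOne_\lambda}$, up to a harmless $\ModF$-correction absorbed by the modulation piece of $\SOne_\lambda$; here one uses that operators with $L^1_{t,x}$-kernel such as $P_{\lambda,\vece_j}\ModN\tilde P_\lambda$ are bounded on $L^{\infty,2}_{\vece_j}$, and that the local smoothing bounds of Lemma~\ref{lem:StrichartzLocalSmooth} are insensitive to the precise dyadic direction-frequency as long as it is comparable to $\lambda$. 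When $w$ sits at frequency $\gtrsim\lambda/2^8$, the output projector now constrains the $\vece_j$-frequency of $w$, and I would place the time-independent factor $w$ in the lateral space $L^{\infty,2}_{\vece_j}$ — which costs a factor $|J|^{\frac12}$ precisely because that norm is $L^2$ in time and $w$ does not depend on $t$ — while placing the low-frequency factor of $F$ in $L^\infty_{t,x}$ via Bernstein; the resulting powers of the frequencies are then absorbed by the $H^4$-smoothness of $w$ upon $\ell^2$-summation, producing the factor $|J|^{\frac12}\lesssim1+|I|^{\frac12}$. The case $\lambda=1$ carries no lateral component and is covered by the $\SOne_1$-estimates above.

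The main obstacle is exactly this lateral Strichartz component. The naive route — bounding the lateral norm of the product $P_\lambda(wF)$ by Bernstein in the $\vece_j$-direction starting from $L^2_{t,x}$ — loses too many powers of $\lambda$ and fails to be $\ell^2$-summable; one must instead exploit the output projector $P_{\lambda,\vece_j}$ to see that only the direction-aligned part of $F$ (when $w$ is low frequency) or of $w$ (when $w$ is high frequency) contributes, and then, in the high-$w$ regime, accept the $|J|^{\frac12}$ loss from inserting a stationary function into a lateral space with an $L^2$-in-time component. Making the frequency bookkeeping in this regime close within $\|w\|_{H^4}$, and verifying the commutation of the modulation weight with multiplication by the time-independent $w$, are the delicate points.
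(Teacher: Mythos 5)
Your overall architecture is sensible and close in spirit to the paper's: you put a unit-scale time cutoff on an extension of $u$ (this is indeed bounded on $\XOne(\R)$, by exactly the estimates proved for Lemma~\ref{lem:DecompX}), so that the multiplier $w_0=e^{\pm W_1(\sigma)}-1$ stays time-independent and no $\partial_t$-of-cutoff or fractional-in-time Leibniz terms arise (the paper instead cuts off $w_0$ in time and pays for it with $\partial_t\rho$ terms and the fractional product rule of \cite{CHN23}); your Leibniz/paraproduct treatment of the modulation-weighted piece, your HL/HH lateral bookkeeping, and the $|I|^{\frac12}$ accounting are all fine. But two of your asserted reductions fail as written. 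First, the Strichartz components of $\SOne$ are not controlled by the crude product bounds you invoke: the norm requires $\big(\sum_\lambda\lambda^2\|P_\lambda(w_0F)\|_{L^\infty_tL^2_x}^2\big)^{1/2}$, which is not dominated by $\|w_0F\|_{L^\infty_tH^1_x}$ (the $\ell^2_\lambda$-sum sits outside the $L^\infty_t$), and, more seriously, the chain $\|w_0F\|_{L^2_tW^{1,4}_x}\lesssim\|w_0\|_{W^{1,\infty}}\|F\|_{L^2_tW^{1,4}_x}\lesssim\|F\|_{\SOne}$ is false at the last step: $\SOne$ only contains the modulation-localized piece $\lambda\|\ModN F_\lambda\|_{L^2_tL^4_x}$, and a single frequency-$\lambda$ block with temporal frequency $O(1)$ (hence modulation $\sim\lambda^2$ and weight $\sim\lambda^{-\frac14}$) shows $\|F_\lambda\|_{L^2_tW^{1,4}_x}$ can exceed $\|F_\lambda\|_{\SOne_\lambda}$ by a factor $\lambda^{\frac14}$. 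Closing this component forces you to keep the output projection $\ModN$ and transfer it onto $F$, which is precisely what the commutator argument \eqref{eq:ProductNoiseXStrichartzEndpoint}--\eqref{eq:ProductNoiseXStrichartzEndpointCommutator} accomplishes (alternatively, one must exploit that output modulation $\leq(\frac{\lambda}{2^8})^2$ together with $w_0$ at spatial frequency $\leq\frac{\lambda}{2^8}$ restricts the input modulation to $\ll\lambda^2$, which at spatial frequency $\sim\lambda$ forces temporal frequency $\sim\lambda^2$, where the weight is $O(1)$); neither step appears in your plan.

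Second, in the lateral low-high estimate you replace the norm's exact operator by a ``fattened'' direction/modulation projection of $F$ and assert this is controlled by $\|F_\lambda\|_{\XOne_\lambda}$, justifying it by the insensitivity of the bounds in Lemma~\ref{lem:StrichartzLocalSmooth} to the precise dyadic direction-frequency. That lemma concerns free evolutions, whereas $F$ is a general element of the space, and $\XOne_\lambda$ contains only the specific quantity $\lambda^{\frac32}\|P_{\lambda,\vece_j}\ModN P_\lambda F\|_{L^{\infty,2}_{\vece_j}}$; lateral norms with mismatched direction or modulation cutoffs are not automatically dominated by it, and your ``harmless $\ModF$-correction'' runs into the same $\lambda^{\frac14}$ loss as above unless the output-modulation constraint is used. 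The clean repair is again the paper's device: commute the \emph{entire} operator $P_{\lambda,\vece_j}\ModN P_\lambda$ past the low-frequency factor, so that exactly the norm piece lands on $F$, and estimate the commutator through its rescaled kernel, which costs $\lambda^{-\frac12}\|w_0\|_{H^4}\|\tilde P_\lambda F\|_{L^\infty_tL^2_x}$ and sums. With these two commutator steps inserted, your (time-independent $w_0$) variant of the proof does go through and is in fact slightly simpler than the paper's; without them, the proposal has genuine gaps at the adapted-space Strichartz piece and the lateral LH piece.
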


\begin{proof}
By the definition of $\XOne(I)$, there exist extensions of $u$ which belong to $\XOne(\R)$.
We fix such an extension and also denote it by $u$ to ease the notation.
Let $\rho \in C_c^\infty(\R)$ be such that $\rho(t) = 1$ for $t \in I$ and $\rho(t) = 0$ for $t \notin I + [-1,1]$. We further set
\begin{align*}
	w_0 = e^{\pm W_1(\sigma)} - 1 \qquad \text{and} \qquad w(t) = \rho(t) w_0
\end{align*}
and note that $w_0 \in H^4(\R^4)$ as $W_1(\sigma) \in H^4(\R^4)$.

We claim that $w u$ belongs to $\XOne(\R)$ and
\begin{equation}
\label{eq:EstwuRealline}
	\|w u \|_{\XOne(\R)} \lesssim \|w_0\|_{H^4} (1 + |I|^{\frac{1}{2}}) \|u\|_{\XOne(\R)}.
\end{equation}
Since $(w u)_{|I} = w_0 u$, this claim implies that $w_0 u \in \XOne(I)$ and
\begin{align*}
	\|w_0 u\|_{\XOne(I)} \lesssim \|w_0\|_{H^4}(1 + |I|^{\frac{1}{2}}) \|u\|_{\XOne(I)},
\end{align*}
since $u$ is an arbitrary extension of $u$ in $\XOne(\R)$.
This yields the assertion of the lemma as $u \in \XOne(I)$.

Below we focus on the proof of \eqref{eq:EstwuRealline}
and consider the lateral Strichartz and adapted spaces separately.

\medskip
\paragraph{\bf  $\bullet$ Lateral Strichartz space component.}
We start with the lateral Strichartz space component of the $\XOne(\R)$-norm. For that component it is sufficient to show
	\begin{align}
		\label{eq:ProductNoiseLocalSmoothing}
		\Big( \sum_{\lambda \in 2^\N} \lambda^3 \|P_{\lambda, \vece_j} C_{\leq (\frac{\lambda}{2^8})^2} P_\lambda (w u)\|_{L^{\infty,2}_{\vece_j}}^2 \Big)^{\frac{1}{2}}
	   &\lesssim   \|w_0\|_{H^4_x} \Big( \sum_{\lambda \in 2^\N} \lambda^3 \|P_{\lambda, \vece_j} C_{\leq (\frac{\lambda}{2^8})^2} P_\lambda u\|_{L^{\infty,2}_{\vece_j}}^2 \Big)^{\frac{1}{2}} \notag \\
      &\quad + (1   + |I|^{\frac{1}{2}}) \|w_0\|_{H^4_x} \|u\|_{L^\infty_t H^1_x}.
	\end{align}

	To that purpose, we decompose
	\begin{align}
	\label{eq:ProductNoiseParaProductDecomp}
		w u &= \sum_{\lambda \in 2^{\N}} P_\lambda w P_{\leq \frac{\lambda}{2^8}} u + \sum_{\lambda \in 2^{\N}} \sum_{\mu \sim \lambda} P_\lambda w P_\mu u + \sum_{\lambda \in 2^\N} P_{\leq \frac{\lambda}{2^8}} w P_\lambda u   \notag \\
        &=: (wu)_{HL} + (wu)_{HH} + (wu)_{LH},
	\end{align}
	where $\mu \sim \lambda$ means $|\log_2(\frac{\mu}{\lambda})| \leq 7$.

For the high-low contribution we estimate via Bernstein's inequality
	\begin{align}
	\label{eq:ProductNoiseLocalSmoothingHL}
		 \Big( \sum_{\lambda \in 2^\N} \lambda^3 \|P_{\lambda, \vece_j} C_{\leq (\frac{\lambda}{2^8})^2}
        P_\lambda (w u)_{HL}\|_{L^{\infty,2}_{\vece_j}}^2 \Big)^{\frac{1}{2}}
         & \lesssim \Big( \sum_{\lambda \in 2^\N} \lambda^{4} \| P_\lambda w P_{\leq \frac{\lambda}{2^8}} u\|_{L^2_{t,x}}^2 \Big)^{\frac{1}{2}} \nonumber \\
		&\lesssim \Big( \sum_{\lambda \in 2^\N} \lambda^{4} \| P_\lambda w \|_{L^2_t L^\infty_x}^2
           \| P_{\leq \frac{\lambda}{2^8}} u\|_{L^\infty_t L^2_x}^2 \Big)^{\frac{1}{2}} \nonumber\\
		& \lesssim |I|^{\frac{1}{2}} \| u \|_{L^\infty_t L^2_x} \Big( \sum_{\lambda \in 2^\N} \lambda^8 \| P_\lambda w_0\|_{L^2_x}^2 \Big)^{\frac{1}{2}} \notag \\
		& \lesssim |I|^{\frac{1}{2}} \|w_0\|_{H^4} \| u \|_{L^\infty_t L^2_x}.
	\end{align}
	The usual adaptions yield the same estimate for the high-high contribution $(wu)_{HH}$.

   It remains to estimate the low-high contribution $(wu)_{LH}$.
   We first note that
	\begin{align}
	\label{eq:ProductNoiseLocalSmoothingLH}
		P_{\lambda, \vece_j} C_{\leq (\frac{\lambda}{2^8})^2} P_\lambda (w u)_{LH} = \sum_{\frac{\lambda}{2} \leq \mu \leq 2 \lambda} P_{\lambda, \vece_j} C_{\leq (\frac{\lambda}{2^8})^2} P_\lambda (P_{\leq \frac{\mu}{2^8}} w P_\mu u).
	\end{align}		
	We fix a dyadic number $\mu \in \{\frac{\lambda}{2}, \lambda, 2\lambda\}$ and write
	\begin{align}
		\label{eq:ProductNoiseLocalSmoothingIntroCommutator}
			P_{\lambda, \vece_j} C_{\leq (\frac{\lambda}{2^8})^2} P_\lambda (P_{\leq \frac{\mu}{2^8}} w P_\mu u)
    = & P_{\leq \frac{\mu}{2^8}} w P_{\lambda, \vece_j} C_{\leq (\frac{\lambda}{2^8})^2} P_\lambda P_\mu u \notag \\
    &+ \(P_{\lambda, \vece_j} C_{\leq (\frac{\lambda}{2^8})^2} P_\lambda (P_{\leq \frac{\mu}{2^8}} w P_\mu u) - P_{\leq \frac{\mu}{2^8}} w  P_{\lambda, \vece_j} C_{\leq (\frac{\lambda}{2^8})^2} P_\lambda P_\mu u\).
	\end{align}
	For the first term on the right-hand side of \eqref{eq:ProductNoiseLocalSmoothingIntroCommutator} we get
	\begin{align}
	\label{eq:ProductNoiseLocalSmoothingLHEstMain}
		\|P_{\leq \frac{\mu}{2^8}} w P_{\lambda, \vece_j} C_{\leq (\frac{\lambda}{2^8})^2} P_\lambda P_\mu u\|_{L^{\infty,2}_{\vece_j}} &\lesssim \|P_{\leq \frac{\mu}{2^8}} w \|_{L^\infty_{t,x}} \|P_{\lambda, \vece_j} C_{\leq (\frac{\lambda}{2^8})^2} P_\lambda P_\mu u\|_{L^{\infty,2}_{\vece_j}} \nonumber \\
		&\lesssim \|w_0\|_{H^4_x} \|P_{\lambda, \vece_j} C_{\leq (\frac{\lambda}{2^8})^2} P_\lambda u\|_{L^{\infty,2}_{\vece_j}}.
	\end{align}
	To estimate the remaining commutator term in~\eqref{eq:ProductNoiseLocalSmoothingIntroCommutator},
    we recall that $P_{\lambda, \vece_j} C_{\leq (\frac{\lambda}{2^8})^2} P_\lambda$
    is a convolution operator with kernel $\phi_\lambda$,
    where $\phi_\lambda(t,x) = \lambda^6 \phi(\lambda^2 t, \lambda x)$
    for a Schwartz function $\phi \in \Schw(\R \times \R^4)$.
    Hence, we have
	\begin{align}
	\label{eq:ProductNoiseCommutatorConvolution}
		&P_{\lambda, \vece_j} C_{\leq (\frac{\lambda}{2^8})^2} P_\lambda (P_{\leq \frac{\mu}{2^8}} w P_\mu u)(t,x) - P_{\leq \frac{\mu}{2^8}} w  P_{\lambda, \vece_j} C_{\leq (\frac{\lambda}{2^8})^2} P_\lambda P_\mu u(t,x) \nonumber\\
		&= \int_{\R \times \R^4} (P_{\leq \frac{\mu}{2^8}} w (t-s,x-y) - P_{\leq \frac{\mu}{2^8}} w(t,x)) \phi_\lambda(s, y) P_\mu u(t-s,x-y) \dd (s,y) \nonumber\\
		&= \int_{\R \times \R^4} \int_0^1 \nabla_{t,x} P_{\leq \frac{\mu}{2^8}} w(t - \eta s, x - \eta y) \dd \eta \cdot (-s, -y) \phi_\lambda (s,y)P_\mu u(t-s, x-y) \dd (s,y).
	\end{align}
	Using Bernstein's and Minkowski's inequality, we thus infer
	\begin{align}
		\label{eq:ProductNoiseLocalSmoothingEstCommutator}
		&\|P_{\lambda, \vece_j} C_{\leq (\frac{\lambda}{2^8})^2} P_\lambda (P_{\leq \frac{\mu}{2^8}} w P_\mu u) - P_{\leq \frac{\mu}{2^8}} w  P_{\lambda, \vece_j} C_{\leq (\frac{\lambda}{2^8})^2} P_\lambda P_\mu u\|_{L^{\infty,2}_{\vece_j}} \nonumber\\
		&\lesssim \lambda^{\frac{1}{2}} \|P_{\lambda, \vece_j} C_{\leq (\frac{\lambda}{2^8})^2} P_\lambda (P_{\leq \frac{\mu}{2^8}} w P_\mu u) - P_{\leq \frac{\mu}{2^8}} w  P_{\lambda, \vece_j} C_{\leq (\frac{\lambda}{2^8})^2} P_\lambda P_\mu u\|_{L^2_{t,x}} \nonumber \\
		&\lesssim \lambda^{\frac{1}{2}} \int_{\R \times \R^4} \Big\|  \int_0^1 \nabla_{t,x} P_{\leq \frac{\mu}{2^8}} w(t - \eta s, x - \eta y) \dd \eta \Big\|_{L^2_t L^\infty_x} \|P_\mu u(t-s, x-y)\|_{L^\infty_t L^2_x} (|s| + |y|) |\phi_\lambda(s,y)| \dd(s,y) \nonumber\\		
		&\lesssim \lambda^{\frac{1}{2}} \|\nabla_{t,x} w\|_{L^2_t L^\infty_x} \|P_\mu u\|_{L^\infty_t L^2_x} \lambda^{-1} \int_{\R \times \R^4}  (|\lambda^2 s| + |\lambda y|) \lambda^6 |\phi(\lambda^2 s,\lambda y)| \dd(s,y) \nonumber\\
		&\lesssim \lambda^{-\frac{1}{2}} \|\rho'\|_{L^2_t} \|\nabla w_0\|_{L^\infty_x} \|P_\mu u\|_{L^\infty_t L^2_x} \|(|s| + |y|)\phi(s,y)\|_{L^1_{s,y}} \nonumber \\
        &\lesssim \lambda^{-\frac{1}{2}} \| w_0\|_{H^4_x} \|P_\mu u\|_{L^\infty_t L^2_x}.
	\end{align}
	Thus, combining~\eqref{eq:ProductNoiseLocalSmoothingLH} to~\eqref{eq:ProductNoiseLocalSmoothingEstCommutator}  we derive
	\begin{align}
		\label{eq:ProductNoiseLocalSmoothingEstLH}
		&\Big( \sum_{\lambda \in 2^\N} \lambda^3 \|P_{\lambda, \vece_j} C_{\leq (\frac{\lambda}{2^8})^2} P_\lambda (w u)_{LH}\|_{L^{\infty,2}_{\vece_j}}^2 \Big)^{\frac{1}{2}} \nonumber\\
		&\lesssim  \Big( \sum_{\lambda \in 2^\N} \lambda^3 \|w_0\|_{H^4_x}^2 \|P_{\lambda, \vece_j} C_{\leq (\frac{\lambda}{2^8})^2} P_\lambda u\|_{L^{\infty,2}_{\vece_j}}^2 \Big)^{\frac{1}{2}} + \Big( \sum_{\lambda \in 2^\N} \lambda^2 \|w_0\|_{H^4_x}^2 \| P_\lambda u\|_{L^\infty_t L^2_x}^2 \Big)^{\frac{1}{2}}  \nonumber \\
		&\lesssim  \|w_0\|_{H^4_x} \|u\|_{L^\infty_t H^1_x} + \|w_0\|_{H^4_x} \Big( \sum_{\lambda \in 2^\N} \lambda^3 \|P_{\lambda, \vece_j} P_\lambda u\|_{L^{\infty,2}_{\vece_j}}^2 \Big)^{\frac{1}{2}}.
	\end{align}

Therefore, taking into account \eqref{eq:ProductNoiseLocalSmoothingHL} and the corresponding estimate for the high-high contribution
we obtain \eqref{eq:ProductNoiseLocalSmoothing}.

\medskip
\paragraph{\bf $\bullet$ Adapted space component.}
		We continue with the adapted function space component of the $\XOne(\R)$-norm.

\medskip
\paragraph{\bf $(i)$ High-low interaction.}
Let us first treat the high-low interaction. Using Bernstein's inequality we estimate
		\begin{align}
			\label{eq:ProductNoiseXAdaptStrichartzHL}
			&\Big( \sum_{\lambda \in 2^\N} (\lambda \|P_\lambda (w u)_{HL}\|_{L^\infty_t L^2_x} + \lambda \|C_{\leq (\frac{\lambda}{2^8})^2} P_\lambda (wu)_{HL}\|_{L^2_t L^4_x})^2 \Big)^{\frac{1}{2}} \nonumber\\
			&\lesssim \Big( \sum_{\lambda \in 2^\N} \lambda^2 \|P_\lambda w P_{\leq \frac{\lambda}{2^8}} u\|_{L^\infty_t L^2_x}^2\Big)^{\frac{1}{2}} +  \Big( \sum_{\lambda \in 2^\N} \lambda^4 \| P_\lambda w P_{\leq \frac{\lambda}{2^8}} u\|_{L^2_{t,x}}^2 \Big)^{\frac{1}{2}} \nonumber\\
			&\lesssim  \Big( \sum_{\lambda \in 2^\N} \lambda^2 \|P_\lambda w \|_{L^\infty_{t,x}}^2 \|P_{\leq \frac{\lambda}{2^8}} u\|_{L^\infty_t L^2_x}^2\Big)^{\frac{1}{2}}
         +  \Big( \sum_{\lambda \in 2^\N} \lambda^4
         \| P_\lambda w \|_{L^2_t L^\infty_x}^2 \| P_{\leq \frac{\lambda}{2^8}} u\|_{L^\infty_t L^2_x}^2 \Big)^{\frac{1}{2}} \nonumber \\
			&\lesssim \|u\|_{L^\infty_t L^2_x} \Big( \sum_{\lambda \in 2^\N} \lambda^6 \|P_\lambda w_0 \|_{L^2_x}^2\Big)^{\frac{1}{2}} + |I|^{\frac{1}{2}} \|u\|_{L^\infty_t L^2_x} \Big( \sum_{\lambda \in 2^\N} \lambda^8 \| P_\lambda w_0 \|_{L^2_x}^2  \Big)^{\frac{1}{2}}  \notag \\
			& \lesssim  (1 + |I|^{\frac{1}{2}}) \|w_0\|_{H^4_x} \|u\|_{\XOne(\R)}.
		\end{align}

		Next, we compute
		\begin{align}
		\label{eq:ProductNoiseXSchrOpHL}
			(\imu \partial_t + \Delta)(P_{\lambda} w P_{\leq \frac{\lambda}{2^8}} u)
			= P_{\lambda} w (\imu \partial_t + \Delta) P_{\leq \frac{\lambda}{2^8}} u + \imu \partial_t P_{\lambda} w P_{\leq \frac{\lambda}{2^8}} u
     + 2\nabla P_{\lambda} w \cdot \nabla P_{\leq \frac{\lambda}{2^8}} u
     + \Delta P_{\lambda} w P_{\leq \frac{\lambda}{2^8}} u.
		\end{align}

		For the first summand on the right-hand side we infer
		\begin{align}
		\label{eq:ProductNoiseXSchrOpEst}
			 \|P_{\lambda} w (\imu \partial_t + \Delta) P_{\leq \frac{\lambda}{2^8}} u \|_{L^2_{t,x}}
         & \lesssim \|P_{\lambda} w\|_{L^\infty_t L^4_x} \|(\imu \partial_t + \Delta) P_{\leq \frac{\lambda}{2^8}} u \|_{L^2_t L^4_x}  \notag \\
         & \lesssim \lambda\|P_{\lambda} w_0\|_{L^2_x} \sum_{1 \leq \mu \leq \frac{\lambda}{2^8}} \mu \|(\imu \partial_t + \Delta) P_{\mu} u \|_{L^2_{t,x}} \nonumber\\
	     &\lesssim \lambda\|P_{\lambda} w_0\|_{L^2_x} \sum_{1 \leq \mu \leq \frac{\lambda}{2^8}} \mu^{\frac{5}{4}} \Big\|\Big(\frac{\mu + |\partial_t|}{\mu^2 + |\partial_t|} \Big)^{\frac{1}{4}}(\imu \partial_t + \Delta) P_{\mu} u \Big\|_{L^2_{t,x}} \notag \\
         & \lesssim \lambda^3 \|P_\lambda w_0\|_{L^2_x} \|u\|_{\XOne(\R)}.
		\end{align}

For the second summand in~\eqref{eq:ProductNoiseXSchrOpHL},
since $\partial_t P_\lambda w = \partial_t \rho P_\lambda w_0$,
we have
		\begin{align}
			\label{eq:ProductNoiseXSchrOpEstTimeDer}
			\| \imu \partial_t P_{\lambda} w P_{\leq \frac{\lambda}{2^8}} u\|_{L^2_{t,x}}
			\lesssim \| \partial_t \rho P_\lambda w_0\|_{L^2_t L^\infty_x} \|P_{\leq \frac{\lambda}{2^8}} u\|_{L^\infty_t L^2_x}
			\lesssim \lambda^2 \|P_\lambda w_0\|_{L^2_x} \|u\|_{L^\infty_t L^2_x}.
		\end{align}

		For the remaining two summands in~\eqref{eq:ProductNoiseXSchrOpHL} we simply estimate by Bernstein's inequality
		\begin{align}
			\label{eq:ProductNoiseXSchrOpEstLowOrder}
			\|\nabla P_{\lambda} w \cdot \nabla P_{\leq \frac{\lambda}{2^8}} u\|_{L^2_{t,x}} + \|\Delta P_{\lambda} w P_{\leq \frac{\lambda}{2^8}} u\|_{L^2_{t,x}}
			&\lesssim \lambda^2 \|P_\lambda w\|_{L^2_t L^\infty_x} \|P_{\leq \frac{\lambda}{2^8}} u\|_{L^\infty_t L^2_x} \notag \\
			&\lesssim |I|^{\frac{1}{2}} \lambda^4 \|P_\lambda w_0\|_{L^2_x} \|u\|_{L^\infty_t L^2_x}.
		\end{align}

		Combining~\eqref{eq:ProductNoiseXSchrOpEst}, \eqref{eq:ProductNoiseXSchrOpEstTimeDer}  and~\eqref{eq:ProductNoiseXSchrOpEstLowOrder}, we infer
		\begin{align}
			\label{eq:ProductNoiseXL2Comp}
			\Big(\sum_{\lambda \in 2^\N} \Big\|\Big(\frac{\lambda + |\partial_t|}{\lambda^2 + |\partial_t|}\Big)^{\frac{1}{4}} (\imu \partial_t + \Delta) P_\lambda(w u)_{HL}\Big\|_{L^2_{t,x}}^2 \Big)^{\frac{1}{2}}
			&\lesssim \Big(\sum_{\lambda \in 2^\N}\| (\imu \partial_t + \Delta) P_\lambda(w u)_{HL}\|_{L^2_{t,x}}^2 \Big)^{\frac{1}{2}} \nonumber\\
			&\lesssim \|w_0\|_{H^4_x} (1 + |I|^{\frac{1}{2}}) \|u\|_{\XOne(I)}.
		\end{align}
		In view of~\eqref{eq:ProductNoiseXAdaptStrichartzHL}, we thus have
		\begin{align}
			\label{eq:ProductNoiseXSnormHL}
			\Big(\sum_{\lambda \in 2^\N} \|P_\lambda(w u)_{HL}\|_{\SOne_\lambda}^2 \Big)^{\frac{1}{2}}
			\lesssim \|w_0\|_{H^4_x} (1 + |I|^{\frac{1}{2}}) \|u\|_{\XOne(I)}.
		\end{align}

		Straightforward adaptions of the above arguments yield the same estimate for the high-high interaction, which includes the $P_1(w u) = P_1(wu)_{HH}$ part.
		
\medskip
\paragraph{\bf $(ii)$ Low-high interaction.}
		For the low-high contribution $(wu)_{LH}$,
        similar arguments as in~\eqref{eq:ProductNoiseXAdaptStrichartzHL} yield
		\begin{align}
		\label{eq:ProductNoiseXAdpatStrichartzLH1}
			\Big(\sum_{\lambda \in 2^\N} \lambda^2 \|P_\lambda (w u)_{LH}\|_{L^\infty_t L^2_x}^2\Big)^{\frac{1}{2}}
         &\lesssim \|w_0\|_{H^4_x}(1 + |I|^{\frac{1}{2}}) \Big(\sum_{\lambda \in 2^\N} \lambda^2 \|P_\lambda u\|_{L^\infty_t L^2_x}^2 \Big)^{\frac{1}{2}} \notag \\
			& \lesssim \|w_0\|_{H^4_x}(1 + |I|^{\frac{1}{2}})\|u\|_{\XOne(\R)}.
		\end{align}

		Regarding the estimate of
$\lambda \|C_{\leq (\frac{\lambda}{2^8})^2} P_\lambda (w u)_{LH}\|_{L^2_t L^4_x}$, we use the representations~\eqref{eq:ProductNoiseLocalSmoothingLH} and~\eqref{eq:ProductNoiseLocalSmoothingIntroCommutator} once more.	
For the first term in~\eqref{eq:ProductNoiseLocalSmoothingIntroCommutator}, we simply estimate
		\begin{align}
			\label{eq:ProductNoiseXStrichartzEndpoint}
			\|P_{\leq \frac{\mu}{2^8}} w  C_{\leq (\frac{\lambda}{2^8})^2} P_\lambda P_\mu u\|_{L^2_t L^4_x}
           &\lesssim \|P_{\leq \frac{\mu}{2^8}} w \|_{L^\infty_{t,x}} \|C_{\leq (\frac{\lambda}{2^8})^2} P_\lambda P_\mu u\|_{L^2_t L^4_x} \notag \\
           &\lesssim \|w_0\|_{H^4_x} \|C_{\leq (\frac{\lambda}{2^8})^2} P_\lambda u\|_{L^2_t L^4_x}.
		\end{align}
		For the commutator term in~\eqref{eq:ProductNoiseLocalSmoothingIntroCommutator}, we observe that
		\begin{align}
			\label{eq:ProductNoiseXStrichartzEndpointCommutator}
			&\|P_{\lambda, \vece_j} C_{\leq (\frac{\lambda}{2^8})^2} P_\lambda (P_{\leq \frac{\mu}{2^8}} w P_\mu u) - P_{\leq \frac{\mu}{2^8}} w  P_{\lambda, \vece_j} C_{\leq (\frac{\lambda}{2^8})^2} P_\lambda P_\mu u\|_{L^2_t L^4_x} \nonumber\\
		&\lesssim \lambda \|P_{\lambda, \vece_j} C_{\leq (\frac{\lambda}{2^8})^2} P_\lambda (P_{\leq \frac{\mu}{2^8}} w P_\mu u) - P_{\leq \frac{\mu}{2^8}} w  P_{\lambda, \vece_j} C_{\leq (\frac{\lambda}{2^8})^2} P_\lambda P_\mu u\|_{L^2_{t,x}}  \notag \\
		& \lesssim \| w_0\|_{H^4_x} \|P_\mu u\|_{L^\infty_t L^2_x},
		\end{align}	
		where the last estimate was shown in~\eqref{eq:ProductNoiseLocalSmoothingEstCommutator} by means of the representation~\eqref{eq:ProductNoiseCommutatorConvolution}.	Combining~\eqref{eq:ProductNoiseXStrichartzEndpoint} and~\eqref{eq:ProductNoiseXStrichartzEndpointCommutator} with~\eqref{eq:ProductNoiseLocalSmoothingLH} and~\eqref{eq:ProductNoiseLocalSmoothingIntroCommutator}, we arrive at
		\begin{align}
			\label{eq:ProductNoiseXAdpatStrichartzLH2}
			&\Big(\sum_{\lambda \in 2^\N} \lambda^2 \| C_{\leq (\frac{\lambda}{2^8})^2} P_\lambda (w u)_{LH}\|_{L^2_t L^4_x}^2\Big)^{\frac{1}{2}}
			\lesssim \|w_0\|_{H^4_x} \|u\|_{\XOne(\R)}.
		\end{align}
		
		Regarding the last component of the $\SOne_\lambda$-norm,
we can expand it as in \eqref{eq:ProductNoiseXSchrOpHL}
and note that the lower order terms in $(\imu \partial_t + \Delta)(P_{\leq \frac{\lambda}{2^8}} w  P_\lambda u)$ are controlled by
		\begin{align}
			\label{eq:ProductNoiseXSchrOpLowerOrd}
			\|\nabla P_{\leq \frac{\lambda}{2^8}} w \cdot \nabla P_{\lambda} u\|_{L^2_{t,x}} + \|\Delta P_{\leq \frac{\lambda}{2^8}} w P_{\lambda} u\|_{L^2_{t,x}}
			&\lesssim  \|\nabla w\|_{L^2_t L^\infty_x} \lambda \|P_\lambda u\|_{L^\infty_t L^2_x} + \|\Delta w\|_{L^2_t L^4_x} \|P_\lambda u\|_{L^\infty_t L^4_x} \nonumber \\
			&\lesssim |I|^{\frac{1}{2}} \|w_0\|_{H^4_x} \lambda \|P_\lambda u\|_{L^\infty_t L^2_x}
		\end{align}
		and
		\begin{align}
			\label{eq:ProductNoiseXSchrOpLowerOrdTimeDer}
			\| \imu \partial_t P_{\leq \frac{\lambda}{2^8}} w P_{\lambda} u\|_{L^2_{t,x}}
			\lesssim \|\partial_t \rho P_{\leq \frac{\lambda}{2^8}} w_0\|_{L^2_t L^\infty_x} \|P_{\lambda} u\|_{L^\infty_t L^2_x}
			\lesssim  \|w_0\|_{H^4_x} \|P_{\lambda} u\|_{L^\infty_t L^2_x}.
		\end{align}
		Next, we fix $\mu \in \{\frac{\lambda}{2}, \lambda, 2\lambda\}$.
It remains to treat the term $P_{\leq \frac{\mu}{2^8}} w (\imu \partial_t + \Delta) P_\mu u$.
 Splitting $P_\mu u$ in low and high temporal frequencies and applying the product estimate for fractional derivatives from Lemma~2.7 in~\cite{CHN23}, we obtain
		\begin{align}
		\label{eq:ProductNoiseXSNormL2txLHMain}
		 &\Big\|\Big(\frac{\lambda + |\partial_t|}{\lambda^2 + |\partial_t|}\Big)^{\frac{1}{4}} (P_{\leq \frac{\mu}{2^8}} w (\imu \partial_t + \Delta) P_\mu u) \Big\|_{L^2_{t,x}} \nonumber\\
		 &\lesssim \Big\|\Big(\frac{\lambda + |\partial_t|}{\lambda^2 + |\partial_t|}\Big)^{\frac{1}{4}} (P_{\leq \frac{\mu}{2^8}} w (\imu \partial_t + \Delta) P_{\leq (\frac{\lambda}{2^8})^2}^{(t)} P_\mu u) \Big\|_{L^2_{t,x}} + \| P_{\leq \frac{\mu}{2^8}} w (\imu \partial_t + \Delta) P_{> (\frac{\lambda}{2^8})^2}^{(t)} P_\mu u \|_{L^2_{t,x}} \nonumber \\
		 &\lesssim \lambda^{-\frac{1}{2}} \|(\lambda + |\partial_t|)^{\frac{1}{4}} (P_{\leq \frac{\mu}{2^8}} w (\imu \partial_t + \Delta)P_{\leq (\frac{\lambda}{2^8})^2}^{(t)} P_\mu u) \|_{L^2_{t,x}}  + \|w\|_{L^\infty_{t,x}} \|  (\imu \partial_t + \Delta) P_{> (\frac{\lambda}{2^8})^2}^{(t)} P_\mu u \|_{L^2_{t,x}} \nonumber\\
		 &\lesssim \lambda^{-\frac{1}{2}} \cdot \lambda^{-\frac{1}{4}} \|(\lambda + |\partial_t|)^{\frac{1}{4}}P_{\leq \frac{\mu}{2^8}} w\|_{L^\infty_{t,x}} \|(\lambda + |\partial_t|)^{\frac{1}{4}} (\imu \partial_t + \Delta)P_{\leq (\frac{\lambda}{2^8})^2}^{(t)} P_\mu u\|_{L^2_{t,x}} \nonumber \\
		 &\qquad + \|w_0\|_{H^4_x} \|  (\imu \partial_t + \Delta) P_{> (\frac{\lambda}{2^8})^2}^{(t)} P_\mu u \|_{L^2_{t,x}} \nonumber \\
		 &\lesssim  \|\lambda^{-\frac{1}{4}}(\lambda + |\partial_t|)^{\frac{1}{4}} \rho\|_{L^\infty_t} \|w_0\|_{L^\infty_x} \|\lambda^{-\frac{1}{2}}(\lambda + |\partial_t|)^{\frac{1}{4}} (\imu \partial_t + \Delta)P_{\leq (\frac{\lambda}{2^8})^2}^{(t)} P_\mu u\|_{L^2_{t,x}} \nonumber \\
		 &\qquad + \|w_0\|_{H^4_x} \|  (\imu \partial_t + \Delta) P_{> (\frac{\lambda}{2^8})^2}^{(t)} P_\mu u \|_{L^2_{t,x}} \nonumber \\
		 &\lesssim (1 + |I|^{\frac{1}{2}}) \|w_0\|_{H^4_x} \Big\|\Big(\frac{\lambda + |\partial_t|}{\lambda^2 + |\partial_t|}\Big)^{\frac{1}{4}} (\imu \partial_t + \Delta) P_\mu u \Big\|_{L^2_{t,x}},
		\end{align}
		where we used that $\|\lambda^{-\frac{1}{4}}(\lambda + |\partial_t|)^{\frac{1}{4}} \rho\|_{L^\infty_t}$ is uniformly bounded in $\lambda$ by $1 + |I|^{\frac{1}{2}}$.
		From~\eqref{eq:ProductNoiseXSchrOpLowerOrd}, \eqref{eq:ProductNoiseXSchrOpLowerOrdTimeDer}, and~\eqref{eq:ProductNoiseXSNormL2txLHMain} we infer
		\begin{align}
			\label{eq:ProductNoiseXSNormL2tx}
			\Big(\sum_{\lambda \in 2^\N} \Big\|\Big(\frac{\lambda + |\partial_t|}{\lambda^2 + |\partial_t|}\Big)^{\frac{1}{4}} (\imu \partial_t + \Delta) P_\lambda (w u)_{LH} \Big\|_{L^2_{t,x}}^2 \Big)^{\frac{1}{2}}
			\lesssim \|w_0\|_{H^4_x} (1 + |I|^{\frac{1}{2}}) \|u\|_{\XOne(I)}.
		\end{align}

Thus, combining this estimate with~\eqref{eq:ProductNoiseXAdpatStrichartzLH1} and~\eqref{eq:ProductNoiseXAdpatStrichartzLH2},
we conclude that
		\begin{align}
			\label{eq:ProductNoiseXSNormLH}
			\Big(\sum_{\lambda \in 2^\N} \|P_\lambda(w u)_{LH}\|_{\SOne_\lambda}^2 \Big)^{\frac{1}{2}}
			\lesssim \|w_0\|_{H^4_x} (1 + |I|^{\frac{1}{2}}) \|u\|_{\XOne(I)}.
		\end{align}

Finally, the combination of~\eqref{eq:ProductNoiseXSnormHL}, the corresponding estimate for the high-high interaction
and~\eqref{eq:ProductNoiseXSNormLH} yields
		\begin{align*}
			\Big(\sum_{\lambda \in 2^{\N_0}} \|P_\lambda(w u)\|_{\SOne_\lambda}^2 \Big)^{\frac{1}{2}}
			\lesssim \|w_0\|_{H^4_x} (1 + |I|^{\frac{1}{2}}) \|u\|_{\XOne(I)}.
		\end{align*}
		Together with~\eqref{eq:ProductNoiseLocalSmoothing},
this estimate implies~\eqref{eq:EstwuRealline} and thus the assertion of the lemma.
\end{proof}

As a consequence of Lemma~\ref{lem:ProductNoiseInX},
we get the following corollary.

\begin{corollary}
	\label{cor:RefinedRescalingInX}
	Let $\sigma \geq 0$ and $\tau > 0$.
	\begin{enumerate}
		\item \label{it:RefinedRescalingInX1} If $u_\sigma \in \XOne([0,\tau])$, then $u$ defined by $u(t) = e^{-W_1(\sigma)} u_\sigma(t - \sigma)$ for $t \in [\sigma, \sigma + \tau]$ belongs to $\XOne([\sigma, \sigma + \tau])$.
		\item \label{it:RefinedRescalingInX2} If $u \in \XOne([\sigma, \sigma + \tau])$, then $u_\sigma$ defined by $u_\sigma(t) = e^{W_1(\sigma)}u(t + \sigma)$ for $t \in [0,\tau]$ belongs to $\XOne([0,\tau])$.
	\end{enumerate}
	The statements in~\ref{it:RefinedRescalingInX1} and~\ref{it:RefinedRescalingInX2} remain true if we replace $[0,\tau]$ and $[\sigma, \sigma + \tau]$ by $[0,\tau)$ and $[\sigma, \sigma + \tau)$, respectively.
\end{corollary}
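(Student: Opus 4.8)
The plan is to deduce Corollary~\ref{cor:RefinedRescalingInX} directly from two ingredients: the product estimate of Lemma~\ref{lem:ProductNoiseInX} and the invariance of the $\XOne$-norm under time translations. I would first record that, under Hypothesis (H), for every (stopping) time $\sigma$ bounded by the finite horizon $T$ one has $W_1(\sigma) \in H^4(\R^4)$ $\bbp$-a.s.: indeed $W_1(\cdot)$ is an $H^4$-valued continuous martingale with $\bbe \|W_1(t)\|_{H^4}^2 = t \sum_k \|\phi^{(1)}_k\|_{H^4}^2$, where the series is finite by~\eqref{phik-condition}, so Doob's maximal inequality gives $\bbe \sup_{t \le T} \|W_1(t)\|_{H^4}^2 < \infty$. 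Since $H^4(\R^4) \hookrightarrow L^\infty(\R^4)$ and hence $H^4(\R^4)$ is a Banach algebra, it follows that $e^{\pm W_1(\sigma)} - 1 \in H^4(\R^4)$ $\bbp$-a.s. Thus, for $\bbp$-a.e.\ $\omega$, Lemma~\ref{lem:ProductNoiseInX} applies with the spatial multiplier $e^{\pm W_1(\sigma)}$ on every bounded interval, i.e.\ multiplication by $e^{\pm W_1(\sigma)}$ maps $\XOne(I)$ boundedly into itself for any bounded $I \subseteq \R$.

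Next I would observe that the $\XOne(\R)$-norm commutes with translations in time. All Lebesgue norms entering its definition ($L^\infty_t L^2_x$, $L^2_t L^4_x$, $L^2_{t,x}$, and the lateral Strichartz norms $L^{\infty,2}_{\vece_j}$) are translation invariant, and the Fourier multipliers involved — the spatial projectors $P_\lambda$, $P_{\lambda,\vece_j}$ and the temporal ones $C_{\leq (\frac{\lambda}{2^8})^2}$, $(\lambda + |\partial_t|)^{1/4}$, $P^{(t)}_{\leq(\frac{\lambda}{2^8})^2}$, etc.\ — all commute with translations in $t$. Hence $\|u'(\cdot - a)\|_{\XOne(\R)} = \|u'\|_{\XOne(\R)}$ for every $u' \in \XOne(\R)$ and $a \in \R$, and taking the infimum over extensions (recall~\eqref{eq:DefRestrictionNorm}) yields $\|v(\cdot - a)\|_{\XOne(I + a)} = \|v\|_{\XOne(I)}$ for every interval $I \subseteq \R$.

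Combining these two facts proves the corollary. For~\ref{it:RefinedRescalingInX1}: given $u_\sigma \in \XOne([0,\tau])$, fix an extension in $\XOne(\R)$; Lemma~\ref{lem:ProductNoiseInX} applied on the bounded interval $[0,\tau]$ gives $e^{-W_1(\sigma)} u_\sigma \in \XOne([0,\tau])$, and translating by $\sigma$ via the previous paragraph gives $u = (e^{-W_1(\sigma)} u_\sigma)(\cdot - \sigma) \in \XOne([\sigma, \sigma + \tau])$, with a norm bound of the form provided by Lemma~\ref{lem:ProductNoiseInX}. Part~\ref{it:RefinedRescalingInX2} is identical with $e^{-W_1(\sigma)}$ replaced by $e^{W_1(\sigma)}$ and a translation by $-\sigma$. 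For the half-open variants, if $u_\sigma \in \XOne([0,\tau))$ then it admits an extension $u_\sigma' \in \XOne(\R)$; restricting $u_\sigma'$ to the bounded interval $[0,\tau]$ puts us back in the closed-interval case, and restricting the resulting element of $\XOne([\sigma, \sigma + \tau])$ to $[\sigma, \sigma + \tau)$ gives the claim, and symmetrically for~\ref{it:RefinedRescalingInX2}.

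The proof carries essentially no analytic difficulty: all the substance is contained in Lemma~\ref{lem:ProductNoiseInX}, which is already established. The only points requiring (routine) care are the $\bbp$-a.s.\ $H^4(\R^4)$-regularity of $W_1(\sigma)$ — needed so that the product estimate is applicable pathwise — and the elementary translation invariance of the function space, used to pass between the intervals $[0,\tau]$ and $[\sigma, \sigma + \tau]$.
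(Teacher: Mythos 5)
Your proposal is correct and follows essentially the same route as the paper: take an extension, use time-translation invariance of the $\XOne(\R)$-norm, and invoke Lemma~\ref{lem:ProductNoiseInX}, the only (immaterial) difference being that you multiply by $e^{\pm W_1(\sigma)}$ before translating rather than after. Your additional justification that $W_1(\sigma)\in H^4(\R^4)$ $\bbp$-a.s.\ under Hypothesis (H) is a correct (and welcome) point that the paper leaves implicit.
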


\begin{proof}
	We start with part~\ref{it:RefinedRescalingInX1}. Let $u_\sigma \in \XOne([0,\tau])$. By definition there exists an extension $\tilde{u}_\sigma$ of $u_\sigma$ which belongs to $\XOne(\R)$. Since the $\XOne(\R)$-norm is time-translation invariant, we also have $\tilde{u}_\sigma(\cdot - \sigma) \in \XOne(\R)$. Since $\tilde{u}_\sigma(t-\sigma) = u_\sigma(t-\sigma)$ for all $t \in [\sigma, \sigma + \tau]$, we obtain $u_\sigma(\cdot - \sigma) \in \XOne([\sigma, \sigma + \tau])$. Lemma~\ref{lem:ProductNoiseInX} thus implies $e^{-W_1(\sigma)} u_\sigma(\cdot - \sigma) \in \XOne([\sigma, \sigma + \tau])$.
	
	Part~\ref{it:RefinedRescalingInX2} follows in the same way. Moreover, we can replace  $[0,\tau]$ and $[\sigma, \sigma + \tau]$ by $[0,\tau)$ and $[\sigma, \sigma + \tau)$, respectively, in the above proof.
\end{proof}

\section{LWP and blow-up alternative}
\label{Sec-LWP}

The aim of this section is to prove the local well-posedness and blow-up alternative
in Theorem \ref{Thm-LWP} for the energy-critical Zakharov system \eqref{eq:StoZak}.
Theorem \ref{Thm-Zakharov-Low} 
can be proved in a similar manner. 

We first collect some H{\"o}lder continuity properties of the noise terms. This is just Lemma~6.1 from~\cite{HRSZ23} adapted to the regularity assumptions for $W_1$ and $W_2$ we make in dimension four.
\begin{lemma} \label{lem:PropNoise}
Let $T\in (0,\infty)$ and $\kappa\in (0,\frac{1}{2})$.
Then, $W_1$ is $C^\kappa$-H\"older continuous in $H^4$ and
$W_2$ and the process $t \mapsto \int_0^t e^{-\imu s|\nabla|} \dd W_2(s)$ are $C^\kappa$-H\"older
continuous in $H^2$.
Moreover, for every $j \in \{1, \ldots, 4\}$ and
for $\bbp$-a.e. $\omega\in \Omega$,
there exists a sequence $(n_l(\omega))_{l \in \N}$ in $\N$ with $n_l(\omega) \rightarrow \infty$ as $l \rightarrow \infty$ such that
\begin{align}  \label{eq:limphi1kbeta1k}
   \sum\limits_{k = n_l}^\infty \int  \sup_{y\in \R^3} |\nabla \phi^{(1)}_k(r \vece_j+y)| \dd r
   \sup\limits_{t\in [0,T]} |\beta^{(1)}_k(t,\omega)|
  \longrightarrow 0,\ \ \text{as}\ l \to \infty.
\end{align}
\end{lemma}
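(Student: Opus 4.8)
The plan is to establish the two parts of the lemma independently: first the $C^\kappa$ path regularity of $W_1$, $W_2$ and of the stochastic convolution, and then the almost-sure decay of the weighted tail sum.

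For the regularity of $W_1$, I would first note that Hypothesis~(H) gives $\sum_{k}\|\phi^{(1)}_k\|_{H^4}^2<\infty$, so that the partial sums of $W_1(t)=\sum_k \imu\phi^{(1)}_k\beta^{(1)}_k(t)$ form a Cauchy sequence in $L^2(\Omega;H^4)$ for each fixed $t$, and $W_1$ is a well-defined $H^4$-valued centered Gaussian process. For $0\le s\le t$ the increment $W_1(t)-W_1(s)$ is centered Gaussian in $H^4$ with $\bbe\|W_1(t)-W_1(s)\|_{H^4}^2=|t-s|\sum_k\|\phi^{(1)}_k\|_{H^4}^2$, whence, by the equivalence of Gaussian moments (hypercontractivity), $\bbe\|W_1(t)-W_1(s)\|_{H^4}^{2p}\lesssim_p|t-s|^{p}$ for every $p\ge 1$. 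The Kolmogorov--Chentsov continuity theorem then provides a modification with $C^{\kappa}$ paths in $H^4$ for every $\kappa<\tfrac12-\tfrac1{2p}$; letting $p\to\infty$ covers all $\kappa\in(0,\tfrac12)$. The identical argument, now using $\sum_k\|\phi^{(2)}_k\|_{H^2}^2<\infty$, yields the $C^\kappa$ regularity of $W_2$ in $H^2$. For $M(t):=\int_0^t e^{-\imu s|\nabla|}\dd W_2(s)$ one uses that $e^{-\imu s|\nabla|}$ is deterministic and unitary on $H^2$, so that $M$ is again a Gaussian ($H^2$-valued martingale) process with, by the It\^o isometry, $\bbe\|M(t)-M(s)\|_{H^2}^2=\int_s^t\sum_k\|e^{-\imu r|\nabla|}\phi^{(2)}_k\|_{H^2}^2\dd r=|t-s|\sum_k\|\phi^{(2)}_k\|_{H^2}^2$; the same hypercontractivity plus Kolmogorov--Chentsov argument then gives the claimed $H^2$-Hölder continuity.

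For the second assertion, fix $j\in\{1,\dots,4\}$ and set $a_k:=\int\sup_{y\in\R^3}|\nabla\phi^{(1)}_k(r\vece_j+y)|\dd r\ge 0$ and $B_k:=\sup_{t\in[0,T]}|\beta^{(1)}_k(t)|$. Hypothesis~(H) gives $\sum_k a_k<\infty$, while Brownian scaling shows that $B_k$ is distributed as $\sqrt T$ times $\sup_{[0,1]}|\beta^{(1)}_1|$, a random variable with finite expectation (indeed with moments of all orders), so that $\bbe B_k=c\sqrt T<\infty$ uniformly in $k$. By Tonelli, $\bbe\sum_k a_kB_k=c\sqrt T\sum_k a_k<\infty$, hence $\sum_k a_kB_k<\infty$ $\bbp$-a.s.; in particular the tail $\sum_{k\ge N}a_kB_k\to 0$ as $N\to\infty$ $\bbp$-a.s., which is precisely the assertion (one may simply take $n_l=l$).

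I do not expect a genuine obstacle here: the statement is a direct adaptation of~\cite[Lemma~6.1]{HRSZ23} to the $H^4$/$H^2$ regularity of the noise in dimension four. The only mild points of care are that $e^{-\imu s|\nabla|}$ acts isometrically on $H^2$ (so no loss of derivatives intervenes in the bound for $M$), that one must send the integrability exponent $p\to\infty$ in Kolmogorov--Chentsov to reach every $\kappa<\tfrac12$, and that the tail estimate reduces to a one-line Tonelli argument once $\bbe\sup_{[0,T]}|\beta^{(1)}_1|<\infty$ is recalled.
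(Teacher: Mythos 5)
Your argument is correct, and it is worth noting that the paper does not prove this lemma at all: it simply cites \cite[Lemma~6.1]{HRSZ23}, adapted to the $H^4/H^2$ regularity of Hypothesis~(H). Your self-contained proof follows the standard route that underlies that citation: Gaussian moment equivalence plus Kolmogorov--Chentsov for the $C^\kappa$ regularity of $W_1$, $W_2$ and, via the It\^o isometry and unitarity of $e^{-\imu s|\nabla|}$ on $H^2$, of the Wiener integral $t \mapsto \int_0^t e^{-\imu s|\nabla|}\dd W_2(s)$. The only substantive remark is on the tail estimate: since Hypothesis~(H) in this paper imposes the $\ell^1$ summability $\sum_k \int \sup_y |\nabla\phi^{(1)}_k(r\vece_j+y)|\dd r < \infty$, your Tonelli argument indeed gives $\sum_k a_k B_k < \infty$ $\bbp$-a.s. and hence convergence of the \emph{full} tail, so taking $n_l = l$ is legitimate; the subsequence formulation~\eqref{eq:limphi1kbeta1k} is retained only because the statement is copied from~\cite{HRSZ23}, and proving the stronger version causes no harm since the weaker one is all that is used later (e.g.\ in the blow-up alternative). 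Two cosmetic points you could make explicit: the Kolmogorov theorem produces a H\"older-continuous \emph{modification} (one works with this version throughout, as is implicit in the paper), and the exponent range is exhausted by choosing, for each fixed $\kappa < \tfrac12$, a finite $p$ with $\kappa < \tfrac{p-1}{2p}$ rather than literally sending $p \to \infty$.
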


\subsection{Linear equation with potential}

To begin with, let us
first develop the well-posedness theory for the linear Schr\"odinger equation with forcing in $\GOne(I)$
and a potential,
which is a perturbation of a free wave.

With the estimates from Section~\ref{Sec:MultilinEst},
the proof of the following result follows along the same lines as the proof of~\cite[Theorem~7.1]{CHN23}.

\begin{lemma}
	\label{lem:LinSchrPotSmallTime}
	There exist $\epsilon > 0$ and $C > 0$ such that for any interval $I \subset \R$, $t_0 \in I$, $f \in H^1(\R^4)$, $F \in \GOne(I)$, and $V \in \Y(I)$ satisfying
	\begin{align*}
		\|V\|_{\Y(I) + L^2_t W_x^{1,4}(I \times \R^4)} < \epsilon,
	\end{align*}
	the Cauchy problem
	\begin{align}
		\label{eq:SchroedingerPotential}
		(\imu \partial_t  + \Delta - \Re(V))u = F, \qquad u(t_0) = f
	\end{align}
	has a unique solution $u \in C(I, L^2(\R^4)) \cap L^2_t L^{4}_x(I \times \R^4)$, which satisfies
	\begin{align*}
		\|u\|_{\XOne(I)} \leq C(\|f\|_{H^1} + \|F\|_{\GOne(I)}), 
    \end{align*} 
    and 
    \begin{align*}
     \|u\|_{L^2_t L^4_x(I \times \R^4)} \leq C( \|f\|_{L^2} + \|F\|_{L^2_t L^{\frac{4}{3}}_x(I \times \R^4)}).
	\end{align*}
\end{lemma}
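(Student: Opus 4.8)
The plan is to recast~\eqref{eq:SchroedingerPotential} as a Duhamel fixed point problem, run a contraction argument in $\XOne(I)$, and then obtain the claimed $L^2$-level bound together with the uniqueness in the rougher class by a separate Strichartz--absorption step. Writing~\eqref{eq:SchroedingerPotential} as $(\imu\partial_t+\Delta)u=\Re(V)u+F$ with $u(t_0)=f$, a solution is exactly a fixed point of
\[
\Phi(u):=e^{\imu(t-t_0)\Delta}f+\cI_0[\Re(V)u+F].
\]

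The first step is to control the potential term. Combining the bilinear estimate~\eqref{eq:Bilinvu} with the embeddings $\|\cdot\|_{\GOne(I)}\le\|\cdot\|_{\NOne(I)}$ (take $F_2=0$ in the definition of $\GOne$) and $\|u\|_{\SOne(I)}\lesssim\|u\|_{\XOne(I)}$, one gets
\[
\|\Re(V)u\|_{\GOne(I)}\le C_0\,\|V\|_{\Y(I)+L^2_tW^{1,4}_x(I\times\R^4)}\,\|u\|_{\XOne(I)}<C_0\epsilon\,\|u\|_{\XOne(I)}.
\]
Hence, by Lemma~\ref{lem:LinEstimates} with $s=1$, $\Phi$ maps $\XOne(I)$ into itself with $\|\Phi(u)\|_{\XOne(I)}\le C(\|f\|_{H^1}+\|F\|_{\GOne(I)})+CC_0\epsilon\|u\|_{\XOne(I)}$, and, using that $\Phi(u_1)-\Phi(u_2)=\cI_0[\Re(V)(u_1-u_2)]$, one has $\|\Phi(u_1)-\Phi(u_2)\|_{\XOne(I)}\le CC_0\epsilon\|u_1-u_2\|_{\XOne(I)}$. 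For $\epsilon$ so small that $CC_0\epsilon\le\tfrac12$, $\Phi$ is a contraction on the Banach space $\XOne(I)$; its unique fixed point $u$ solves~\eqref{eq:SchroedingerPotential} in the Duhamel (hence analytically weak) sense, lies in $C(I,H^1(\R^4))$ and, by Remark~\ref{rem:NormComp}, in $L^2_tL^4_x(I\times\R^4)$, and satisfies $\|u\|_{\XOne(I)}\le C(\|f\|_{H^1}+\|F\|_{\GOne(I)})$ after absorbing the $\XOne(I)$-term.

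For the $L^2$-bound and the uniqueness statement I would argue at $L^2$-regularity. Given $\delta>0$, decompose $V=V_1+V_2$ with $\|V_1\|_{\Y(I)}+\|V_2\|_{L^2_tW^{1,4}_x(I\times\R^4)}<\epsilon+\delta$; since $\|V_1\|_{L^\infty_tL^2_x}\lesssim\|V_1\|_{\Y(I)}$ (from the definition of the $\Y$-norm) and $\|V_2\|_{L^2_tL^4_x}\le\|V_2\|_{L^2_tW^{1,4}_x}$, Hölder's inequality ($\tfrac34=\tfrac12+\tfrac14$ in space, and in time) gives
\[
\|\Re(V)u\|_{L^2_tL^{4/3}_x(I\times\R^4)}\lesssim(\epsilon+\delta)\big(\|u\|_{L^2_tL^4_x(I\times\R^4)}+\|u\|_{L^\infty_tL^2_x(I\times\R^4)}\big).
\]
Plugging $u=e^{\imu(t-t_0)\Delta}f+\cI_0[\Re(V)u+F]$ into the standard Schr\"odinger Strichartz estimates (including the $L^\infty_tL^2_x$ endpoint, cf.\ Lemma~\ref{lem:StrichartzLocalSmooth}) and using that $u\in\XOne(I)$ already makes $\|u\|_{L^2_tL^4_x}+\|u\|_{L^\infty_tL^2_x}$ finite on compact subintervals, one absorbs (after shrinking $\epsilon,\delta$) the $u$-terms to obtain $\|u\|_{L^2_tL^4_x(I\times\R^4)}\le C(\|f\|_{L^2}+\|F\|_{L^2_tL^{4/3}_x(I\times\R^4)})$, passing from compact subintervals containing $t_0$ to all of $I$ if $I$ is unbounded. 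Applying the same absorption to the difference $w=u_1-u_2$ of two solutions in $C(I,L^2)\cap L^2_tL^4_x$ with zero data forces $w\equiv0$, which is the required uniqueness in that larger class.

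The analytic substance here is inherited from the cited results — Lemma~\ref{lem:LinEstimates} (the $\XOne$-control of the Schr\"odinger flow and of $\cI_0$, itself resting on~\cite{CHN23}) and the bilinear bound~\eqref{eq:Bilinvu} — so what remains is mostly bookkeeping: the chain of embeddings $\GOne\hookleftarrow\NOne$, $\SOne\hookleftarrow\XOne$, $\Y\hookrightarrow L^\infty_tL^2_x$, and $\XOne(I)\hookrightarrow L^2_tL^4_x(I\times\R^4)$ that let the small potential be absorbed at both the $H^1$ and the $L^2$ level, and the separation of the two regularity tiers so that uniqueness is proved in $C(I,L^2)\cap L^2_tL^4_x$ rather than merely in $\XOne(I)$. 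I expect the treatment of $\Re(V)u$ at the $L^2$ level to be the one genuinely delicate spot: there one cannot invoke~\eqref{eq:Bilinvu} and must instead split $V=V_1+V_2$ and handle the $\Y$- and $L^2_tW^{1,4}_x$-parts on a different footing through a plain Hölder estimate, and for unbounded $I$ one must take care to run the absorption on an exhausting family of compact subintervals anchored at $t_0$.
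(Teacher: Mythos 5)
Your proposal is correct and follows essentially the same route as the paper: a Banach fixed point argument in $\XOne(I)$ based on Lemma~\ref{lem:LinEstimates} and the bilinear bound~\eqref{eq:Bilinvu} (with the embeddings $\|\cdot\|_{\GOne}\le\|\cdot\|_{\NOne}$ and $\|\cdot\|_{\SOne}\lesssim\|\cdot\|_{\XOne}$), followed by a separate Strichartz absorption at the $L^2$ level using $\|\Re(V)u\|_{L^2_tL^{4/3}_x}\lesssim\|V\|_{L^\infty_tL^2_x+L^2_tL^4_x}\|u\|_{L^\infty_tL^2_x\cap L^2_tL^4_x}$ to get the second estimate and uniqueness in $C(I,L^2)\cap L^2_tL^4_x$. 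The only cosmetic difference is that you contract on all of $\XOne(I)$ (the map being affine) while the paper works on a ball $B_R$; this is immaterial.
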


\begin{proof}
	We first note that $u$ solves~\eqref{eq:SchroedingerPotential} if and only if
	\begin{equation}
		\label{eq:SchroedingerPotentialDuhamel}
		u(t) = e^{\imu (t - t_0) \Delta} f -i \int_{t_0}^t e^{\imu (t-s) \Delta} (\Re(V) u + F)(s) \dd s.
	\end{equation}
	Define the operator $\Psi(f, F,  V; u)$ by the right-hand side of~\eqref{eq:SchroedingerPotentialDuhamel}, for which we want to construct a fixed point. Let $R > 0$ and set
	\begin{align*}
		B_R = \{u \in \XOne(I) \colon \|u\|_{\XOne(I)} \leq R\}
	\end{align*}
	equipped with the metric induced by the $\XOne(I)$-norm. Lemmas~\ref{lem:LinEstimates} and \ref{lem:BilinearEstimates} imply
	\begin{align}
	\label{eq:SchrPotEstBound}
		\|\Psi(f, F,  V; u)\|_{\XOne(I)}
        &\lesssim \|f\|_{H^1} + \|\Re(V) u + F\|_{\GOne(I)}  \nonumber\\
		&\lesssim  \|f\|_{H^1} + \|V\|_{\Y(I) + L^2_t W^{1,4}_x(I \times \R^4)}\|u\|_{\XOne(I)} + \| F\|_{\GOne(I)}  \nonumber\\
        & \lesssim \|f\|_{H^1} +\epsilon \|u\|_{\XOne(I)} + \| F\|_{\GOne(I)}
	\end{align}
	and
	\begin{align}
	\label{eq:SchrPotEstContr}
		\|\Psi(f, F,  V; u) - \Psi(f, F,  V; w)\|_{\XOne(I)}
         &\lesssim \|\Re(V)(u - w)\|_{\GOne(I)}  \nonumber \\
		&\lesssim \|V\|_{\Y(I) + L^2_t W^{1,4}_x(I \times \R^4)}\|u - w\|_{\XOne(I)} \lesssim \epsilon \|u - w\|_{\XOne(I)}.
	\end{align} 

	We fix the maximum $C_0$ of the implicit constants on the right-hand sides of~\eqref{eq:SchrPotEstBound} and~\eqref{eq:SchrPotEstContr}, set $R = 2 C_0 (\|f\|_{H^1} + \|F\|_{\GOne(I)})$, and choose $\epsilon > 0$ so small that $C_0 \epsilon < \frac{1}{2}$. Estimates~\eqref{eq:SchrPotEstBound} and~\eqref{eq:SchrPotEstContr} thus yield that $\Psi(f, F,  V; \cdot)$ is a contractive self-mapping on the complete metric space $B_R$. Hence, the Cauchy problem~\eqref{eq:SchroedingerPotential} has a unique solution in $\XOne(I)$. Using~\eqref{eq:SchrPotEstBound} for this solution, we further get
	\begin{align*}
		\|u\|_{\XOne(I)} \leq 2 C_0 (\|f\|_{H^1} + \|F\|_{\GOne(I)}).
	\end{align*}
	
	The uniqueness in the larger space $C(I, L^2(\R^4)) \cap L^2_tL^4_x(I \times \R^4)$ follows from standard arguments and the Strichartz estimate
	\begin{align*}
		\|\cI_0(\Re(v) u)\|_{L^\infty_t L^2_x \cap L^2_t L^4_x} &\lesssim \|\Re(V) u\|_{L^2_t L^{\frac{4}{3}}_x} \lesssim \|V\|_{L^\infty_t L^2_x + L^2_t L^4_x} \|u\|_{L^\infty_t L^2_x \cap L^2_t L^4_x} \\
		&\lesssim \|V\|_{\Y(I) + L^2_t W^{1,4}_x(I \times \R^4)}  \|u\|_{L^\infty_t L^2_x \cap L^2_t L^4_x}.
	\end{align*}
	Applying this estimate to~\eqref{eq:SchroedingerPotentialDuhamel}, we also obtain
	\begin{align*}
		 \|u\|_{L^\infty_t L^2_x \cap L^2_t L^4_x} \lesssim \|f\|_{L^2} + \|\Re(V) u\|_{L^2_t L^{\frac{4}{3}}_x} + \|F\|_{L^2_t L^{\frac{4}{3}}_x} \lesssim \|f\|_{L^2} + \epsilon \|u\|_{L^\infty_t L^2_x \cap L^2_t L^4_x} + \|F\|_{L^2_t L^{\frac{4}{3}}_x}.
	\end{align*}
	Assuming that the implicit constant is smaller or equal than $C_0$, the last part of the assertion follows.
\end{proof}

In order to apply the previous lemma when the potential $V$ is a linear wave, 
we recall 
from \cite[Lemma~7.5]{CHN23} with parameters $d = 4$, $s = 1$, $l = 0$, $a = \frac{1}{4}$ and $\beta = \frac{1}{2}$ that the smallness condition in Lemma~\ref{lem:LinSchrPotSmallTime} is satisfied for linear waves on small time intervals. 

\begin{lemma}
	\label{lem:SmallnessWavePotential}
	Let $g \in L^2(\R^4)$, $V_L(t) = e^{\imu t |\nabla|} g$, and $\epsilon > 0$. There exist finitely many intervals $(I_j)_{j = 1, \ldots, N}$ such that $\R = \cup_{j = 1}^N I_j$, $\min|I_j \cap I_{j+1}| > 0$, and
	\begin{align*}
		\sup_{j = 1, \ldots, N} \|V_L\|_{\Y(I_j) + L^2_t W^{1,4}_x(I_j \times \R^4)} < \epsilon.
	\end{align*}
\end{lemma}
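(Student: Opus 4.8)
The plan is to decompose the free wave $V_L$ into one part that is uniformly small in the $\Y$-norm over \emph{all} of $\R$, and a second part whose $L^2_t W^{1,4}_x(\R\times\R^4)$-norm is finite; the finite cover is then produced from the absolute continuity of this latter norm. Concretely, given $\delta>0$ I would write $g=g_1+g_2$, where $g_1$ is a Schwartz function whose Fourier transform is supported in a fixed annulus bounded away from the origin and $\|g-g_1\|_{L^2}<\delta$. This is possible because the low- and high-frequency $L^2$-tails of $g$ are small and Schwartz functions with Fourier support in a given annulus are dense in the corresponding subspace of $L^2$.

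For the $g_2$-part, Lemma~\ref{lem:LinearEstimateHalfWave} together with the definition of the restricted norm gives, for \emph{every} interval $I\subseteq\R$,
\[
   \|e^{\imu t|\nabla|}g_2\|_{\Y(I)}\le\|e^{\imu t|\nabla|}g_2\|_{\Y(\R)}\lesssim\|g_2\|_{L^2}<C\delta ,
\]
so it remains only to fix $\delta$ so small that $C\delta<\epsilon/2$. For the $g_1$-part I would show $e^{\imu t|\nabla|}g_1\in L^2_t W^{1,4}_x(\R\times\R^4)$: since $g_1$ has Fourier support in a fixed annulus away from $0$, the cone has non-vanishing curvature there, and the standard dispersive estimate for the half-wave group in $\R^4$ yields $\|e^{\imu t|\nabla|}g_1\|_{W^{1,4}_x}\lesssim |t|^{-3/4}$ for $|t|\ge1$ (after interpolating the $|t|^{-3/2}$ decay in $L^\infty_x$ with the $L^2_x$-conservation law), while for $|t|\le1$ this norm is bounded by $\|g_1\|_{H^2}$ via the Sobolev embedding $H^2(\R^4)\hookrightarrow W^{1,4}(\R^4)$; in either case the bound is a multiple of $\langle t\rangle^{-3/4}$, which lies in $L^2_t(\R)$.

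It follows that $t\mapsto\|e^{\imu t|\nabla|}g_1\|_{W^{1,4}_x}^2$ is integrable on $\R$, so by absolute continuity of the integral I would choose finitely many points $t_1<\dots<t_{N-1}$, with $\pm t_1$, $\pm t_{N-1}$ large and the intermediate ones spaced closely enough that the $L^2_t W^{1,4}_x$-norm of $e^{\imu t|\nabla|}g_1$ over each $[t_{k-1},t_k]$ is $<\epsilon/(2\sqrt2)$, and set $I_1=(-\infty,t_2]$, $I_k=[t_{k-1},t_{k+1}]$ for $2\le k\le N-1$, $I_N=[t_{N-1},\infty)$. Then $\R=\bigcup_k I_k$, consecutive intervals overlap in an interval of positive length, and $\|e^{\imu t|\nabla|}g_1\|_{L^2_t W^{1,4}_x(I_k\times\R^4)}<\epsilon/2$ for every $k$ (for the two unbounded intervals this uses the integrable tail $\int_{|t|\ge T}\langle t\rangle^{-3/2}\,dt\to0$). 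Writing $V_L=e^{\imu t|\nabla|}g_1+e^{\imu t|\nabla|}g_2$ and using the triangle inequality for the sum norm $\Y(I)+L^2_t W^{1,4}_x(I\times\R^4)$ gives $\sup_k\|V_L\|_{\Y(I_k)+L^2_t W^{1,4}_x(I_k\times\R^4)}<\epsilon$, as claimed. (This is, in essence, the argument of~\cite[Lemma~7.5]{CHN23}.)

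The main obstacle is establishing $e^{\imu t|\nabla|}g_1\in L^2_t W^{1,4}_x(\R\times\R^4)$: the exponent pair $(2,4)$ is not wave-admissible in dimension four, so one cannot appeal to an abstract global Strichartz estimate and instead must exploit genuine pointwise dispersive decay of the wave group — which is precisely what forces the reduction to data with Fourier support bounded away from the origin — and then check that the resulting temporal decay rate is square-integrable on the whole line.
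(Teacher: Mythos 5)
Your proof is correct: the decomposition of $g$ into a small $L^2$ remainder (handled uniformly in $\Y$ via Lemma~\ref{lem:LinearEstimateHalfWave}) plus a Schwartz piece with Fourier support in an annulus, whose free wave evolution lies globally in $L^2_t W^{1,4}_x$ thanks to the $|t|^{-3/4}$ frequency-localized dispersive decay (square-integrable in $t$), followed by divisibility of the $L^2_t$-norm to produce the overlapping intervals, is exactly the standard argument. The paper gives no independent proof of this lemma but simply invokes \cite[Lemma~7.5]{CHN23} with the parameters $d=4$, $s=1$, $l=0$, $a=\tfrac14$, $\beta=\tfrac12$, so your write-up is in essence a correct self-contained reconstruction of that cited result.
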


\begin{remark}
    \label{rem:WDFreeWaveProp}
    As in the proof of Theorem~7.1 in~\cite{CHN23}, the combination of Lemma~\ref{lem:LinSchrPotSmallTime}, Lemma~\ref{lem:SmallnessWavePotential}, and Lemma~\ref{lem:DecompX} shows that~\eqref{eq:SchroedingerPotential} has a unique solution in $C(I, L^2(\R^4)) \cap L^2_t L^4_x(I \times \R^4)$ on any interval $I$, which also satisfies the estimates in Lemma~\ref{lem:LinSchrPotSmallTime}. This particular shows that the propagation operators $\cU_V$ and $\cI_V$ introduced in Subsection~\ref{sec:Notation}, are well-defined for free wave potentials $V$.
\end{remark}

We are now ready to prove Theorem \ref{thm:LocalWP}.
The proof mainly proceeds in three steps
in Subsections \ref{Subsec-LWP}, \ref{Subsec-Max-Exist} and \ref{Subsec-Blowup} below,
respectively.

\subsection{Local well-posedness}  \label{Subsec-LWP}

We first prove that system \eqref{eq:StoZak} is locally well-posed up to some stopping time.

Set
\begin{align*}
   (u_0, v_0) := (X_0, Y_0),\ \ v_L := e^{\imu t |\nabla|} v_0,\ \ {\rm and}\  \ \rho := v - v_L.
\end{align*}
Then $(u,v)$ is a solution of~\eqref{eq:RanZakbc} if and only if $(u,\rho)$ solves
\begin{equation}  \label{eq:RanZakWavePot}
	\left\{\aligned
		(\imu \partial_t + \Delta - \Re(v_L)) u &= \Re(\rho) u - b\cdot \na u - cu + \Re(\cT_t(W_2)) u, \qquad &u(0) &= u_0, \\
		(\imu \partial_t + |\nabla|) \rho &= - |\nabla| |u|^2, &\rho(0) &= 0.
    \endaligned
    \right.
\end{equation}
	Noting that
	\begin{align*}
		\rho(t) =  - \cJ_0[|\nabla| |u|^2],
	\end{align*}
	we obtain a solution of~\eqref{eq:RanZakWavePot} - and thus of~\eqref{eq:RanZakbc} - if and only if
	\begin{align}
		u(t) = \cU_{v_L} [u_0](t) - \cI_{v_L}[\Re(\cJ_0[|\nabla| |u|^2])u](t) - \cI_{v_L}[b \cdot \nabla u + c u - \Re(\cT_{\cdot}(W_2)) u](t).
		\label{eq:LWPFixedPoint}
	\end{align}

	 Let us define the fixed point operator $\Phi(u_0, v_0; u)$ by the right-hand side of~\eqref{eq:LWPFixedPoint}.
     Let $\delta, R > 0$. Set
	\begin{align*}
		B_{R, \delta}(\tau) = \{u \in \XOne([0,\tau]) \colon \|u\|_{L^2_tL^4_x([0,\tau] \times \R^4)} \leq \delta, \, \|u\|_{\XOne([0,\tau])} \leq R\},
	\end{align*}		
	 where $\tau > 0$ is a stopping time to be fixed below. In this step all space-time norms are taken over $[0,\tau] \times \R^4$ so that we drop $[0,\tau] \times \R^4$ from the notation in the following. Equipped with the metric induced by $\| \cdot \|_{\XOne([0,\tau])}$ the set $B_{R, \delta}(\tau)$ is a complete metric space, cf. Remark~\ref{rem:NormComp}.

     Below we show that $\Phi(u_0, v_0; \cdot)$ is a contractive self-mapping on the ball $B_{R, \delta}(\tau)$.
     For this purpose, we fix $\epsilon > 0$ from Lemma~\ref{lem:LinSchrPotSmallTime}, a time $T > 0$, and define
     \begin{equation}
     	\label{eq:DefTauTilde}
     	\tau_0 := \inf\{t \in [0,T] \colon \|v_L\|_{\Y([0,t]) + L^2_t W^{1,4}_x([0,t] \times \R^4)} 
      \geq \epsilon\} \wedge \min\{2,T\}.
     \end{equation}
We point out that $\tau_0 > 0$ by Lemma~\ref{lem:SmallnessWavePotential}. In the following we assume $\tau \leq \tau_0$.

\medskip
\paragraph{\bf $\bullet$ Self-mapping}
	 Lemma~\ref{lem:LinSchrPotSmallTime} yields that
	 \begin{align*}
	 	\| \cU_{v_L}[ u_0 ]\|_{\XOne([0,\tau])} \leq C \|u_0\|_{H^1}, \qquad \text{and} \qquad
	 	\| \cI_{v_L} [F] \|_{\XOne([0,\tau])} \leq C \|F\|_{\GOne([0,\tau])}.
	 \end{align*}
	 We thus obtain
	 \begin{align}
	 	\| \Phi(u_0, v_0; u) \|_{\XOne([0,\tau])}
	 	&\leq C \|u_0\|_{H^1} + C \|\Re(\cJ_0[|\nabla| |u|^2])u\|_{\GOne([0,\tau])} + C \|b \cdot \nabla u\|_{\GOne([0,\tau])} + C \|c u\|_{\GOne([0,\tau])} \nonumber\\
	 	&\qquad + C \|\Re(\cT_{\cdot}(W_2)) u\|_{\GOne([0,\tau])}. \label{eq:EstFixedPointOp1}
	 \end{align}
	 The definition of $\GOne$ and Lemma~\ref{lem:BilinearEstimates} yield
	 \begin{align}
	 	\|\Re(\cJ_0[|\nabla| |u|^2])u\|_{\GOne([0,\tau])}
         &\lesssim \| \Re(\cJ_0[|\nabla| |u|^2])u\|_{\NOne([0,\tau])}    \nonumber\\
         & \lesssim \| \cJ_0[|\nabla| |u|^2] \|_{\Y([0,\tau])} \|u\|_{\SOne(I)} \nonumber\\
	 	&\lesssim \|u\|_{L^2_t L^4_x}^{2 \theta} \|u\|_{\XOne([0,\tau])}^{3 - 2 \theta}. \label{eq:FixedPointOpNonlin}
	 \end{align}
	 Thus, inserting this estimate into~\eqref{eq:EstFixedPointOp1} and employing Lemma~\ref{lem:BilinearLowerOrder}, we arrive at
	 \begin{align}
	 	\| \Phi(u_0, v_0; u) \|_{\XOne([0,\tau])}
	 	&\leq C \|u_0\|_{H^1} +  C \|u\|_{L^2_t L^4_x}^{2 \theta} \|u\|_{\XOne([0,\tau])}^{3 - 2 \theta}   + C \Big(\sum_{j = 1}^4 \|b\|_{L^{1,\infty}_{\vece_j}} + \|b\|_{L^\infty_t H^3_x} \Big) \|u\|_{\XOne([0,\tau])} \nonumber \\
	 	&\qquad + C \tau^{\frac{1}{2}}(\|b\|_{L^\infty_t H^2_x} + \|c\|_{L^\infty_t H^2_x} + \|\cT_{\cdot}(W_2)\|_{L^\infty_t H^2_x}) \|u\|_{\XOne([0,\tau])}. \label{eq:EstFixedPointOp2}
	 \end{align}
	
	 Concerning the estimate of $L^2_tL^4_x$-norm,
we write the linear propagator $\cU_{v_L}$ as
	 \begin{align*}
	 	\cU_{v_L}[u_0](t) = e^{\imu t \Delta} u_0 + \cI_{v_L}[\Re(v_L) e^{\imu (\cdot) \Delta} u_0](t).
	 \end{align*}
	 Using Lemma~\ref{lem:LinSchrPotSmallTime}, we thus obtain
	 \begin{align*}
	 	\|\cU_{v_L}[u_0] \|_{L^2_t L^4_x} &\leq \| e^{\imu t \Delta} u_0 \|_{L^2_t L^4_x} + \|\Re(v_L) e^{\imu t \Delta} u_0\|_{L^2_t L^{\frac{4}{3}}_x} \notag \\
        & \leq \| e^{\imu t \Delta} u_0 \|_{L^2_t L^4_x} + \|v_L\|_{L^\infty_t L^2_x} \| e^{\imu t \Delta} u_0 \|_{L^2_t L^4_x} \\
	 	&\leq (1 + \|v_0\|_{L^2}) \| e^{\imu t \Delta} u_0 \|_{L^2_t L^4_x}.
	 \end{align*}
	 Lemmas \ref{lem:BilinearEstimates} and \ref{lem:LinSchrPotSmallTime} also show that
	 \begin{align*}
	 	\| \cI_{v_L}[\cJ_0[|\nabla| |u|^2] u] \|_{L^2_t L^4_x}
        & \lesssim \|\cJ_0[|\nabla| |u|^2]\|_{L^\infty_t L^2_x} \|u\|_{L^2_t L^4_x} \\
        & \lesssim  \|\cJ_0[|\nabla| |u|^2]\|_{\Y([0,\tau])} \|u\|_{L^2_t L^4_x} \\
	 	&\lesssim  \|u\|_{\XOne([0,\tau])}^{2(1-\theta)} \|u\|_{L^2_t L^4_x}^{2\theta + 1}. 
	 \end{align*} 

     Thus, combining the last two estimates with Lemma~\ref{lem:BilinearLowerOrder}, we derive
	 \begin{align}
	 	\|\Phi(u_0, v_0; u)\|_{L^2_t L^4_x} &\leq C  \| e^{\imu t \Delta} u_0 \|_{L^2_t L^4_x} +  C \|u\|_{\XOne([0,\tau])}^{2(1-\theta)} \|u\|_{L^2_t L^4_x}^{2\theta + 1} \nonumber \\
	 	&\qquad + C \tau^{\frac{1}{2}}(\|b\|_{{L^\infty_t H^2_x}} + \|c\|_{{L^\infty_t H^2_x}} + \|\cT_{\cdot}(W_2)\|_{{L^\infty_t H^2_x}}) \|u\|_{\XOne([0,\tau])}  \nonumber\\
	 	&\qquad + C \Big(\sum_{j = 1}^4 \|b\|_{L^{1,\infty}_{\vece_j}} + \|b\|_{{L^\infty_t H^3_x}} \Big) \|u\|_{\XOne([0,\tau])}. 	\label{eq:EstFixedPointOp3}
	 \end{align}

We also note that, by the definition of the random coefficients $b$ and $c$ in~\eqref{eq:Defb} and~\eqref{eq:Defc},  
	 \begin{align}
	 	\label{eq:DefWstar}
	 	&\|b(t)\|_{H^3} + \sum_{j = 1}^4 \| b\|_{L^{1,\infty}_{\vece_j}((0,t) \times \R^4)} + \|c(t)\|_{{H^2}} + \| \cT_t(W_2) \|_{{H^2}} \nonumber\\
	 	&\leq C\Big(\| \nabla W_1(t)\|_{H^3} + \sum_{j = 1}^4 \sum_{k = 1}^\infty \int \sup_{y \in \R^3} |\nabla \phi^{(1)}_k(r \vece_j + y)| \dd r \sup_{s \in [0,t]} |\beta_k^{(1)}(s)| + \|W_1(t)\|_{{H^4}}^2 + \|W_1(t)\|_{{H^4}} \nonumber\\
	 	&\qquad + \|\cT_t(W_2)\|_{{H^2}}\Big) =: W^*(t).
	 \end{align}
	
	  Fixing now $C = C(\|v_0\|_{L^2})$ as the maximum of the generic constants in~\eqref{eq:EstFixedPointOp2} and~\eqref{eq:EstFixedPointOp3} and setting $R = 2 C \|u_0\|_{H^1}$, we get from~\eqref{eq:EstFixedPointOp2} and~\eqref{eq:EstFixedPointOp3}
	 \begin{align*}
	 	\| \Phi(u_0, v_0; u) \|_{\XOne([0,\tau])} &\leq \frac{R}{2} + C R^{2(1 -  \theta)} \delta^{2 \theta} R + C R  \Big(\sum_{j = 1}^4 \|b\|_{L^{1,\infty}_{\vece_j}} + \|b\|_{L^\infty_t {H^2_x}}\Big) \\
	 	&\qquad + C R \tau^{\frac{1}{2}}(\|b\|_{L^\infty_t {H^2_x}} + \|c\|_{L^\infty_t {H^2_x}} + \|\cT_{\cdot}(W_2)\|_{L^\infty_t {H^2_x}}), \\
	 	\| \Phi(u_0, v_0; u) \|_{L^2_t L^4_x} &\leq  C  \| e^{\imu t \Delta} u_0 \|_{L^2_t L^4_x} +  C R^{2(1-\theta)} \delta^{2\theta} \delta + C R  \Big(\sum_{j = 1}^4 \|b\|_{L^{1,\infty}_{\vece_j}} + \|b\|_{L^\infty_t {H^2_x}}\Big) \\
	 	&\qquad + C R \tau^{\frac{1}{2}}(\|b\|_{L^\infty_t {H^2_x}} + \|c\|_{L^\infty_t {H^2_x}} + \|\cT_{\cdot}(W_2)\|_{L^\infty_t {H^2_x}})
	 \end{align*}
	 for every $u \in B_{R, \delta}(\tau)$. Choosing $\delta \in (0,R)$ so small that $4 C R^{2(1 - \theta)} \delta^{2 \theta} \leq 1$ and defining the stopping time $\tilde{\tau}$ by
	 \begin{align}
	 \label{eq:DefFixedPointStoppingTimeSelfmap}
	 	 \tilde{\tau} := \inf\Big\{t \in [0,T] \colon C  \| e^{\imu (\cdot) \Delta} u_0 \|_{L^2_t L^4_x((0,t) \times \R^4)} +	2 C R  W^*(t) \geq \frac{\delta}{4}\Big\} \wedge \tau_0,
	 \end{align}
	 we conclude that $\tilde{\tau} > 0$ $\PP$-a.s.,
since $\lim_{t \rightarrow 0} W^*(t) = 0$ $\PP$-a.s., and that $\Phi(u_0, v_0; \cdot)$ maps $B_{R,\delta}(\tilde{\tau})$ into itself.

\medskip
\paragraph{\bf $\bullet$ Contraction}
	 In order to show that $\Phi(u_0, v_0; \cdot)$ is a contraction, we first argue as in~\eqref{eq:FixedPointOpNonlin} to derive
	 \begin{align*}
	 	&\|\Re(\cJ_0[|\nabla| |u|^2]) u - \Re(\cJ_0[|\nabla| |w|^2]) w\|_{\GOne([0,\tau])} \\
	 	&= \|\Re(\cJ_0[|\nabla|((\overline{u} - \overline{w})u)]) u + \Re(\cJ_0[|\nabla| (\overline{w}(u - w))]) u  + \Re(\cJ_0[|\nabla| |w|^2]) (u - w)\|_{\GOne([0,\tau])} \\
	 	&\lesssim (\|u-w\|_{\XOne([0,\tau])} \|u\|_{\XOne([0,\tau])} )^{1-\theta} (\|u-w\|_{L^2_t L^4_x} \|u\|_{L^2_t L^4_x})^\theta \|u\|_{\XOne([0,\tau])}   \\
	 	&\qquad + (\|w\|_{\XOne([0,\tau])} \|u-w\|_{\XOne([0,\tau])} )^{1-\theta} (\|w\|_{L^2_t L^4_x} \|u-w\|_{L^2_t L^4_x})^\theta \|u\|_{\XOne([0,\tau])}  \\
        & \qquad +\|w\|_{L^2_t L^4_x}^{2\theta} \|w\|_{\XOne([0,\tau])}^{2(1-\theta)} \|u - w\|_{\XOne([0,\tau])} \\
	 	&\lesssim \delta^{\theta} R^{2 - \theta} \|u-w\|_{\XOne([0,\tau])} + \delta^{2 \theta} R^{2(1 - \theta)} \|u - w\|_{\XOne([0,\tau])}
	 \end{align*}
	 for all $u,w \in B_{R,\delta}(\tau)$, where we also used Remark~\ref{rem:NormComp}. In the same way as we derived~\eqref{eq:EstFixedPointOp2}, we thus get
	 \begin{align}
	 	\|\Phi(u_0, v_0; u) - \Phi(u_0, v_0; w)\|_{\XOne([0,\tau])} &\leq C \delta^{\theta} R^{2 - \theta} \|u-w\|_{\XOne([0,\tau])} + C \delta^{2 \theta} R^{2(1 - \theta)} \|u - w\|_{\XOne([0,\tau])}  \notag  \\
	 	&\qquad +  C \Big(\sum_{j = 1}^4 \|b\|_{L^{1,\infty}_{\vece_j}}+ \|b\|_{L^\infty_t {H^2_x}} \Big) \|u-w\|_{\XOne([0,\tau])}  \label{eq:EstFixedPointOpDiff}  \\
	 	&\qquad + C \tau^{\frac{1}{2}}(\|b\|_{L^\infty_t {H^2_x}} + \|c\|_{L^\infty_t {H^2_x}} + \|\cT_{\cdot}(W_2)\|_{L^\infty_t {H^2_x}}) \|u-w\|_{\XOne([0,\tau])}.  \nonumber
	 \end{align}
	 Fixing the generic constant $C$ and taking $\delta > 0$ possibly smaller such that additionally
	 \begin{align*}
	 	C \delta^\theta R^{2- \theta} + C \delta^{2 \theta} R^{2(1 - \theta)} \leq \frac{1}{4},
	 \end{align*}
	  we update the definition of $\tilde{\tau}$ in~\eqref{eq:DefFixedPointStoppingTimeSelfmap} and set
		\begin{align*}
			\tilde{\tau}_1 := \inf\Big\{t \in [0,T]: W^*(t) \geq \frac{1}{4}\Big\} \wedge \tilde{\tau}.
		\end{align*}			
	  Then $\tilde{\tau}$ is a stopping time, $\tilde{\tau} > 0$ $\PP$-a.s., 
and $\Phi(u_0, v_0; \cdot)$ is a contractive self-mapping on $B_{R,\delta}(\tilde{\tau}_1)$.
	
	  As the constant $C$ and the radius $R$ are increasing in $\|u_0\|_{H^1}$ and $\|v_0\|_{L^2}$, we note that there is a small constant $\delta_*(\|u_0\|_{H^1}, \|v_0\|_{L^2}) > 0$, which is decreasing in both its arguments, such that
	  \begin{align}
	  \label{eq:FixedPointDefStoppingTimeFirstIntervalExt}
	  	\tilde{\tau}_1 = \inf\{t \in [0, T] \colon  \|e^{\imu (\cdot) \Delta} u_0\|_{L^2_t L^4_x([0,t] \times \R^4)} + W^*(t) \geq 4 \delta_*(\|u_0\|_{H^1}, \|v_0\|_{L^2})\} \wedge \tau_0.
	  \end{align}
	  Moreover, we define the stopping time
	  \begin{align}
	  \label{eq:FixedPointDefStoppingTimeFirstInterval}
	  	\tau_1 &= \inf\{t \in [0, T] \colon  \|e^{\imu (\cdot) \Delta} u_0\|_{L^2_t L^4_x([0,t] \times \R^4)} + W^*(t) \geq 2 \delta_*(\|u_0\|_{H^1}, \|v_0\|_{L^2})\} \nonumber\\
	  	&\qquad \wedge \inf\Big\{t \in [0,T] \colon \|v_L\|_{\Y([0,t]) + L^2_t W^{1,4}_x([0,t] \times \R^4)} \geq  \frac{\epsilon}{2}\Big\} \wedge \min\{1,T\}.
	  \end{align}
	Using Lemma~\ref{lem:LinSchrPotSmallTime} once again, we note that $\tau_1 > 0$ $\PP$-a.s. Moreover, for continuity reasons (employing also Lemma~\ref{lem:Continuity}~\ref{it:ContSumY}), we have $\tau_1 < \tilde{\tau}_1$ or $\tau_1 = T$ $\PP$-a.s.

	Since $\Phi(u_0, v_0; \cdot)$ is a contractive self-mapping on the complete metric space $B_{R,\delta}(\tilde{\tau}_1)$, Banach's fixed point theorem yields a unique solution $\tilde{u}_1 \in B_{R,\delta}(\tilde{\tau}_1)$ of~\eqref{eq:LWPFixedPoint}. Standard arguments show that $\tilde{u}_1$
is the unique solution of~\eqref{eq:LWPFixedPoint} in $\XOne([0,\tilde{\tau}_1])$.
	Then, setting
	\begin{align*}
		\tilde{v}_1(t) := v_L(t) - \cJ_0[|\nabla| |\tilde{u}_1|^2](t) = e^{\imu t |\nabla|} v_0  - \cJ_0[|\nabla| |\tilde{u}_1|^2](t), \ \
    t\in [0,\tilde{\tau}_1],
	\end{align*}
   we obtain that $\tilde{v}_1 \in \Y([0,\tilde{\tau}_1])$ by Lemma~\ref{lem:BilinearEstimates}.
    Thus, $(\tilde{u}_1, \tilde{v}_1)$ is the unique solution of~\eqref{eq:RanZakbc}, where uniqueness holds in $\XOne([0,\tilde{\tau}_1]) \times L^\infty_t L^2_x([0,\tilde{\tau}_1] \times \R^4)$.

   Finally, set
	\begin{align*}
		(u_1,v_1)(t) := (\tilde{u}_1(t \wedge \tilde{\tau}_1), \tilde{v}_1(t \wedge \tilde{\tau}_1)),\ \ t\in [0,T].
	\end{align*}
	Then, $(u_1, v_1)$ is an $\{\cF_t\}$-adapted process in $C([0,T], H^1(\R^4) \times L^2(\R^4))$ (see e.g.~\cite{BRZ14} for the relevant arguments)
and solves~\eqref{eq:RanZakbc} on $[0,\tilde{\tau}_1]$.

\begin{remark} \label{Rem-twostopping}
   We introduced two stopping times above
 as a preparation for the gluing procedure in 
 the next step below,
where an overlap with positive measure of two intervals is required
to conclude that the glued solution belongs to $\XOne$ on the union of these intervals, cf. Proposition~\ref{prop:GluingSolutions} and Lemma~\ref{lem:DecompX}.
\end{remark}

\subsection{Extension to maximal existence time}  \label{Subsec-Max-Exist}
In this step,
we extend the solution from Step~1 to its maximal existence time.
The proof relies crucially on the inductive application of refined rescaling transforms and the gluing procedure.
	
	 Let $n \in \N$ and assume $(u_n, v_n)$ is an $\{\cF_t\}$-adapted continuous process
in $H^1 \times L^2$ and that $\sigma_n \leq \tilde{\sigma}_n$ are $\{\cF_t\}$-stopping times,
such that $\sigma_n < \tilde{\sigma}_n$ or $\sigma_n = T$ $\PP$-a.s.,
and that $(u_n, v_n)$ is the unique solution of~\eqref{eq:RanZakbc} on $[0,\tilde{\sigma}_n]$ in $\XOne([0,\tilde{\sigma}_n]) \times \Y([0,\tilde{\sigma}_n])$
satisfying $(u_n, v_n) \equiv (u_n(\tilde{\sigma}_n), v_n(\tilde{\sigma}_n))$ on $[\tilde{\sigma}_n, T]$.
	
In view of Propositions \ref{prop:RefinedRescaling} and \ref{prop:GluingSolutions},
we aim to solve \eqref{eq:RanZakbsigmacsigma} with the initial data
	 \begin{align}
	 \label{eq:DefInitialDataRescaled}
	 	(u_{0,n}, v_{0,n}) = (e^{W_1(\sigma_n)} u_n(\sigma_n), v_n(\sigma_n) + \cT_{\sigma_n}(W_2)),
	 \end{align}
	 i.e.  the system
	 \begin{equation}   \label{eq:RanZakbsigmacsigmaInProof}
	\left\{\aligned
	  \imu \partial_t u_\sigma + \Delta u_\sigma
	   &= \Re(v_\sigma) u_\sigma - b_\sigma \cdot \nabla u_\sigma - c_\sigma u_\sigma + \Re(\cT_{\sigma + \cdot, \sigma}(W_2)) u_\sigma,  \\
	 \imu \partial_t v_\sigma + |\nabla |v_\sigma  &= - |\nabla||u_\sigma|^2, \\
	 (u_\sigma(0), v_\sigma(0)) &= (u_{0,n}, v_{0,n}),
	\endaligned
	\right.
\end{equation}
where
\begin{align}
\label{eq:DefbsigmacsigmaInProof}
	&b_\sigma = 2 \nabla W_{1,\sigma}, \quad c_\sigma = |\nabla W_{1,\sigma}|^2 + \Delta W_{1,\sigma}, \quad W_{1,\sigma}(t)= W_1(\sigma+t) - W_1(\sigma), \qquad \text{and } \nonumber\\
	&\cT_{\sigma+t, \sigma} (W_2)
     = -\imu \int_\sigma^{\sigma+t} e^{\imu (\sigma+t-s)|\na|} \dd W_2(s) \quad (t \in [0,T]).
\end{align}

Proceeding as in Step~1,
we define $v_{L,n}(t) := e^{\imu t |\nabla|} v_{0,n}$
and $\rho_{\sigma,n} := v_\sigma - v_{L,n}$.
Similarly to \eqref{eq:LWPFixedPoint},
we see that $(u_\sigma, v_\sigma)$ is a solution of~\eqref{eq:RanZakbsigmacsigmaInProof} if and only if $u_\sigma$ solves
\begin{equation}
	\label{eq:FixedPointusigma}
	u_\sigma(t) = \cU_{v_{L,n}}[u_{0,n}](t) - \cI_{v_{L,n}}[\Re(\cJ_0[|\nabla| |u_\sigma|^2])u_\sigma](t) - \cI_{v_{L,n}}[b_\sigma \cdot \nabla u_\sigma + c_\sigma u_\sigma - \Re(\cT_{\sigma + \cdot,\sigma}(W_2)) u_\sigma](t).
\end{equation}

We define the fixed point operator $\Phi_\sigma(u_{0,n}, v_{0,n}; u_\sigma)$ by the right-hand side of~\eqref{eq:FixedPointusigma} as well as
\begin{align}
	\label{eq:DefWStarsigman}
	W^*_{\sigma_n}(t) &= \| \nabla W_{1,\sigma_n}(t)\|_{H^3} + \sum_{j = 1}^4 \sum_{k = 1}^\infty \int \sup_{y \in \R^3} |\nabla \phi^{(1)}_k(r \vece_j + y)| \dd r \sup_{s \in [0,t]} |\beta_k^{(1)}(\sigma_n + s) - \beta_k^{(1)}(\sigma_n)| \nonumber\\
	&\qquad + \|W_{1,\sigma_n}(t)\|_{{H^4}}^2 + \|W_{1,\sigma_n}(t)\|_{{H^4}} + \|\cT_{\sigma_n + t,\sigma_n}(W_2)\|_{{H^2}},
\end{align}
the $\{\cF_{\sigma_n + t}\}$-stopping times
\begin{align*}
	\tilde{\tau}_{n+1} &=\inf\{t \in [0, T] \colon  \|e^{\imu (\cdot) \Delta} u_{0,n}\|_{L^2_t L^4_x([0,t] \times \R^4)} + W_{\sigma_n}^*(t) \geq 4 \delta_*(\|u_{0,n}\|_{H^1}, \|v_{0,n}\|_{L^2})\} \\
	&\qquad \wedge \inf\{t \in [0,T] \colon \|v_{L,n}\|_{\Y([0,t]) + L^2_t W^{1,4}_x([0,t] \times \R^4)} \geq \epsilon\} \wedge \min\{2,T - \sigma_n\}, \\
	\tau_{n+1} &= \inf\{t \in [0, T] \colon  \|e^{\imu (\cdot) \Delta} u_{0,n}\|_{L^2_t L^4_x([0,t] \times \R^4)} + W_{\sigma_n}^*(t) \geq 2 \delta_*(\|u_{0,n}\|_{H^1}, \|v_{0,n}\|_{L^2})\} \\
	&\qquad \wedge \inf\Big\{t \in [0,T] \colon \|v_{L,n}\|_{\Y([0,t]) + L^2_t W^{1,4}_x([0,t] \times \R^4)} \geq  \frac{\epsilon}{2}\Big\} \wedge \min\{1,T - \sigma_n\}
\end{align*}
with $\delta_*$ from Subsection \ref{Subsec-LWP}, and
\begin{align*}
	\sigma_{n + 1} := \sigma_n + \tau_{n+1}, \qquad \tilde{\sigma}_{n+1} := \sigma_n + \tilde{\tau}_{n+1}.
\end{align*}
Note that $t \mapsto \|v_{L,n}\|_{\Y([0,t]) + L^2_t W^{1,4}_x([0,t] \times \R^4)}$ is continuous by Lemma~\ref{lem:Continuity}~\ref{it:ContSumY} so that $\tilde{\tau}_{n+1}$ and $\tau_{n+1}$ are indeed $\{\cF_{\sigma_n + t}\}$-stopping times.
Then, $\sigma_{n+1}$ and $\tilde{\sigma}_{n+1}$ are $\{\cF_t\}$-stopping times
(see \cite{BRZ14,LR15} for the relevant arguments)
with $\sigma_{n+1} \leq \tilde{\sigma}_{n+1} \leq T$,
as well as $\sigma_{n+1} < \tilde{\sigma}_{n+1}$ or $\sigma_{n+1} = \tilde{\sigma}_{n+1} = T$, $\PP$-a.s.

We note that, as mentioned in Subsection \ref{Subsec-LWP},
we need two stopping times in order to show later that
the glued solutions belong to $\XOne([0,\tilde{\sigma}_{n+1}]) \times \Y([0,\tilde{\sigma}_{n+1}])$.

Employing the estimates from Subsection~\ref{Subsec-LWP}, i.e.,
\eqref{eq:EstFixedPointOp1} to~\eqref{eq:EstFixedPointOp3} and~\eqref{eq:EstFixedPointOpDiff},
we derive as in Subsection~\ref{Subsec-LWP} that the operator $\Phi_\sigma(u_{0,n}, v_{0,n}; \cdot)$
is a contractive self-mapping on a closed subset of $\XOne([0,\tilde{\tau}_{n+1}])$.
Hence, there is a unique solution $\tilde{u}_{\sigma_{n+1}}$ of~\eqref{eq:FixedPointusigma} in this closed subset.
Setting $\tilde{v}_{\sigma_{n+1}} := v_{L,n} - \cJ_0[|\nabla| |\tilde{u}_{\sigma_{n+1}}|^2]$,
we thus obtain a solution $(\tilde{u}_{\sigma_{n+1}}, \tilde{v}_{\sigma_{n+1}})$
of~\eqref{eq:RanZakbsigmacsigmaInProof} in $\XOne([0,\tilde{\tau}_{n+1}]) \times \Y([0,\tilde{\tau}_{n+1}])$, which is unique in $\XOne([0,\tilde{\tau}_{n+1}]) \times L^\infty_t L^2_x([0,\tilde{\tau}_{n+1}] \times \R^4)$.

Next, define
\begin{align*}
	(u_{\sigma_{n+1}}(t), v_{\sigma_{n+1}}(t)) = (\tilde{u}_{\sigma_{n+1}}(t \wedge \tilde{\tau}_{n+1}), \tilde{v}_{\sigma_{n+1}}(t \wedge \tilde{\tau}_{n+1})),
    \ \ t \in [0,T].
\end{align*}
Then, $(u_{\sigma_{n+1}}, v_{\sigma_{n+1}})$ is an $\{\cF_{\sigma_{n} + t}\}$-adapted continuous process in $H^1 \times L^2$ which solves~\eqref{eq:RanZakbsigmacsigmaInProof} on $[0,\tilde{\tau}_{n+1}]$.

Finally, we use the gluing procedure to define
\begin{align*}
	u_{n+1}(t) &:= u_n(t) \chi_{[0,\sigma_n)}(t) + e^{-W_1(\sigma_n)}u_{\sigma_{n+1}}((t-\sigma_{n}) \wedge \tilde{\tau}_{n+1})\chi_{[\sigma_n, T]}(t), \\
	v_{n+1}(t) &:= v_n(t) \chi_{[0,\sigma_n)}(t) + \Big(v_{\sigma_{n+1}}((t-\sigma_n) \wedge \tilde{\tau}_{n+1}) - e^{\imu ((t - \sigma_n) \wedge \tilde{\tau}_{n+1}) |\nabla|} \cT_{\sigma_n}(W_2)\Big) \chi_{[\sigma_n, T]}(t)
\end{align*}
for all $t \in [0,T]$. By Proposition~\ref{prop:GluingSolutions},
$(u_{n+1}, v_{n+1})$ solves~\eqref{eq:RanZakbc} on $[0, \tilde{\sigma}_{n+1}]$.

\medskip
\paragraph{\bf Claim:}
$(u_{n+1}, v_{n+1})$ belongs to $\XOne([0,\tilde{\sigma}_{n+1}]) \times \Y([0,\tilde{\sigma}_{n+1}])$.

\medskip
To this end,
if $\sigma_n(\omega) = \tilde{\sigma}_n(\omega) = T$, there is nothing to show.
So it suffices to consider $\omega$ with $\sigma_n(\omega) < \tilde{\sigma}_n(\omega)$ in the following.

On the one hand, since $(u_n, v_n)$ solves~\eqref{eq:RanZakbc} on $[0,\tilde{\sigma}_n]$,
it also solves~\eqref{eq:RanZakbc} on $[\sigma_n, \tilde{\sigma}_n]$ with the initial data
$(u_n(\sigma_n), v_n(\sigma_n)) = (e^{-W_1(\sigma_n)} u_{0,n}, v_{0,n} - \cT_{\sigma_n}(W_2))$.
Proposition~\ref{prop:RefinedRescaling}~\ref{it:RefRescSigTo0} thus yields that
\begin{align*}
	(\underline{u}_n, \underline{v}_n):= (e^{W_1(\sigma_n)} u_n(\sigma_n + \cdot), v_n(\sigma_n + \cdot) + e^{\imu (\cdot) |\nabla|} \cT_{\sigma_n}(W_2))
\end{align*}
solves~\eqref{eq:RanZakSigma} on $[0,\tilde{\sigma}_n - \sigma_n]$ with the initial data $(u_{0,n}, v_{0,n})$.
Since the $\Y(\R)$-norm is time translation invariant and $v_n \in \Y([0,\tilde{\sigma}_n])$, we infer that $v_n(\sigma_n + \cdot)$ belongs to $\Y([-\sigma_n, \tilde{\sigma}_n - \sigma_n]) \subseteq \Y([0,\tilde{\sigma}_n - \sigma_n])$. As $\cT_{\sigma_n}(W_2) \in L^2(\R^4)$, we also have $e^{\imu (\cdot) |\nabla|} \cT_{\sigma_n}(W_2) \in \Y(\R) \subseteq  \Y([0,\tilde{\sigma}_n - \sigma_n])$. Moreover, Corollary~\ref{cor:RefinedRescalingInX}~\ref{it:RefinedRescalingInX2} yields $e^{W_1(\sigma_n)} u_n(\sigma_n + \cdot) \in \XOne([0, \tilde{\sigma}_n - \sigma_n])$,
as $u_n \in \XOne([0, \tilde{\sigma}_n]) \subseteq \XOne([\sigma_n, \tilde{\sigma}_n])$.
Thus, we conclude that $(\underline{u}_n, \underline{v}_n) \in \XOne([0, \tilde{\sigma}_n - \sigma_n]) \times \Y([0, \tilde{\sigma}_n - \sigma_n])$.

On the other hand,
$(u_{\sigma_{n+1}}, v_{\sigma_{n+1}})$ also solves~\eqref{eq:RanZakbsigmacsigmaInProof} with the initial data $(u_{0,n}, v_{0,n})$ in $\XOne([0,\tilde{\tau}_{n+1}]) \times \Y([0,\tilde{\tau}_{n+1}])$ and it is unique in $\XOne([0,\tilde{\tau}_{n+1}]) \times L^\infty_t L^2_x([0,\tilde{\tau}_{n+1}] \times \R^4)$.
We thus infer
\begin{align*}
	(\underline{u}_n, \underline{v}_n) = (u_{\sigma_{n+1}}, v_{\sigma_{n+1}}) \qquad \text{on } [0, (\tilde{\sigma}_n - \sigma_n) \wedge \tilde{\tau}_{n+1}].
\end{align*}
Via the definition of $(\underline{u}_n, \underline{v}_n)$, this
yields
\begin{align*}
	(u_n(t), v_n(t)) = (e^{-W_1(\sigma_n)} u_{\sigma_{n+1}}(t - \sigma_n), v_{\sigma_{n+1}}(t - \sigma_n) - e^{\imu(t - \sigma_n)|\nabla|} \cT_{\sigma_n}(W_2))
\end{align*}
for all $t \in [\sigma_n, ((\tilde{\sigma}_n - \sigma_n) \wedge \tilde{\tau}_{n+1}) + \sigma_n] = [\sigma_n, \tilde{\sigma}_n \wedge \tilde{\sigma}_{n+1}]$.
In view of the definition of $(u_{n+1}, v_{n+1})$, we thus infer that
\begin{align*}
	(u_{n+1}, v_{n+1})_{|[0,\tilde{\sigma}_n \wedge \tilde{\sigma}_{n+1}]} &= (u_n, v_n), \\
	(u_{n+1}, v_{n+1})_{|[\sigma_n,\tilde{\sigma}_{n+1}]} &= (e^{-W_1(\sigma_n)} u_{\sigma_{n+1}}(t - \sigma_n), v_{\sigma_{n+1}}(t - \sigma_n) - e^{\imu(t - \sigma_n)|\nabla|} \cT_{\sigma_n}(W_2)).
\end{align*}
Arguing as for $\underline{v}_n$ above, we infer that $(v_{n+1})_{|[\sigma_n,\tilde{\sigma}_{n+1}]} \in \Y([\sigma_n,\tilde{\sigma}_{n+1}])$. Corollary~\ref{cor:RefinedRescalingInX}~\ref{it:RefinedRescalingInX1} further shows that $e^{-W_1(\sigma_n)} u_{\sigma_{n+1}}(\cdot - \sigma_n) \in \XOne([\sigma_n, \tilde{\sigma}_{n+1}])$. Consequently, $(u_{n+1}, v_{n+1})_{|[\sigma_n,\tilde{\sigma}_{n+1}]} \in \XOne([\sigma_n,\tilde{\sigma}_{n+1}]) \times \Y([\sigma_n,\tilde{\sigma}_{n+1}])$, and $(u_{n+1}, v_{n+1})_{|[0,\tilde{\sigma}_n \wedge \tilde{\sigma}_{n+1}]} \in \XOne([0,\tilde{\sigma}_n \wedge \tilde{\sigma}_{n+1}]) \times \Y([0,\tilde{\sigma}_n \wedge \tilde{\sigma}_{n+1}])$ because of the properties of $(u_n, v_n)$.

Finally,
since $\tilde{\sigma}_n \wedge \tilde{\sigma}_{n+1} > \sigma_n>0$,
Lemma~\ref{lem:DecompX} yields that $(u_{n+1}, v_{n+1}) \in \XOne([0, \tilde{\sigma}_{n+1}]) \times \Y([0, \tilde{\sigma}_{n+1}])$,
as claimed.

\medskip
Now, combining the uniqueness properties of $(u_n, v_n)$ and $(u_{\sigma_{n+1}}, v_{\sigma_{n+1}})$, Proposition~\ref{prop:RefinedRescaling}, Corollary~\ref{cor:RefinedRescalingInX}, and standard arguments, we infer that $(u_{n+1}, v_{n+1})$ is the unique solution of~\eqref{eq:RanZakbc} in $\XOne([0,\tilde{\sigma}_{n+1}]) \times L^\infty_t L^2_x([0,\tilde{\sigma}_{n+1}] \times \R^4)$. Moreover, $(u_{n+1}, v_{n+1})$ is an $\{\cF_t\}$-adapted continuous process in $H^1 \times L^2$ (see e.g.~\cite{BRZ14} for the relevant arguments),
which coincides with $(u_n, v_n)$ on $[0,\sigma_n]$ and satisfies $(u_{n+1}(t), v_{n+1}(t)) = (u_{n+1}(\tilde{\sigma}_{n+1}), v_{n+1}(\tilde{\sigma}_{n+1}))$ for all $t \in [\tilde{\sigma}_{n+1}, T]$.

Inductively, we thus obtain an increasing sequence of $\{\cF_t\}$-adapted stopping times $(\sigma_n)$ as well as corresponding $\{\cF_t\}$-adapted processes $(u_n, v_n)$,
such that $(u_n, v_n)$ is the solution of~\eqref{eq:RanZakbc}
in $\XOne([0,\sigma_n]) \times \Y([0,\sigma_n])$, unique in $\XOne([0,\tilde{\sigma}_{n+1}]) \times L^\infty_t L^2_x([0,\tilde{\sigma}_{n+1}] \times \R^4)$,
and $(u_{n+1}, v_{n+1})$ coincides with $(u_n, v_n)$ on $[0,\sigma_n]$.

Setting
\begin{align*}
	\tau_T^* := \lim_{n \rightarrow \infty} \sigma_n \qquad \text{and} \qquad (u^T, v^T) := \lim_{n \rightarrow \infty} (u_n \chi_{[0, \tau_T^*)}, v_n \chi_{[0, \tau_T^*)}),
\end{align*}
we obtain an $\{\cF_t\}$-adapted stopping time $\tau_T^*$ as well as an $\{\cF_t\}$-adapted process $(u^T, v^T)$, which is
the solution of~\eqref{eq:RanZakbc} on $[0,\tau_T^*)$.

Since $\{\tau_T^*\}$ is increasing in $T$, and for $T' > T$,
the process $(u^{T'}, v^{T'})$ coincides with $(u^T, v^T)$ on $[0,\tau_T^*)$ by the uniqueness property,
we can define
\begin{align*}
	\tau^* := \lim_{T \rightarrow \infty} \tau_T^* \qquad \text{and} \qquad (u,v) := \lim_{T \rightarrow \infty} (u^T \chi_{[0,\tau^*)}, v^T \chi_{[0,\tau^*)}).
\end{align*}
The resulting process $(u,v)$ is thus $\{\cF_t\}$-adapted,
continuous in $H^1 \times L^2$ on $[0,\tau^*)$,
and uniquely solves \eqref{eq:RanZakbc}.

Finally, we use the rescaling transformation again to define
\begin{align*}
   (X,Y) := (e^{W_1} u, v + \cT_{\cdot}(W_2)).
\end{align*}
The equivalence result in Theorem~\ref{thm:Rescaling}
shows that $(X,Y)$ is the unique solution of~\eqref{eq:StoZak} on $[0,\tau^*)$ in the sense of Definition~\ref{def:Solution}.

\subsection{Blow-up alternative}   \label{Subsec-Blowup}
It remains to prove the blow-up alternative in Theorem~\ref{thm:LocalWP}. We argue by contradiction and assume that it is not true. Also employing Lemma~\ref{lem:PropNoise}, we thus find a set $\Omega'$ of positive measure such that for every $\omega \in \Omega'$ we have
\begin{enumerate}
	\item $\tau^*(\omega) < \infty$,
	\item $\limsup_{t \rightarrow \tau^*(\omega)} (\|X(t,\omega)\|_{H^1} + \|Y(t,\omega)\|_{L^2}) < \infty$,
	\item $\|X(\cdot, \omega)\|_{L^2_t W^{\frac{1}{2},4}_x([0,\tau^*(\omega)) \times \R^4)} < \infty$,
\end{enumerate}
and~\eqref{eq:limphi1kbeta1k} is satisfied.
We fix such an $\omega$ in the following. Let $T \in (0,\infty)$ such that $T > \tau^*(\omega)$. Let $\{\sigma_n, \tau_n\}$ be constructed as in Subsection \ref{Subsec-Max-Exist} for this $T$. For convenience  the dependence on $\omega$ is dropped in the following. 

Since $(u_{0,n}, v_{0,n}) = (X(\sigma_n), Y(\sigma_n))$ for the initial data from~\eqref{eq:DefInitialDataRescaled}, (ii) above implies $\limsup_{n \rightarrow \infty}(\|u_{0,n}\|_{H^1} + \|v_{0,n}\|_{L^2}) < \infty$. In particular, there exists $r > 0$ such that
\begin{equation}
	\label{eq:BoundInitialDataRescaled}
	\|u_{0,n}\|_{H^1} + \|v_{0,n}\|_{L^2} \leq r
\end{equation}
for all $n \in \N$.

Note that $\lim_{n \rightarrow \infty} \tau_n = 0$ since $\tau^* < \infty$. We further assume without loss of generality that $T - \sigma_n < 1$ for all $n \in \N$. Because of $T - \sigma_n \geq T - (\tau^* - \tau_{n+1}) > \tau_{n+1}$, we infer that
\begin{align}
\label{eq:StoppingTimeBlowUp}
	\tau_{n+1} &= \inf\{t \in [0, T] \colon  \|e^{\imu (\cdot) \Delta} u_{0,n}\|_{L^2_t L^4_x([0,t] \times \R^4)} + W_{\sigma_n}^*(t) \geq 2 \delta_*(\|u_{0,n}\|_{H^1}, \|v_{0,n}\|_{L^2})\} \nonumber\\
	&\qquad \wedge \inf\Big\{t \in [0,T] \colon \|v_{L,n}\|_{\Y([0,t]) + L^2_t W^{1,4}_x([0,t] \times \R^4)} \geq \frac{\epsilon}{2}\Big\}
\end{align}
for all $n \in \N$.
Recall that $\delta_*$ is decreasing in both its arguments. Setting $\delta = \delta_*(r,r) > 0$, we obtain from~\eqref{eq:BoundInitialDataRescaled} that
\begin{align*}
	\delta_*(\|u_{0,n}\|_{H^1}, \|v_{0,n}\|_{L^2}) \geq \delta_*(r,r) = \delta >0
\end{align*}
for all $n \in \N$. By continuity (where we use Lemma~\ref{lem:Continuity}~\ref{it:ContSumY} again) and~\eqref{eq:StoppingTimeBlowUp}, we thus get
\begin{align}
\label{eq:AlternativeTaunplus1}
	\|e^{\imu (\cdot) \Delta} u_{0,n}\|_{L^2_t L^4_x([0,\tau_{n+1}] \times \R^4)}  \geq \delta \quad \text{or} \quad W_{\sigma_n}^*(\tau_{n+1}) \geq \delta  \quad \text{or} \quad \|v_{L,n}\|_{\Y([0,\tau_{n+1}]) + L^2_t W^{1,4}_x([0,\tau_{n+1}] \times \R^4)} \geq \frac{\epsilon}{2}
\end{align}
for all $n \in \N$.

We next demonstrate that the second alternative in~\eqref{eq:AlternativeTaunplus1} is never satisfied if $n \in \N$ is large enough. To that purpose we show that
\begin{equation}
	\label{eq:WStarsigmanLimZero}
	\lim_{n \rightarrow \infty} \sup_{t \in [0, \tau^* - \sigma_n]} W_{\sigma_n}^*(t) = 0.
\end{equation} 

To prove this claim, we exploit Lemma~\ref{lem:PropNoise}. Let $\zeta > 0$.
Since the convergence~\eqref{eq:limphi1kbeta1k} holds for the $\omega$ we fixed,
there exists an index $n_l \in \N$ such that
\begin{align*}
    \sum\limits_{j=1}^4 \sum_{k=n_l+1}^\infty \int  \sup_{y\in \R^3}
    |\nabla \phi^{(1)}_k(r \vece_j + y)| \dd r \sup\limits_{t\in [0,T]} |\beta^{(1)}_k(t)|
    < \frac{\zeta}{4}.
\end{align*}
We now fix $\kappa \in (0,\frac{1}{2})$. The $C^\kappa$-H\"older continuity of Brownian motions yields for the first $n_l$ modes of the noise
\begin{align*}
     & \sum_{j=1}^4 \sum_{k=1}^{n_l}
    \int  \sup_{y\in \R^3} |\nabla \phi^{(1)}_k(r \vece_j+y)| \dd r
    \sup_{s \in [0,\tau^* - \sigma_n]}|\beta^{(1)}_k(\sigma_n+s) - \beta^{(1)}_k(\sigma_n)|  \\
   &\leq \sum_{j=1}^4 \sum_{k=1}^{n_l}
    \int  \sup_{y\in \R^3} |\nabla \phi^{(1)}_k(r \vece_j+y)| \dd r\,
          \tilde{C}(k,\kappa,T) (\tau^* - \sigma_n)^{\kappa}
    =: \tilde{C}(n_l,\kappa, T)  (\tau^* - \sigma_n)^{\kappa},
\end{align*}
where $\tilde{C}(k,\kappa,T) $ is the $C^\kappa$-H\"older norm of $\beta^{(1)}_k$ on $[0,T]$ for $1\leq k\leq n_l$.
Consequently, we have
\begin{align*}
   & \sum\limits_{j=1}^4 \sum_{k=1}^\infty \int \sup_{y \in \R^3}
        |\nabla \phi^{(1)}_k(r \vece_j+y)| \dd r \sup_{s\in[0, \tau^* - \sigma_n]}  |\beta^{(1)}_k(\sigma_n+s) - \beta^{(1)}_k(\sigma_n)|  \leq \frac{\zeta}{2} + \tilde{C}(n_l, \kappa, T)  (\tau^* - \sigma_n)^{\kappa}.
\end{align*}
The H\"older continuity of the noise provided by Lemma~\ref{lem:PropNoise}
also shows that
there exists $\tilde{C}'(\kappa,T)$ such that
\begin{align*}
    &\|\nabla W_{1,\sigma_n}(t) \|_{H^3}
     + \|W_{1,\sigma_n}(t) \|_{{H^4}}^2 + \|W_{1,\sigma_n}(t) \|_{{H^4}}
  + \|\cT_{\sigma_n+t, \sigma_n}(W_2)\|_{H^2}  \\
   &\leq   2 \|W_1(\sigma_{n}+t) - W_1(\sigma_n)\|_{{H^4}}
     + \| W_1(\sigma_{n} + t) -  W_1(\sigma_n)\|_{{H^4}}^2  \\
    &\qquad + \Big \|\int_{\sigma_n}^{\sigma_{n} + t} e^{\imu(\sigma_n + t -s)|\nabla|} \dd W_2(s) \Big\|_{H^2}   \\
  &\leq \tilde{C}'(\kappa,T) (\tau^* - \sigma_n)^\kappa
\end{align*}
for all $t \in [0, \tau^* - \sigma_n]$.
The last two estimates and the definition of $W^*_{\sigma_n}$ in \eqref{eq:DefWStarsigman} now yield
\begin{align*}
   \sup_{t \in [0, \tau^* - \sigma_n]} W^*_{\sigma_n}(t)
   \leq  \frac{\zeta}{2}
           + (\tilde{C}(n_l, \kappa, T)   +  \tilde{C}'(\kappa,T)) (\tau^* - \sigma_n)^\kappa.
\end{align*}
Since $\sigma_{n} \rightarrow \tau^*$ as $n \rightarrow \infty$, we conclude that there exists $N \in \N$ such that
\begin{equation}
	\label{eq:BoundWsigmanStar}
	\sup_{t \in [0, \tau^* - \sigma_n]} W_{\sigma_n}^*(t) < \zeta
\end{equation}
for all $n \geq N$, which implies~\eqref{eq:WStarsigmanLimZero}.
As $\tau_{n+1} \in [0, \tau^* - \sigma_n)$ for all $n \in \N$, we particularly find an index $n_0 \in \N$ such that $W^*_{\sigma_n}(\tau_{n+1}) < \delta$ for all $n \geq n_0$.

We next show that the first alternative in~\eqref{eq:AlternativeTaunplus1} is not satisfied for large enough $n$, i.e. that
\begin{equation}
	\label{eq:ExcludingBlowUpSchroedingerProfile}
	\|e^{\imu(\cdot)\Delta} u_{0,n}\|_{L^2_t L^4_x([0,\tau_{n+1}] \times \R^4)} < \delta
\end{equation}
if $n$ is large enough.

To that purpose, we define
\begin{align*}
	(u_{\sigma_n}(t), v_{\sigma_n}(t)) = (e^{W_1(\sigma_n)} u(\sigma_n + t), v(\sigma_n + t) + e^{\imu t |\nabla|} \cT_{\sigma_n}(W_2))
\end{align*}
for all $t \in [0, \tau^* - \sigma_n)$. By Proposition~\ref{prop:RefinedRescaling},
$(u_{\sigma_n}, v_{\sigma_n})$ solves~\eqref{eq:RanZakbsigmacsigmaInProof} on $[0, \tau^* - \sigma_n)$ with initial data $(u_{0,n}, v_{0,n})$. We further note that there exists a number $R > 0$ such that
\begin{align}
	\|u_{\sigma_n}\|_{L^\infty_t H^1_x([0,\tau^* - \sigma_n) \times \R^4)} &\leq \|e^{W_1(\sigma_n)} u\|_{L^\infty_t H^1_x([0,\tau^*) \times \R^4)} = \|e^{W_1(\sigma_n)} e^{-W_1} X\|_{L^\infty_t H^1_x([0,\tau^*) \times \R^4)} \nonumber \\
	&\leq C \| X \|_{L^\infty_t H^1_x([0,\tau^*) \times \R^4)} \leq R, \nonumber\\
	\|v_{\sigma_n}\|_{L^\infty_t L^2_x([0,\tau^* - \sigma_n) \times \R^4)} &\leq \|v\|_{L^\infty_t L^2_x([0,\tau^*) \times \R^4)} + \|\cT_{\sigma_n}\|_{L^2_x} \nonumber\\
	&\leq \|Y\|_{L^\infty_t L^2_x([0,\tau^*) \times \R^4)} + 2 \| \cT_{\cdot}(W_2)\|_{L^\infty_t L^2_x([0,\tau^*] \times \R^4)} \leq R, \label{eq:BlowUpBoundv}
\end{align}
for all $n \in \N$, where we employed~(ii) as well as $W_1 \in C([0,\tau^*], H^4(\R^4))$ and $\cT_{\cdot}(W_2) \in C([0,\tau^*], L^2(\R^4))$.

Using now that $(u_{\sigma_n}, v_{\sigma_n})$ solves~\eqref{eq:RanZakbsigmacsigmaInProof} on $[0, \tau^* - \sigma_n)$, we infer
\begin{align*}
	u_{\sigma_n}(t) = e^{\imu t \Delta} u_{0,n} - \imu \int_0^t e^{\imu (t-s) \Delta}(\Re(v_{\sigma_n}) u_{\sigma_n} - b_{\sigma_n} \cdot \nabla u_{\sigma_n} - c_{\sigma_n} u_{\sigma_n} + \Re(\cT_{\sigma_n + \cdot, \sigma_n}) u_{\sigma_n})(s) \dd s
\end{align*}
and thus
\begin{align*}
	&\|e^{\imu t \Delta} u_{0,n}\|_{L^2_t L^4_x([0,\tau_{n+1}] \times \R^4)} \leq \|u_{\sigma_n}\|_{L^2_t L^4_x([0,\tau^* - \sigma_n) \times \R^4)} \\
	&\qquad + \Big\| \int_0^t e^{\imu (t-s) \Delta}(\Re(v_{\sigma_n}) u_{\sigma_n} - b_{\sigma_n} \cdot \nabla u_{\sigma_n} - c_{\sigma_n} u_{\sigma_n} + \Re(\cT_{\sigma_n + \cdot, \sigma_n}) u_{\sigma_n})(s) \dd s \Big\|_{L^2_t L^4_x([0,\tau^* - \sigma_n) \times \R^4)} \\
	&\lesssim \|u\|_{L^2_t L^4_x([\sigma_n, \tau^*) \times \R^4)} + \|\Re(v_{\sigma_n}) u_{\sigma_n}\|_{L^2_t L^{\frac{4}{3}}_x([0, \tau^* - \sigma_n) \times \R^4)} \\
	&\qquad + \| b_{\sigma_n} \cdot \nabla u_{\sigma_n} + c_{\sigma_n} u_{\sigma_n} - \Re(\cT_{\sigma_n + \cdot, \sigma_n}) u_{\sigma_n} \|_{L^1_t L^2_x([0, \tau^* - \sigma_n) \times \R^4)} \\
	&\lesssim \|X\|_{L^2_t L^4_x([\sigma_n, \tau^*) \times \R^4)} + \|v_{\sigma_n}\|_{L^\infty_t L^2_x([0, \tau^* - \sigma_n) \times \R^4)} \| u_{\sigma_n}\|_{L^2_t L^{4}_x([0, \tau^* - \sigma_n) \times \R^4)} \\
	&\qquad + (\tau^* - \sigma_n) (\| b_{\sigma_n} \|_{L^\infty_t H^3_x([0, \tau^* - \sigma_n) \times \R^4)} + \|c_{\sigma_n}\|_{L^\infty_t H^1_x([0, \tau^* - \sigma_n) \times \R^4)} \\
	&\hspace{10em} + \|\cT_{\sigma_n + \cdot, \sigma_n}\|_{L^\infty_t H^1_x([0, \tau^* - \sigma_n) \times \R^4)}) \| u_{\sigma_n} \|_{L^\infty_t H^1_x([0, \tau^* - \sigma_n) \times \R^4)} \\
	&\lesssim (1+ R) \|X\|_{L^2_t L^4_x([\sigma_n, \tau^*) \times \R^4)} + (\tau^* - \sigma_n) R \sup_{t \in [0, \tau^* - \sigma_n]}W^*_{\sigma_n}(t)
\end{align*}
for all $n \in \N$, where we used Strichartz estimates as well as $|e^{\pm W_1}| = 1$. In~\eqref{eq:BoundWsigmanStar} we have seen that $\sup_{t \in [0, \tau^* - \sigma_n]}W^*_{\sigma_n}(t) \leq 1$ for all $n \geq N$. By assumption~(iii) and the dominated convergence theorem, we thus conclude
\begin{align*}
	\|e^{\imu t \Delta} u_{0,n}\|_{L^2_t L^4_x([0,\tau_{n+1}] \times \R^4)} \longrightarrow 0
\end{align*}
as $n \rightarrow \infty$. In particular, there exists $n_0 \in \N$ such that~\eqref{eq:ExcludingBlowUpSchroedingerProfile} is satisfied for all $n \geq n_0$.

Finally, we show that the third alternative in~\eqref{eq:AlternativeTaunplus1} cannot hold, i.e., we show that
\begin{equation}
\label{eq:BlowUpAltLinWavePotSmall}
	\|v_{L,n}\|_{\Y([0,\tau_{n+1}]) + L^2_t W^{1,4}_x([0,\tau_{n+1}] \times \R^4)} < \frac{\epsilon}{2}
\end{equation}
if $n \in \N$ is large enough. To that purpose we first recall that
\begin{align*}
	v_{L,n}(t) = e^{\imu t |\nabla|} v_{0,n} = e^{\imu t |\nabla|}(v_n(\sigma_n) + \cT_{\sigma_n}(W_2)) = e^{\imu t |\nabla|}(v(\sigma_n) + \cT_{\sigma_n}(W_2))
\end{align*}
from the construction in Subsection \ref{Subsec-Max-Exist}. By Sobolev's embedding we get for the second summand
\begin{align}
	&\|e^{\imu t |\nabla|}\cT_{\sigma_n}(W_2)\|_{\Y([0,\tau_{n+1}]) + L^2_t W^{1,4}_x([0,\tau_{n+1}] \times \R^4)} \lesssim \|e^{\imu t |\nabla|}\cT_{\sigma_n}(W_2)\|_{L^2_t W^{1,4}_x([0,\tau_{n+1}] \times \R^4)} \nonumber\\
	&\lesssim \|e^{\imu t |\nabla|}\cT_{\sigma_n}(W_2)\|_{L^2_t H^2_x([0,\tau_{n+1}] \times \R^4)} \lesssim \tau_{n+1}^{\frac{1}{2}} \|\cT_{\sigma_n}(W_2)\|_{H^2_x}
	\lesssim (\tau^* - \sigma_n)^{\frac{1}{2}} \sup_{t \in [0,\tau*]} \|\cT_t(W_2)\|_{H^2}. \label{eq:BlowUpWaveProfileSecondSummand}
\end{align}

For the first summand, we exploit Lemma~\ref{lem:ImprovementRegularity}. Note that by~(ii) and~(iii) the assumptions of that lemma are satisfied so that we can extend $v$ to a function in $C([0,\tau^*], L^2(\R^4))$. Now let $\phi \in C_c^\infty(\R^4)$ with $0 \leq \phi \leq 1$, $\phi = 1$ on $B_1(0)$ and $\phi = 0$ on $B_2(0)^c$. 
Setting $\phi_\nu(x) = \nu^{-4} \phi(\frac{x}{\nu})$ for $x \in \R^4$ and $\nu > 0$, we obtain the kernel of a standard mollifier on $\R^4$. Since $v$ is continuous on the compact interval $[0,\tau^*]$, we have
\begin{align*}
	\|v - v \ast \phi_\nu\|_{L^\infty_t L^2_x([0,\tau^*] \times \R^4)} \longrightarrow 0
\end{align*}
as $\nu \rightarrow 0$. Hence, we can fix $\nu > 0$ such that $\|v - v \ast \phi_\nu\|_{L^\infty_t L^2_x([0,\tau^*] \times \R^4)} < \frac{\epsilon}{4C'}$, where $C'$ is the constant from Lemma~\ref{lem:LinearEstimateHalfWave}. Using Lemma~\ref{lem:LinearEstimateHalfWave} and Sobolev's embedding again as in~\eqref{eq:BlowUpWaveProfileSecondSummand}, we thus infer
\begin{align}
	\label{eq:BlowUpWaveProfileFirstSummand}
	&\|e^{\imu t |\nabla|}v(\sigma_n)\|_{\Y([0,\tau_{n+1}]) + L^2_t W^{1,4}_x([0,\tau_{n+1}] \times \R^4)} \nonumber\\
	& \leq \|e^{\imu t |\nabla|}(v(\sigma_n) - v \ast \phi_\nu(\sigma_n))\|_{\Y([0,\tau_{n+1}])} + \|e^{\imu t |\nabla|}(v \ast \phi_\nu(\sigma_n))\|_{L^2_t W^{1,4}_x([0,\tau_{n+1}] \times \R^4)} \nonumber \\
	&\leq C'\|v(\sigma_n) - v \ast \phi_\nu(\sigma_n)\|_{L^2_x(\R^4)} + C\|e^{\imu t |\nabla|}(v(\sigma_n) \ast \phi_\nu)\|_{L^2_t H^2_x([0,\tau_{n+1}] \times \R^4)} \nonumber \\
	&\leq C'\|v - v \ast \phi_\nu\|_{L^\infty_t L^2_x([0,\tau^*] \times \R^4)} + C\tau_{n+1}^{\frac{1}{2}} \|v(\sigma_n) \ast \phi_\nu\|_{H^2_x(\R^4)}  \nonumber \\ 
	&\leq \frac{\epsilon}{4} + C(\tau^* - \sigma_n)^{\frac{1}{2}} \nu^{-2} \|v(\sigma_n)\|_{L^2_x(\R^4)} \nonumber \\
	&\leq \frac{\epsilon}{4} + C(\tau^* - \sigma_n)^{\frac{1}{2}} \nu^{-2} R,
\end{align}
where we also employed~\eqref{eq:BlowUpBoundv} in the last step. 

Combining~\eqref{eq:BlowUpWaveProfileFirstSummand} and~\eqref{eq:BlowUpWaveProfileSecondSummand} and using that $\lim_{n \rightarrow \infty} \sigma_n = \tau^*$, we conclude that there is $n_0 \in \N$ such that~\eqref{eq:BlowUpAltLinWavePotSmall} is satisfied for all $n \geq n_0$. 
Finally, \eqref{eq:WStarsigmanLimZero}, \eqref{eq:ExcludingBlowUpSchroedingerProfile}, and~\eqref{eq:BlowUpAltLinWavePotSmall} contradict~\eqref{eq:AlternativeTaunplus1} and thus the blow-up alternative in Theorem~\ref{thm:LocalWP} holds true. \hfill \qed

\section{GWP below the ground state}
\label{sec:WellPoBelowGrSt}

This section is devoted to the proof of the global well-posedness
below the ground state.
Two crucial ingredients of this proof are the variational properties of the ground state and a uniform estimate for solutions of a Schrödinger equation with a free-wave potential and lower order perturbations in the adapted space $\SHalf(I)$. By uniform we mean in this context that the involved constant does not depend on the free-wave profile, but only on its $L^2$-norm.

We first recall some consequences of the variational properties of the ground state $W$. These properties have been studied in~\cite{GNW13} and have been further developed in~\cite{GN21}. We exploit them in the form of Lemma~7.3 in~\cite{CHN21}.

\begin{lemma}[Variational constraints below the ground state, {\cite[Lemma~7.3]{CHN21}}]
	\label{lem:VariationalConstraints}
	Let $f \in \dot{H}^1(\R^4)$ and $g \in L^2(\R^4)$ with
	\begin{align*}
		e_Z(f,g) < e_Z(W, -W^2) = \frac{1}{4} \|W^2\|_{L^2_x}^2, \qquad \|g\|_{L^2_x} \leq \|W^2\|_{L^2_x}.
	\end{align*}
	We then have
	\begin{align*}
		\|g\|_{L^2_x}^2 \leq 4 e_Z(f,g), \qquad \| \nabla f\|_{L^2_x}^2 \leq \frac{1}{2} \frac{\|W^2\|_{L^2_x}}{\|W^2\|_{L^2_x} - \|g\|_{L^2_x}}(4 e_Z(f,g) - \|g\|_{L^2_x}^2) \leq \|W^2\|_{L^2_x}^2.
	\end{align*}
\end{lemma}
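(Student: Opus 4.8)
\emph{Strategy.} The plan is to control the sign-indefinite cross term in $e_Z$ by combining the Cauchy--Schwarz inequality with the sharp Sobolev inequality $\|\varphi\|_{L^4(\R^4)} \le \frac{\|W\|_{L^4}}{\|\nabla W\|_{L^2}} \|\nabla \varphi\|_{L^2}$, and then to read off both bounds by elementary algebra. The only input from the variational theory of the ground state are the identities $\|W^2\|_{L^2_x}^2 = \|W\|_{L^4}^4 = \|\nabla W\|_{L^2}^2$, which follow by testing the Aubin--Talenti equation~\eqref{AT-equa} against $W$, together with $e_Z(W,-W^2) = \tfrac14 \|W^2\|_{L^2_x}^2$.

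\emph{Main estimate.} Abbreviate $K := \|W^2\|_{L^2_x}^2 = \|\nabla W\|_{L^2}^2$, $N := \|\nabla f\|_{L^2_x}^2$ and $M := \|g\|_{L^2_x}$, so the hypotheses read $e_Z(f,g) < \tfrac14 K$ and $M \le \sqrt{K}$. Since $\|W\|_{L^4}^2/\|\nabla W\|_{L^2}^2 = \sqrt{K}/K = K^{-1/2}$, squaring the Sobolev inequality gives $\|f\|_{L^4}^2 \le K^{-1/2} N$, so that by Cauchy--Schwarz
\[
\Big| \int_{\R^4} \Re(g)\,|f|^2 \, dx \Big| \le M \|f\|_{L^4}^2 \le \frac{MN}{\sqrt{K}} .
\]
Inserting this into the defining expression~\eqref{energy-Zakh} for $e_Z$ yields
\begin{equation}
\label{eq:VarAuxLowerBound}
e_Z(f,g) \;\ge\; \frac12 N + \frac14 M^2 - \frac12 \frac{MN}{\sqrt{K}} \;=\; \frac12 N\Big(1 - \frac{M}{\sqrt{K}}\Big) + \frac14 M^2 .
\end{equation}

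\emph{Extracting the two bounds.} First I would note that $M \le \sqrt{K}$ makes the coefficient of $N$ in~\eqref{eq:VarAuxLowerBound} nonnegative, hence $e_Z(f,g) \ge \tfrac14 M^2$, which is the first assertion $\|g\|_{L^2_x}^2 \le 4 e_Z(f,g)$. I would then observe that $M = \sqrt{K}$ would force $e_Z(f,g) \ge \tfrac14 K = e_Z(W,-W^2)$, contradicting the strict energy bound; thus $M < \sqrt{K}$ and the denominator $\sqrt{K} - M$ below is positive. Rearranging~\eqref{eq:VarAuxLowerBound},
\[
N \;\le\; \frac{4 e_Z(f,g) - M^2}{2\big(1 - M/\sqrt{K}\big)} \;=\; \frac12 \,\frac{\|W^2\|_{L^2_x}}{\|W^2\|_{L^2_x} - \|g\|_{L^2_x}}\big(4 e_Z(f,g) - \|g\|_{L^2_x}^2\big),
\]
which is the middle term of the second assertion (nonnegative by the first assertion). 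Finally, $4 e_Z(f,g) < K$ gives $4 e_Z(f,g) - M^2 < K - M^2 = (\sqrt{K} - M)(\sqrt{K} + M)$, so the right-hand side above is strictly less than $\tfrac12 \sqrt{K}(\sqrt{K} + M) = \tfrac12(K + M\sqrt{K}) < K$, again because $M < \sqrt{K}$. Therefore $\|\nabla f\|_{L^2_x}^2 < \|W^2\|_{L^2_x}^2$, which completes the proof.

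\emph{On the difficulty.} This is a short, self-contained computation, so there is no genuine analytic obstacle; the points that require care are the precise identification of the sharp Sobolev constant through~\eqref{AT-equa} and the bookkeeping of which inequalities are strict, needed to land the final bound $\|\nabla f\|_{L^2_x}^2 \le \|W^2\|_{L^2_x}^2$. Since the statement coincides with Lemma~7.3 in~\cite{CHN21} (resting on the variational analysis of~\cite{GNW13,GN21}), one may alternatively just invoke it.
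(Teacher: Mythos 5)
Your computation is correct: the identities $\|\nabla W\|_{L^2}^2=\|W\|_{L^4}^4=\|W^2\|_{L^2}^2$ do follow from testing $-\Delta W=W^3$ against $W$, the Cauchy--Schwarz plus sharp Sobolev bound on the cross term gives exactly the lower bound $e_Z(f,g)\ge \tfrac12\|\nabla f\|_{L^2}^2\bigl(1-\|g\|_{L^2}/\|W^2\|_{L^2}\bigr)+\tfrac14\|g\|_{L^2}^2$, and your bookkeeping of strict inequalities (in particular that $e_Z(f,g)<\tfrac14\|W^2\|_{L^2}^2$ forces $\|g\|_{L^2}<\|W^2\|_{L^2}$, so the denominator is positive) is sound, as is the final chain giving $\|\nabla f\|_{L^2}^2<\|W^2\|_{L^2}^2$. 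The difference from the paper is that the paper does not prove this lemma at all: it simply imports it as Lemma~7.3 of~\cite{CHN21}, which rests on the variational analysis of the ground state developed in~\cite{GNW13,GN21}. So your argument is a genuinely more self-contained route: it needs only the sharp Sobolev inequality with extremiser $W$ (stated in the paper) and the elementary identity coming from~\eqref{AT-equa}, and it makes transparent where each hypothesis enters -- the mass constraint $\|g\|_{L^2}\le\|W^2\|_{L^2}$ gives the sign of the coefficient of $\|\nabla f\|_{L^2}^2$, while the strict energy bound upgrades it to a quantitative gap. What the citation buys is merely brevity and consistency with the source of the dichotomy theory; your derivation could replace it verbatim, or, as you note, one may simply invoke~\cite{CHN21}.
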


The following result gives the uniform estimate in the case of lower order perturbations.

\begin{proposition} [Uniform estimates]
	\label{prop:UniformEstimateBelowGroundState}
	Let $I$ be an interval with $t_0 = \min I$, $0 < B < \|W^2\|_{L^2(\R^4)}$, and $v_0 \in L^2(\R^4)$ with $\|v_0\|_{L^2(\R^4)} \leq B$. Let $u_0\in H^{1}(\R^4)$ and $f \in \NHalf(I)$.
Let $u\in C(I, H^1(\R^4))$
solve the equation
\begin{align*}
   \imu \partial_t u + \Delta u  - \Re(v_L) u + b\cdot \na u + c u - \Re(\cT_t(W_2)) u = f,
\end{align*}
with initial condition $u(t_0) = u_0$,
where $v_L:= e^{\imu (t-t_0)|\na|} v_0$ . Assume that there is a constant $A > 0$ such that
	\begin{align*}
		\|u \|_{L^\infty_t H^1_x(I \times \R^4)}  + a^*(I) \leq A,
	\end{align*}		
	where
\begin{align*}
	a^*(I) :=  \sum_{j = 1}^4 \|b\|_{L^{1,\infty}_{\vece_j}(I \times \R^4)} + \|b\|_{L^\infty_t H^3_x(I \times \R^4)} + \|c\|_{L^\infty_t H^2_x(I \times \R^4)} + \|\Re(\cT_t(W_2))\|_{L^\infty_t H^2_x(I \times \R^4)}.
\end{align*}
	 Then there is a constant $C = C(A,B) > 0$ such that
	\begin{align}  \label{unif-esti}
		\|u\|_{\SHalf(I)}
     \leq C \|u_0\|_{H^{\frac{1}{2}}_x(\R^4)} +  C\|f\|_{\NHalf(I)} + C a^*(I) |I|^{\frac{1}{2}}.
	\end{align}
\end{proposition}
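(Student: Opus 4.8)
The strategy is to reduce the problem to the deterministic uniform estimate established in~\cite{CHN21} for the Schrödinger equation with a free-wave potential (but without the noise terms), and then absorb the lower order perturbations $b\cdot\nabla u$, $cu$, $\Re(\cT_t(W_2))u$ as inhomogeneities. The crucial algebraic device is the \emph{double expansion of the Duhamel operator} mentioned in the introduction,
\begin{align*}
	\cI_{v_L} = [I + \cI_{v_L}\Re(v_L)]\,\cI_0,
\end{align*}
which expresses the solution operator of the Schrödinger equation with potential $v_L$ in terms of the free Duhamel operator $\cI_0$ and one more application of $\cI_{v_L}$. Since $u$ solves $(\imu\partial_t+\Delta-\Re(v_L))u = f - b\cdot\nabla u - cu + \Re(\cT_t(W_2))u =: g$ with $u(t_0)=u_0$, we can write $u = \cU_{v_L}[u_0] + \cI_{v_L}[g]$, and then apply the expansion to the inhomogeneous part.

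\textbf{Step 1: Reduce to the deterministic uniform estimate.} First I would invoke the variational constraints (Lemma~\ref{lem:VariationalConstraints}) together with the energy bound to control $\|v_0\|_{L^2}$ and $\|\nabla u(t_0)\|_{L^2}$ uniformly in terms of $A$ and $B$; this is what makes the constant in~\cite{CHN21} depend only on $A$ and $B$. The uniform estimate from~\cite{CHN21} (in its $\SHalf$/$\NHalf$ incarnation, which follows from the discussion in Subsection~\ref{Subsec:FunctFrame}) gives
\begin{align*}
	\|\cU_{v_L}[u_0]\|_{\SHalf(I)} \lesssim_{A,B} \|u_0\|_{H^{1/2}}, \qquad
	\|\cI_{v_L}[h]\|_{\SHalf(I)} \lesssim_{A,B} \|h\|_{\NHalf(I)}
\end{align*}
for the homogeneous and inhomogeneous propagators associated with the free-wave potential. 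Applying this with $h = g$, it remains to bound $\|g\|_{\NHalf(I)}$, i.e.\ to estimate $b\cdot\nabla u$, $cu$, $\Re(\cT_t(W_2))u$ in $\NHalf(I)$.

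\textbf{Step 2: Control the noise terms.} Here is where the double expansion enters and where the main difficulty lies: the naive estimate of $b\cdot\nabla u$ in $\NHalf(I)$ would require control of $\nabla u$, which is one derivative worse than what the $\SHalf$-norm provides. The resolution is to split $b\cdot\nabla u = (b\cdot\nabla u)_{LH} + (b\cdot\nabla u)_{HL+HH}$. For the high-frequency parts $(b\cdot\nabla u)_{HL+HH}$, $cu$ and $\Re(\cT_t(W_2))u$, the bound~\eqref{eq:BilinNoiseN12} of Lemma~\ref{lem:BilinearLowerOrder}\ref{it:ControlNoiseN12} gives directly
\begin{align*}
	\|(b\cdot\nabla u)_{HL+HH} + cu - \Re(\cT_t(W_2))u\|_{\NHalf(I)} \lesssim |I|^{1/2}\,a^*(I)\,\|u\|_{L^\infty_t H^1_x} \lesssim_A a^*(I)|I|^{1/2},
\end{align*}
which accounts for the $C a^*(I)|I|^{1/2}$ term in~\eqref{unif-esti}. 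The problematic low-high piece $(b\cdot\nabla u)_{LH}$ is \emph{not} put into $\NHalf$; instead, going back to the Duhamel formula, one treats $\cI_{v_L}[(b\cdot\nabla u)_{LH}]$ via the expansion $\cI_{v_L} = [I+\cI_{v_L}\Re(v_L)]\cI_0$, so that one only needs to bound $\cI_0[(b\cdot\nabla u)_{LH}]$ in the lateral Strichartz component of $\XHalf$ (or, after applying $\Re(v_L)$, one picks up a factor $\|v_0\|_{L^2}\le B$ and lands back in $\NHalf$ via~\eqref{eq:BilinvuEndpoint}). The lateral Strichartz estimate for $\cI_0$ — this is exactly the content of~\eqref{eq:DuhamelLS} in Lemma~\ref{lem:LinEstimates} — together with the local-smoothing estimate $\|P_\lambda(b\cdot\nabla u)_{LH}\|_{L^{1,2}_{\vece_j}}$ from the computations in the proof of Lemma~\ref{lem:BilinearLowerOrder}\ref{it:ControlNoiseG1} (compare~\eqref{eq:bnablauLS1}--\eqref{eq:bnablauLS3}), controls this term by $a^*(I)\|u\|_{\XHalf(I)}$. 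This is the step I expect to be the main obstacle, since it forces one to work simultaneously in the adapted spaces and in the lateral Strichartz spaces and to invoke the compatibility lemmas.

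\textbf{Step 3: Close the estimate by a bootstrap/absorption.} Combining Steps 1 and 2, and also recalling $\|u\|_{L^2_t L^4_x} \lesssim \|u\|_{\SHalf}$ and the elementary $\NHalf$-bound for the half-wave Duhamel term, I obtain an a priori inequality of the schematic form
\begin{align*}
	\|u\|_{\SHalf(I)} \leq C(A,B)\big(\|u_0\|_{H^{1/2}} + \|f\|_{\NHalf(I)} + a^*(I)|I|^{1/2}\big) + C(A,B)\,a^*(I)\,\|u\|_{\XHalf(I)}.
\end{align*}
Since the lateral-Strichartz (hence $\XHalf$) contributions all came multiplied by $a^*(I)$ and $a^*(I)\le A$ is only bounded, not small, one cannot absorb directly on all of $I$; instead I would subdivide $I$ into finitely many subintervals $I_k$ (the number depending only on $A$ and on a suitable smallness threshold) on each of which the coefficient is small enough to absorb, using the continuity and additivity of $t\mapsto a^*([t_0,t])$ and of the free-wave norm (as in Lemma~\ref{lem:SmallnessWavePotential}), and then glue the $\SHalf$-bounds over the $I_k$ via the decomposability of the adapted spaces (Appendix~\ref{Sec-Decom}). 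Summing the finitely many pieces reintroduces a constant depending only on $A$ and $B$, yielding~\eqref{unif-esti}.
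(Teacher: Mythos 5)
Your overall architecture matches the paper's: write $u = \cU_{v_L}[u_0] + \cI_{v_L}[g]$, use the expansion $\cI_{v_L} = [I + \cI_{v_L}\Re(v_L)]\cI_0$ for the dangerous piece, split $b\cdot\nabla u$ into paraproducts, handle $(b\cdot\nabla u)_{HL+HH} + cu - \Re(\cT_t(W_2))u$ via Lemma~\ref{lem:BilinearLowerOrder}~\ref{it:ControlNoiseN12}, and invoke the uniform estimate of~\cite{CHN21} for $\cU_{v_L}$ and $\cI_{v_L}$. (The appeal to Lemma~\ref{lem:VariationalConstraints} in your Step~1 is superfluous: the hypotheses already give $\|v_0\|_{L^2}\leq B$ and $\|u\|_{L^\infty_t H^1_x}\leq A$; the variational lemma belongs to the proof of Theorem~\ref{Thm-GWP-Ground}, not of this proposition.)

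The genuine gap is in your treatment of $F=(b\cdot\nabla u)_{LH}$ and the ensuing Step~3. You bound the lateral norm $\sum_j\big(\sum_\lambda\|F_\lambda\|_{L^{1,2}_{\vece_j}}^2\big)^{1/2}$ as in~\eqref{eq:bnablauLS1}--\eqref{eq:bnablauLS3}, i.e.\ by $a^*(I)\,\|u\|_{\XHalf(I)}$, which reintroduces the local-smoothing norm of $u$ on the right-hand side and forces an absorption. This cannot be closed: first, the uniform estimate of~\cite{CHN21} controls only the $\SHalf$-norm of $\cI_{v_L}$, not the lateral Strichartz component, so there is no uniform a priori bound on $\|u\|_{\XHalf(I)}$ with constant depending only on $A,B$ (this is exactly the obstruction the paper points out when explaining why the concentration-compactness approach does not extend to lateral spaces); second, your subdivision argument relies on "continuity and additivity of $t\mapsto a^*([t_0,t])$", but $a^*$ is built from $\|b\|_{L^\infty_t H^3_x}$, $\|c\|_{L^\infty_t H^2_x}$, $\|\Re(\cT_t(W_2))\|_{L^\infty_t H^2_x}$ and $\|b\|_{L^{1,\infty}_{\vece_j}}$ (which is $L^1$ in the lateral spatial variable and $L^\infty$ in time), none of which becomes small on short time subintervals, so no smallness can be generated and the prefactor $a^*\leq A$ cannot be absorbed. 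The paper avoids all of this by estimating the low-high piece crudely after the Hölder step: $\||P_{\leq\frac{\lambda}{2^8}}b|^{1/2}|P_\lambda\nabla u|\|_{L^2_{t,x}}\lesssim\|b\|_{L^\infty_{t,x}}^{1/2}|I|^{1/2}\|\nabla u\|_{L^\infty_t L^2_x}$, using only the hypothesis $\|u\|_{L^\infty_t H^1_x}\leq A$ and the factor $|I|^{1/2}$; this yields $\|\cI_{v_L}[(b\cdot\nabla u)_{LH}]\|_{\SHalf(I)}\lesssim_{A,B} a^*(I)|I|^{1/2}$ directly, which is precisely why the loss term $C\,a^*(I)|I|^{1/2}$ appears in~\eqref{unif-esti} — no bootstrap, no $\XHalf$-norm of $u$, and no subdivision are needed. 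Replacing your Step~2/3 treatment of the low-high term by this crude bound repairs the argument and recovers the paper's proof.
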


\begin{remark}
	\label{rem:SubNorm}
	In Theorem~6.1 in~\cite{CHN21} the norm $\|\cdot\|_{\underline{S}^\frac{1}{2}(I)}$ is used. This norm is stronger than our $\SHalf$-norm, i.e. $\|u\|_{\SHalf(I)} \leq \|u\|_{\underline{S}^{\frac{1}{2}}(I)}$, see~\cite[Lemma~2.1]{CHN21}.
\end{remark}

\begin{proof}
We assume $\|u\|_{\SHalf(I)} < \infty$ in the following.
	We rewrite
	\begin{align}
		\label{eq:uInUniformEstimate}
			u(t) &= \cU_{v_L}[u_0](t) + \cI_{v_L}[f](t) - \cI_{v_L}[b \cdot \nabla u + c u - \Re(\cT_t(W_2)) u](t)  \nonumber \\
			 &= \cU_{v_L}[u_0](t) - \cI_{v_L}[-f + (b \cdot \nabla u)_{HL + HH} + c u - \Re(\cT_t(W_2)) u](t) - \cI_{v_L}[(b \cdot \nabla u)_{LH} ](t).
	\end{align}
	Set $F= (b \cdot \nabla u)_{LH}$. The key point is to prove a uniform estimate for the lower order perturbation term $\cI_{v_L}[F]$. For the other two terms in~\eqref{eq:uInUniformEstimate} we can directly apply the uniform Strichartz estimate from~\cite[Theorem~6.1]{CHN21}.
	
	We first note that a simple computation shows
	\begin{equation}
		\label{eq:IdDuhamelWithPotential}
		\cI_{v_L} = [I + \cI_{v_L} \Re(v_L)] \cI_0,
	\end{equation}
	see~(6.9) in~\cite{CHN21}. Consequently, we have
	\begin{align}
	\label{eq:SplittingIvLF}
		\| \cI_{v_L}[F] \|_{\SHalf(I)} \leq \|\cI_0 F\|_{\SHalf(I)} + \| \cI_{v_L}[\Re(v_L)\cI_0[F]]\|_{\SHalf(I)}.
	\end{align}
	For the first summand we apply Lemma~\ref{lem:LinEstimates} to deduce
	\begin{align}
	\label{eq:S12LSInProof}
		\|\cI_0[F]\|_{\SHalf(I)} \lesssim \|F\|_{\G^{\frac{1}{2}}(I)} \lesssim \sum_{j = 1}^4 \Big(\sum_{\lambda \in 2^\N} \|F_\lambda\|_{L^{1,2}_{\vece_j}}^2 \Big)^{\frac{1}{2}},
	\end{align}
 where we also exploited that $P_1 F = 0$.
	For the second term on the right-hand side of~\eqref{eq:SplittingIvLF}, Theorem~6.1 in~\citep{CHN21} (see also Remark~\ref{rem:SubNorm}) yields
	\begin{align*}
		\| \cI_{v_L}[\Re(v_L)\cI_0[F]]\|_{\SHalf(I)} \lesssim_B \|\Re(v_L) \cI_0[F]\|_{\NHalf(I)}.
	\end{align*}
	Applying Lemmas~\ref{lem:LinearEstimateHalfWave} 
 and \ref{lem:BilinearEstimates}, we further deduce
	\begin{align*}
		\|\Re(v_L) \cI_0[F]\|_{\NHalf(I)} &\lesssim \|\Re(v_L)\|_{W^{0,0,0}(I)} \|\cI_0[F]\|_{\SHalf(I)}  \notag \\
        &\lesssim \|v_0\|_{L^2_x(\R^4)} \|\cI_0[F]\|_{\SHalf(I)}  \\
		&\lesssim B \sum_{j = 1}^4 \Big(\sum_{\lambda \in 2^\N} \|F_\lambda\|_{L^{1,2}_{\vece_j}}^2 \Big)^{\frac{1}{2}},
	\end{align*}
	where we also used~\eqref{eq:S12LSInProof} in the last step. Combining the last two estimates with~\eqref{eq:S12LSInProof} and~\eqref{eq:SplittingIvLF} 
 we obtain 
	\begin{equation}
		\label{eq:EstIvLF}
		\| \cI_{v_L}[F] \|_{\SHalf(I)} \lesssim_B \sum_{j = 1}^4 \Big(\sum_{\lambda \in 2^\N} \|F_\lambda\|_{L^{1,2}_{\vece_j}}^2 \Big)^{\frac{1}{2}}.
	\end{equation}
	For the above right-hand side we estimate
	\begin{align}
		\Big(\sum_{\lambda \in 2^\N} \|F_\lambda\|_{L^{1,2}_{\vece_j}}^2 \Big)^{\frac{1}{2}}
		&\lesssim \Big(\sum_{\lambda \in 2^\N} \|P_{\leq \frac{\lambda}{2^8}} b P_\lambda \nabla u\|_{L^{1,2}_{\vece_j}}^2 \Big)^{\frac{1}{2}} \nonumber\\
		&\lesssim \Big(\sum_{\lambda \in 2^\N} \||P_{\leq \frac{\lambda}{2^8}} b |^{\frac{1}{2}} \|_{L^{2,\infty}_{\vece_j}}^2 \||P_{\leq \frac{\lambda}{2^8}} b |^{\frac{1}{2}} |P_\lambda \nabla u |\|_{L^2_{t,x}}^2 \Big)^{\frac{1}{2}} \nonumber\\
		&\lesssim \|b\|_{L^{1,\infty}_{\vece_j}}^{\frac{1}{2}} \|b\|_{L^\infty_{t,x}}^{\frac{1}{2}} \| \nabla u\|_{L^2_{t,x}} \nonumber \\
		&\lesssim |I|^{\frac{1}{2}} \|b\|_{L^{1,\infty}_{\vece_j}}^{\frac{1}{2}} \|b\|_{L^\infty_t H^3_x}^{\frac{1}{2}} \| \nabla u\|_{L^\infty_t L^2_x} 
  \lesssim_A a^*(I) |I|^{\frac{1}{2}} \label{eq:EstbnuLHLS}
	\end{align}
	for every $j \in \{1, \ldots, 4\}$, which finally shows
	\begin{equation}
		\label{eq:EstIvLbnabuUniform}
		\| \cI_{v_L}[F] \|_{\SHalf(I)} \lesssim_{A,B} a^*(I) |I|^{\frac{1}{2}}.
	\end{equation}

	We next turn to the second term in~\eqref{eq:uInUniformEstimate}. Here we simply apply Theorem~6.1 from~\cite{CHN21} to infer
	\begin{align}
		&\| \cI_{v_L}[-f + (b \cdot \nabla u)_{HL + HH} + c u - \Re(\cT_t(W_2)) u] \|_{\SHalf(I)} \nonumber\\
		&\lesssim_B \|f\|_{\NHalf(I)} + \|(b \cdot \nabla u)_{HL + HH} + c u - \Re(\cT_t(W_2)) u\|_{\NHalf(I)}. \label{eq:EstIvLRest}
	\end{align}
	Lemma~\ref{lem:BilinearLowerOrder} implies for the remaining components of the lower order perturbation
	\begin{align}
		\label{eq:UniformEstLowerOrderN12}
		&\|(b \cdot \nabla u)_{HL + HH} + c u - \Re(\cT_t(W_2)) u\|_{\NHalf(I)} \nonumber \\
		&\lesssim |I|^{\frac{1}{2}}(\|b\|_{L^\infty_t H^2_x} + \|c\|_{L^\infty_t H^2_x} + \|\Re(\cT_t(W_2))\|_{L^\infty_t H^2_x}) \|u\|_{L^\infty_t H^1_x} \lesssim_A a^*(I) |I|^{\frac{1}{2}}.
	\end{align}

	In order to estimate the linear propagator in~\eqref{eq:uInUniformEstimate}, we employ the identity
	\begin{align*}
		\cU_{v_L}[u_0](t) = \cI_{v_L}[\Re(v_L) e^{\imu (\cdot - t_0) \Delta} u_0](t) +  e^{\imu (t-t_0) \Delta} u_0.
	\end{align*}
	Another application of Theorem~6.1 from~\cite{CHN23} as well as the energy estimates from Lemmas~\ref{lem:LinEstimates}, \ref{lem:LinearEstimateHalfWave} 
 and \ref{lem:BilinearEstimates} yield
	\begin{align}
		\label{eq:UniformEstLinear}
		\|\cU_{v_L}[u_0]\|_{\SHalf(I)} &\lesssim_B \|\Re(v_L) e^{\imu (\cdot - t_0) \Delta} u_0\|_{\NHalf} + \|e^{\imu (\cdot - t_0) \Delta} u_0\|_{\SHalf(I)} \nonumber\\
		&\lesssim_B (1 + \|v_L\|_{W^{0,0,0}})\|e^{\imu (\cdot - t_0) \Delta} u_0\|_{\SHalf(I)} \notag \\
        &\lesssim_B (1 + \|v_0\|_{L^2_x(\R^4)}) \|u_0\|_{H^{\frac{1}{2}}_x(\R^4)} \lesssim_B \|u_0\|_{H^{\frac{1}{2}}_x(\R^4)}.
	\end{align}
	The combination of estimates~\eqref{eq:EstIvLbnabuUniform} to~\eqref{eq:UniformEstLinear} yields the assertion of the lemma.	
\end{proof}

We are now in position to prove the global well-posedness result
below the ground state.
An important fact is that controlling the endpoint critical $L^2_tW^{\frac 12, 4}$-norm only needs $\frac{1}{2}$ derivative while our solution $u$ belongs to $H^1(\R^4)$. We will exploit this observation via the estimate
\begin{align}
	\label{eq:EstProfileControllingNorm}
	\|e^{\imu t \Delta} u_0\|_{L^2_t W^{\frac{1}{2},4}_x(I \times \R^4)} \lesssim |I|^{\frac{1}{4}}\|e^{\imu t \Delta} u_0\|_{L^4_t W^{1,\frac{8}{3}}_x(I \times \R^4)} \lesssim |I|^{\frac{1}{4}} \|u_0\|_{H^1_x(\R^4)},
\end{align}
which follows from Sobolev's embedding and Strichartz estimates.

\medskip
\paragraph{\bf Proof of Theorem \ref{Thm-GWP-Ground}}

Let $(X,Y)$ be the unique maximal solution of~\eqref{eq:StoZak} on the maximal interval of existence $[0,\tau^*)$ provided by Theorem~\ref{thm:LocalWP}. We have to prove that $\sigma^* \leq \tau^*$ $\bbp$-a.s. We argue by contradiction and assume that $\bbp(\{\sigma^* > \tau^*\}) > 0$. We fix an element $\omega \in \{\sigma^* > \tau^*\}$ in the following but do not denote the dependence of the considered quantities on $\omega$ for the ease of notation. Note that in particular $\tau^* < \infty$.
By the definition of $\sigma^*$, there exists an $n \in \N$ such that $\tau^* < \sigma_n^*$. We fix such an $n$ in the following.

Setting $u := e^{-W_1} X$ and $v := Y - \cT_t(W_2)$, Theorem~\ref{thm:Rescaling} shows that $(u,v)$ solves~\eqref{eq:RanZakbc} on $[0,\tau^*)$. Moreover, the definition of $\sigma_n^*$ implies that
\begin{equation}
    \label{eq:EnergyBdStoppingTime}
    e_Z(u(t), v(t))  < e_Z(W, W^2) - \frac{1}{n} \qquad \text{for all } t \in [0, \tau^*).
\end{equation}
We define
\begin{align*}
    B := \max\left\{\Big(\|W^2\|_{L^2_x}^2 - \frac{4}{n}\Big)^{\frac{1}{2}}, \| Y_0 \|_{L^2_x}\right\}  < \| W^2 \|_{L^2_x}.
\end{align*}
Lemma~\ref{lem:VariationalConstraints} and a continuity argument yield that
\begin{align*}
    \| v(t) \|_{L^2_x} \leq B \qquad \text{for all } t \in [0,\tau^*).
\end{align*}
Hence, we can combine~\eqref{eq:EnergyBdStoppingTime} with Lemma~\ref{lem:VariationalConstraints} again to infer
\begin{align*}
    \| \nabla u(t) \|_{L^2_x} \leq \|W^2\|_{L^2_x} \qquad \text{for all } t \in [0,\tau^*),
\end{align*}
which implies
\begin{equation}
    \label{eq:H1BounduBelowGroundState}
    \| u \|_{L^\infty_t H^1_x([0,\tau^*) \times \R^4)} < \infty
\end{equation}
in view of the conservation of $\|u(t)\|_{L^2_x}$.
Using that $\tau^* < \infty$, we also have
\begin{align*}
    a^*([0,\tau^*)) &:=  \sum_{j = 1}^4 \|b\|_{L^{1,\infty}_{\vece_j}([0,\tau^*) \times \R^4)} + \|b\|_{L^\infty_t H^3_x([0,\tau^*) \times \R^4)} + \|c\|_{L^\infty_t H^2_x([0,\tau^*) \times \R^4)} \\
    &\qquad + \|\Re(\cT_t(W_2))\|_{L^\infty_t H^2_x([0,\tau^*) \times \R^4)}  < \infty,
\end{align*}
as well as
\begin{align*}
    \|X\|_{L^\infty_t H^1_x([0,\tau^*) \times \R^4)} \lesssim \|u\|_{L^\infty_t H^1_x([0,\tau^*) \times \R^4)}.
\end{align*}
Consequently, there is a constant $A > 0$ such that
\begin{equation}
    \label{eq:BounduH1aStar}
    \| u \|_{L^\infty_t H^1_x([0,\tau^*) \times \R^4)} + a^*([0,\tau^*)) \leq A
\end{equation}
and a constant $r > 0$ such that
\begin{align}
\label{eq:EnergyBoundMaxExTime}
    \|X\|_{L^\infty_t H^1_x([0,\tau^*) \times \R^4)} + \| Y \|_{L^\infty_t L^2_x([0,\tau^*) \times \R^4)} \leq r.
\end{align}
The blow-up alternative in Theorem~\ref{Thm-LWP} now implies that
\begin{equation}
	\label{eq:BlowupDispersiveNormX}
	\|X\|_{L^2([0,\tau^*), W^{\frac{1}{2},4}(\R^4))} = \infty.
\end{equation}
By standard product estimates, we also have
\begin{align*}
	\|X\|_{L^2_t W^{\frac{1}{2},4}_x([0,\tau^*) \times \R^4))} \lesssim (\|e^{W_1} - 1\|_{L^\infty_t H^3_x([0,\tau^*) \times \R^4))} + 1) \|u\|_{L^2_t W^{\frac{1}{2},4}_x([0,\tau^*) \times \R^4))},
\end{align*}
which leads to
\begin{equation}
	\label{eq:BlowupDispersiveNormu}
	\|u\|_{L^2([0,\tau^*), W^{\frac{1}{2},4}(\R^4))} = \infty.
\end{equation}
By the construction in the proof of Theorem \ref{Thm-LWP},
we further obtain a sequence of stopping times $(\tau_n)$
such that $\tau_n <\tau^*$, $\tau_n \to \tau^*$ as $n\to \infty$,
and
\begin{align}  \label{u-S1-taun}
   \|u\|_{\SOne ([0,\tau_n])} <\infty.
\end{align}

Let $R:= 4 A C(A,B) + 1$, where $C(A,B)$ is the constant from Proposition~\ref{prop:UniformEstimateBelowGroundState}. Choose $\epsilon, \sigma \in (0,1)$, depending only on $A$ and $B$, so small that
\begin{align*}
   C^2 C(A,B) \epsilon^{\frac{1}{2}} R^{\frac{3}{2}} \leq \frac{1}{8}, \qquad
   C(A,B) A \sigma^{\frac{1}{2}} + C \sigma^{\frac{1}{4}} A + C C'(A,B) A^{\frac{1}{2}} \sigma^{\frac{1}{8}} \leq \frac{\epsilon}{8},
\end{align*}
where $C$ is the maximum of the implicit constants in~\eqref{eq:BilinvuEndpoint}, \eqref{eq:BilinnablauwEndpoint}, and~\eqref{eq:EstProfileControllingNorm} and $C'(A,B)$ the constant arising in~\eqref{eq:BootstrapDHomCor} below. In particular, $\sigma$ is independent of $n$.

\medskip
\paragraph{\bf Claim:}
For any $n\geq 1$ and any $\tau < \tau^*$, we have
\begin{align}  \label{D-bdd-boot}
  \|u\|_{\SHalf([\tau_n, (\tau_n + \sigma)\wedge \tau))} \leq R,\ \
  \|u\|_{D([\tau_n, (\tau_n + \sigma)\wedge \tau))} \leq \ve.
\end{align}

We use a bootstrap argument to prove \eqref{D-bdd-boot}.
To that purpose, we first note that the claim holds on some interval $[\tau_n, t')$. To see this, we consider the extension
\begin{align*}
    \tilde{u}(t) = \one_{(-\infty, \tau_n)}(t) e^{\imu (t - \tau_n) \Delta} u(\tau_n) + \one_{[\tau_n, t']}(t) u(t) + \one_{(t', \infty)}(t) e^{\imu(t - t')\Delta} u(t').
\end{align*}
Using Strichartz estimates, we thus infer
\begin{align*}
    &\| u\|_{\SHalf([\tau_n, t'))} \leq \| \tilde{u} \|_{\SHalf(\R)} \leq \tilde{C} \| u \|_{L^\infty_t H^{\frac{1}{2}}_x([\tau_n, t'] \times \R^4)} \\
    & \quad + \Big( \sum_{\lambda \in 2^{\N_0}} (\lambda^{\frac{1}{2}} \| u_\lambda \|_{L^\infty_t L^2_x([\tau_n, t') \times \R^4)} + \lambda^{\frac{1}{2}} \| u_\lambda \|_{L^2_t L^4_x([\tau_n, t') \times \R^4)} + \lambda^{-\frac{1}{2}} \| (\imu \partial_t + \Delta) u_\lambda \|_{L^2_t L^2_x([\tau_n, t') \times \R^4)} )^2 \Big)^{\frac{1}{2}} \\
    &\leq \frac{R}{4} + \Big( \sum_{\lambda \in 2^{\N_0}} ( \lambda^{\frac{1}{2}} \| u_\lambda \|_{L^2_t L^4_x([\tau_n, t') \times \R^4)} + \lambda^{-\frac{1}{2}} \| (\imu \partial_t + \Delta) u_\lambda \|_{L^2_t L^2_x([\tau_n, t') \times \R^4)} )^2 \Big)^{\frac{1}{2}},
\end{align*}
where we assumed without loss of generality that $\tilde{C} + 2 \leq C(A,B)$. Dominated convergence thus implies that~\eqref{D-bdd-boot} is satisfied on $[\tau_n, t')$ if $t'$ is close enough to $\tau_n$.

Now assume that the claim is true on some subinterval $I:=[t_0,t_0'] \subseteq [\tau_n, (\tau_n + \sigma)\wedge \tau)$. Since $(u,v)$ solves~\eqref{eq:RanZakbc},
we have
\begin{align}
\label{eq:DuhameluPotProp}
   u(t)= \cU_{v_L}[u(t_0)](t)
         - \cI_{v_L}[\Re(\cJ_0[|\nabla| |u|^2])u
         + b \cdot \nabla u + c u - \Re(\cT_{\cdot}(W_2)) u](t),
\end{align}
where $v_L:= e^{\imu (t-t_0)|\na|} v(t_0)$ and the propagation operators $\cU_{v_L}$ and $\cI_{v_L}$ are used with initial time $t_0$.

Using the uniform estimate \eqref{unif-esti} in Proposition \ref{prop:UniformEstimateBelowGroundState},
we infer
\begin{align}
    \| u \|_{\SHalf(I)} &\leq C(A,B)(\|u(t_0)\|_{H^{\frac{1}{2}}_x} + \| \cJ_0[|\nabla| |u|^2]u \|_{\NHalf(I)} + a^*(I) |I|^{\frac{1}{2}}) \nonumber\\
    &\leq \frac{R}{4} + C(A,B) A \sigma^{\frac{1}{2}} + C(A,B) \| \cJ_0[|\nabla| |u|^2]u \|_{\NHalf(I)}. \label{eq:BootstrapS12}
\end{align}
We next apply the endpoint estimates \eqref{eq:BilinvuEndpoint}
and \eqref{eq:BilinnablauwEndpoint}
to infer
\begin{align}   \label{esti-uD-2}
   \|\Re(\cJ_0[|\nabla| |u|^2])u\|_{\NHalf(I)}
   &\leq  C \|\Re(\cJ_0[|\nabla| |u|^2])\|_{\WEndp(I)}
            \|u\|_{D(I)}^\frac 12  \|u\|_{\SHalf(I)}^\frac 12    \notag \\
   &\leq C^2 \|u\|_{D(I)}^\frac 32  \|u\|_{\SHalf(I)}^\frac 32.
\end{align}
Combining the previous two estimates, we arrive at
\begin{align}
    \label{eq:BootstrapS12Final}
    \| u \|_{\SHalf(I)} \leq \frac{R}{4} + C(A,B) A \sigma^{\frac{1}{2}} + C(A,B) C^2 \epsilon^{\frac{3}{2}} R^{\frac{3}{2}} \leq \frac{R}{2}.
\end{align}

To estimate the $D(I)$-norm of $u$, we begin with the homogeneous propagation operator. Recalling that 
\begin{align}
\label{eq:HomPropPot}
    \cU_{v_L}[u(t_0)](t) = e^{\imu (t - t_0) \Delta} u(t_0) + \cI_{v_L}[\Re(v_L)e^{\imu (t - t_0) \Delta} u(t_0)],
\end{align}
we obtain
\begin{align}
\label{eq:BootstrapDHomPot}
    \| \cU_{v_L}[u(t_0)] \|_{D(I)} \leq \| e^{\imu (t - t_0) \Delta} u(t_0) \|_{D(I)} + \| \cI_{v_L}[\Re(v_L)e^{\imu (t - t_0) \Delta} u(t_0)] \|_{\SHalf(I)}. 
\end{align}
For the second term on the right-hand side, we apply the uniform estimate from Theorem~6.1 in~\cite{CHN21} and the endpoint estimates~\eqref{eq:BilinvuEndpoint} and~\eqref{eq:BilinnablauwEndpoint} to infer
\begin{align}
    \| \cI_{v_L}[\Re(v_L)e^{\imu (t - t_0) \Delta} u(t_0)] \|_{\SHalf(I)} &\leq C(B) \|\Re(v_L)e^{\imu (t - t_0) \Delta} u(t_0) \|_{\NHalf} \nonumber\\
    &\leq C(B) C \| v_L \|_{\WEndp} \| e^{\imu (t - t_0) \Delta} u(t_0) \|_{D(I)}^{\frac{1}{2}} \| e^{\imu (t - t_0) \Delta} u(t_0) \|_{\SHalf(I)}^{\frac{1}{2}} \nonumber\\
    &\leq C(B) C \| v(t_0) \|_{L^2} \|u(t_0)\|_{H^1}^{\frac{1}{2}}  \| e^{\imu (t - t_0) \Delta} u(t_0) \|_{D(I)}^{\frac{1}{2}} \nonumber\\
    &\leq C'(A,B) \| e^{\imu (t - t_0) \Delta} u(t_0) \|_{D(I)}^{\frac{1}{2}} . \label{eq:BootstrapDHomCor}
\end{align}
For the inhomogeneous part in~\eqref{eq:DuhameluPotProp}, we use again that the $D(I)$-norm is controlled by the $\SHalf(I)$-norm and the estimates in~\eqref{eq:BootstrapS12} to~\eqref{eq:BootstrapS12Final}, which yields
\begin{align}
\label{eq:BootstrapDInhom}
    \| \cI_{v_L}[\Re(\cJ_0[|\nabla| |u|^2])u]
         + b \cdot \nabla u + c u - \Re(\cT_{\cdot}(W_2)) u] \|_{D(I)} \leq C(A,B) A \sigma^{\frac{1}{2}} + C(A,B) C^2 \epsilon^{\frac{3}{2}} R^{\frac{3}{2}} \leq \frac{\epsilon}{4}.
\end{align}
Combining~\eqref{eq:HomPropPot} to~\eqref{eq:BootstrapDInhom} and employing estimate~\eqref{eq:EstProfileControllingNorm}, we arrive at
\begin{align}
    \| u \|_{D(I)} &\leq \| e^{\imu (t - t_0) \Delta} u(t_0) \|_{D(I)} + C'(A,B) \| e^{\imu (t - t_0) \Delta} u(t_0) \|_{D(I)}^{\frac{1}{2}} + \frac{\epsilon}{4} \nonumber\\
    &\leq C \sigma^{\frac{1}{4}} \| u(t_0) \|_{H^1_x} + C C'(A,B) \sigma^{\frac{1}{8}} \| u(t_0) \|_{H^1_x}^{\frac{1}{2}} + \frac{\epsilon}{4} \leq \frac{\epsilon}{2}. \label{eq:BootstrapDFinal}
\end{align}
Lemma~\ref{lem:Continuity} and the fact that $u \in \SHalf(J)$ for every compact subinterval $J \subseteq [t_0,\tau^*)$ imply that $t \mapsto \| u \|_{\SHalf([\tau_n, t))}$ is continuous in every $t \in [\tau_n, \tau^*)$. Hence, \eqref{eq:BootstrapS12Final}, \eqref{eq:BootstrapDFinal} and a continuity argument imply \eqref{D-bdd-boot}, as claimed.

\medskip
Now, since $\sigma$ is independent of $n$,
we can take $\tau_n$ such that $\tau^*- \tau_n <\sigma$,
i.e., $\tau_n + \sigma > \tau^*$.
In view of \eqref{D-bdd-boot},
we then get
\begin{align*}
   \|u\|_{D([\tau_n, \tau))} \leq R
\end{align*}
for every $\tau < \tau^*$. The Lemma of Fatou thus yields
\begin{align*}
    \| u \|_{D([\tau_n, \tau^*))} \leq R.
\end{align*}
Since we have
\begin{align*}
   \|u\|_{D([0, \tau_n])}
   \leq C \|u\|_{\SOne([0, \tau_n])}
   <\infty
\end{align*}
by \eqref{u-S1-taun}, we arrive at
\begin{align*}
   \|u\|_{D([0, \tau^*))}<\infty,
\end{align*}
which however contradicts~\eqref{eq:BlowupDispersiveNormu}.

We thus conclude that $\sigma_* \leq \tau^*$, completing the proof of Theorem~\ref{Thm-GWP-Ground}.
\hfill $\square$

\section{GWP and scattering via regularization by noise} 
\label{Sec-Noise-Regular} 

In this section we prove the regularization by noise result in Theorem~\ref{thm:RegNoise}.

We set $c := \Im \phi_1^{(1)}$ to ease the notation in the following.  
Since $W_1$ is a one-dimensional Brownian motion, there are no lower order perturbations in~\eqref{eq:RanZakNoncons}. Consequently, 
the local smoothing component is not needed in our functional setting from Subsection~\ref{Subsec:FunctFrame}, i.e., we will solve~\eqref{eq:RanZakNoncons} in $S^{1, \frac{1}{4}} \times W^{0,\frac{1}{4},\frac{1}{2}}$.

In order to prove Theorem~\ref{thm:RegNoise} we shall show that the probability of the event
\begin{align}
	\Upsilon := \{\omega \in \Omega \colon &(z,v) \text{ solution of~\eqref{eq:RanZakNoncons} exists on } [0,\infty) \text{ and there is } (z_+, v_+) \in H^1 \times L^2 \text{ s.t. } \nonumber\\ 
    &\lim_{t \rightarrow \infty} \|e^{-\imu t \Delta} z(t) - z_+\|_{H^1} = 0 \text{ and } \lim_{t \rightarrow \infty} \|e^{-\imu t |\nabla|} v(t) - v_+\|_{L^2} = 0\} \label{eq:DefUpsilon}
\end{align}
converges to $1$ as $c \rightarrow \infty$. To that purpose, we have to substantiate
the heuristic expectation that the asymptotic exponential decay of the geometric Brownian motion stabilizes the system and facilitates to get global results. 
As metioned in Subsection \ref{Subsec-Novelties}, 
the key point is to derive a global-in-time $V^p$ control of geometric Brownian motions.

\subsection{Global-in-time $V^p$ control}
Let us first introduce the $V^p$ spaces. Define the set of partitions
\begin{align*}
    \cP = \{(t_j)_{j=1}^N \colon N \in \N, t_j \in \R, t_j < t_{j+1}\},
\end{align*}
i.e., a partition is a finite increasing sequence in $\R$. Let $1 \leq p < \infty$. For every function $v \colon \R \rightarrow \R$,  define the $p$-variation
\begin{align*}
    |v|_{V^p} = \sup_{(t_j)_{j = 1}^N \in \cP} \Big( \sum_{j = 1}^{N-1} |v(t_{j+1}) - v(t_j)|^p\Big)^{\frac{1}{p}}
\end{align*}
and the $V^p$-norm
\begin{align*}
	\| v \|_{V^p} = \sup_{(t_j)_{j = 1}^N \in \cP} \Big( \sum_{j = 1}^{N-1} |v(t_N)|^p + |v(t_{j+1}) - v(t_j)|^p\Big)^{\frac{1}{p}}.
\end{align*}
The space $V^p = V^p(\R, \C)$ consists of the functions with finite $V^p$-norm, i.e.,
\begin{align*}
	V^p := \{v \colon \R \rightarrow \C \, | \, \|v\|_{V^p} < \infty\}.
\end{align*}
Finally, let 
\begin{align*}
	V^p_0 := \{v \colon \R \rightarrow \C \, | \, v \text{ is right-continuous}, \, \lim_{t \rightarrow - \infty} v(t) = 0, \, \| v \|_{V^p} < \infty\}.
\end{align*}
Both $V^p$ and $V^p_0$ are Banach spaces when equipped with the $V^p$-norm, see~\cite{CH18} and the references therein. On $V_0^p$ the $p$-variation $|\cdot|_{V^p}$ is an equivalent norm to $\|\cdot\|_{V^p}$ and we will mainly use $|\cdot|_{V^p}$ on this space.

Let us prepare the proof of the geometric Brownian motion being in $V^p_0$ by noting that $| \cdot |_{V^p}$ is measurable on the space of continuous functions.

\begin{lemma}
	\label{lem:MeasurabilityVpnorm}
	The functional
	\begin{align*}
		| \cdot |_{V^p} \colon (C(\R), \cB(C(\R))) \rightarrow (\overline{\R}, \cB(\overline{\R}))
	\end{align*}
	is measurable, where $\cB(C(\R))$ and $\cB(\overline{\R})$ denote the Borel-$\sigma$-algebras on $C(\R)$ and $\overline{\R}$, respectively.
\end{lemma}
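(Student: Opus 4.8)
\textbf{Proof plan for Lemma~\ref{lem:MeasurabilityVpnorm}.} The plan is to exhibit $|\cdot|_{V^p}$ as a countable supremum of continuous (hence measurable) functionals on $C(\R)$. First I would observe that for a \emph{continuous} function $v$, one may restrict attention in the definition of $|v|_{V^p}$ to partitions with \emph{rational} nodes: given any partition $(t_j)_{j=1}^N \in \cP$ and any $\varepsilon > 0$, continuity of $v$ lets us replace each $t_j$ by a nearby rational $q_j$ (keeping $q_j < q_{j+1}$) so that $\big|\sum_{j=1}^{N-1} |v(q_{j+1}) - v(q_j)|^p - \sum_{j=1}^{N-1} |v(t_{j+1}) - v(t_j)|^p\big| < \varepsilon$. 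Hence
\begin{align*}
    |v|_{V^p} = \sup_{(q_j)_{j=1}^N \in \cP_\Q} \Big( \sum_{j=1}^{N-1} |v(q_{j+1}) - v(q_j)|^p \Big)^{\frac{1}{p}},
\end{align*}
where $\cP_\Q$ denotes the (countable) set of finite increasing sequences in $\Q$.

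Next, for each fixed rational partition $(q_j)_{j=1}^N \in \cP_\Q$, the map
\begin{align*}
    v \longmapsto \Big( \sum_{j=1}^{N-1} |v(q_{j+1}) - v(q_j)|^p \Big)^{\frac{1}{p}}
\end{align*}
is continuous on $C(\R)$ with respect to the topology of locally uniform convergence, since it is a continuous function of the finitely many evaluations $v(q_1), \ldots, v(q_N)$, and each evaluation functional $v \mapsto v(q_j)$ is continuous on $C(\R)$. In particular each such map is $\cB(C(\R))$-measurable into $\R$ (and a fortiori into $\overline{\R}$). Since $\cP_\Q$ is countable, $|\cdot|_{V^p}$ is a countable supremum of measurable $\overline{\R}$-valued functions, and a countable supremum of measurable functions is measurable; this yields the claim.

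The only genuinely substantive point — and the one I would present carefully rather than as routine — is the reduction to rational partitions, i.e. that passing to a countable index set does not change the supremum; everything else is the standard fact that pointwise evaluations generate the Borel $\sigma$-algebra on $C(\R)$ together with stability of measurability under countable suprema. There is no real obstacle here; the lemma is a technical measurability statement whose role is simply to make ``the geometric Brownian motion lies in $V^p_0$ a.s.'' a meaningful (measurable) event, so the write-up should be short. One should just take care to state the approximation estimate with the correct handling of the $\ell^p$-to-real conversion, e.g. by first approximating the inner sums and then using continuity of $x \mapsto x^{1/p}$ on $[0,\infty)$.
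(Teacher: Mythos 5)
Your proposal is correct and follows essentially the same route as the paper: reduce to the countable family of rational partitions using continuity of $v$, note that for each fixed partition the functional is a measurable (indeed continuous) function of finitely many point evaluations, and conclude by stability of measurability under countable suprema. No gaps.
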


\begin{proof}
	Let $\cP_{\Q} := \{ (t_j)_{j = 1}^N \colon N \in \N, t_j \in \Q, t_j < t_{j+1}\}$ denote the set of rational partitions of $\R$. Note that $\cP_\Q$ is countable. Let $v \in C(\R)$ and $(t_j)_{j = 1}^N $ be a partition in $\cP$. For any $\epsilon > 0$ there exists a rational partition $(s_j)_{j = 1}^N \in \cP_\Q$ such that
 \begin{align*}
    \Big| \Big( \sum_{j = 1}^{N-1} |v(t_{j+1}) - v(t_j)|^p\Big)^{\frac{1}{p}} - \Big( \sum_{j = 1}^{N-1} |v(s_{j+1}) - v(s_j)|^p\Big)^{\frac{1}{p}} \Big| < \epsilon
\end{align*}
as $v$ is continuous. We infer that
\begin{align*}
    \Big( \sum_{j = 1}^{N-1} |v(t_{j+1}) - v(t_j)|^p\Big)^{\frac{1}{p}} \leq \sup_{(s_j)_{j=1}^N \in \cP_\Q} \Big( \sum_{j = 1}^{N-1} |v(s_{j+1}) - v(s_j)|^p\Big)^{\frac{1}{p}}.
\end{align*}
Hence, $|v|_{V^p} \leq \sup_{(s_j)_{j=1}^N \in \cP_\Q} \Big( \sum_{j = 1}^{N-1} |v(s_{j+1}) - v(s_j)|^p\Big)^{\frac{1}{p}}$ and since the reverse inequality is trivial, we conclude that
\begin{align*}
    |v|_{V^p} = \sup_{(s_j)_{j=1}^N \in \cP_\Q} \Big( \sum_{j = 1}^{N-1} |v(s_{j+1}) - v(s_j)|^p\Big)^{\frac{1}{p}}.
\end{align*}
Denote the point evaluations by $\pi_t \colon C(\R) \rightarrow \C$, $\pi_t(v) = v(t)$ and introduce the map
\begin{align*}
    \pi_P \colon C(\R) \rightarrow \R, \quad \pi_P(v) = \Big( \sum_{j = 1}^{N-1} |v(t_{j+1}) - v(t_j)|^p\Big)^{\frac{1}{p}}
\end{align*}
for every partition $P = (t_j)_{j = 1}^N \in \cP$. Clearly, $\pi_P$ is measurable for every $P \in \cP$ so that $| \cdot |_{V^p} = \sup_{P \in \cP_\Q} \pi_P$ implies that $| \cdot |_{V^p}$ is measurable from $(C(\R), \cB(C(\R)))$ to $(\overline{\R}, \cB(\overline{\R}))$ as a countable supremum of measurable functions.
\end{proof}

We will also exploit the H{\"o}lder-continuity property of Brownian motions. To that purpose, 
let us denote the $C^{0,\alpha}$-H{\"o}lder norm over an interval $I$ by $\|\cdot\|_{0,\alpha,I}$, i.e.,
\begin{align*}
	\|v\|_{0,\alpha,I} = \sup_{s,t \in I, s \neq t} \frac{|v(t) - v(s)|}{|t - s|^\alpha}.
\end{align*}
Note that $\| \cdot \|_{0,\alpha,I}$ is measurable on $C(I)$ by a similar argument as in Lemma~\ref{lem:MeasurabilityVpnorm} for every interval~$I$. 
The following lemma shows the H{\"o}lder norm of Brownian motion $\beta$ over intervals of constant length is uniformly bounded in expectation, 
due to the invariance 
$\PP \circ (\beta(\cdot))^{-1} = \PP \circ (\beta(\cdot+n) - \beta(n))^{-1}$ on $C([0,\infty))$.

\begin{lemma}
	\label{lem:FiniteExpectationHoelderNorm}
	Let $\beta$ be a one-dimensional Brownian motion and $\alpha \in (0,\frac{1}{2})$. Then
	\begin{align*}
		\sup_{n \in \N_0} \bbe (\| \beta\|_{0,\alpha,[n,n+1]}) 
      =\bbe (\| \beta\|_{0,\alpha,[0,1]}) 
      < \infty.
	\end{align*}
\end{lemma}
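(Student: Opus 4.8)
The plan is to split the claim into two parts: the identity $\sup_{n \in \N_0} \bbe(\|\beta\|_{0,\alpha,[n,n+1]}) = \bbe(\|\beta\|_{0,\alpha,[0,1]})$, which is a direct consequence of the stationarity of Brownian increments, and the finiteness $\bbe(\|\beta\|_{0,\alpha,[0,1]}) < \infty$, for which I would invoke the quantitative Kolmogorov--Chentsov continuity theorem.

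For the identity, I would fix $n \in \N_0$ and observe that the H\"older seminorm over $[n,n+1]$ depends only on the increments of $\beta$: writing $\beta^{(n)}(r) := \beta(n+r) - \beta(n)$, one has $\|\beta\|_{0,\alpha,[n,n+1]} = \|\beta^{(n)}\|_{0,\alpha,[0,1]}$, since subtracting the constant $\beta(n)$ leaves all differences $\beta(t) - \beta(s)$ unchanged. By the stationarity of the increments (equivalently, the strong Markov property at the deterministic time $n$), $\beta^{(n)}$ is again a one-dimensional Brownian motion, so $\bbp \circ (\beta^{(n)})^{-1} = \bbp \circ \beta^{-1}$ on $C([0,\infty))$. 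Since $v \mapsto \|v\|_{0,\alpha,[0,1]}$ is measurable on $C([0,1])$ --- by the same argument as in Lemma~\ref{lem:MeasurabilityVpnorm} --- the random variables $\|\beta^{(n)}\|_{0,\alpha,[0,1]}$ and $\|\beta\|_{0,\alpha,[0,1]}$ share the same law, hence the same expectation. As this holds for every $n \in \N_0$, the identity follows (and, incidentally, the supremum is attained at each $n$).

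For the finiteness, I would start from the moment identity $\bbe |\beta(t) - \beta(s)|^{2m} = c_m |t-s|^m$, valid for every $m \in \N$ because the increments are centered Gaussians with variance $|t-s|$. The quantitative form of the Kolmogorov--Chentsov theorem then yields, for every $\alpha \in (0, \frac{m-1}{2m})$, a finite constant $C(m,\alpha)$ with $\bbe(\|\beta\|_{0,\alpha,[0,1]}^{2m}) \leq C(m,\alpha)$. Given $\alpha \in (0,\frac12)$, I would choose $m \in \N$ with $m > \frac{1}{1-2\alpha}$, so that $\alpha < \frac{m-1}{2m}$, and conclude by Jensen's inequality that $\bbe(\|\beta\|_{0,\alpha,[0,1]}) \leq (\bbe(\|\beta\|_{0,\alpha,[0,1]}^{2m}))^{1/(2m)} < \infty$. (The moment bound may equivalently be obtained from the Garsia--Rodemich--Rumsey inequality with $\Psi(x) = x^{2m}$ and $p(u) = u^{\alpha + 1/(2m)}$.) There is no genuine obstacle here; the only points requiring a little care are the measurability of the H\"older seminorm and keeping track of the admissible range of $\alpha$ in the Kolmogorov estimate, so that every $\alpha < \frac12$ is reached by taking $m$ sufficiently large.
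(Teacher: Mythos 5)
Your proof is correct and follows essentially the same route as the paper, which justifies the identity by the invariance $\PP \circ (\beta(\cdot))^{-1} = \PP \circ (\beta(\cdot+n)-\beta(n))^{-1}$ (stationarity of increments, with the H\"older seminorm depending only on increments) and takes the finiteness of $\bbe(\|\beta\|_{0,\alpha,[0,1]})$ as a standard consequence of Kolmogorov--Chentsov type moment bounds. Your additional care about the measurability of the seminorm and the choice of $m$ with $\alpha < \frac{m-1}{2m}$ is accurate and fills in the routine details the paper leaves implicit.
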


The main result of this subsection is formulated below which shows that the geometric Brownian motion belongs to $V^p_0$ for every $p > 2$.

\begin{proposition} [Global-in-time $V^p$ control]
	\label{prop:GeometricBrownianMotionVp}
	Let $\beta$ be a one-dimensional Brownian motion. Let $h$ be the geometric Brownian motion
	\begin{align*}
		h(t) = e^{-2 \beta(t) - 2 t} \qquad \text{on } [0,\infty),
	\end{align*}
 extended by $h(t) = t + 1$ for $-1 \leq t < 0$ and $h(t) = 0$ for $t < -1$ to the real line.
 Then, for every $p \in (2,\infty)$ we have $h \in V^p_0$, $\PP$-a.s.
\end{proposition}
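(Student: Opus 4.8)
The plan is to decompose $[0,\infty)$ into the unit intervals $[n,n+1]$, $n\in\N_0$, control the $p$-variation on each piece by the H\"older norm of $\beta$ there together with the supremum of $h$, and then sum the pieces using the exponential decay of $h$ to dominate the slowly-growing H\"older norms. First I would record the elementary fact that for a function $v$ on an interval $I$ of length $1$ one has $|v|_{V^p(I)}\le \|v\|_{0,1/p,I}$ whenever $p>1$: indeed, for any partition $(t_j)$ of $I$, $\sum_j|v(t_{j+1})-v(t_j)|^p\le \|v\|_{0,1/p,I}^p\sum_j(t_{j+1}-t_j)\le\|v\|_{0,1/p,I}^p$. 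Next, since $h=\Psi\circ(-2\beta)$ with $\Psi(y)=e^{y-2t}$ locally, but more simply because on $[n,n+1]$ we have $h(t)=h(n)e^{-2(\beta(t)-\beta(n))-2(t-n)}$, I would use that $y\mapsto e^{-2y-2(\cdot)}$ is Lipschitz on any bounded set with constant controlled by $\sup_{[n,n+1]}h$, to get
\begin{align*}
	\|h\|_{0,1/p,[n,n+1]} \lesssim \Big(\sup_{t\in[n,n+1]}h(t)\Big)\big(\|\beta\|_{0,1/p,[n,n+1]}+1\big).
\end{align*}
Combining this with $\sup_{[n,n+1]}h(t)=h(n)\sup_{[n,n+1]}e^{-2(\beta(t)-\beta(n))-2(t-n)}\le h(n)\,e^{2\,\mathrm{osc}}$ and, ultimately, with the law of the iterated logarithm, the running supremum of $h$ is $\PP$-a.s.\ summable against subexponentially growing factors.

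The second step is the gluing of $V^p$-pieces: if $I=\bigcup_n I_n$ is a locally finite cover by intervals sharing endpoints, then $|v|_{V^p(I)}^p\le \sum_n|v|_{V^p(I_n)}^p$ is false in general, but the sharp inequality $|v|_{V^p(I)}\le \big(\sum_n|v|_{V^p(I_n)}^p\big)^{1/p}+\big(\sum_n \text{(jump terms)}\big)$ does hold in the form $|v|_{V^p}\le \big(\sum_n |v|_{V^p(I_n)}^p\big)^{1/p}$ up to a constant, because a general partition of $[0,\infty)$ splits into sub-partitions of the $I_n$ plus at most one straddling increment per interval, and $\ell^p$ superadditivity gives $\sum_j |\Delta v|^p \le \sum_n\sum_{j:\,[t_j,t_{j+1}]\subseteq I_n}|\Delta v|^p + \sum_n|v(b_n^+)-v(b_n^-)|^p$ where the straddling increments are themselves bounded by oscillations on adjacent $I_n$'s. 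I would therefore conclude
\begin{align*}
	|h|_{V^p([0,\infty))} \lesssim \Big(\sum_{n=0}^\infty \|h\|_{0,1/p,[n,n+1]}^p\Big)^{1/p} + \Big(\sum_{n=0}^\infty\Big(\sup_{[n,n+1]}h\Big)^p\Big)^{1/p} \lesssim \Big(\sum_{n=0}^\infty \Big(\sup_{[n,n+1]}h\Big)^p\big(\|\beta\|_{0,1/p,[n,n+1]}+1\big)^p\Big)^{1/p},
\end{align*}
and the extension of $h$ to $[-1,0)$ and $(-\infty,-1)$ contributes a bounded amount since there $h$ is affine, so right-continuity and $\lim_{t\to-\infty}h(t)=0$ are immediate.

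The third step is to show the series on the right is $\PP$-a.s.\ finite, which is where I expect the real work to be. The two random factors are negatively correlated in the relevant regime but not independent, so I would argue pathwise. By the law of the iterated logarithm \eqref{log-BM}, for $\PP$-a.e.\ $\omega$ there is $N(\omega)$ such that $|\beta(n,\omega)|\le 2\sqrt{2n\log\log n}$ for $n\ge N(\omega)$, hence $\sup_{[n,n+1]}h(t,\omega)\le h(n,\omega)\,e^{2\,\mathrm{osc}_n(\omega)}$ with $h(n,\omega)=e^{-2\beta(n,\omega)-2n}\le e^{-2n+4\sqrt{2n\log\log n}}$, which decays faster than $e^{-n}$ eventually; and $\mathrm{osc}_n(\omega)=\sup_{[n,n+1]}|\beta(t)-\beta(n)|$ has $\bbe\,e^{2\,\mathrm{osc}_n}<\infty$ uniformly in $n$ by the reflection principle and stationarity of increments, so by Borel--Cantelli $\mathrm{osc}_n(\omega)\le \log n$ for $n$ large. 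For the H\"older factor I would invoke Lemma~\ref{lem:FiniteExpectationHoelderNorm}: since $\bbe\,\|\beta\|_{0,1/p,[n,n+1]}^p=\bbe\,\|\beta\|_{0,1/p,[0,1]}^p<\infty$ (finite because $1/p<1/2$, via Garsia--Rodemich--Rumsey or Kolmogorov's criterion applied to the $p$-th moment), Markov's inequality plus Borel--Cantelli give $\|\beta\|_{0,1/p,[n,n+1]}(\omega)\le n$ for all large $n$, $\PP$-a.s. Putting these together, the $n$-th summand is, for large $n$, bounded by $\big(e^{-2n+4\sqrt{2n\log\log n}}e^{2\log n}\big)^p(n+1)^p = e^{-2pn}\cdot e^{o(n)}$, which is summable; the finitely many small-$n$ terms are finite because $\beta$ is continuous, hence locally bounded and locally H\"older. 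Therefore $|h|_{V^p([0,\infty))}<\infty$ $\PP$-a.s., and together with the already-noted right-continuity and decay at $-\infty$ this gives $h\in V^p_0$ $\PP$-a.s., as claimed. (The measurability of $\omega\mapsto|h(\cdot,\omega)|_{V^p}$ needed to make the a.s.\ statement meaningful is exactly Lemma~\ref{lem:MeasurabilityVpnorm}.) The main obstacle is organizing the pathwise estimate cleanly: balancing the exact exponential decay rate of $h(n)$ against the $\sqrt{n\log\log n}$ fluctuations from the LIL and the polynomially-in-probability-growing H\"older norms, and getting the $V^p$-gluing inequality with the straddling increments handled correctly rather than hand-waved.
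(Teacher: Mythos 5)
Your proposal is correct and follows essentially the same route as the paper's proof: decomposition of $[0,\infty)$ into unit intervals with the straddling increments controlled by the (exponentially decaying) size of $h$, per-interval control of the $p$-variation via the $\frac1p$-H\"older norm of $\beta$, and a pathwise summation based on the law of the iterated logarithm plus Markov/Borel--Cantelli bounds on the H\"older norms. The only notable difference is cosmetic: you threshold $\|\beta\|_{0,\frac1p,[n,n+1]}$ at $n$ using its $p$-th moment (which you must, and do, justify independently via Kolmogorov/Garsia--Rodemich--Rumsey, since Lemma~\ref{lem:FiniteExpectationHoelderNorm} only provides the first moment), whereas the paper thresholds at $e^{n/16}$ precisely so that the first-moment bound of Lemma~\ref{lem:FiniteExpectationHoelderNorm} suffices.
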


\begin{proof}
	For every interval $I$,  
 set $\cP_I := \{\tau = (t_j)_{j = 1}^N \colon N \in \N, \,  t_j \in I, \, t_{j -1} < t_j\}$ and then for every $p \in (2,\infty)$
	\begin{align*}
		|v|_{V^p_I} = \sup_{(t_j)_{j = 1}^N \in \cP_I} \Big(\sum_{j = 1}^{N-1} |v(t_{j+1}) - v(t_j)|^p \Big)^{\frac{1}{p}}.
	\end{align*}
  Note that $|\cdot|_{V^p_I}$ is measurable by the same argument as in Lemma~\ref{lem:MeasurabilityVpnorm}.
    
	With the chosen extension, we have $h \in C(\R)$ $\PP$-a.s. and $\lim_{t \rightarrow - \infty} h(t) = 0$. 
 It is thus sufficient to show $|h|_{V^p} < \infty$ $\PP$-a.s. In view of our extension, for the latter it is sufficient to show
	\begin{align*}
		|h|_{V^p_{[0,\infty)}} < \infty \qquad \PP\text{-a.s.}
	\end{align*} 

 For this purpose, 
 we fix $p \in (2,\infty)$ and define the set
	\begin{align*}
		A &:= \{\omega \in \Omega \colon \exists n_0(\omega) \in \N,\, \forall n \geq n_0(\omega) \colon \|\beta\|_{0,\frac{1}{p},[n,n+1]} < e^{\frac{n}{16}}\} = \bigcup_{k \in \N} \bigcap_{n = k}^\infty \{\omega \in \Omega \colon \|\beta\|_{0,\frac{1}{p},[n,n+1]} < e^{\frac{n}{16}}\}.
	\end{align*}
	Writing $B_n := \{\omega \in \Omega \colon \|\beta\|_{0,\frac{1}{p},[n,n+1]} \geq e^{\frac{n}{16}}\}$, we thus have 
	\begin{align*}
		A^c = \bigcap_{k \in \N} \bigcup_{n = k}^\infty B_n = \limsup_{n \rightarrow \infty} B_n.
	\end{align*}
	By Lemma~\ref{lem:FiniteExpectationHoelderNorm}, $C_0 = \sup_{n \in \N_0} \bbe (\| \beta \|_{0,\frac{1}{p},[n,n+1]}) < \infty$, so that Markov's inequality yields
	\begin{align*}
		\PP(B_n) \leq e^{-\frac{n}{16}} \bbe \|\beta \|_{0,\frac{1}{p},[n,n+1]} \leq C_0 e^{-\frac{n}{16}}
	\end{align*}
	for all $n \in \N$. In particular, we have $\sum_{n \in \N} \PP(B_n) < \infty$ and thus
	\begin{align*}
		\PP(A^c) = \PP(\limsup_{n \rightarrow \infty} B_n) = 0
	\end{align*}
	by the Borel-Cantelli lemma. 
 It follows that $\PP(A) = 1$.
	
	Combining this result with the H{\"o}lder continuity properties and the iterated law of the logarithm of Brownian motions, we find a set $\tilde{\Omega} \subseteq \Omega$ with full measure, i.e. $\PP(\tilde{\Omega}) = 1$, such that for every $\omega \in \tilde{\Omega}$ there exists an index $n_0(\omega) \in \N$ such that
	\begin{enumerate}
		\item $\beta(\cdot, \omega)$ is $\frac{1}{p}$-H{\"o}lder continuous on $[0,n]$ for all $n \in \N$,
		\item $\|\beta(\cdot, \omega)\|_{0,\frac{1}{p},[n,n+1]} \leq e^{\frac{n}{16}}$ for all $n \geq n_0(\omega)$,
		\item $|\beta(t, \omega)| \leq 2 \sqrt{2 t \log(\log(t))} \leq \frac{t}{16}$ for all $t \geq n_0(\omega)$.
	\end{enumerate}
	We now fix an $\omega \in \tilde{\Omega}$ and claim that 
	\begin{equation}
		\label{eq:FiniteVarGeomBM}
		|h(\cdot, \omega)|_{V^p_{[0,\infty)}} < \infty,
	\end{equation}		
	which will imply the statement of the proposition. From now on the analysis will be pathwise for this fixed $\omega$ and 
 the $\omega$ dependence is dropped in order to ease the notation in the following.
	
	Let $n_0$ be as above. We denote the $\frac{1}{p}$-H{\"o}lder constant of $\beta$ on $[0,n_0]$ by $C_1$ and the maximum of $|\beta|$ on $[0,n_0]$ by $M$. Let $(t_j)_{j = 1}^N \in \cP_{[0,n_0]}$. We then infer
	\begin{align*}
		\sum_{j = 1}^{N-1} |h(t_{j+1}) - h(t_j)|^p &= \sum_{j = 1}^{N-1} \Big|e^{- 2 \beta(t_{j+1}) - 2 t_{j+1}} - e^{-2\beta(t_j) - 2 t_j}\Big|^p \\
		&\lesssim \sum_{j = 1}^{N-1} e^{- 2 t_{j+1} p} \Big|e^{- 2 \beta(t_{j+1})} - e^{-2\beta(t_j)}\Big|^p 
			+ \sum_{j = 1}^{N-1} e^{- 2 \beta(t_j)p} \Big| e^{- 2 t_{j+1}} - e^{-2 t_j} \Big|^p \\
		&\lesssim \sum_{j = 1}^{N-1} e^{2 M p} (|\beta(t_{j+1}) - \beta(t_j)|^p + |t_{j+1} - t_j|^p) \\
		&\lesssim e^{2 M p} \sum_{j = 1}^{N-1} (C_1^p + n_0^{p-1}) |t_{j+1} - t_j|\lesssim e^{2 M p} (C_1^p + n_0^{p-1}) n_0.
	\end{align*}
	Taking the supremum over all partitions in $\cP_{[0,n_0]}$, we obtain
	\begin{equation}
	\label{eq:FiniteVarGeomBMCompact}
		|h|_{V^p_{[0,n_0]}} \lesssim e^{2M}(C_1 + n_0) n_0^{\frac{1}{p}} < \infty.
	\end{equation} 
 
	To prove~\eqref{eq:FiniteVarGeomBM} it thus remains to show that $|h|_{V^p_{[n_0,\infty)}} < \infty$. We claim that for this statement it is actually enough to show
	\begin{equation}
		\label{eq:FiniteVarGeomBmSum}
		\sum_{n = n_0}^\infty |h|_{V^p_{[n,n+1]}} < \infty.
	\end{equation} 
 
	To see this claim, let $(t_j)_{j = 1}^N \in \cP_{[n_0,\infty)}$. Then there is an index $K \in \N$ and an increasing sequence of natural numbers $(n_k)_{k = 1}^{K}$ and $(l_k)_{k = n_1}^{n_{K-1}}$ such that
	\begin{align*}
		t_1, \ldots, t_{l_{n_1}} \in [n_1, n_1 + 1], \quad t_{l_{n_1} + 1}, \ldots, t_{l_{n_2}} \in [n_2, n_2 + 1], \quad \ldots, \quad t_{l_{n_{K-1} + 1}}, \ldots, t_N \in [n_K, n_K + 1].
	\end{align*}
	Setting $l_{n_0} = 0$ and $l_{n_K} = N$, we get
	\begin{equation}
	\label{eq:FiniteVarGeomBMExtrJumps}
		\sum_{j = 1}^{N-1} |h(t_{j+1}) - h(t_j)|^p = \sum_{k = 1}^{K} \sum_{l = l_{n_{k-1}}+1}^{l_{n_k} - 1} |h(t_{l+1}) - h(t_l)|^p + \sum_{k = 1}^{K-1} |h(t_{l_{n_k}+1}) - h(t_{l_{n_k}})|^p.
	\end{equation}
	For the second summand we estimate
	\begin{align*}
		\sum_{k = 1}^{K-1} |h(t_{l_{n_k}+1}) - h(t_{l_{n_k}})|^p &\lesssim \sum_{k = 1}^{K-1} \Big(e^{(-2\beta(t_{l_{n_k}+1}) - 2 t_{l_{n_k}+1}) p} + e^{(-2\beta(t_{l_{n_k}}) - 2 t_{l_{n_k}}) p}\Big)
		\lesssim \sum_{k = 1}^{K-1} \Big(e^{-t_{l_{n_k}+1} p} + e^{- t_{l_{n_k}} p}\Big) \\
		&\lesssim \sum_{k = 1}^{K-1} \Big(e^{- n_{k+1} p} + e^{- n_k p}\Big) \lesssim \sum_{n \in \N} e^{-n p} < \infty,
	\end{align*}
	where we used that $|\beta(t)| \leq {t}/{16}$ for all $t \geq n_0$ by our choice of the set $\tilde{\Omega}$. Consequently, if~\eqref{eq:FiniteVarGeomBmSum} is true, we obtain from~\eqref{eq:FiniteVarGeomBMExtrJumps}
	\begin{align*}
		\sum_{j = 1}^{N-1} |h(t_{j+1}) - h(t_j)|^p \lesssim  \sum_{k = 1}^{K} |h|_{V^p_{[n_k, n_k + 1]}}^p + \sum_{n \in \N} e^{-n p} \lesssim \sum_{n = n_0}^\infty |h|_{V^p_{[n, n + 1]}}^p + \sum_{n \in \N} e^{-n p} < \infty.
	\end{align*}
	Taking the supremum over all partitions $(t_j)_{j = 1}^N \in \cP_{[n_0,\infty)}$ thus yields $|h|_{V^p_{[n_0,\infty)}} < \infty$.
	
	It is now remains to prove~\eqref{eq:FiniteVarGeomBmSum}. Let $n \geq n_0$ and $(t_j)_{j = 1}^N$ be a partition in $\cP_{[n, n + 1]}$. We then estimate
	\begin{align*}
		\sum_{j = 1}^{N-1} |h(t_{j+1}) - h(t_j)|^p &\lesssim \sum_{j = 1}^{N-1} e^{- 2 t_{j+1} p} \Big|e^{- 2 \beta(t_{j+1})} - e^{-2\beta(t_j)}\Big|^p 
			+ \sum_{j = 1}^{N-1} e^{- 2 \beta(t_j)p} \Big| e^{- 2 t_{j+1}} - e^{-2 t_j} \Big|^p \\
			&\lesssim \sum_{j = 1}^{N-1} e^{- 2 t_{j+1} p} e^{\frac{t_{j+1}}{8} p} \Big|\beta(t_{j+1}) - \beta(t_j)\Big|^p 
			+ \sum_{j = 1}^{N-1} e^{\frac{t_j}{8} p} e^{- 2 t_j p} | t_{j+1} - t_j |^p,
	\end{align*}
	where we again used that $|\beta(t)| \leq {t}/{16}$ for all $t \geq n_0$. 
 It follows that 
	\begin{align*}
		\sum_{j = 1}^{N-1} |h(t_{j+1}) - h(t_j)|^p &\lesssim \sum_{j = 1}^{N-1} e^{- \frac{3}{2} n p} \|\beta \|_{0,\frac{1}{p},[n,n+1]}^p |t_{j+1} - t_j| 
			+ \sum_{j = 1}^{N-1} e^{-\frac{3}{2} n p} | t_{j+1} - t_j | \\
			&\lesssim e^{-\frac{3}{2} n p} (e^{\frac{n}{16} p} + 1) \lesssim e^{- n p},
	\end{align*}
	where we again exploited the definition of $\tilde{\Omega}$ in order to estimate $\|\beta\|_{0,\frac{1}{p},[n,n+1]}$. Taking the supremum over all these partitions yields
	\begin{align*}
		| h |_{V^p_{[n,n+1]}} \lesssim e^{-n}, 
	\end{align*} 
 where the implicit constant is independent of $n$. 
	Since $n \geq n_0$ was arbitrary, this implies~\eqref{eq:FiniteVarGeomBmSum} and thus the assertion of the proposition.
\end{proof}

\subsection{Trilinear estimates of wave nonlinearity}

We next estimate the nonlinearity of the wave equation in~\eqref{eq:RanZakNoncons}. The situation here is different from the deterministic one because of the presence of the geometric Brownian motion $h$. Although $h$ is independent of the spatial variable, the modulation components of the $W^{0,\frac{1}{4}, \frac{1}{2}}$-norm lead to intricate trilinear interactions involving the geometric Brownian motion.

One of the key steps in the proof is to uncover a subtle {\it nonresonance identity} 
that allows us to transfer some spatial regularity to temporal regularity of $h_c$, 
at the cost of the $1/8$-temproal regularity 
$h_c$ in the Besov space $B^{{1}/{8}}_{6,\infty}$, 
which is acceptable thanks to the global $V^p$ control derived in the previous subsection.

\begin{theorem} [Trilinear estimate for wave nonlinearity]
	\label{thm:TrilinearEstWave}
	Let $I \subseteq \R$ be an interval. If $\varphi, \psi \in \SOne(I)$ and $h \in L^6(I) \cap B^{\frac{1}{8}}_{6,\infty}(I)$, then
	\begin{align*}
		\|\cJ_0(h |\nabla|(\overline{\varphi} \psi)\|_{\WEner(I)} \lesssim (\|h\|_{L^6(I)} + \|h\|_{B^{\frac{1}{8}}_{6,\infty}(I)}) \| \varphi \|_{\SOne(I)} \| \psi \|_{\SOne(I)}.
	\end{align*}
\end{theorem}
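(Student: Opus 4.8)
The plan is to reduce the trilinear estimate to the deterministic bilinear estimate~\eqref{eq:Bilinnablauu} by carefully handling the extra factor $h$. Since $h$ depends only on $t$, multiplication by $h$ commutes with all spatial Fourier multipliers, so the only place where $h$ interacts nontrivially with the wave-norm structure is through the temporal modulation weight $(\lambda + |\partial_t|)^{1/4}\TempN$ and the space-time term $\lambda^{-1/2}\|(\imu\partial_t + |\nabla|)v\|_{L^2_{t,x}}$ in $\|\cdot\|_{\WEner_\lambda}$. First I would dyadically decompose in the output frequency $\lambda$ (the frequency of $\overline\varphi\psi$) and write, by the Duhamel formula, $v_\lambda := \cJ_0(h|\nabla|(\overline\varphi\psi))_\lambda$, which satisfies $(\imu\partial_t + |\nabla|)v_\lambda = h\,|\nabla|(\overline\varphi\psi)_\lambda$ with zero data. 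The $L^\infty_t L^2_x$-piece and the space-time residual piece of $\|v_\lambda\|_{\WEner_\lambda}$ are handled directly: for the former one uses that $\|v_\lambda\|_{L^\infty_t L^2_x}\lesssim \|h|\nabla|(\overline\varphi\psi)_\lambda\|_{L^1_t L^2_x}$, then H\"older in $t$ to pull out $\|h\|_{L^6(I)}$ (or a lower $L^p_t$ norm) and estimate the remaining $L^{p'}_t L^2_x$-norm of $|\nabla|(\overline\varphi\psi)_\lambda$ by interpolating the Strichartz information in $\SOne$; for the latter, $\lambda^{-1/2}\|(\imu\partial_t+|\nabla|)v_\lambda\|_{L^2_{t,x}} = \lambda^{-1/2}\|h|\nabla|(\overline\varphi\psi)_\lambda\|_{L^2_{t,x}}\lesssim \lambda^{1/2}\|h\|_{L^6(I)}\|(\overline\varphi\psi)_\lambda\|_{L^3_t L^2_x}$, again controlled by the bilinear Strichartz machinery behind~\eqref{eq:Bilinnablauu}. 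The genuinely new term is the modulation term $\lambda^{-1/4}\|(\lambda+|\partial_t|)^{1/4}\TempN v_\lambda\|_{L^\infty_t L^2_x}$.

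For the modulation term, the key is the \emph{nonresonance identity}: on the low temporal modulation region $|\partial_t|\lesssim\lambda^2$ relative to the half-wave symbol, the operator $(\lambda+|\partial_t|)^{1/4}$ applied to $v_\lambda = \cJ_0(h\,g_\lambda)$ with $g_\lambda = |\nabla|(\overline\varphi\psi)_\lambda$ can be rewritten, using $(\imu\partial_t + |\nabla|)v_\lambda = h g_\lambda$, as an expression where the temporal derivative falls either onto $h$ (producing a fractional time-derivative $\partial_t^{1/8}h$, hence the $B^{1/8}_{6,\infty}$-norm) or onto the bilinear term $\overline\varphi\psi$ — and in the latter case $(\imu\partial_t + \Delta)$ acting on $\varphi,\psi$ is controlled, since $\varphi,\psi\in\SOne\subseteq S^{1,1/4,0}$, by the last term of the $\SOne_\lambda$-norm~\eqref{eq:DefS1lambda}. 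Concretely, I would split $h g_\lambda = \TempN(h g_\lambda) + \ModF(h g_\lambda)$-type pieces (temporal Littlewood--Paley in the Duhamel variable), and on each piece distribute $(\lambda+|\partial_t|)^{1/4}$ via the fractional Leibniz rule (Lemma~2.7 in~\cite{CHN23}) between $h$ and $g_\lambda$. The $h$-derivative piece contributes $\|h\|_{B^{1/8}_{6,\infty}}$ times $\|g_\lambda\|_{(\text{Strichartz})}$, and the $g_\lambda$-derivative piece contributes $\|h\|_{L^6}$ (or $L^\infty$ after interpolating, but $L^6$ suffices) times a norm of $(\lambda+|\partial_t|)^{1/4}$-weighted $g_\lambda$, which by Bernstein in time and the paraproduct structure of $\overline\varphi\psi$ reduces to the $S^{1,1/4,0}$-control of $\varphi$ and $\psi$, i.e.\ exactly the quantity that the deterministic argument for~\eqref{eq:Bilinnablauu} already bounds. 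High temporal modulations of $v_\lambda$ are directly estimated as in the space-time residual term, gaining powers of $\lambda$ that compensate the $\lambda^{-1/4}$ factor.

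The main obstacle I expect is precisely the bookkeeping in the modulation term: one must verify that the nonresonance identity genuinely allows the temporal derivative to be moved off $v_\lambda$ and that the resulting reallocation between $h$ and the bilinear expression never costs more than $1/8$ of a time-derivative on $h$ and never costs more than the $1/4$-modulation weight already present in $\SOne$. This requires a case distinction on whether the temporal frequency of the Duhamel integral is comparable to $\lambda^2$ or much larger, and in the comparable regime one has to exploit that the symbol $\imu\tau + |\xi|$ restricted to $|\xi|\sim\lambda$ and $|\tau|\lesssim\lambda^2$ is elliptic of size $\sim\lambda$, so that $(\lambda+|\partial_t|)^{1/4}\cJ_0 = (\lambda+|\partial_t|)^{1/4}(\imu\partial_t + |\nabla|)^{-1}$ is bounded with the right homogeneity — this is where the identity $(\lambda+|\partial_t|)^{1/4} = (\lambda+|\partial_t|)^{1/4}(\imu\partial_t+|\nabla|)^{-1}(\imu\partial_t+|\nabla|)$ gets applied. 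Once the modulation term is handled, summing the $\ell^2$ Littlewood--Paley pieces in $\lambda$ and invoking the deterministic bilinear estimates of Lemma~\ref{lem:BilinearEstimates}~\ref{it:BilinearEstNonendpoint} (with the Strichartz improvement captured by the parameter $\theta$) closes the estimate, with the two norms of $h$ combined additively as in the statement.
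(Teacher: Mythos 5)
Your overall architecture — estimating the three components of the $\WEner_\lambda$-norm separately, pulling out $\|h\|_{L^6_t}$ by H\"older in time for the $L^\infty_tL^2_x$-piece and the $\lambda^{-1/2}\|(\imu\partial_t+|\nabla|)v\|_{L^2_{t,x}}$-piece, and isolating the temporal-weight term $\lambda^{-1/4}\|(\lambda+|\partial_t|)^{1/4}\TempN v\|_{L^\infty_tL^2_x}$ as the genuinely new difficulty — coincides with the paper's strategy, and the first two pieces go through essentially as you sketch (with the additional, routine, paraproduct and modulation splitting of the high-frequency factor needed for $\ell^2$-summability in $\lambda$). The gap is in the mechanism you propose for the weighted term. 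First, your ellipticity claim is false: on $|\xi|\sim\lambda$, $|\tau|\lesssim\lambda^2$ the half-wave symbol vanishes on the characteristic set $|\tau|\sim\lambda$, so you cannot invert $(\imu\partial_t+|\nabla|)$ there, and even on $\lambda\ll|\tau|\ll\lambda^2$ the symbol has size $|\tau|$, not $\lambda$. The paper avoids inversion altogether: it splits output temporal frequencies into $\lesssim\lambda$ (absorbed by the unweighted $L^\infty_tL^2_x$ bound) and $\gg\lambda$, and for the latter transfers the weight onto the source via the energy-type inequality of \cite[Lemma~2.6]{CHN23}, reducing to $\lambda^{3/4}\|(\lambda+|\partial_t|)^{1/4}P^{(t)}_{\ll\lambda^2}P_\lambda(h\overline{\varphi}\psi)\|_{L^1_tL^2_x}$.

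Second, and more seriously, distributing $(\lambda+|\partial_t|)^{1/4}$ by a fractional Leibniz rule between $h$ and the bilinear term does not produce the claimed $1/8$-derivative on $h$: in the regime where $h$ carries the dominant temporal frequency a Leibniz rule costs the full $1/4$-derivative, i.e.\ $B^{1/4}_{6,\infty}$, which is not what the statement assumes. The reason only $1/8$ is needed is a three-way temporal-frequency orthogonality, and to see it you must first split the high spatial frequency factor into modulations, $\varphi_\mu=C_{\ll\mu^2}\varphi_\mu+C_{\gtrsim\mu^2}\varphi_\mu$ (a step absent from your plan): the low-modulation part has temporal frequency pinned at $\sim\mu^2$, so under the output cutoff $P^{(t)}_{\ll\lambda^2}$ either $\psi_{\ll\mu}$ must carry temporal frequency $\gtrsim\mu^2$ — in which case one gains $\sim\mu^{-2}$ from the third component of its $\SOne$-norm in \eqref{eq:DefS1lambda} and only $\|h\|_{L^6_t}$ is needed — or $h$ must carry temporal frequency $\sim\mu^2$; this is exactly the nonresonance identity \eqref{eq:NonresonanceId}. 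In the latter case the uncompensated excess is $\lambda^{1/4}=(\mu^2)^{1/8}$, i.e.\ precisely an $1/8$-derivative at $h$'s own temporal frequency, which is what $\|h\|_{B^{1/8}_{6,\infty}}$ pays for. You name verifying this bookkeeping as the main obstacle, but the tools you propose (weight-Leibniz plus symbol inversion) do not deliver it, and without the modulation decomposition of $\varphi_\mu$ the frequency arithmetic cannot even be set up. A final minor point: one cannot close by citing \eqref{eq:Bilinnablauu} of Lemma~\ref{lem:BilinearEstimates} as a black box, since $h$ sits inside the Duhamel integral; the wave bounds have to be reproved with $h$ inserted, as in \eqref{eq:EstTrilinWaveTempLinfL2}.
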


\begin{proof}
By the definition of the involved norms, it is sufficient to prove the assertion for $I = \R$.
	We first note that $\lambda^{\frac{3}{4}}\|u\|_{L^2_t L^4_x} \lesssim \|u\|_{\SOne_\lambda}$ by~\eqref{eq:DefSsablambda} and Remark~\ref{rem:NormComp}. Interpolating with $\lambda \|u\|_{L^\infty_t L^2_x} \lesssim \|u\|_{\SOne_\lambda}$, we obtain
	\begin{equation}
		\label{eq:ControlL3txS1}
		\lambda^{\frac{5}{6}} \|u\|_{L^3_t L^3_x} \lesssim \|u\|_{\SOne_\lambda},
	\end{equation}
	which we will use frequently in the following without further reference.
	
	We will show the estimates
	\begin{align}
		\Big( \sum_{\lambda \in 2^{\N_0}} \|P_\lambda \cJ_0(h |\nabla|(\overline{\varphi} \psi)\|_{L^\infty_t L^2_x}^2\Big)^{\frac{1}{2}} &\lesssim \|h\|_{L^6_t} \|\varphi\|_{\SOne} \|\psi\|_{\SOne}, \label{eq:EstTrilinWaveLinfL2} \\
		\Big( \sum_{\lambda \in 2^{\N_0}} \lambda^{-1} \|P_\lambda (h |\nabla|(\overline{\varphi} \psi)\|_{L^2_t L^2_x}^2\Big)^{\frac{1}{2}} &\lesssim \|h\|_{L^6_t} \|\varphi\|_{\SOne} \|\psi\|_{\SOne}, \label{eq:EstTrilinWaveL2L2}  \\ 
  \Big( \sum_{\lambda \in 2^{\N_0}} \lambda^{-\frac{1}{2}} \|(\lambda + |\partial_t|)^{\frac{1}{4}} P_{\leq (\frac{\lambda}{2^8})^2}^{(t)} P_\lambda \cJ_0(h |\nabla|(\overline{\varphi} \psi)\|_{L^\infty_t L^2_x}^2\Big)^{\frac{1}{2}} &\lesssim (\|h\|_{L^6_t} + \|h\|_{B^{\frac{1}{8}}_{6,\infty}}) \|\varphi\|_{\SOne} \|\psi\|_{\SOne}, \label{eq:EstTrilinWaveTempLinfL2} 
	\end{align}
	which imply the assertion in view of the definition of $\|\cdot\|_{\WEner}$.
	
	Throughout the proof we will employ the usual paraproduct decomposition
	\begin{align*}
			P_\lambda(\overline{\varphi} \psi) = \sum_{\lambda/2 \leq \mu \leq 2 \lambda}  \overline{\varphi}_\mu \psi_{\ll \mu} + \sum_{\lambda_1 \sim \lambda_2 \gtrsim \lambda} P_\lambda(\overline{\varphi}_{\lambda_1}  \psi_{\lambda_2}) + \sum_{\lambda/2 \leq \mu \leq 2 \lambda}  \overline{\varphi}_{\ll \mu} \psi_\mu,
\end{align*}
where we set $\varphi_\lambda = P_\lambda \varphi$, $\varphi_{\ll \lambda} = P_{\ll \lambda} \varphi$, etc. As the estimates~\eqref{eq:EstTrilinWaveLinfL2} to~\eqref{eq:EstTrilinWaveL2L2} are invariant under complex conjugation, it is sufficient to prove them for the high-low and the high-high contributions.
	
	\textbf{Proof of~\eqref{eq:EstTrilinWaveLinfL2}:} We first use the energy estimate
		\begin{equation}
			\label{eq:EnergyEstWave}
				\|P_\lambda \cJ_0(h |\nabla|(\overline{\varphi} \psi))\|_{L^\infty_t L^2_x} \lesssim \lambda \| h P_\lambda (\overline{\varphi} \psi)\|_{L^1_t L^2_x}.
		\end{equation}
		Then decompose the high-low contribution as
		\begin{align*}
			\overline{\varphi}_{\mu} \psi_{\ll \mu} = (C_{\ll \mu^2} \overline{\varphi}_{\mu}) \psi_{\ll \mu} + (C_{\gtrsim \mu^2} \overline{\varphi}_{\mu}) \psi_{\ll \mu}.
		\end{align*}
	For the low modulation part we estimate
	\begin{align*}
		\lambda \|h (C_{\ll \mu^2} \overline{\varphi}_{\mu}) \psi_{\ll \mu}\|_{L^1_t L^2_x} &\lesssim \lambda \|h\|_{L^6_t} \|C_{\ll \mu^2} \overline{\varphi}_{\mu} \|_{L^2_t L^4_x} \| \psi_{\ll \mu}\|_{L^3_t L^4_x} \\
		&\lesssim \|h\|_{L^6_t} \lambda \|C_{\ll \mu^2} \varphi_{\mu} \|_{L^2_t L^4_x} \sum_{\nu \ll \mu} \nu^{-\frac{1}{2}} \nu^{\frac{5}{6}} \|\psi_\nu\|_{L^3_t L^3_x} \\ 
		&\lesssim \|h\|_{L^6_t} \|\varphi_\mu\|_{\SOne_\mu} \|\psi\|_{\SOne},
	\end{align*}
	while we infer for the high modulation part 
	\begin{align*}
		\lambda \| h (C_{\gtrsim \mu^2} \overline{\varphi}_{\mu}) \psi_{\ll \mu} \|_{L^1_t L^2_x} &\lesssim \|h\|_{L^6_t} \lambda \|C_{\gtrsim \mu^2} \overline{\varphi}_{\mu}\|_{L^2_t L^2_x} \| \psi_{\ll \mu} \|_{L^3_t L^\infty_x} \\
		&\lesssim \|h\|_{L^6_t} \mu \cdot \mu^{-2} \cdot \mu^{\frac{1}{4}} \Big\|\Big( \frac{\mu + |\partial_t|}{\mu^2 + |\partial_t|}\Big)^{\frac{1}{4}} (\imu \partial_t + \Delta) \varphi_{\mu} \Big\|_{L^2_t L^2_x}  \sum_{\nu \ll \mu} \nu^{\frac{1}{2}} \nu^{\frac{5}{6}} \|\psi_\nu\|_{L^3_t L^3_x} \\
		&\lesssim \|h\|_{L^6_t} \|\varphi_\mu\|_{\SOne_\mu} \|\psi\|_{\SOne}. 
	\end{align*}
	 Taking the $l^2$-norm in $\lambda$, we thus obtain~\eqref{eq:EstTrilinWaveLinfL2} for the high-low contribution. The standard adaptions yield the high-high case, which finishes the proof of~\eqref{eq:EstTrilinWaveLinfL2}.
	
	\textbf{Proof of~\eqref{eq:EstTrilinWaveL2L2}:} Here we have to estimate
	\begin{align*}
		\lambda^{-\frac{1}{2}} \| h P_\lambda(|\nabla|(\overline{\varphi} \psi))\|_{L^2_t L^2_x}
			\lesssim \lambda^{\frac{1}{2}} \|h P_\lambda (\overline{\varphi} \psi)\|_{L^2_t L^2_x}.
	\end{align*}
	We again start with the high-low contribution and infer
	\begin{align*}
		\mu^{\frac{1}{2}} \|h \overline{\varphi}_\mu \psi_{\ll \mu} \|_{L^2_t L^2_x}
		&\lesssim \mu^{\frac{1}{2}} \|h\|_{L^6_t} \|\overline{\varphi}_\mu\|_{L^\infty_t L^2_x} \|\psi_{\ll \mu} \|_{L^3_t L^\infty_x}  \\ 
		&\lesssim \|h\|_{L^6_t}  \mu \|\varphi_\mu\|_{L^\infty_t L^2_x} \mu^{-\frac{1}{2}} \sum_{\nu \ll \mu} \nu^{\frac{1}{2}} \nu^{\frac{5}{6}} \|\psi_\nu\|_{L^3_t L^3_x} \\
		&\lesssim \|h\|_{L^6_t} \|\varphi_\mu\|_{\SOne_\mu} \|\psi\|_{\SOne}.
	\end{align*}
	The standard adaptions also imply the corresponding estimate for the high-high contribution. Taking the $l^2$-norm in $\lambda$, we thus obtain~\eqref{eq:EstTrilinWaveL2L2}.
	
	\textbf{Proof of~\eqref{eq:EstTrilinWaveTempLinfL2}:}  
 Now we come to the most delicate estimate \eqref{eq:EstTrilinWaveTempLinfL2}. 
 First, we split the required estimate into
	\begin{align*}
		\lambda^{-\frac{1}{4}} \|(\lambda + |\partial_t|)^{\frac{1}{4}} P_{\ll \lambda^2}^{(t)}P_\lambda \cJ_0(h|\nabla|(\overline{\varphi} \psi) \|_{L^\infty_t L^2_x} 
		&\lesssim \lambda^{-\frac{1}{4}} \|(\lambda + |\partial_t|)^{\frac{1}{4}} P^{(t)}_{\lesssim \lambda} P_{\ll \lambda^2}^{(t)} P_\lambda \cJ_0(h|\nabla|(\overline{\varphi} \psi)) \|_{L^\infty_t L^2_x} \\
		&\qquad + \lambda^{-\frac{1}{4}} \|(\lambda + |\partial_t|)^{\frac{1}{4}} P^{(t)}_{\gg \lambda} P_{\ll \lambda^2}^{(t)} P_\lambda \cJ_0(h|\nabla|(\overline{\varphi} \psi)) \|_{L^\infty_t L^2_x}  \\ 
         &=: I_\lambda + II_\lambda.
	\end{align*}
	Since the temporal frequencies are bounded by $\lambda$ in the first summand, we can simply estimate
	\begin{align*}
		I_\lambda \lesssim \| P_\lambda \cJ_0(h|\nabla|(\overline{\varphi} \psi)) \|_{L^\infty_t L^2_x}.
	\end{align*}
	Taking the $l^2$-norm in $\lambda$, estimate~\eqref{eq:EstTrilinWaveLinfL2} yields the assertion for this part of the nonlinearity.
	
	It remains to estimate $II_\lambda$. Here we first employ the energy type inequality for the wave equation in the $W^{0,\frac{1}{4},\frac{1}{2}}$-norm from Lemma~2.6 in~\cite{CHN23}. To be more precise, the last estimate in the proof of~\cite[Lemma~2.6]{CHN23} yields
	\begin{align*}
		\|(\lambda + |\partial_t|)^{\frac{1}{4}} P^{(t)}_{\gg \lambda} P_{\ll \lambda^2}^{(t)} P_\lambda \cJ_0(h|\nabla|(\overline{\varphi} \psi)) \|_{L^\infty_t L^2_x} \lesssim \|(\lambda + |\partial_t|)^{\frac{1}{4}}  P_{\ll \lambda^2}^{(t)} P_\lambda (h|\nabla|(\overline{\varphi} \psi)) \|_{L^1_t L^2_x}
	\end{align*}
	and hence,
	\begin{equation}
		\label{eq:ReductionIILambda}
		II_\lambda \lesssim \lambda^{\frac{3}{4}} \|(\lambda + |\partial_t|)^{\frac{1}{4}}  P_{\ll \lambda^2}^{(t)} P_\lambda (h\overline{\varphi} \psi) \|_{L^1_t L^2_x}.
	\end{equation}

 In order to bound the above right-hand side, let us start with the high-low part. 
 
 {\bf $\bullet$ High-low part:} 
 Decompose into a low modulation and a high modulation contribution, i.e., for $\frac{\lambda}{2} \leq \mu \leq 2 \lambda$ we estimate
	\begin{align*}
		\lambda^{\frac{3}{4}} \|(\lambda + |\partial_t|)^{\frac{1}{4}}  P_{\ll \lambda^2}^{(t)} (h \overline{\varphi}_\mu \psi_{\ll \mu}) \|_{L^1_t L^2_x}
		&\lesssim \lambda^{\frac{3}{4}} \|(\lambda + |\partial_t|)^{\frac{1}{4}}  P_{\ll \lambda^2}^{(t)} (h C_{\gtrsim \mu^2} \overline{\varphi}_\mu \psi_{\ll \mu}) \|_{L^1_t L^2_x} \\  
        & \quad + \lambda^{\frac{3}{4}} \|(\lambda + |\partial_t|)^{\frac{1}{4}}  P_{\ll \lambda^2}^{(t)} (h C_{\ll \mu^2} \overline{\varphi}_\mu \psi_{\ll \mu}) \|_{L^1_t L^2_x} \\ 
      &=: II^{HL}_{HM,\lambda} 
         + II^{HL}_{LM,\lambda}.
	\end{align*}
	For the high modulation contribution we infer
	\begin{align}
		II^{HL}_{HM,\lambda} &\lesssim \lambda^{\frac{5}{4}} \| h C_{\gtrsim \mu^2} \overline{\varphi}_\mu \psi_{\ll \mu} \|_{L^1_t L^2_x} 
		\lesssim \lambda^{\frac{5}{4}} \|h\|_{L^6_t} \|C_{\gtrsim \mu^2} \varphi_\mu\|_{L^2_t L^2_x} \|\psi_{\ll \mu}\|_{L^3_t L^\infty_x} \nonumber\\
		&\lesssim \lambda^{\frac{5}{4}} \|h\|_{L^6_t} \mu^{-2} \mu^{\frac{1}{4}} \Big\|\Big( \frac{\mu + |\partial_t|}{\mu^2 + |\partial_t|} \Big)^{\frac{1}{4}} (\imu \partial_t + \Delta) \varphi_\mu \Big\|_{L^2_t L^2_x} \sum_{\nu \ll \mu} \nu^{\frac{1}{2}} \nu^{\frac{5}{6}} \|\psi_\nu\|_{L^3_t L^3_x} \nonumber\\
		&\lesssim \|h\|_{L^6_t} \|\varphi_\mu\|_{\SOne_\mu} \|\psi\|_{\SOne}. \label{eq:EstIIHLHM}
	\end{align}
	The low modulation part is the most subtle one. Here we employ the nonresonance identity
	\begin{align}
		\label{eq:NonresonanceId}
		P^{(t)}_{\ll \lambda^2}(h C_{\ll \mu^2} \overline{\varphi}_\mu \psi_{\ll \mu})
		= P^{(t)}_{\ll \lambda^2}(h C_{\ll \mu^2} \overline{\varphi}_\mu P^{(t)}_{> (\frac{\mu}{2^4})^2 }\psi_{\ll \mu}) + P^{(t)}_{\ll \lambda^2}(P^{(t)}_{\sim \mu^2} h C_{\ll \mu^2} \overline{\varphi}_\mu P^{(t)}_{\leq  (\frac{\mu}{2^4})^2 }\psi_{\ll \mu}),
	\end{align}
	which follows from the fact that $C_{\ll \mu^2} \overline{\varphi}_\mu$ has temporal frequency of size $\mu^2$. Recalling that $\psi_{\ll \mu} = P_{\leq \frac{\mu}{2^8}}\psi$, we obtain for the first summand
	\begin{align*}
		&\lambda^{\frac{3}{4}} \| (\lambda + |\partial_t|)^{\frac{1}{4}} P^{(t)}_{\ll \lambda^2}(h C_{\ll \mu^2} \overline{\varphi}_\mu P^{(t)}_{>(\frac{\mu}{2^4})^2}\psi_{\ll \mu}) \|_{L^1_t L^2_x}  \\ 
		&\lesssim \lambda^{\frac{5}{4}} \|h\|_{L^6_t} \| C_{\ll \mu^2} \overline{\varphi}_\mu\|_{L^2_t L^4_x} \| P^{(t)}_{>(\frac{\mu}{2^4})^2} \psi_{\ll \mu}\|_{L^3_t L^4_x} \\
		&\lesssim \sum_{\nu > (\frac{\mu}{2^4})^2} \|h\|_{L^6_t} \mu \|C_{\ll \mu^2} \varphi_\mu\|_{L^2_t L^4_x} \mu^{\frac{1}{4}}  \nu^{\frac{1}{6}} \mu \nu^{-1} \| P^{(t)}_{\nu} (\imu \partial_t + \Delta)\psi_{\ll \mu}\|_{L^2_t L^2_x} \\ 
       & \lesssim \|h\|_{L^6_t} \|\varphi_\mu\|_{\SOne_\mu} \|\psi\|_{\SOne},
	\end{align*}
	where we also employed Bernstein's inequality in space and time and used that
	\begin{align*}
		\| P^{(t)}_{\nu} (\imu \partial_t + \Delta)\psi_{\ll \mu}\|_{L^2_t L^2_x} 
		 \lesssim \Big( \sum_{\kappa \ll \mu} \Big\| P^{(t)}_{\nu} \Big(\frac{\kappa + |\partial_t|}{\kappa^2 + |\partial_t|} \Big)^{\frac{1}{4}} (\imu \partial_t + \Delta)\psi_{\kappa}\Big\|_{L^2_t L^2_x}^2\Big)^{\frac{1}{2}} 
		 \lesssim \|\psi\|_{\SOne}
	\end{align*}
	as $\nu \gtrsim \mu^2 \gg \kappa^2$.
	For the second summand in~\eqref{eq:NonresonanceId}, we infer
	\begin{align*}
		&\lambda^{\frac{3}{4}} \|(\lambda + |\partial_t|)^{\frac{1}{4}} P^{(t)}_{\ll \lambda^2}(P^{(t)}_{\sim \mu^2} h C_{\ll \mu^2} \overline{\varphi}_\mu P^{(t)}_{\leq  (\frac{\mu}{2^4})^2 }\psi_{\ll \mu}) \|_{L^1_t L^2_x} \\ 
		&\lesssim \lambda^{\frac{5}{4}} \|P^{(t)}_{\sim \mu^2} h\|_{L^6_t} \|C_{\ll \mu^2} \varphi_\mu\|_{L^2_t L^4_x} \|\psi_{\ll \mu}\|_{L^3_t L^4_x} \\
		&\lesssim (\mu^2)^{\frac{1}{8}} \|P^{(t)}_{\sim \mu^2} h\|_{L^6_t} \mu \|C_{\ll \mu^2} \varphi_\mu\|_{L^2_t L^4_x} \sum_{\nu \ll \mu} \nu^{-\frac{1}{2}} \nu^{\frac{5}{6}} \|\psi_\nu\|_{L^3_t L^3_x}  \\ 
		&\lesssim \|h\|_{B^{\frac{1}{8}}_{6,\infty}} \|\varphi_\mu\|_{\SOne_\mu} \|\psi\|_{\SOne}.
	\end{align*}
	In view of~\eqref{eq:NonresonanceId}, the last two estimates and~\eqref{eq:EstIIHLHM} imply
	\begin{equation}
		\label{eq:EstIIHL}
		\lambda^{\frac{3}{4}} \|(\lambda + |\partial_t|)^{\frac{1}{4}}  P_{\ll \lambda^2}^{(t)} (h \overline{\varphi}_\mu \psi_{\ll \mu}) \|_{L^1_t L^2_x}
		\lesssim  (\|h\|_{L^6_t} + \|h\|_{B^{\frac{1}{8}}_{6,\infty}}) \|\varphi_\mu\|_{\SOne_\mu} \|\psi\|_{\SOne}.
	\end{equation}
	Taking the $l^2$-norm in $\lambda$, we obtain the desired bound for the high-low part of $II$.

 {\bf $\bullet$ High-high part:}
	To control the high-high contribution of the right-hand side of~\eqref{eq:ReductionIILambda}, we proceed similarly as in the proof of~\eqref{eq:EstTrilinWaveLinfL2}. We estimate
	\begin{align}
		&\lambda^{\frac{3}{4}} \|(\lambda + |\partial_t|)^{\frac{1}{4}} P_{\ll \lambda^2}^{(t)} P_\lambda (h(\overline{\varphi} \psi)_{HH}) \|_{L^1_t L^2_x}
		\lesssim \lambda^{\frac{5}{4}} \sum_{\lambda_1 \sim \lambda_2 \gtrsim \lambda} \|h \varphi_{\lambda_1} \psi_{\lambda_2}\|_{L^1_t L^2_x} \nonumber\\
		&\lesssim  \lambda^{\frac{5}{4}} \sum_{\lambda_1 \sim \lambda_2 \gtrsim \lambda} (\|h C_{\ll \lambda_1^2}\varphi_{\lambda_1} \psi_{\lambda_2}\|_{L^1_t L^2_x} + \|h C_{\gtrsim \lambda_1^2}\varphi_{\lambda_1} \psi_{\lambda_2}\|_{L^1_t L^2_x}) =: II^{HH}_{LM, \lambda} + II^{HH}_{HM,\lambda}. \label{eq:EstIIHH}
	\end{align}
	For the low modulation contribution we then derive
	\begin{align*}
		II^{HH}_{LM, \lambda} &\lesssim \lambda^{\frac{5}{4}} \sum_{\lambda_1 \sim \lambda_2 \gtrsim \lambda} \|h\|_{L^6_t} \|C_{\ll \lambda_1^2} \varphi_{\lambda_1} \|_{L^2_t L^4_x} \|\psi_{\lambda_2}\|_{L^3_t L^4_x}\\
		&\lesssim \|h\|_{L^6_t} \lambda^{\frac{5}{4}} \sum_{\lambda_1 \sim \lambda_2 \gtrsim \lambda} \lambda_1^{-1} (\lambda_1 \|C_{\ll \lambda_1^2} \varphi_{\lambda_1} \|_{L^2_t L^4_x}) \lambda_2^{-\frac{1}{2}} (\lambda_2^{\frac{5}{6}} \|\psi_{\lambda_2}\|_{L^3_t L^3_x}) \\ 
		&\lesssim \|h\|_{L^6_t} \|\varphi\|_{\SOne} \|\psi\|_{\SOne} \lambda^{\frac{5}{4}} \sum_{\lambda_1 \gtrsim \lambda} \lambda_1^{-\frac{3}{2}} \\
		&\lesssim \lambda^{-\frac{1}{4}} \|h\|_{L^6_t} \|\varphi\|_{\SOne} \|\psi\|_{\SOne},
	\end{align*}
	while we get
	\begin{align*}
		II^{HH}_{HM,\lambda} &\lesssim  \lambda^{\frac{5}{4}} \sum_{\lambda_1 \sim \lambda_2 \gtrsim \lambda} \|h\|_{L^6_t} \|C_{\gtrsim \lambda_1^2} \varphi_{\lambda_1} \|_{L^2_t L^2_x} \|\psi_{\lambda_2}\|_{L^3_t L^\infty_x} \\
		&\lesssim \|h\|_{L^6_t} \lambda^{\frac{5}{4}} \sum_{\lambda_1 \sim \lambda_2 \gtrsim \lambda} \lambda_1^{-2} \lambda_1^{\frac{1}{4}} \Big\| \Big( \frac{\lambda_1 + |\partial_t|}{\lambda_1^2 + |\partial_t|}\Big)^{\frac{1}{4}} (\imu \partial_t + \Delta) \varphi_{\lambda_1} \Big\|_{L^2_t L^2_x} \lambda_2^{\frac{1}{2}} \lambda_2^{\frac{5}{6}} \|\psi_{\lambda_2}\|_{L^3_t L^3_x}\\
		&\lesssim \|h\|_{L^6_t} \|\psi\|_{\SOne} \sum_{\lambda_1  \gtrsim \lambda} \Big(\frac{\lambda}{\lambda_1}\Big)^{\frac{5}{4}} \|\varphi_{\lambda_1}\|_{\SOne_{\lambda_1}}
	\end{align*}
	for the high modulation part. Inserting the last two estimates into~\eqref{eq:EstIIHH} and taking the $l^2$-norm in $\lambda$, we finally obtain
	\begin{align*}
		\Big( \sum_{\lambda \in 2^{\N_0}} (\lambda^{\frac{3}{4}} \|(\lambda + |\partial_t|)^{\frac{1}{4}} P_{\ll \lambda^2}^{(t)} P_\lambda (h(\overline{\varphi} \psi)_{HH}) \|_{L^1_t L^2_x})^2 \Big)^{\frac{1}{2}} \lesssim \|h\|_{L^6_t} \|\varphi\|_{\SOne} \|\psi\|_{\SOne},
	\end{align*}
	which is the claimed bound for the high-high part of $II$ and thus finishes the proof of~\eqref{eq:EstTrilinWaveTempLinfL2}.
\end{proof}

\begin{remark}
The application of Theorem~\ref{thm:TrilinearEstWave} in the proof of Theorem~\ref{thm:RegNoise} thus requires a global Besov bound for the geometric Brownian motion. For this reason we have shown Proposition~\ref{prop:GeometricBrownianMotionVp}, which implies the corresponding bound 
due to the following Besov embedding of $V^p$ spaces 
\begin{equation}
	\label{eq:BesovEmbVp}
	\dot{B}^{\frac{1}{p}}_{p,1} \subseteq V^p_0 \subseteq \dot{B}^{\frac{1}{p}}_{p,\infty},
\end{equation}
see~\cite[Section~5]{CH18}, where $\dot{B}^{\frac{1}{p}}_{p,1}$ and $\dot{B}^{\frac{1}{p}}_{p,\infty}$ are the standard homogeneous Besov spaces on $\R$. 
\end{remark}

\subsection{Proof of Theorem~\ref{thm:RegNoise}} 

We have now collected all the tools for the proof of the noise regularization effects in Theorem~\ref{thm:RegNoise}.

Let $V$ be a solution of the linear wave equation. 
Recall that $\cU_V$ and $\cI_V$ denote the homogeneous and inhomogeneous solution operators, respectively, of the Schr{\"o}dinger equation with potential $V$. Theorem~7.1 in~\cite{CHN23} yields that $\cU_V$ and $\cI_V$ are continuous linear operators from $H^1(\R^4)$ and $\NOne(I)$ to $\SOne(I)$, respectively, for any interval $I$ and that there exists a constant $C = C(V)$, independent of $I$, $u_0$, and $g$, such that
\begin{equation}
	\label{eq:BoundsPropOpPot}
	\|\cU_V[u_0]\|_{\SOne(I)} \leq C(V) \|u_0\|_{H^1}, \qquad \|\cI_{V}[g]\|_{\SOne(I)} \leq C(V) \|g\|_{\NOne(I)}.
\end{equation}

As in the proof of Theorem~\ref{thm:LocalWP} we rewrite the problem. Setting $v_L(t) = e^{\imu t |\nabla|} Y_0$ and $\rho = v - v_L$, $(z,v)$ is a solution of~\eqref{eq:RanZakNoncons} if and only if $(z,\rho)$ solves
\begin{equation}  \label{eq:RanZakNonconsPot}
	\left\{\aligned
		(\imu \partial_t + \Delta - \Re(v_L)) z &= \Re(\rho) z, \qquad &z(0) &= X_0, \\
		(\imu \partial_t + |\nabla|) \rho &= - h_c |\nabla| |z|^2, &\rho(0) &= 0,
    \endaligned
    \right.
\end{equation}
where we recall that $c = \Im \phi^{(1)}_1$ 
and $h_c$ is defined in \eqref{h-W1-def}.
Since $\rho(t) = -\cJ_0[h_c |\nabla| |z|^2]$, we obtain a solution of~\eqref{eq:RanZakNonconsPot} - and thus of~\eqref{eq:RanZakNoncons} - if and only if
\begin{equation}
\label{eq:RanZakNonconsFixedPoint}
	z(t) = \cU_{v_L}[X_0](t) - \cI_{v_L}[\Re(\cJ_0[h_c |\nabla| |z|^2]) z](t).
\end{equation}

Define the fixed point operator $\Phi(X_0, Y_0; z)$ by the right-hand side of~\eqref{eq:RanZakNonconsFixedPoint}. With $C = C(v_L)$ the constant from~\eqref{eq:BoundsPropOpPot}, we set $R = 2 C(v_L)\|X_0\|_{H^1}$ and for some stopping time $\tau$
\begin{align*}
	B_R(\tau) := \{z \in \SOne([0,\tau)) \colon \|z\|_{\SOne([0,\tau))} \leq R\},
\end{align*}
which is a complete metric space equipped with the metric induced by $\|\cdot\|_{\SOne([0,\tau))}$. Combining the estimates~\eqref{eq:BoundsPropOpPot}, \eqref{eq:Bilinvu} from Lemma~\ref{lem:BilinearEstimates}, and~Theorem~\ref{thm:TrilinearEstWave}, we infer
\begin{align}
	\| \Phi(X_0, Y_0; z)\|_{\SOne([0,\tau))} &\leq C(v_L) \|X_0\|_{H^1} + C(v_L) \|\cJ_0[h_c |\nabla| |z|^2] z\|_{\NOne([0,\tau))} \nonumber\\
	&\leq  C(v_L) \|X_0\|_{H^1} + C \cdot C(v_L) \|\cJ_0[h_c |\nabla| |z|^2]\|_{\WEner([0,\tau))} \| z\|_{\SOne([0,\tau))} \nonumber\\
	&\leq  C(v_L) \|X_0\|_{H^1} + C \cdot C(v_L) (\|h_c\|_{L^6_t} + \|h_c\|_{B^{\frac{1}{8}}_{6,\infty}([0,\tau))}) \| z\|_{\SOne([0,\tau))}^3 \label{eq:EstRegNoiseFixOp}
\end{align}
for all $z \in \SOne([0,\tau))$. Arguing in the same way, we also obtain
\begin{align}
	&\| \Phi(X_0, Y_0; z) - \Phi(X_0, Y_0; w) \|_{\SOne([0,\tau))} \nonumber\\
	&\leq C \cdot C(v_L) (\|h_c\|_{L^6_t} + \|h_c\|_{B^{\frac{1}{8}}_{6,\infty}([0,\tau))}) (\|z\|_{\SOne([0,\tau))}^2 + \|w\|_{\SOne([0,\tau))}^2) \| z - w \|_{\SOne([0,\tau))} \label{eq:EstRegNoiseDiffFixOp}
\end{align}
for all $z,w \in \SOne([0,\tau))$.  

Now, fix $C$ as the maximum of the generic constants on the right-hand sides of~\eqref{eq:EstRegNoiseFixOp} and~\eqref{eq:EstRegNoiseDiffFixOp} and define the stopping time $\tau$ as
\begin{align*}
	\tau_c := \inf\left\{t \geq 0 \colon 2 C \cdot C(v_L) (\|h_c\|_{L^6([0,t))} + \|h_c\|_{B^{\frac{1}{8}}_{6,\infty}([0,t))})R^2 > \frac{1}{2}\right\}.
\end{align*}
Then, it follows from~\eqref{eq:EstRegNoiseFixOp} and~\eqref{eq:EstRegNoiseDiffFixOp} that 
\begin{align*}
	&\| \Phi(X_0, Y_0; z)\|_{\SOne([0,\tau_c))} \leq \frac{R}{2} + \frac{R}{2} = R, \\
	&\| \Phi(X_0, Y_0; z) - \Phi(X_0, Y_0; w) \|_{\SOne([0,\tau_c))} \leq \frac{1}{2}\| z - w \|_{\SOne([0,\tau_c))}
\end{align*}
for all $z,w \in B_R(\tau)$. Consequently, $\Phi$ has a unique fixed point $z$ in $B_R(\tau)$. Uniqueness of $z$ in $\SOne([0,\tau_c))$ then follows from standard arguments. 

Below we will show that $\PP(\tau_c = \infty) \longrightarrow 1$ as $c \rightarrow \infty$. Note that the arguments which yield this statement also show that $\tau_c > 0$ a.s. for every $c$.

Define the event
\begin{align*}
	A_c :=\{\omega \in \Omega \colon \tau_c(\omega) = \infty\}.
\end{align*}
We first prove that 
\begin{equation}
\label{eq:ProbATo1}
	\PP(A_c) \longrightarrow 1 \quad \text{as } c \rightarrow \infty.
\end{equation} 

For this purpose, 
we define $C' := C'(X_0, Y_0)$ as $C' = (2 C \cdot C(v_L) R^2)^{-1}$. In view of the definition of $\tau_c$ it is thus sufficient to prove that
\begin{equation}
\label{eq:ProbAcTo0}
	\PP(\{\|h_c\|_{L^6([0,\infty))} + \|h_c\|_{B^{\frac{1}{8}}_{6,\infty}([0,\infty))} \geq C'\}) \longrightarrow 0 \quad \text{as } c \rightarrow \infty.
\end{equation}
To that purpose, we first extend $h_c$ to $\R$ by $h_c(t) = c^2 t + 1$ for $-\frac{1}{c^2} \leq t < 0$ and $h_c(t) = 0$ for $t < -\frac{1}{c^2}$.
By interpolation we have
\begin{align}
	\|h_c\|_{B^{\frac{1}{8}}_{6,\infty}} \lesssim \|h_c\|_{L^{15}}^{\frac{5}{8}} \|h_c\|_{B^{\frac{1}{3}}_{3,\infty}}^{\frac{3}{8}} \lesssim \|h_c\|_{L^{15}}^{\frac{5}{8}} (\|h_c\|_{L^3}^{\frac{3}{8}} + \|h_c\|_{\dot{B}^{\frac{1}{3}}_{3,\infty}}^{\frac{3}{8}})
	\lesssim \|h_c\|_{L^{15}}^{\frac{5}{8}} (\|h_c\|_{L^3}^{\frac{3}{8}} + |h_c|_{V^3}^{\frac{3}{8}}), \label{eq:InterpolBesovNorm}
\end{align}
where the norms are taken over $\R$. For every $c > 0$ we now define a map $\psi_c \colon C([0,\infty)) \rightarrow C(\R)$ by
\begin{align*}
    \psi_c[g](t) = \begin{cases}
        e^{- 2 g(t) - 2 c^2 t}, \quad &t \geq 0, \\
        c^2 t + 1, &-\frac{1}{c^2} \leq t < 0, \\
        0, &t < -\frac{1}{c^2},
    \end{cases}
\end{align*}
which is measurable when $C([0,\infty))$ and $C(\R)$ are equipped with their respective Borel-$\sigma$-algebras.
Using the scaling property of Brownian motion, i.e. $\PP \circ (c \beta_1^{(1)}(\cdot))^{-1} = \PP \circ (\beta_1^{(1)}((c^2 \cdot ))^{-1}$ on $C([0,\infty))$, and recalling that $| \cdot |_{V^3}$ is measurable on $C(\R)$ by Lemma~\ref{lem:MeasurabilityVpnorm}, we infer
\begin{align*}
	& \PP(\{\|h_c\|_{L^{15}_t(\R)}^{\frac{5}{8}} |h_c|_{V^3(\R)}^{\frac{3}{8}} \geq C'/3\})
 = \PP(\{\|\psi_c[c \beta_1^{(1)}] \|_{L^{15}_t(\R)}^{\frac{5}{8}} \big|\psi_c[c \beta_1^{(1)}]\big|_{V^3(\R)}^{\frac{3}{8}} \geq C'/3\}) \\
	&= \PP(\{\|\psi_c[\beta_1^{(1)}(c^2 \cdot)]\|_{L^{15}_t(\R)}^{\frac{5}{8}} |\psi_c[\beta_1^{(1)}(c^2 \cdot)]|_{V^3(\R)}^{\frac{3}{8}} \geq C'/3\}) =: I.
\end{align*}
Since $|\cdot|_{V^p}$ is invariant under rescaling, the definition of $\psi_c$ and Proposition~\ref{prop:GeometricBrownianMotionVp} imply that 
\begin{align*}
    |\psi_c[\beta_1^{(1)}(c^2 \cdot)]|_{V^3(\R)} = |\psi_1[\beta_1^{(1)}]|_{V^3([0,\infty))} < \infty \quad  \text{a.s.}
\end{align*}
Hence, there is a constant $C_2 = C_2(\omega)$ such that
\begin{align*}
	I &\leq \PP(\{C_2 \|\psi_c[\beta_1^{(1)}(c^2 \cdot)]\|_{L^{15}_t(\R)}^{\frac{5}{8}} \geq C'/3\})
 = \PP(\{C_2 \|\psi_1[\beta_1^{(1)}]\|_{L^{15}_t(\R)}^{\frac{5}{8}}  \geq c^{\frac{1}{12}} \cdot C'/3\}) \longrightarrow 0
\end{align*}
as $c \rightarrow \infty$. In the same way, we obtain
\begin{align*}
\PP(\{\|h_c\|_{L^{15}_t(\R)}^{\frac{5}{8}} \|h_c\|_{L^3_t(\R)}^{\frac{3}{8}} \geq C'/3\}) 
+ \PP(\{\|h_c\|_{L^6_t(\R)} \geq C'/3\}) \longrightarrow 0
\end{align*}
as $c \rightarrow \infty$. In view of~\eqref{eq:InterpolBesovNorm}, this implies~\eqref{eq:ProbAcTo0} and thus~\eqref{eq:ProbATo1}. 

Next, we show that for every $\omega \in A_c$ the solution $(z,v)$ of~\eqref{eq:RanZakNoncons} scatters, i.e., that the event $A_c$ coincides with $\Upsilon$ from~\eqref{eq:DefUpsilon}. In combination with~\eqref{eq:ProbATo1} this proves the assertion of the theorem.

For any $\omega \in A_c$, we have $\|z\|_{\SOne([0,\infty))} < \infty$ so that Theorem~7.1 in~\cite{CHN23} implies the existence of $z_+ \in H^1(\R^4)$ such that
\begin{align*}
	\lim_{t \rightarrow \infty} \|e^{- \imu t \Delta} z(t) - z_+\|_{H^1_x} = 0.
\end{align*}
To show that also $v$ scatters as $t \rightarrow \infty$, we employ estimate~\eqref{eq:EstTrilinWaveLinfL2} from the proof of Theorem~\ref{thm:TrilinearEstWave} to deduce for every $\omega \in A_c$, 
\begin{align*}
    \| e^{- \imu t |\nabla|} v(t) - e^{- \imu t' | \nabla |} v(t') \|_{L^2} 
    = \Big\| \int_{t'}^t e^{- \imu s |\nabla|} (h_c | \nabla | |z|^2)(s) \dd s \Big\|_{L^2}
    \lesssim \| h_c \|_{L^6([t',t])} \| z\|_{\SOne([t',t])}^2 \longrightarrow 0
\end{align*}
as $t',t \rightarrow \infty$. We conclude that $e^{-\imu t |\nabla|} v(t)$ converges in $L^2$, which finishes the proof of Theorem~\ref{thm:RegNoise}. 
\hfill \qed

\begin{remark}
We remark that the scattering behavior~\eqref{scatter-XY} also implies that the energy of the Schr\"odinger component vanishes, i.e.,
\begin{align}  \label{X-H1-0} 
  \lim\limits_{t\to \infty}  \|X(t)\|_{H^1}  = 0, \ \ \mathbb{P}\text{-a.s.} 
\end{align}  
In fact, since $\{e^{-\imu t \Delta}\}$ is unitary in $H^1$ 
and $e^{\wh \mu t - W_1(t)}$ is independent of the spatial variable, 
one has 
\begin{align}  \label{X-H1-vanish-exp}
 \|X(t)\|_{H^1} 
\leq& \|e^{-(\wh \mu t - W_1(t)) } (e^{-it \Delta} e^{\wh \mu t - W_1(t)} X(t) - z_+) \|_{H^1} 
          +  e^{-\Re(\wh \mu t - W_1(t)) } \|z_+\|_{H^1},  
\end{align}  
where $z_+$ is the scattering state as in \eqref{scatter-XY}. 
Since $ e^{-\Re(\wh \mu t - W_1(t)) } =  e^{- \Im \phi_1^{(1)} \beta^{(1)}_1(t) -  (\Im \phi_1^{(1)})^2 t} $ 
converges asymptotically exponentially fast to zero  $\mathbb{P}$-a.s., one thus obtains \eqref{X-H1-0}. 

\end{remark}

\appendix

\section{Decomposability} 
\label{Sec-Decom}

We prove the decomposability for the $\XOne$-space,
which is used in the gluing procedure
when extending local solutions to the maximal existence time.

\begin{lemma}  [Decomposability]
	\label{lem:DecompX}
	Let $I_1, I_2 \subseteq \R$ be open intervals such that $I_1 \cap I_2 \neq \emptyset$. If $u$ belongs to $\XOne(I_1) \cap \XOne(I_2)$, then $u \in \XOne(I_1 \cup I_2)$ and
	\begin{align*}
		\|u\|_{\XOne(I_1 \cup I_2)} \lesssim (1 + |I_1 \cap I_2|^{-\frac{1}{2}}) (\|u\|_{\XOne(I_1)} + \|u\|_{\XOne(I_2)}).
	\end{align*}
\end{lemma}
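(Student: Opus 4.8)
The natural approach is to use a partition of unity in time that is subordinate to the covering $\{I_1, I_2\}$ and to exploit that the $\XOne$-norm only involves the Schrödinger operator $\imu\p_t+\Delta$, the free group, and lateral Strichartz norms, all of which interact well with multiplication by a smooth cutoff in time. Concretely, write $I_1\cup I_2$ as an interval (if both are open with nonempty intersection, their union is an open interval), pick a point $t_*\in I_1\cap I_2$, and choose $\chi_1,\chi_2\in C^\infty(\R)$ with $\chi_1+\chi_2=1$ on $I_1\cup I_2$, $\supp\chi_1\subseteq I_1$, $\supp\chi_2\subseteq I_2$, and with $\|\chi_j'\|_{L^\infty}\lesssim |I_1\cap I_2|^{-1}$, which is possible since the two intervals overlap in a set of length $|I_1\cap I_2|$. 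Let $u_1,u_2$ be extensions of $u$ to $\XOne(\R)$ that are almost norm-minimizing for $\XOne(I_1)$ and $\XOne(I_2)$ respectively, and set $v:=\chi_1 u_1+\chi_2 u_2$. Then $v$ is an extension of $u$ to all of $\R$ (since on $I_1\cup I_2$ we have $u_1=u_2=u$ and $\chi_1+\chi_2=1$), so it suffices to bound $\|v\|_{\XOne(\R)}$ by the right-hand side.

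The key point is then to estimate $\|\chi_j u_j\|_{\XOne(\R)}$. This is exactly a product estimate of the type already developed in the paper, namely a purely temporal analogue of Lemma~\ref{lem:ProductNoiseInX} — indeed one can think of $\chi_j$ as a (spatially constant) "noise" factor, which makes the spatial regularity completely trivial and leaves only the temporal derivative bookkeeping. I would argue componentwise along the definition of $\XOne_\lambda$: the $L^\infty_t L^2_x$ and $C_{\le(\lambda/2^8)^2}L^2_tL^4_x$ pieces are immediate since multiplication by a bounded $\chi_j$ is bounded on $L^\infty_t L^2_x$ and $L^2_tL^4_x$, and the $C_{\le(\lambda/2^8)^2}$ projection can be absorbed by a commutator argument as in~\eqref{eq:ProductNoiseLocalSmoothingIntroCommutator}–\eqref{eq:ProductNoiseLocalSmoothingEstCommutator}, now with the spatial difference quotient replaced by only a temporal one; the lateral Strichartz pieces $\lambda^{3/2}\|P_{\lambda,\vece_j}C_{\le(\lambda/2^8)^2}P_\lambda(\chi_j u_j)\|_{L^{\infty,2}_{\vece_j}}$ are handled by the same commutator decomposition. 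The genuinely nontrivial term is the last component of $\SOne_\lambda$, i.e. $\big\|\big(\tfrac{\lambda+|\p_t|}{\lambda^2+|\p_t|}\big)^{1/4}(\imu\p_t+\Delta)(\chi_j u_j)\big\|_{L^2_{t,x}}$, where the Leibniz rule produces the extra term $\imu\chi_j' u_j$; one controls $(\imu\p_t+\Delta)(\chi_j u_j)=\chi_j(\imu\p_t+\Delta)u_j+\imu\chi_j' u_j$, bounds the first summand via the fractional product estimate from Lemma~2.7 in~\cite{CHN23} exactly as in~\eqref{eq:ProductNoiseXSNormL2txLHMain} using $\|\lambda^{-1/4}(\lambda+|\p_t|)^{1/4}\chi_j\|_{L^\infty_t}\lesssim 1+|I_j|^{1/2}$, and bounds $\|\chi_j' u_j\|_{L^2_{t,x}}\lesssim \|\chi_j'\|_{L^\infty_t}\|u_j\|_{L^\infty_t L^2_x}\lesssim |I_1\cap I_2|^{-1}\|u_j\|_{\XOne}$, which after summing in $\lambda$ contributes the $|I_1\cap I_2|^{-1/2}$ loss (one power $|I_1\cap I_2|^{-1/2}$ from $\|\chi_j'\|_{L^2_t}\lesssim |I_1\cap I_2|^{-1/2}$ once the time integration is restricted to the overlap, or crudely $|I_1\cap I_2|^{-1}$ times a length factor — in any case producing the stated power).

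Summing these componentwise bounds over the dyadic scales $\lambda$ and over $j\in\{1,2\}$ yields
\begin{align*}
	\|v\|_{\XOne(\R)}\le \|\chi_1 u_1\|_{\XOne(\R)}+\|\chi_2 u_2\|_{\XOne(\R)}\lesssim (1+|I_1\cap I_2|^{-\frac12})(\|u_1\|_{\XOne(\R)}+\|u_2\|_{\XOne(\R)}),
\end{align*}
and taking the infimum over the extensions $u_1,u_2$ gives $\|u\|_{\XOne(I_1\cup I_2)}\le \|v\|_{\XOne(\R)}\lesssim (1+|I_1\cap I_2|^{-1/2})(\|u\|_{\XOne(I_1)}+\|u\|_{\XOne(I_2)})$, as claimed. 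The main obstacle, as usual with these adapted spaces, is the commutator analysis for the lateral Strichartz and low-modulation components: one must verify that $P_{\lambda,\vece_j}C_{\le(\lambda/2^8)^2}P_\lambda$ nearly commutes with the (spatially constant, but time-dependent) multiplier $\chi_j$ up to an error that is better by a factor $\lambda^{-1/2}$ and carries a temporal derivative of $\chi_j$ — this is precisely the content of the computation in~\eqref{eq:ProductNoiseCommutatorConvolution}–\eqref{eq:ProductNoiseLocalSmoothingEstCommutator}, specialized to a spatially trivial factor, so no new ideas are required, only careful bookkeeping of the time-localization loss $1+|I_j|^{1/2}$ versus the gain $|I_1\cap I_2|^{-1/2}$. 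The $\Y$-component of the statement, if needed, is even simpler since the wave norm $\WEner$ behaves straightforwardly under multiplication by a smooth temporal cutoff.
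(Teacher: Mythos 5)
Your construction is essentially the paper's proof: choose cutoffs whose transition occurs inside the overlap, apply them to two near-optimal global extensions $u_1,u_2$, bound the adapted-space components by temporal commutator/product arguments, and bound the lateral Strichartz component by writing $P_{\lambda,\vece_j}C_{\leq(\frac{\lambda}{2^8})^2}P_\lambda(\chi_k u_k)$ as $\chi_k$ times the projection plus a kernel commutator that carries $\|\chi_k'\|_{L^2_t}\lesssim|I_1\cap I_2|^{-\frac12}$ and a gain in $\lambda$ from the temporal scale of the kernel $\phi_\lambda(t,x)=\lambda^6\phi(\lambda^2t,\lambda x)$; this is exactly \eqref{eq:DecompNormLSIntroCommutator}--\eqref{eq:DecompNormLSEst}. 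The only structural difference is that the paper does not redo the $\SOne$-component by hand but cites \cite[Lemma~2.8]{CHN23}, whose proof gives $\|\rho_k u_k\|_{\SOne(\R)}\lesssim(1+|I_1\cap I_2|^{-\frac12})\|u_k\|_{\SOne(\R)}$ directly.

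Two points in your write-up need repair. First, the condition $\supp\chi_j\subseteq I_j$ together with $\chi_1+\chi_2=1$ on $I_1\cup I_2$ is not literally satisfiable for open intervals (near the outer endpoint of $I_1$ one is forced to have $\chi_1=1$ up to the boundary); what is actually needed, and what the paper uses via $\rho_1(t)=\rho(\epsilon^{-1}t)$, $\rho_2(t)=\rho(-\epsilon^{-1}t)$ after normalizing $I_1\cap I_2=(-\epsilon,\epsilon)$, is only that $\chi_1=1$ on $I_1\setminus I_2$, $\chi_2=1$ on $I_2\setminus I_1$ and $\chi_1+\chi_2=1$, which already makes $\chi_1u_1+\chi_2u_2$ an extension of $u$ from $I_1\cup I_2$. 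Second, and more substantively, the factor $1+|I_j|^{\frac12}$ you import from Lemma~\ref{lem:ProductNoiseInX} must not enter: the lemma is stated for arbitrary open intervals, so $|I_j|$ may be infinite, and the asserted constant depends only on $|I_1\cap I_2|$. The $|I|^{\frac12}$ losses in Lemma~\ref{lem:ProductNoiseInX} stem from the spatial profile $w=\rho w_0$ being estimated over a cutoff adapted to the whole interval (e.g. $\|P_\lambda w\|_{L^2_tL^\infty_x}\lesssim|I|^{\frac12}\|P_\lambda w_0\|_{L^2_x}$), whereas here the $\chi_j$ are spatially constant, bounded, and vary only on the overlap, so all temporal-multiplier and Leibniz terms must be measured in terms of the overlap scale alone ($\|\chi_j'\|_{L^2_t}\lesssim|I_1\cap I_2|^{-\frac12}$, not $\|\chi_j'\|_{L^\infty_t}$ as you first wrote, though you correct this); the cleanest fix is to do what the paper does and invoke \cite[Lemma~2.8]{CHN23} for the entire $\SOne$-part. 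With these corrections your argument coincides with the paper's proof.
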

\begin{proof}
	We fix a function $\rho \in C^\infty(\R)$ with $\rho(t) = 1$ for $t \leq -1$ and $\rho(t) = 0$ for $t \geq 1$ such that
	\begin{align*}
		\rho(t) + \rho(-t) = 1
	\end{align*}
	for all $t \in \R$. After translation in time, we can assume that $I_1 \cap I_2 = (-\epsilon, \epsilon)$ for some $\epsilon > 0$. Moreover, we assume that $\inf I_1 \leq \inf I_2$.
    Let $\rho_1(t) := \rho(\epsilon^{-1} t)$ and $\rho_2(t) := \rho(-\epsilon^{-1}t)$.
    Let $u_k$ be extensions of $u_{|I_k}$ with $\|u_k\|_{\XOne(\R)} \sim \|u\|_{\XOne(I_k)}$ for $k = 1,2$. By construction we then have
	\begin{equation}
		\label{eq:DecompNormLSDecu}
		u = \rho_1 u_1 + \rho_2 u_2 \qquad \text{on } I_1 \cup I_2.
	\end{equation}

	The decomposability of the $\SOne$-component of the $\XOne$-norm was demonstrated in Lemma~2.8 in~\cite{CHN23}. In the proof of that lemma it was shown that
	\begin{align}
		\label{eq:DecompNormSComp}
		\|\rho_k u_k\|_{\SOne(\R)} \lesssim (1 + \epsilon^{-\frac{1}{2}})\|u_k\|_{\XOne(\R)}.
	\end{align}
	It remains to provide an analogous localizability estimate for the lateral Strichartz component of the norm.
	
	In order to estimate $P_{\lambda, \vece_j} C_{\leq(\frac{\lambda}{2^8})^2} P_\lambda (\rho_k u_k)$ in the $L^{\infty,2}_{\vece_j}$-norm,
we see that $P_{\lambda, \vece_j} C_{\leq(\frac{\lambda}{2^8})^2} P_\lambda$ is a convolution operator with kernel $\phi_\lambda$, where $\phi_\lambda(t,x) = \lambda^6 \phi(\lambda^2 t, \lambda x)$ for a Schwartz function $\phi \in \Schw(\R \times \R^4)$.

We write
	\begin{align}
	\label{eq:DecompNormLSIntroCommutator}
		P_{\lambda, \vece_j} C_{\leq(\frac{\lambda}{2^8})^2} P_\lambda (\rho_k u_k)
		= \(P_{\lambda, \vece_j} C_{\leq(\frac{\lambda}{2^8})^2} P_\lambda (\rho_k u_k) -  \rho_k  P_{\lambda, \vece_j} C_{\leq(\frac{\lambda}{2^8})^2} P_\lambda u_k \)
          + \rho_k  P_{\lambda, \vece_j} C_{\leq(\frac{\lambda}{2^8})^2} P_\lambda u_k.
	\end{align}
	The commutator term can be written as
	\begin{align*}
		 &P_{\lambda, \vece_j} C_{\leq(\frac{\lambda}{2^8})^2} P_\lambda (\rho_k u_k) -  \rho_k  P_{\lambda, \vece_j} C_{\leq(\frac{\lambda}{2^8})^2} P_\lambda u_k \notag \\
		 & = \int_{\R \times \R^4} (\rho_k(t-s) - \rho_k(t)) \phi_\lambda(s,y) \tilde{P}_\lambda u_k(t-s, x-y) \dd(s,y) \nonumber\\
		 &=\int_{\R \times \R^4} \int_0^1 \rho_k'(t-\eta s) \dd \eta \cdot (-s) \phi_\lambda(s,y) \tilde{P}_\lambda u_k(t-s, x-y) \dd(s,y),
	\end{align*}
	and we estimate
	\begin{align}
	\label{eq:DecompNormLSCommutatorEst}
	&\|P_{\lambda, \vece_j} C_{\leq(\frac{\lambda}{2^8})^2} P_\lambda (\rho_k u_k) -  \rho_k  P_{\lambda, \vece_j} C_{\leq(\frac{\lambda}{2^8})^2} P_\lambda u_k\|_{L^{\infty,2}_{\vece_j}} \nonumber\\
	&\lesssim \lambda^{\frac{1}{2}} \|P_{\lambda, \vece_j} C_{\leq(\frac{\lambda}{2^8})^2} P_\lambda (\rho_k u_k) -  \rho_k  P_{\lambda, \vece_j} C_{\leq(\frac{\lambda}{2^8})^2} P_\lambda u_k\|_{L^2_{t,x}} \nonumber\\
	&\lesssim \lambda^{\frac{1}{2}}\int_{\R \times \R^4} \int_0^1 \|\rho_k'(t-\eta s)\|_{L^2_t} \dd \eta \cdot |s| |\phi_\lambda(s,y)| \|\tilde{P}_\lambda u_k(t-s, x-y)\|_{L^\infty_t L^2_x} \dd(s,y) \nonumber\\
	&\lesssim \lambda^{\frac{1}{2}} \|\rho_k'\|_{L^2_t} \|\tilde{P}_\lambda u_k\|_{L^\infty_t L^2_x} \lambda^{-2} \int_{\R \times \R^4} \lambda^6  |\lambda^2 s| |\phi(\lambda^2 s, \lambda y)|  \dd(s,y)   \nonumber \\
	&\lesssim \lambda^{-\frac{3}{2}} \|\rho_k'\|_{L^2_t} \|\tilde{P}_\lambda u_k\|_{L^\infty_t L^2_x}
     \lesssim \lambda^{-\frac 32} \epsilon^{-\frac{1}{2}}   \|\tilde{P}_\lambda u_k\|_{L^\infty_t L^2_x},
	\end{align}
where in the last step we used the fact that
	\begin{align}
	\label{eq:DecompNormLSDerRhoEst}
		\|\rho_1'\|_{L^2_t} = \Big(\int_\R |\epsilon^{-1} \rho'(\epsilon^{-1} t)|^2 \dd t\ \Big)^{\frac{1}{2}} = \epsilon^{-1} \Big(\int_\R | \rho'(t)|^2 \epsilon \dd t\ \Big)^{\frac{1}{2}} \lesssim \epsilon^{-\frac{1}{2}}
	\end{align}
	and the same bound for $\rho_2'$.

Moreover, for the last term on the right-hand side of \eqref{eq:DecompNormLSIntroCommutator},
we have
	\begin{align*}
		\|\rho_k  P_{\lambda, \vece_j} C_{\leq(\frac{\lambda}{2^8})^2} P_\lambda u_k\|_{L^{\infty,2}_{\vece_j}}
        & \lesssim \|\rho_k\|_{L^\infty_t} \|P_{\lambda, \vece_j} C_{\leq(\frac{\lambda}{2^8})^2} P_\lambda u_k\|_{L^{\infty,2}_{\vece_j}} \notag \\
        & \lesssim  \|P_{\lambda, \vece_j} C_{\leq(\frac{\lambda}{2^8})^2} P_\lambda u_k\|_{L^{\infty,2}_{\vece_j}}.
	\end{align*}

Thus, combining the above estimates we get
	\begin{align}
		\label{eq:DecompNormLSEst}
		\Big(\sum_{\lambda \in 2^\N} \lambda^3 \|P_{\lambda, \vece_j} C_{\leq(\frac{\lambda}{2^8})^2} P_\lambda (\rho_k u_k)\|_{L^{\infty,2}_{\vece_j}}^2 \Big)^{\frac{1}{2}}
		&\lesssim \epsilon^{-\frac{1}{2}} \Big(\sum_{\lambda \in 2^\N} \|\tilde{P}_\lambda u_k\|_{L^\infty_t L^2_x}^2 \Big)^{\frac{1}{2}} \notag \\
         & \qquad + \Big(\sum_{\lambda \in 2^\N} \lambda^3 \|P_{\lambda, \vece_j} C_{\leq(\frac{\lambda}{2^8})^2} P_\lambda  u_k\|_{L^{\infty,2}_{\vece_j}}^2 \Big)^{\frac{1}{2}} \nonumber \\
		&\lesssim (1 + \epsilon^{-\frac{1}{2}}) \|u_k\|_{\XOne(\R)}.
	\end{align}
In view of \eqref{eq:DecompNormLSDecu}, we thus infer
	\begin{align*}
		\|u\|_{\XOne(\R)} \leq \sum\limits_{i=1}^2 \|\rho_k u_k\|_{\XOne(\R)}
       \lesssim (1 + \epsilon^{-\frac{1}{2}}) \sum\limits_{i=1}^2  \|u_k\|_{\XOne(\R)}
         \lesssim ( 1 + |I_1 \cap I_2|^{-\frac{1}{2}}) (\|u\|_{\XOne(I_1)} + \|u\|_{\XOne(I_2)}).
	\end{align*}
The definition of the $\XOne(I_1 \cup I_2)$-norm now implies the assertion of the lemma.
\end{proof}

\section{Improvement of regularity}
In this part of the appendix we prove an improvement of regularity result, which is used in the proof of the blow-up alternative in Theorem~\ref{Thm-LWP}.

\begin{lemma}
	\label{lem:ImprovementRegularity}
	Let $I \subseteq \R$ be a finite interval with $0 = \min I$. Let $(u,v) \in C(I, H^{1}(\R^4) \times L^2(\R^4))$ be a solution of~\eqref{eq:RanZakbc} with
	\begin{align*}
		\|u\|_{L^\infty_t H^{1}_x(I \times \R^4)}  + \|u\|_{L^2_t W^{\frac{1}{2},4}_x(I \times \R^4)} + \|v\|_{L^\infty_t L^2_x(I \times \R^4)} < \infty.
	\end{align*}
	Then $(u,v) \in \SHalf(I) \times W^{0,0,0}(I)$. In particular, $v$ can be continuously extended to $\overline{I}$ in $L^2(\R^4)$.
\end{lemma}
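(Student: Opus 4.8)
\textbf{Proof plan for Lemma~\ref{lem:ImprovementRegularity}.}
The plan is to bootstrap regularity using the already-established linear theory (Lemmas~\ref{lem:LinSchrPotSmallTime}, \ref{lem:LinEstimates}, \ref{lem:LinearEstimateHalfWave}) and the endpoint bilinear estimates from Lemma~\ref{lem:BilinearEstimates}\ref{it:BilinearEstEndpoint}. First I would partition $I$ into finitely many subintervals $I = \bigcup_{m=1}^M J_m$ (with consecutive subintervals overlapping in a set of positive measure, so that Lemma~\ref{lem:DecompX}-type gluing applies) on each of which the controlling quantity $\|u\|_{D(J_m)} = \|u\|_{L^2_t W^{1/2,4}_x(J_m\times\R^4)}$ is as small as we wish; this is possible by absolute continuity of the integral, using the assumption $\|u\|_{L^2_t W^{1/2,4}_x(I\times\R^4)}<\infty$. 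On a fixed such piece $J=[t_0,t_1]$ set $v_L := e^{\imu(t-t_0)|\nabla|}v(t_0)$ and $\rho := v - v_L$, so that $\rho = -\cJ_0[|\nabla||u|^2]$ and $u$ solves the Schrödinger equation with free-wave potential $v_L$ and forcing coming from $\Re(\rho)u$ together with the noise terms $b\cdot\nabla u$, $cu$, $\Re(\cT_t(W_2))u$. Note $\|v_L\|_{W^{0,0,0}} \lesssim \|v(t_0)\|_{L^2} \lesssim \|v\|_{L^\infty_t L^2_x(I)}$, which is finite but not small; however Lemma~\ref{lem:LinSchrPotSmallTime} only needs smallness for the \emph{contraction/existence} part, and here the solution is already given. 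What we need is the a~priori estimate: write the Duhamel identity with potential $v_L$ as in~\eqref{eq:uInUniformEstimate}, and use the double-Duhamel expansion~\eqref{eq:IdDuhamelWithPotential}, $\cI_{v_L} = [I + \cI_{v_L}\Re(v_L)]\cI_0$, to reduce every estimate to $\cI_0$-estimates plus one application of the uniform Strichartz bound of~\cite[Theorem~6.1]{CHN21}, exactly as in the proof of Proposition~\ref{prop:UniformEstimateBelowGroundState}, but now at endpoint regularity and with $B := \|v\|_{L^\infty_t L^2_x(I)}$.

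The core computation on $J$ is then: by Lemma~\ref{lem:LinEstimates}, \cite[Theorem~6.1]{CHN21}, Lemma~\ref{lem:BilinearEstimates}\ref{it:BilinearEstEndpoint}, Lemma~\ref{lem:LinearEstimateHalfWave}, and Lemma~\ref{lem:BilinearLowerOrder}\ref{it:ControlNoiseN12},
\begin{align*}
    \|u\|_{\SHalf(J)} &\lesssim_B \|u(t_0)\|_{H^{1/2}} + \|\Re(\cJ_0[|\nabla||u|^2])u\|_{\NHalf(J)} + \|(b\cdot\nabla u)_{HL+HH} + cu - \Re(\cT_t(W_2))u\|_{\NHalf(J)} \\
    &\qquad + \|\cI_{v_L}[(b\cdot\nabla u)_{LH}]\|_{\SHalf(J)} \\
    &\lesssim_B \|u\|_{L^\infty_t H^1_x(I)} + \|u\|_{D(J)}\,\|u\|_{\SHalf(J)} + |J|^{1/2}\,a^*(I)\,\|u\|_{L^\infty_t H^1_x(I)},
\end{align*}
where $a^*(I)$ is finite because $b,c,\cT_t(W_2)$ are built from the Brownian paths (Lemma~\ref{lem:PropNoise}, Hypothesis~(H)) and $I$ is bounded, and the last term is handled exactly as the $\cI_{v_L}[F]$ term in Proposition~\ref{prop:UniformEstimateBelowGroundState} via~\eqref{eq:EstIvLF}–\eqref{eq:EstbnuLHLS}. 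Crucially $\|u\|_{D(J)} \lesssim \|u\|_{\SHalf(J)}$ is \emph{not} needed on the right; instead the first nonlinear term uses the \emph{given} smallness $\|u\|_{D(J)} < \eta$ as a genuine coefficient, so choosing $\eta$ small (depending only on $B$) lets us absorb $\|u\|_{\SHalf(J)}$ and conclude
\begin{align*}
    \|u\|_{\SHalf(J)} \lesssim_{B,\,a^*(I),\,|I|} \|u\|_{L^\infty_t H^1_x(I)} < \infty,
\end{align*}
\emph{provided} $\|u\|_{\SHalf(J)}$ is a priori finite. To justify that a~priori finiteness on $J$ I would first run the argument with $\SHalf$ replaced by a slightly weaker, manifestly finite norm—e.g. the truncated Strichartz norm $\|u\|_{L^\infty_t H^{1/2}_x(J)} + \|u\|_{L^2_t W^{1/2,4}_x(J)} = \|u\|_{L^\infty_t H^{1/2}_x(J)} + \|u\|_{D(J)}$, which is finite by hypothesis—then upgrade, or alternatively approximate by solutions on $[t_0, t_0+\delta]$ and let $\delta \uparrow |J|$ using that the bound is uniform; this is the standard continuity/continuation device and I expect it to go through routinely. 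Once $u\in\SHalf(J)$ on each $J_m$, gluing (Lemma~\ref{lem:DecompX}, whose proof adapts verbatim to the endpoint $\SHalf$-norm since only the lateral Strichartz and adapted $\SHalf$-components enter, and both localize) gives $u\in\SHalf(I)$. Feeding $u\in\SHalf(I)$ back into the wave equation, $v = v_L - \cJ_0[|\nabla||u|^2]$, and using $\|v_L\|_{W^{0,0,0}}\lesssim\|v(t_0)\|_{L^2}$ together with~\eqref{eq:BilinnablauwEndpoint}, i.e. $\|\cJ_0[|\nabla||u|^2]\|_{W^{0,0,0}} \lesssim \|u\|_{D(I)}\|u\|_{\SHalf(I)} < \infty$, yields $v\in W^{0,0,0}(I)$; since $W^{0,0,0}(I)\subseteq C(I,L^2)$ with a uniform-in-time bound, $v$ extends continuously to $\overline I$.

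The main obstacle is the a~priori finiteness of the $\SHalf$-norm needed before the absorption argument can be run: the hypothesis only gives control of $\|u\|_{D(I)}$ and $\|u\|_{L^\infty_t H^1_x(I)}$, not of the modulation component $\|(\imu\partial_t+\Delta)u\|_{L^2_{t,x}}$ appearing in $\SHalf$. I would resolve this by working on a short initial interval where the standard $L^2$-Strichartz contraction of Lemma~\ref{lem:LinSchrPotSmallTime} applies (giving a solution in $\XOne\supseteq\SHalf$ locally, hence $\SHalf$-finite there), identifying it with the given $u$ by the uniqueness statements in Lemma~\ref{lem:LinSchrPotSmallTime}/Remark~\ref{rem:WDFreeWaveProp}, and then propagating $\SHalf$-finiteness forward in time by the uniform bound above, restarting at a dense increasing sequence of times — exactly the two-stopping-time continuation mechanism already used in Subsection~\ref{Subsec-Max-Exist}. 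A minor secondary point is to double-check that the coefficient $a^*(I)$ entering through Lemma~\ref{lem:BilinearLowerOrder} is finite pathwise, which follows from Lemma~\ref{lem:PropNoise} and boundedness of $I$.
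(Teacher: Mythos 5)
There is a genuine gap at exactly the point you flag yourself: the a priori finiteness of $\|u\|_{\SHalf(J)}$, and your proposed patch does not close it. The hypotheses only control $\|u\|_{L^\infty_t H^1_x}$ and $\|u\|_{L^2_t W^{1/2,4}_x}$, whereas the $\SHalf$-norm carries the $\ell^2_\lambda$-sum \emph{outside} the $L^\infty_t L^2_x$-norm (and the modulation component); neither is dominated by the assumed quantities, so your absorption/bootstrap on $J$ cannot even be started. Your fix is to identify the given $(u,v)$ locally with a solution produced by the contraction argument and then to propagate $\SHalf$-finiteness by restarting. But (a) to realize $u$ as the $\XOne$-fixed point of Lemma~\ref{lem:LinSchrPotSmallTime} you must place the forcing $\Re(\rho)u - b\cdot\nabla u - cu + \Re(\cT_t(W_2))u$ in $\GOne$ (or $\GHalf$/$\NHalf$), and every available estimate for $\Re(\rho)u$ --- \eqref{eq:BilinvuEndpoint} or the $\SOne$-based bound --- requires precisely the adapted-space norm of $u$ you are trying to establish; the uniqueness clause of Lemma~\ref{lem:LinSchrPotSmallTime} in $C_tL^2\cap L^2_tL^4_x$ is of no help unless such a fixed point exists to compare with. (b) Identification with the nonlinear solution from Theorem~\ref{Thm-LWP} is also unavailable, because uniqueness for the Zakharov system is only \emph{conditional} (in $\XOne\times L^\infty_tL^2_x$, Remark~\ref{rem:Uniqueness}); unconditional uniqueness in $C_tH^1\cap L^2_tW^{1/2,4}_x\times C_tL^2$ is nowhere established, so a solution merely satisfying the lemma's hypotheses cannot be matched with the constructed one. (c) The forward propagation additionally leans on continuity of $t\mapsto\|u\|_{\SHalf([0,t))}$ (Lemma~\ref{lem:Continuity}), which presupposes $u\in\SHalf$ on the interval. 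A further, smaller slip: combining \eqref{eq:BilinvuEndpoint} with \eqref{eq:BilinnablauwEndpoint} gives $\|\Re(\cJ_0[|\nabla||u|^2])u\|_{\NHalf(J)}\lesssim \|u\|_{D(J)}^{3/2}\|u\|_{\SHalf(J)}^{3/2}$, not $\|u\|_{D(J)}\|u\|_{\SHalf(J)}$, so even granted finiteness you need a continuity/bootstrap argument as in Section~\ref{sec:WellPoBelowGrSt}, not linear absorption.

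The paper's proof avoids all of this and proceeds quite differently: no smallness, no partition, no fixed point. It first reads off from the equation that $(\imu\partial_t+\Delta)u\in L^2_tH^{-1/2}_x$ and $(\imu\partial_t+|\nabla|)v\in L^2_tH^{-1}_x$ using only the assumed bounds, then introduces the weak norms $\|u\|_{\SHalf_w}=\|u\|_{L^\infty_tH^{1/2}_x}+\|u\|_{L^2_tW^{1/2,4}_x}+\|(\imu\partial_t+\Delta)u\|_{L^2_tH^{-1/2}_x}$ and the analogous $\WEndp_w$, which are finite a priori. The decisive input is that the endpoint bilinear estimates hold with these \emph{weak} norms on the right (Propositions~6.1 and 6.2 in the Corrigendum of~\cite{CHN23}); with them one shows that $e^{-\imu t\Delta}u(t)$ is Cauchy as $t\to\sup I$, extends $u$ continuously in $H^{1/2}$, reconstructs $v$ via the wave Duhamel formula to get $v\in\WEndp(I)$ and its continuous extension, and finally concludes that the full nonlinearity lies in $\GHalf(I)$, whence $u\in\SHalf(I)$ by the linear estimate. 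If you want to salvage your route, you would need either an unconditional uniqueness statement at the regularity of the hypotheses or weak-norm versions of the bilinear estimates --- which is exactly what the Corrigendum supplies and what your argument is missing.
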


\begin{proof}
Without loss of generality we assume $I = [0,T_1)$ for some $T_1 > 0$.
Standard estimates and embeddings imply
	\begin{align*}
		\|\Re(v) u\|_{L^2_t H^{-\frac{1}{2}}_x(I \times \R^4)} \lesssim \|v\|_{L^\infty_t L^2_x (I \times \R^4)} \|u\|_{L^2_t W^{\frac{1}{2},4}_x(I \times \R^4)}.
	\end{align*}
	Moreover, we have
	\begin{align*}
		&\|  b \cdot \nabla u - c u + \Re(\cT_t(W_2)) u \|_{L^2_t H^{-\frac{1}{2}}_x(I \times \R^4)}  \\ 
        & \lesssim \|b \cdot \nabla u\|_{L^2_t L^2_x(I \times \R^4)} + \|c u \|_{L^2_t L^2_x(I \times \R^4)} + \|\Re(\cT_t(W_2)) u\|_{L^2_t L^2_x(I \times \R^4)} \\
		& \lesssim T_1^{\frac{1}{2}} \|b\|_{L^\infty_t H^3_x(I \times \R^4)} \|u\|_{L^\infty_t H^1_x(I \times \R^4)}+ (\|c\|_{L^\infty_t H^1_x(I \times \R^4)} + \|\Re(\cT_t(W_2))\|_{L^\infty_t H^1_x(I \times \R^4)}) \|u\|_{L^2_t L^4_x(I \times \R^4)}.
	\end{align*}
	In view of the regularity properties of $b$, $c$, and $\cT_t(W_2)$, the last two estimates imply
\begin{align}
		\label{eq:SchrOpApplImprReg}
		\| (\imu \partial_t + \Delta) u\|_{L^2_t H^{-\frac{1}{2}}_x(I \times \R^4)}
		= \| \Re(v) u - b \cdot \nabla u - c u + \Re(\cT_t(W_2)) u \|_{L^2_t H^{-\frac{1}{2}}_x(I \times \R^4)}  < \infty.
	\end{align}
 Similarly, we obtain
	\begin{align}
		\label{eq:WaveOpApplToExtension}
		\| (\imu \partial_t + |\nabla|) v \|_{L^2_t H^{-1}_x}
		&\lesssim \| |u|^2 \|_{L^2_t L^2_x(I \times \R^4)} \lesssim \|u\|_{L^\infty_t L^{\frac{8}{3}}_x(I \times \R^4)} \|u\|_{L^2_t L^8_x(I \times \R^4)} \nonumber\\
		&\lesssim \|u\|_{L^\infty_t H^{\frac{1}{2}}_x(I \times \R^4)} \| u \|_{L^2_t W^{\frac{1}{2},4}_x(I \times \R^4)} < \infty.
	\end{align}
 We next show that $u$ can be continuously extended to $\overline{I}$ in $H^{\frac{1}{2}}(\R^4)$. To that purpose, we define for any $t' \in I$ the extensions
\begin{align}
		u_{t'}(t) &:= \one_{(-\infty,0)}(t) e^{\imu t \Delta} u_0 + \one_{[0,t')}(t) u(t) + \one_{[t',\infty)}(t) e^{\imu(t-t')\Delta}u(t'), \label{eq:DefExtension}\\
		v_{t'}(t) &:= \one_{(-\infty,0)}(t) e^{\imu t |\nabla|} v_0 + \one_{[0,t')}(t) v(t) + \one_{[t',\infty)}(t) e^{\imu(t-t')|\nabla|}v(t'). \nonumber
	\end{align}
 Note that $(u_{t'}, v_{t'}) \in C(\R, H^1 \times L^2)$ with
 \begin{align}
 \label{eq:EstExtImprovReg}
     \| u_{t'} \|_{\SHalf_w} &:= \| u_{t'} \|_{L^\infty_t H^{\frac{1}{2}}_x} + \| u_{t'} \|_{L^2_t W^{\frac{1}{2},4}_x} + \| (\imu \partial_t + \Delta) u_{t'} \|_{L^2_t H^{-\frac{1}{2}}_x} \nonumber\\
     &\lesssim \| u \|_{L^\infty_t H^{\frac{1}{2}}_x(I \times \R^4)} + \| u \|_{L^2_t W^{\frac{1}{2},4}_x(I \times \R^4)} + \| (\imu \partial_t + \Delta) u \|_{L^2_t H^{-\frac{1}{2}}_x(I \times \R^4)}, \\
     \| v_{t'} \|_{\WEndp_w} &:= \| v_{t'} \|_{L^\infty_t L^2_x} + \| (\imu \partial_t + |\na|) v_{t'} \|_{L^2_t H^{-1}_x} \lesssim  \| v \|_{L^\infty_t L^2_x(I \times \R^4)} + \| (\imu \partial_t + |\na|) v \|_{L^2_t H^{-1}_x(I \times \R^4)}, \nonumber
 \end{align}
 where the right-hand sides are independent of $t'$. For $t_1 < t_2$ we compute
 \begin{align*}
     \| e^{- \imu t_2 \Delta} u(t_2) - e^{- \imu t_1 \Delta} u(t_1) \|_{H^{\frac{1}{2}}_x} &= \Big\| \int_{t_1}^{t_2} e^{-\imu s \Delta} (\Re(v) u - b \cdot \nabla u - c u + \Re(\cT_{\cdot}(W_2)) u) \dd s \Big\|_{H^{\frac{1}{2}}_x} \\
     &\leq \Big\| \int_{t_1}^{t} e^{\imu (t- s) \Delta} (\Re(v) u - b \cdot \nabla u - c u + \Re(\cT_{\cdot}(W_2)) u) \dd s \Big\|_{\SHalf([t_1,t_2])} \\
     &\lesssim \| \Re(v) u \|_{\NHalf([t_1,t_2])} + \| b \cdot \nabla u + c u - \Re(\cT_{\cdot}(W_2)) u \|_{\GHalf([t_1,t_2])},
 \end{align*}
 where we employed Lemma~\ref{lem:LinEstimates} in the last step. For the first summand on the right-hand side we apply Proposition~6.1 in the Corrigendum of~\cite{CHN23}, which yields
 \begin{align*}
     \| \Re(v) u \|_{\NHalf([t_1,t_2])} &\lesssim \| v \|_{\WEndp_w([t_1,t_2])} \| u \|_{L^2_t W^{\frac{1}{2},4}_x([t_1,t_2] \times \R^4)}^{\frac{1}{2}} \| u \|_{\SHalf_w([t_1,t_2])}^{\frac{1}{2}} \\
     &\lesssim \| v_{t_2} \|_{\WEndp_w} \| u \|_{L^2_t W^{\frac{1}{2},4}_x([t_1,t_2] \times \R^4)}^{\frac{1}{2}} \| u_{t_2} \|_{\SHalf_w}^{\frac{1}{2}}.
 \end{align*}
 For the second summand we combine Lemma~\ref{lem:BilinearLowerOrder}~\ref{it:ControlNoiseN12} with~\eqref{eq:S12LSInProof} and~\eqref{eq:EstbnuLHLS} to infer
 \begin{align*}
     &\| b \cdot \nabla u + c u - \Re(\cT_{\cdot}(W_2)) u \|_{\GHalf([t_1,t_2])} \\
     &\lesssim (t_2 - t_1)^{\frac{1}{2}} \Big(\sum_{j = 1}^4 \| b \|_{L^{1,\infty}_{\vece_j}(I \times \R^4)} + \| b \|_{L^\infty_t H^3(I \times \R^4)} + \| c \|_{L^\infty_t H^2_x(I \times \R^4)} + \| \cT_t(W_2) \|_{L^\infty_t H^2_x(I \times \R^4)} \Big) \| u \|_{L^\infty_t H^1_x(I \times \R^4)}.
 \end{align*}
 Using the estimates in~\eqref{eq:EstExtImprovReg} and dominated convergence, we obtain that
 \begin{align*}
     \| e^{- \imu t_2 \Delta} u(t_2) - e^{- \imu t_1 \Delta} u(t_1) \|_{H^{\frac{1}{2}}_x} \longrightarrow 0
 \end{align*}
 as $t_1, t_2 \rightarrow T_1$. Hence, $e^{-\imu t \Delta} u(t)$ is Cauchy as $t \rightarrow T_1$ and we conclude that $u(t)$ converges in $H^{\frac{1}{2}}(\R^4)$ as $t \rightarrow T_1$. We call the latter limit $u(T_1)$.

 Replacing $t'$ by $T_1$ in~\eqref{eq:DefExtension}, we obtain an extension $u'$ of $u$ in $C(\R, H^{\frac{1}{2}}(\R^4))$. As in~\eqref{eq:EstExtImprovReg}, we see that $\| u' \|_{\SHalf_w} < \infty$.
Setting $v'(t) = e^{\imu t |\nabla| } v_0 - \cJ_0[|\nabla| | u'|^2]$, we obtain an extension of $v$. Proposition~6.2 in the Corrigendum of~\cite{CHN23} implies $\| v' \|_{\WEndp} < \infty$. Arguing as above for the extension of $u$, we then also get that $v' \in C(\R, L^2(\R^4))$. This shows in particular that $v \in \WEndp(I)$ and that $v$ can be continuously extended to~$\overline{I}$. Finally, arguing as above, we find that $\Re(v) u - b \cdot \nabla u - c u + \Re(\cT_{\cdot}(W_2) u$ belongs to $\GHalf(I)$, which implies $u \in \SHalf(I)$.
\end{proof}

\section{Continuity of the restriction norm}
Finally, we show the continuity of  
the adapted spaces in the endpoint of the time interval. This result is employed in both the proof of
local well-posedness in Theorem~\ref{thm:LocalWP} and in
a continuity argument in the proof of Theorem~\ref{Thm-GWP-Ground}.

\begin{lemma}
    \label{lem:Continuity}
    Let $T > 0$.
    \begin{enumerate}
        \item \label{it:ContSHalf} If $u \in \SHalf([0,T))$, then $t \mapsto \| u \|_{\SHalf([0,t))}$ is continuous on $(0,T)$.
        \item \label{it:ContWEner} If $v \in \WEner([0,T))$, then $t \mapsto \| v \|_{\WEner([0,t))}$ is continuous on $(0,T)$.
        \item \label{it:ContSumY} If $v \in \Y([0,T))$, then $t \mapsto \| v \|_{\Y([0,t]) + L^2_t W^{1,4}_x([0,t] \times \R^4)}$ is continuous on $(0,T)$.
    \end{enumerate} 
\end{lemma}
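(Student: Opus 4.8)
The statement asserts continuity of three restriction-type norms as functions of the right endpoint of the time interval. I would treat all three parts by the same scheme: fix $t \in (0,T)$ and prove left-continuity and right-continuity separately, using that each norm is monotone nondecreasing in $t$ so only the two one-sided limits need to be controlled. Monotonicity is immediate from the definition of the restriction norm (enlarging the interval enlarges the infimum-defining set). Right-continuity: for $s > t$ close to $t$ one has $\|u\|_{\SHalf([0,t))} \le \|u\|_{\SHalf([0,s))}$, so it suffices to show $\limsup_{s \downarrow t} \|u\|_{\SHalf([0,s))} \le \|u\|_{\SHalf([0,t))}$. Left-continuity is the genuinely substantive direction, since one must show that the norm does not jump up as $s \uparrow t$, i.e. that the small ``tail'' $[s,t)$ contributes negligibly.

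The key tool is the gluing/decomposition idea already present in Lemma~\ref{lem:DecompX}: given $s < t$, pick a near-optimal extension $u_1 \in \SHalf(\R)$ of $u|_{[0,s)}$ and the globally-defined function $u_2$ obtained by freezing the Schr\"odinger evolution, namely $u_2(r) = e^{\imu r \Delta} u(s)$ extended by the true solution on a neighborhood — more precisely I would use the explicit two-sided free-flow extension of $u|_{[0,t)}$ analogous to~\eqref{eq:DefExtension} in Lemma~\ref{lem:ImprovementRegularity}, glued via a smooth cutoff $\rho$ that equals $1$ on $(-\infty,s]$ and $0$ on $[t,\infty)$. Then $\rho u_1 + (1-\rho) u_2$ agrees with $u$ on $[0,t)$ and its $\SHalf(\R)$-norm is bounded, by the localizability estimates of the form proved in~\cite[Lemma~2.8]{CHN23} and reproduced in the proof of Lemma~\ref{lem:DecompX}, by $\|u\|_{\SHalf([0,s))}$ plus an error term that is controlled by $(t-s)^{-1/2}$ times the $L^\infty_t H^{1/2}_x$-norm of $u$ \emph{on the overlap $[s,t]$ only}. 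Since $u \in C([0,T),H^{1/2})$ one has $\|u\|_{L^\infty_t H^{1/2}_x([s,t]\times\R^4)} \to 0$? — no, that is false; instead the point is that the Strichartz-type tail $\|u\|_{L^2_t L^4_x([s,t]\times\R^4)}$ and $\|(\imu\partial_t+\Delta)u\|_{L^2_t H^{-1/2}_x}$-type quantities over $[s,t]$ tend to $0$ as $s\uparrow t$ by dominated convergence, because $u \in \SHalf([0,t'))$ for every $t' < t$ and these are absolutely continuous integrals. This forces the correct cancellation: the $\epsilon^{-1/2}$ blow-up of the commutator constant in~\eqref{eq:DecompNormLSDerRhoEst} is multiplied by a factor that itself decays, but one must be careful — one should instead take $u_2$ to be the \emph{actual} extension of $u|_{[0,t)}$ (not a free flow starting at $s$), so that the commutator error is $\lesssim \epsilon^{-3/2}\|\tilde P_\lambda u_2\|_{L^\infty_t L^2_x}$ with $\epsilon = t-s$ fixed and finite, while the ``main'' difference $\|u\|_{\SHalf([0,t))} - \|u\|_{\SHalf([0,s))}$ is estimated directly by the $\ell^2$-summed contributions of the integrals over $[s,t]$, which vanish as $s\uparrow t$.

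Concretely, for part~\ref{it:ContSHalf} I would write, for $s<t<t'<T$ with $u'\in\SHalf(\R)$ a near-optimal extension of $u|_{[0,t')}$,
\begin{align*}
  0 \le \|u\|_{\SHalf([0,t))} - \|u\|_{\SHalf([0,s))}
  \le \|u'\|_{\SHalf([0,t))} - \|u'\chi\|_{\SHalf(\R)} + o(1),
\end{align*}
and bound the difference of the two $\SHalf$-type quantities by the norm of $u'$ restricted to space-time integrals over $[s,t]\times\R^4$ — these are $\|u'_\lambda\|_{L^2_tL^4_x([s,t])}$, $\|(\imu\partial_t+\Delta)u'_\lambda\|_{L^2_{t,x}([s,t])}$, plus, for the lateral Strichartz/modulation pieces, quantities of the same absolutely-continuous type — after $\ell^2$-summation in $\lambda$. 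Each of these tends to $0$ as $s\uparrow t$ by dominated convergence since $u'\in\SHalf(\R)$ makes the $\lambda$-summed integrand an $L^1$ function of time. The $L^\infty_t$ pieces of the norm require slightly more care: there one uses that $\|u'_\lambda\|_{L^\infty_t L^2_x([s,t])}$ and $\|u'_\lambda\|_{L^\infty_t L^2_x([0,t])}$ differ by a quantity controlled by continuity of $r\mapsto u'(r)$ in $L^2$ together with the $\ell^2$-in-$\lambda$ bound, and one splits high/low $\lambda$ to make the tail uniformly small. Parts~\ref{it:ContWEner} and~\ref{it:ContSumY} are handled identically: for~\ref{it:ContWEner} one uses the decomposability of $\WEner$ established in~\cite[Lemma~2.8]{CHN23} and the same dominated-convergence argument for the $L^2_{t,x}$ wave-operator piece and the tempered-frequency-localized $L^\infty_t L^2_x$ piece; for~\ref{it:ContSumY} one additionally notes that $\|v_L\|_{L^2_tW^{1,4}_x([0,t]\times\R^4)}$ is continuous in $t$ trivially (it is $\int_0^t \|\cdot\|^2\,\mathrm{d}r$ to a power), and $\|e^{\imu r|\nabla|}g\|_{\Y}$-type bounds via Lemma~\ref{lem:LinearEstimateHalfWave} combined with $v\in\Y([0,T))$ give the needed tail control for the $\Y$-component of the sum-space norm.

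\textbf{Main obstacle.} The delicate point is the lateral Strichartz and modulation-localized components of the $\XOne$/$\SHalf$/$\WEner$ norms, because these are defined through space-time Fourier localizations (the operators $P_{\lambda,\vece_j}$, $C_{\le(\lambda/2^8)^2}$, $P^{(t)}_{\le(\lambda/2^8)^2}$) that are \emph{nonlocal in time}, so ``restriction to $[0,t)$'' and ``Fourier localization'' do not commute. This is exactly the commutator difficulty already confronted in Lemma~\ref{lem:DecompX} and Lemma~\ref{lem:LinEstimates}; the resolution is the same convolution-kernel commutator estimate (as in~\eqref{eq:DecompNormLSCommutatorEst} and~\eqref{eq:ProductNoiseCommutatorConvolution}), which converts the cutoff $\rho$ into its derivative $\rho'$ and gains a power of $\lambda$, so that after fixing the overlap length $\epsilon = t-s$ and sending $s\uparrow t$ along a sequence the error is dominated by $\|\tilde P_\lambda u'\|_{L^\infty_t L^2_x}$ over the shrinking interval $[s,t]$, which vanishes. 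I would organize the proof so that this commutator lemma is invoked as a black box and the remaining estimates are routine dominated-convergence arguments.
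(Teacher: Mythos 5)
Your treatment of right-continuity is essentially the paper's argument (patch a near-optimal extension of $u|_{[0,t_0)}$ with the true solution on the shrinking interval $[t_0,t_1]$ and a free flow afterwards, then conclude by continuity and dominated convergence), so that half is sound. The genuine gap is in left-continuity, which you correctly single out as the substantive direction but then attack with a mechanism that cannot work. The quantity $\| u \|_{\SHalf([0,s))}$ is an \emph{infimum over all extensions} of $u|_{[0,s)}$, so to bound $\| u \|_{\SHalf([0,t_0))} - \| u \|_{\SHalf([0,s))}$ from above you must convert a near-optimal extension on $[0,s)$ into an admissible competitor on $[0,t_0)$ at only an $o(1)$ cost. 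Your displayed chain $\| u \|_{\SHalf([0,t))} - \| u \|_{\SHalf([0,s))} \le \| u' \|_{\SHalf([0,t))} - \| u' \chi \|_{\SHalf(\R)} + o(1)$ implicitly lower-bounds the infimum $\| u \|_{\SHalf([0,s))}$ by the norm of the particular function $u'\chi$, which is backwards: one only has $\| u \|_{\SHalf([0,s))} \le \| u'\chi \|_{\SHalf(\R)}$, and a different extension may do strictly better on $[0,s)$. The gluing route via Lemma~\ref{lem:DecompX} cannot repair this, because that lemma gives $\| u \|_{\XOne(I_1 \cup I_2)} \lesssim (1 + |I_1 \cap I_2|^{-\frac{1}{2}})(\| u \|_{\XOne(I_1)} + \| u \|_{\XOne(I_2)})$ with a multiplicative implicit constant and an overlap factor that degenerates as $s \uparrow t_0$; an estimate of the form $C \cdot (\text{norm on } [0,s)) + \text{tail}$ with $C > 1$ proves boundedness, never convergence of the norms. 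The ``tail over $[s,t]$ vanishes by dominated convergence'' heuristic would only apply if both quantities were norms of one fixed function over nested intervals, which restriction norms are not; moreover the $L^\infty_t$ components do not vanish on shrinking intervals, as you yourself note.

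What is actually needed, and what the paper does, is: (a) a preliminary frequency truncation reducing to the case $P_\lambda u = 0$ for $\lambda > \lambda_0$; (b) taking near-optimal extensions $u_n$ of $u|_{[0,t_n)}$ with $t_n \uparrow t_0$, observing they are bounded in $\SHalf(\R)$, extracting weak/weak-$\ast$ limits in $\ell^2 L^\infty_t H^{\frac{1}{2}}_x$ and $L^2_t B^{\frac{1}{2}}_{4,2}$, and for $(\imu \partial_t + \Delta) u_n$ in $L^2_t H^{-\frac{1}{2}}_x$, identifying the limit as an extension of $u|_{[0,t_0)}$, and invoking weak lower semicontinuity of the norm to get $\| \tilde u \|_{\SHalf(\R)} \le \lim_n \| u \|_{\SHalf([0,t_n))} + \epsilon$; and (c) a second truncation $P_{\le \lambda_1}$ to produce a competitor that lies in $C(\R, H^{\frac{1}{2}}(\R^4))$ (the weak limit is a priori only continuous in $H^{-\frac{1}{2}}$ via the equation component), which still extends $u$ thanks to (a). None of these steps appear in your plan. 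The same issue recurs in part~\ref{it:ContSumY}: the sum-norm is again an infimum, now over decompositions $v = v_1 + v_2$, so its left-continuity also requires the compactness argument applied to near-optimal decompositions; it is not reducible to the observation that $t \mapsto \| v_2 \|_{L^2_t W^{1,4}_x([0,t] \times \R^4)}$ is continuous for a fixed $v_2$.
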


\begin{proof}
    We first show part~\ref{it:ContSHalf} and we start with the right-continuity of the map. Let $t_0 \in (0,T)$ and $\epsilon > 0$. Let $u_\epsilon$ be an extension of $u$ from $[0,t_0)$ to $\R$ with 
    \begin{align*}
        \| u_\epsilon \|_{\SHalf(\R)} \leq \| u \|_{\SHalf([0,t_0))} + \epsilon. 
    \end{align*}
    For any $t_1 \in (t_0,T)$ we define
    \begin{align*}
        \tilde{u}_{t_1}(t) = u_\epsilon(t) + \one_{[t_0,t_1]}(t) (u(t) - u_\epsilon(t)) + \one_{(t_1, \infty)}(t) e^{\imu(t- t_1)\Delta}(u(t_1) - u_\epsilon(t_1)),
    \end{align*}
    which extends $u$ from $[0,t_1)$ to a function in $C(\R,H^{\frac{1}{2}}(\R^4))$. Using Strichartz estimates, we then estimate
    \begin{align*}
        \| \tilde{u}_{t_1} - u_\epsilon \|_{\SHalf(\R)} 
        &\lesssim \Big(\sum_{\lambda \in 2^{\N_0}} (\lambda^{\frac{1}{2}} \| P_\lambda( u - u_\epsilon) \|_{L^\infty_t L^2_x((t_0,t_1) \times \R^4) } + \lambda^{\frac{1}{2}} \| P_\lambda( u - u_\epsilon) \|_{L^2_t L^4_x((t_0,t_1) \times \R^4) } \\
        &\hspace{6em} + \lambda^{-\frac{1}{2}} \| (\imu \partial_t + \Delta) P_\lambda( u - u_\epsilon) \|_{L^2_t L^2_x((t_0,t_1) \times \R^4) })^2 \Big)^{\frac{1}{2}} + \| u(t_1) - u_\epsilon(t_1) \|_{H^{\frac{1}{2}}(\R^4)}
    \end{align*}
    Since $u, u_\epsilon \in \SHalf([0,T))$, continuity and dominated convergence imply that
    \begin{align*}
        \lim_{t_1 \downarrow t_0} \| \tilde{u}_{t_1} - u_\epsilon \|_{\SHalf(\R)} = 0.
    \end{align*}
    We thus obtain
    \begin{align*}
       | \| u \|_{\SHalf([0,t_1))} -  \| u \|_{\SHalf([0,t_0))} | \leq \| \tilde{u}_{t_1} \|_{\SHalf(\R)} - \| u_\epsilon \|_{\SHalf(\R)} + \epsilon \leq \| \tilde{u}_{t_1} - u_\epsilon \|_{\SHalf(\R)} + \epsilon \leq 2 \epsilon
    \end{align*}
    for $t_1 - t_0$ small enough, which shows the right-continuity of $t \mapsto \| u \|_{\SHalf([0,t))}$.

    To prove the left-continuity, let $\epsilon > 0$ and let $\tilde{u}$ be an extension of $u$ from $[0,T)$ in $\SHalf(\R)$. Then there exists $\lambda_0 \in 2^\N$ such that 
    \begin{align*}
        \| \tilde{u} - P_{\leq \lambda_0} \tilde{u} \|_{\SHalf(\R)} \leq \epsilon,
    \end{align*}
    implying
    \begin{align*}
        \| u - P_{\leq \lambda_0} u \|_{\SHalf([0,t))} \leq \| u - P_{\leq \lambda_0} u \|_{\SHalf([0,T))} \leq  \| \tilde{u} - P_{\leq \lambda_0} \tilde{u} \|_{\SHalf(\R)} \leq \epsilon
    \end{align*}
    for all $t \in (0,T)$ by the monotonicity of the norm. Hence, it is enough to show the assertion for $u \in \SHalf([0,T))$ for which there exists a $\lambda_0 \in 2^\N$ such that $P_\lambda u = 0$ for all $\lambda > \lambda_0$.
    
    So we assume that $u$ has this property in the following. Let $t_0 \in (0,T)$, $(t_n)$ be a monotonically increasing sequence in $(0,t_0)$ with $\lim_{n \rightarrow \infty} t_n = t_0$, and $\epsilon > 0$. Set $g = (\imu \partial_t + \Delta) u$ on $(0,t_0)$. For each $n \in \N$ we take an extension $u_n$ of $u$ from $[0,t_n)$ in $\SHalf(\R)$ such that
    \begin{align}
         \label{eq:PropertiesExtension}
         \| u_n \|_{\SHalf(\R)} \leq \| u \|_{\SHalf([0,t_n))} + \epsilon
     \end{align}
     and define $g_n := (\imu \partial_t + \Delta)u_n$.

     Since $\| u_n \|_{\SHalf(\R)} \leq \| u \|_{\SHalf([0,t_n))} + \epsilon \leq  \| u \|_{\SHalf([0,t_0))} + \epsilon$ for all $n \in \N$, $(u_n)$ is a bounded sequence in $\SHalf(\R)$. In particular, $(u_n)$ is a bounded sequence in $\ell^2 L^\infty_t H^{\frac{1}{2}}_x(\R \times \R^4) \cap L^2(\R, B^{\frac{1}{2}}_{4,2}(\R^4))$ and $(g_n)$ is a bounded sequence in $L^2(\R, H^{-\frac{1}{2}}(\R^4))$, where we write $\ell^2 L^q_t H^{s}_x(\R \times \R^4)$ for the set of tempered distributions with
     \begin{align*}
         \| w \|_{\ell^2 L^q_t H^{s}_x} := \Big( \sum_{\lambda \in 2^{\N_0}} \lambda^{2s} \|P_\lambda w\|_{L^q_t L^2_x}^2 \Big)^{\frac{1}{2}}  < \infty.
     \end{align*}
     Since $\ell^2 L^\infty_t H^{\frac{1}{2}}_x(\R \times \R^4)$ is the dual of the separable space $\ell^2 L^1_t H^{-\frac{1}{2}}_x(\R \times \R^4)$ and $L^2(\R, B^{\frac{1}{2}}_{4,2}(\R^4))$ and $L^2(\R, H^{-\frac{1}{2}}(\R^4))$ are reflexive, we find a subsequence, again denoted by $(u_n)$ and $(g_n)$, such that $u_n \overset{\ast}{\rightharpoonup} \tilde{u}$ in $\ell^2 L^\infty_t H^{\frac{1}{2}}_x(\R \times \R^4)$, $u_n \rightharpoonup \tilde{\tilde{u}}$ in $L^2(\R, B^{\frac{1}{2}}_{4,2}(\R^4))$ and $g_n \rightharpoonup \tilde{g}$ in $L^2(\R, H^{-\frac{1}{2}}(\R^4))$. Since both weak* convergence in $\ell^2 L^\infty_t H^{\frac{1}{2}}_x(\R \times \R^4)$ and weak convergence in $L^2(\R, B^{\frac{1}{2}}_{4,2}(\R^4))$ imply convergence in $\cS'(\R \times \R^4)$, we have $\tilde{u} = \tilde{\tilde{u}}$ in $\cS'(\R \times \R^4)$. Moreover, testing with $\phi \in \cS(\R \times \R^4)$, we also infer that
     \begin{align*}
         (\imu \partial_t + \Delta) \tilde{u} = \tilde{g}.
     \end{align*}
     Since $\tilde{g} \in L^2(\R, H^{-\frac{1}{2}}(\R^4))$, the latter identity implies that $\tilde{u}$ has a representative in $C(\R,H^{-\frac{1}{2}}(\R^4))$. In the following, we identify $\tilde{u}$ with this representative. Since $u_n = u$ on $[0,t_n)$ for all $n \in \N$, we also have that $(u_n)$ converges to $u$ in $C(I, H^{\frac{1}{2}}(\R^4))$ for every compact subinterval $I \subseteq [0,t_0)$. Testing with $\phi \in C^\infty_c(0,t_0)$, we thus obtain $\tilde{u} = u$ on $(0,t_0)$. By the continuity properties of $\tilde{u}$ and $u$, we conclude that $\tilde{u} = u$ on $[0,t_0)$. In particular, $\tilde{u}$ is an extension of $u$ from $[0,t_0)$ to $\R$.
    
    We next note that the above convergence properties of $(u_n)$ and $(g_n)$ also imply that $P_\lambda u_n \overset{\ast}{\rightharpoonup} P_\lambda \tilde{u}$ in $L^\infty(\R, L^2(\R^4))$, $P_\lambda u_n \rightharpoonup P_\lambda \tilde{u}$ in $L^2(\R, L^4(\R^4))$, and $P_\lambda g_n \rightharpoonup P_\lambda \tilde{g}$ in $L^2(\R, L^2(\R^4))$ for every $\lambda \in 2^{\N_0}$. We thus obtain
    \begin{align}
        \label{eq:LeftContEstExt}
        \| \tilde{u} \|_{\SHalf(\R)} &= \Big(\sum_{\lambda \in 2^{\N_0}} (\lambda^{\frac{1}{2}} \| P_\lambda \tilde{u} \|_{L^\infty_t L^2_x} + \lambda^{\frac{1}{2}} \| P_\lambda \tilde{u} \|_{L^2_t L^4_x} + \lambda^{-\frac{1}{2}} \| P_\lambda \tilde{g} \|_{L^2_t L^2_x})^2\Big)^{\frac{1}{2}} \nonumber\\
        &\leq  \Big(\sum_{\lambda \in 2^{\N_0}} (\lambda^{\frac{1}{2}} \liminf_{n \rightarrow \infty} \| P_\lambda u_n \|_{L^\infty_t L^2_x} + \lambda^{\frac{1}{2}} \liminf_{n \rightarrow \infty} \| P_\lambda u_n \|_{L^2_t L^4_x} + \lambda^{-\frac{1}{2}} \liminf_{n \rightarrow \infty} \| P_\lambda g_n \|_{L^2_t L^2_x})^2\Big)^{\frac{1}{2}} \nonumber \\
        &\leq \liminf_{n \rightarrow \infty} \| u_n \|_{\SHalf(\R)} \leq \liminf_{n \rightarrow \infty} (\| u \|_{\SHalf([0,t_n))} + \epsilon) = \lim_{n \rightarrow \infty} \| u \|_{\SHalf([0,t_n))} + \epsilon,
    \end{align}
    where the monotonicity of the norm implies the existence of the limit in the last step. Since $\| \tilde{u} \|_{\SHalf(\R)} < \infty$, we can find $\lambda_1 > \lambda_0$ such that 
    \begin{align}
        \label{eq:LeftContSecondFreqCutoff}
        \| \tilde{u} - P_{\leq \lambda_1} \tilde{u} \|_{\SHalf(\R)} \leq \epsilon.
    \end{align}
    Since $P_\lambda u = 0$ for all $\lambda > \lambda_0$, we have
    \begin{align*}
        P_{\leq \lambda_1} \tilde{u} = P_{\leq \lambda_1} u = u
    \end{align*}
    on $[0,t_0)$. Moreover, since $\tilde{u}$ is continuous in $H^{-\frac{1}{2}}(\R^4)$, we have $P_{\leq \lambda_1} \tilde{u} \in C(\R, H^{\frac{1}{2}}(\R^4))$. Hence, $P_{\leq \lambda_1} \tilde{u}$ is an extension of $u$ from $[0,t_0)$ in $\SHalf(\R)$. Employing~\eqref{eq:LeftContEstExt} and~\eqref{eq:LeftContSecondFreqCutoff}, we finally estimate
    \begin{align*}
        | \| u \|_{\SHalf([0,t_0))} - \| u \|_{\SHalf([0,t_n))} | &\leq \| P_{\leq \lambda_1} \tilde{u} \|_{\SHalf(\R)} - \| u \|_{\SHalf([0,t_n))} \\
        &\leq \| \tilde{u} \|_{\SHalf(\R)} + | \| \tilde{u} \|_{\SHalf(\R)} - \| P_{\leq \lambda_1} \tilde{u} \|_{\SHalf(\R)} | -  \| u \|_{\SHalf([0,t_n))} \\
        &\leq \lim_{k \rightarrow \infty}  \| u \|_{\SHalf([0,t_k))} -  \| u \|_{\SHalf([0,t_n))} + 2 \epsilon \leq 3 \epsilon
    \end{align*}
    for all large enough $n$. Since $\epsilon > 0$ was arbitrary, we infer
    \begin{align*}
        \lim_{n \rightarrow \infty}  \| u \|_{\SHalf([0,t_n))} =  \| u \|_{\SHalf([0,t_0))},
    \end{align*}
    which concludes the proof of the left-continuity of $t \mapsto \| u \|_{\SHalf([0,t))}$.

    Part~\ref{it:ContWEner} follows similarly as part~\ref{it:ContSHalf}.

    To prove~\ref{it:ContSumY}, we again start with the right-continuity. We first fix an extension $\tilde{v}$ of $v$ from $[0,T)$ in $\Y(\R)$. Let $t_0 \in (0,T)$ and $\epsilon > 0$.

    We take $v_1 \in \Y([0,t_0])$ and $v_2 \in L^2_t W^{1,4}_x([0, t_0] \times \R^4)$ such that $v_1 + v_2 = v$ on $[0,t_0]$ and
    \begin{align*}
        \| v_1 \|_{\Y([0,t_0])} + \| v_2 \|_{L^2_t W^{1,4}_x([0, t_0] \times \R^4)} \leq \| v \|_{\Y([0,t_0]) + L^2_t W^{1,4}_x([0, t_0] \times \R^4)} + \epsilon.
    \end{align*}
    Fix an extension $\tilde{v}_1$ of $v_1$ from $[0, t_0]$ in $ \Y(\R)$ and set $\tilde{v}_2 = \tilde{v} - \tilde{v}_1$. Note that $\tilde{v}_2$ is an extension of $v_2$. Moreover, since $\tilde{v}_2 \in \Y(\R)$, there exists $\lambda_0 \in 2^\N$ such that
    \begin{align}
        \label{eq:ProofContYChoicelambda0}
        \| P_{> \lambda_0} \tilde{v}_2 \|_{\Y(\R)} \leq \epsilon \qquad \text{and} \qquad
        \| v_2 - P_{\leq \lambda_0} v_2 \|_{L^2_t W^{1,4}_x([0, t_0] \times \R^4)} \leq \epsilon.
    \end{align}
    Note that this yields the decomposition $v = \tilde{v}_1 + P_{> \lambda_0} \tilde{v}_2 + P_{\leq \lambda_0} \tilde{v}_2$ on every interval $[0,t]$ with $t \in (t_0, T)$. As $t \mapsto \| v \|_{\Y([0,t]) + L^2_t W^{1,4}_x([0, t] \times \R^4)}$ is monotonically increasing, we thus obtain
    \begin{align*}
        &| \| v \|_{\Y([0,t]) + L^2_t W^{1,4}_x([0, t] \times \R^4)} - \| v \|_{\Y([0,t_0]) + L^2_t W^{1,4}_x([0, t_0] \times \R^4)} | \\
        &\leq \| \tilde{v}_1 + P_{> \lambda_0} \tilde{v}_2 \|_{\Y([0,t])} + \| P_{\leq \lambda_0} \tilde{v}_2 \|_{ L^2_t W^{1,4}_x([0, t] \times \R^4)} - \| v_1\|_{\Y([0,t_0])} - \| v_2 \|_{ L^2_t W^{1,4}_x([0, t_0] \times \R^4)} + \epsilon \\
        &\leq\| \tilde{v}_1\|_{\Y([0,t])} + \| P_{\leq \lambda_0} \tilde{v}_2 \|_{ L^2_t W^{1,4}_x([0, t] \times \R^4)} - \| v_1\|_{\Y([0,t_0])} - \| v_2 \|_{ L^2_t W^{1,4}_x([0, t_0] \times \R^4)} + 2 \epsilon,
    \end{align*}
    where we have used~\eqref{eq:ProofContYChoicelambda0} in the last step. Employing part~\ref{it:ContWEner} and dominated convergence, we derive
    \begin{align*}
        &\| \tilde{v}_1\|_{\Y([0,t])} \longrightarrow \| \tilde{v}_1\|_{\Y([0,t_0])} = \| v_1\|_{\Y([0,t_0])}, \\
        &\| P_{\leq \lambda_0} \tilde{v}_2 \|_{ L^2_t W^{1,4}_x([0, t] \times \R^4)} \longrightarrow \| P_{\leq \lambda_0} \tilde{v}_2 \|_{ L^2_t W^{1,4}_x([0, t_0] \times \R^4)} = \| P_{\leq \lambda_0} v_2 \|_{ L^2_t W^{1,4}_x([0, t_0] \times \R^4)}
    \end{align*}
    as $t \downarrow t_0$. In view of~\eqref{eq:ProofContYChoicelambda0}, we conclude that there is $\delta > 0$ such that
    \begin{align*}
        | \| v \|_{\Y([0,t]) + L^2_t W^{1,4}_x([0, t] \times \R^4)} - \| v \|_{\Y([0,t_0]) + L^2_t W^{1,4}_x([0, t_0] \times \R^4)} | \leq 5 \epsilon
    \end{align*}
    for all $t \in (t_0,T)$ with $|t - t_0| < \delta$, which implies the right-continuity of $t \mapsto \| v \|_{\Y([0,t]) + L^2_t W^{1,4}_x([0,t] \times \R^4)}$.

    The left-continuity of this map follows from ideas already used in this proof. As in part~\ref{it:ContSHalf}, using the monotonicity of the norm, it suffices to prove the assertion for $v \in \Y([0,T))$ for which there is $\lambda_0 \in 2^\N$ such that $P_\lambda v = 0$ for all $\lambda > \lambda_0$. Let $t_0 \in (0,T)$ and $(t_n)$ be a sequence in $(0,t_0)$ converging to $t_0$. Let $\epsilon > 0$. For every $n \in \N$, we take $v_{1,n} \in \Y([0,t_n])$ and $v_{2,n} \in L^2_t W^{1,4}_x([0,t_n] \times \R^4)$ such that $v = v_{1,n} + v_{2,n}$ on $[0,t_n]$ and
    \begin{align*}
        \| v_{1,n} \|_{\Y([0,t_n])} + \| v_{2,n} \|_{L^2_t W^{1,4}_x([0,t_n] \times \R^4)} \leq \| v \|_{\Y([0,t_n]) + L^2_t W^{1,4}_x([0,t_n] \times \R^4)} + \epsilon,
    \end{align*}
    as well as extensions $\tilde{v}_{1,n}$ of $v_{1,n}$ in $\Y(\R)$ satisfying
    \begin{align*}
        \| \tilde{v}_{1,n} \|_{\Y(\R)} \leq \| v_{1,n} \|_{\Y([0,t_n])} + \epsilon.
    \end{align*}
    Then $(\| \tilde{v}_{1,n} \|_{\Y(\R)})_n$ is bounded. Arguing as in the proof of part~\ref{it:ContWEner}, i.e., adapting the ideas from part~\ref{it:ContSHalf}, we obtain a subsequence, again denoted by $(\tilde{v}_{1,n})$, such that $\tilde{v}_{1,n} \overset{\ast}{\rightharpoonup} \tilde{v}_1$ in $\ell^2 L^\infty_t L^2_x(\R \times \R^4)$ with $\tilde{v}_1 \in C(\R, H^{-\frac{1}{2}}(\R^4))$ and
    \begin{align*}
        \| \tilde{v}_1 \|_{\Y(\R)} \leq \liminf_{n \rightarrow \infty} \| \tilde{v}_{1,n} \|_{\Y(\R)} \leq \liminf_{n \rightarrow \infty} \| v_{1,n} \|_{\Y([0,t_n])} + \epsilon.
    \end{align*}
    Moreover, $(v_{2,n})$ weakly converges to some $v_2$ in $L^2_t W^{1,4}_x([0,t_0] \times \R^4)$ with
    \begin{align*}
        \| v_2 \|_{L^2_t W^{1,4}_x([0,t_0] \times \R^4)} \leq \liminf_{n \rightarrow \infty} \| v_{2,n} \|_{L^2_t W^{1,4}_x([0,t_n] \times \R^4)},
    \end{align*}
    where we take the trivial extension of $v_{2,n}$ to $[0,t_0]$. Arguing as in part~\ref{it:ContSHalf}, we infer that $v = \tilde{v}_1 + v_2$ on $[0,t_0]$. Using that $P_\lambda v = 0$ for all $\lambda > \lambda_0$, we have
    \begin{align*}
        v = P_{\leq \lambda_1} v = P_{\leq \lambda_1} \tilde{v}_1 + P_{\leq \lambda_1} v_2
    \end{align*}
    on $[0,t_0]$ and
    \begin{align*}
        \| \tilde{v}_1 -  P_{\leq \lambda_1} \tilde{v}_1 \|_{\Y(\R)} + \|v_2 - P_{\leq \lambda_1} v_2 \|_{L^2_t W^{1,4}_x([0,t_0] \times \R^4)} \leq \epsilon
    \end{align*}
    for sufficiently large $\lambda_1$. Then $P_{\leq \lambda_1} \tilde{v}_1 \in \Y(\R)$, $P_{\leq \lambda_1} v_2 \in L^2_t W^{1,4}_x([0,t_0] \times \R^4)$ and, using the monotonicity of the norm once more, we arrive at
    \begin{align*}
        &| \| v \|_{\Y([0,t_0]) + L^2_t W^{1,4}_x([0,t_0] \times \R^4)} - \| v \|_{\Y([0,t_n]) + L^2_t W^{1,4}_x([0,t_n] \times \R^4)} | \\
        &\leq \| P_{\leq \lambda_1} \tilde{v}_1 \|_{\Y([0,t_0])} + \| P_{\leq \lambda_1} v_2 \|_{L^2_t W^{1,4}_x([0,t_0] \times \R^4)}
         - \| v_{1,n} \|_{\Y([0,t_n])} - \| v_{2,n} \|_{L^2_t W^{1,4}_x([0,t_n] \times \R^4)} + \epsilon \\
         &\leq \| \tilde{v}_1 \|_{\Y(\R)} + \|v_2 \|_{L^2_t W^{1,4}_x([0,t_0] \times \R^4)}  - \| v_{1,n} \|_{\Y([0,t_n])} - \| v_{2,n} \|_{L^2_t W^{1,4}_x([0,t_n] \times \R^4)} + 2 \epsilon \\
         &\leq \liminf_{k \rightarrow \infty} \| v_{1,k} \|_{\Y([0,t_k])} + \liminf_{k \rightarrow \infty} \| v_{2,k} \|_{L^2_t W^{1,4}_x([0,t_k] \times \R^4)} - \| v_{1,n} \|_{\Y([0,t_n])} - \| v_{2,n} \|_{L^2_t W^{1,4}_x([0,t_n] \times \R^4)} + 3 \epsilon \leq 4\epsilon
    \end{align*}
    for all large enough $n$. This implies the left-continuity of $t \mapsto \| v \|_{\Y([0,t]) + L^2_t W^{1,4}_x([0,t] \times \R^4)}$, completing the proof of the lemma.        
\end{proof}

\section*{Acknowledgements} 
Funded by the Deutsche Forschungsgemeinschaft (DFG, German Research Foundation) -- Project-ID 317210226 -- SFB 1283. 
D.\ Zhang is also grateful for the NSFC grants (No. 12271352, 12322108) 
and Shanghai Frontiers Science Center of Modern Analysis.

The authors would like to thank Timothy Candy and Kenji Nakanishi for helpful discussions.

\bibliographystyle{abbrv}
\bibliography{StZEnergyCritical}

\end{document}